\date{}
\font\sml=cmr6
\font\itten=cmti10 
\newtheorem{mainthm}{{Theorem}}
\newtheorem{maincor}{{Corollary}}
\newtheorem{thm}{Theorem}[section]
\newtheorem{lem}[thm]{Lemma}
\newtheorem{prop}[thm]{Proposition}
\newtheorem{cor}[thm]{Corollary}
\newtheorem{remark}[thm]{{Remark}}
\newtheorem{definition}[thm]{{Definition}}
 \newcommand{\RR}{\mathbb{R}}
 \newcommand{\CC}{{\mathbb C}}
 \newcommand{\NN}{{\mathbb N}}
\newcommand{\acal}{\mathcal{A}}
\newcommand{\ccal}{\mathcal{C}}
\newcommand{\hcal}{\mathcal{H}}
\newcommand{\ocal}{\mathcal{O}}
\newcommand{\qcal}{\mathcal{Q}}
\newcommand{\vcal}{\mathcal{V}}\newcommand{\ucal}{\mathcal{U}}
\def\calQ{\qcal}  
 \def\calA{\acal} 
\def\calU{\ucal} \def\calH{\hcal} 
 \def\calV{\vcal} 
\def\calC{\ccal} \def\calO{\ocal}
\def\a{\alpha} \def\be{\beta}
\def\o{\omega} \def\O{\Omega}
\def\th{\theta} 
\def\vp{\varphi} \def\eps{\epsilon}
\def\bB{{\mathbf B}}
\newcommand{\dbar}{\bar\partial}
\newcommand{\ddbar}{\partial\dbar}
\newcommand{\diag}{{\operatorname{diag}}}
\def\Re{{\operatorname{Re}}}
\def\max{{\operatorname{max}}}
\def\tr{\hbox{\rm tr}}
\def\MA{Monge--Amp\`ere } 
\def\K{K\"ahler } 
\def\KE{K\"ahler--Einstein } \def\KEno{K\"ahler--Einstein}
\def\KRF{K\"ahler--Ricci flow }
\def\polishl{\char'40l}
\def\Kolodziej{Ko\polishl{}odziej} \def\Blocki{B\polishl{}ocki}
\def\Holder{H\"older }
\def\ovp{{\o_{\vp}}}
\def\on{\omega^n}
\def\ovpn{\omega_{\vp}^n}
\def\intM{\int_M}
\def\Ric{\hbox{\rm Ric}\,}
\def\vpKE{\vp_{\h{\sml KE}}}
\def\ovpn{\o^n_{\vp}}
\def\h#1{\hbox{#1}}
\def\strutdepth{\dp\strutbox}
\def\specialstar{\vtop to \strutdepth{
    \baselineskip\strutdepth
    \vss\llap{$\star$\ \ \ \ \ \ \ \ \  }\null}}
\def\marginalstar{\strut\vadjust{\kern-\strutdepth\specialstar}}
\def\marginal#1{\strut\vadjust{\kern-\strutdepth
    {\vtop to \strutdepth{
    \baselineskip\strutdepth
    \vss\llap{{ \small #1 }}\null} 
    }}
    }
\def\text{\textstyle}
 \def\qq{\qquad}
\def\b#1{\bar{#1}}
\def\MsmD{M\!\setminus\!  D}
\def\PSH{\mathrm{PSH}}
\def\ra{\rightarrow}
\def\pa{\partial}
\def\isom{\cong}
\def\w{\wedge}
\def\i{\sqrt{-1}}
\def\jbar{\bar \jmath}
\newcommand{\Dom}{\mathscr{D}}
\newcommand{\D}{D}
\newcommand{\e}{\epsilon}
\newcommand{\del}{\partial}
\newcommand{\phg}{\operatorname{phg}}
\newcommand{\calD}{\mathcal D}
\newcommand{\Diff}{\mathrm{Diff}}
\newcommand{\ff}{\mathrm{ff}}
\newcommand{\lf}{\mathrm{lf}}
\newcommand{\rf}{\mathrm{rf}}
\newcommand{\sm}{\setminus}
\def\id{{\operatorname{id}}}
\def\Dom{\calD}
\def\Ds{\calD_s^{0,\gamma}}
\def\De{\calD_e^{0,\gamma}}
\def\Dw{\calD_w^{0,\gamma}}
\let\s=\sigma
\def\beq{\begin{equation}}
\def\eeq{\end{equation}}
\def\beqno{\begin{equation*}}
\def\eeqno{\end{equation*}}
\def\eaeq{\end{aligned}}
\def\baeq{\begin{aligned}}
\def\bpf{\begin{proof}}
\def\epf{\end{proof}}
\def\G{\Gamma}
\def\phgs{polyhomogeneous}
\def\onebe{{\frac1\be}}
\begin{document}
\title{K\"ahler--Einstein metrics with edge singularities}

\author{T. Jeffres, Rafe Mazzeo, and Yanir A. Rubinstein
\\
\\
{\itten with an appendix by Chi Li and Yanir A. Rubinstein}
}

\maketitle

\begin{abstract}
This article considers the existence and regularity of \KE metrics on a compact \K manifold $M$ with 
edge singularities with cone angle $2\pi \be$ along a smooth divisor $D$.  
We prove existence of such metrics with negative, zero and some positive cases 
for all cone angles $2\pi \be \leq 2\pi$. The results in the positive case parallel those in the smooth case. 
We also establish that solutions of this problem are polyhomogeneous, i.e., have a complete asymptotic 
expansion with smooth coefficients along $D$ for all $2\pi \be < 2\pi$. 
\end{abstract}

\section{Introduction}
Let $\D\subset M$ be a smooth divisor in a compact \K manifold. A \K edge metric on $M$ 
with angle $2\pi \be$ along $\D$ is a \K metric on $\MsmD$ that is asymptotically equivalent at $\D$ 
to the model edge metric
$$
g_\be:=|z_1|^{2\be-2}|dz_1|^2 + \sum_{j=2}^n |dz_j|^2;
$$
here $z_1,z_2,\ldots,z_n$ are holomorphic coordinates such that $\D = \{z_1 = 0\}$ locally.  We always 
assume that $0 < \be \le 1$. 

Of particular interest is the existence and geometry of metrics of this type which are also Einstein.
The existence of \KE (KE) edge metrics was first conjectured by Tian in the mid '90's \cite{T}. 
In fact, Tian conjectured the existence of KE metrics with `crossing' edge singularities 
when $\D$ has simple normal crossings. 
One motivation was his observation that these metrics could be used to prove various inequalities in 
algebraic geometry; in particular, the Bogomolov--Miyaoka--Yau inequality could be proved by deforming
 the cone angle of \KE edge metrics with negative curvature to $2\pi$. Furthermore, these metrics can be used to bound the 
degree of immersed curves in general type varieties.  He also anticipated that the 
complete Tian--Yau KE metric on the complement of a divisor should be the limit of the \KE
edge metrics as the angle $2\pi \beta$ tends to $0$. Recently, Donaldson \cite{D} 
proposed using these metrics in a similar way to construct smooth \KE metrics on Fano manifolds by deforming 
the cone angle of \KE metrics of positive curvature, and more generally to relate this approach
to the much-studied obstructions to existence of smooth \KE metrics. 

One of the main results in this article is a proof of Tian's conjecture on the existence of \KE edge metrics 
when $\D$ is smooth. In a sequel to this article we shall prove the general case \cite{MR}; 
this involves substantial additional complications due to the singularities of the divisor. 

In the lowest dimensional setting, $M$ is a Riemann surface and the problem is to find constant 
curvature metrics with prescribed conic singularities (with cone angle less than $2\pi$) at a finite collection of points. This was 
accomplished in general by McOwen and Troyanov \cite{McO,Tr}; as part of this, Troyanov found 
some interesting restrictions on the cone angles necessary for the existence of spherical cone 
metrics. Later, Luo and Tian \cite{LT} established the uniqueness of these metrics.  For the 
problem in higher dimensions, we focus only on the case where $\D$ is smooth, unless explicitly 
stated. A preliminary study of the case of \KE edge metrics with negative curvature appeared
in the thesis of the first named author \cite{J2}, where it was already realized 
that some of the a priori estimates of Aubin and Yau \cite{Au,Y2} should carry over to this setting
when $\be\in(0,\frac12]$. An announcement for the existence in that negative case 
with $\be\in(0,\frac12]$ was made over ten years ago by the 
first and second named authors \cite{Ma0}.  There were several analytic issues described in that 
announcement which seemed to complicate the argument substantially and details never appeared.

Recently there has been a renewed interest in these problems stemming from an 
important advance by Donaldson \cite{D}, alluded to just above, whose insightful observations make it possible to
establish good linear estimates. He proves a deformation theorem, showing that the set of attainable 
cone angles for KE edge metrics is open. The key to his work is the identification of a function space in the
space of bounded functions on which the linearized \MA equation is solvable. 

We realized, immediately following the appearance of \cite{D}, that only a slight change of perspective suggested by his
advance makes it possible to apply the theory of elliptic edge operators from \cite{Ma1} in a rather direct
manner so as to circumvent the difficulties surrounding the openness part of the argument proposed in \cite{Ma0}.
Indeed, we show that estimates equivalent to those of Donaldson (but on slightly different function spaces) follow 
directly from some of the basic results in that theory, and we explain this at some length in this paper. This alternate
approach to the linear theory allows us to go somewhat further, and we use it to show that solutions 
are polyhomogeneous, i.e., have complete asymptotic expansions in possibly noninteger powers of the
distance to the divisor and positive integer powers of the log of this distance function, with all coefficients
smooth along the divisor. This was announced in \cite{Ma0} and speculated on in \cite{D}, and the existence 
of this higher regularity should be very helpful in the further study of these metrics. 

In fact, this higher regularity plays a fundamental role in the nonlinear a priori estimates. In order to obtain our 
existence results it is necessary to work with minimal assumptions on the reference geometry, and one theorem
we prove, that solutions of the problem are automatically polyhomogeneous, is a key ingredient allowing 
us to do so. Consequently, we are able to establish the existence theorem for all cone angles less than $2\pi$, 
which we carry out in the negative, zero and in certain positive curvature cases. 

More precisely, what we achieve here is the following. We prove existence of \KE edge (KEE) metrics with cone angle 
$2\pi\be$ that have negative, zero and positive curvature, as appropriate, for all cone angles $2\pi \be \leq 2\pi$, 
when $\D$ is smooth. Existence in the positive case is proved 
under the condition that the twisted Mabuchi K-energy is proper, in parallel to Tian's result in the smooth
case \cite{T97}.  Next, we prove that solutions of a general class of complex \MA
equations are polyhomogeneous, i.e., have complete asymptotic expansions with smooth coefficients.
We provide a sharper identification of the function space defined by Donaldson for his deformation result.  As we 
have briefly noted above, there are two slightly different scales of H\"older spaces which play a role in this type of
problem. One, used in \cite{D}, we call the wedge H\"older spaces; the other, from \cite{Ma1}, 
are the edge H\"older spaces. Functions in the wedge H\"older spaces are slighty more regular, which
is crucial in certain parts of the argument; on the other hand, the edge H\"older spaces are invariant 
with respect to the dilation structure inherent in this problem, which makes the linear theory, and 
certain parts of the nonlinear theory, more transparent. 

We shall employ these spaces at various points in the argument. What makes it possible to
go from the edge spaces back to the wedge spaces is 
Tian's regularity argument from Appendix B, which shows that any solution to the \MA equation which is bounded
along with its Laplacian is automatically in a wedge H\"older space. The results in \S 4 then show that the solution is polyhomogeneous. 

The key new ingredient for deriving the nonlinear a priori estimates is the new {\it Ricci continuity method},
which can be considered as a continuity method analogue of the Ricci flow. This was introduced in the context 
of the Ricci iteration by the third named author \cite{R}, and one point of this article is to show that it 
is perhaps the best suited for proving existence of \KE metrics. Indeed, we derive our estimates also for more classical
continuity paths studied in the literature, and at the appropriate junctures indicate how these break down 
unless $\be$ is in the restricted ``orbifold range" $(0,\frac12]$, while this new continuity method works for all $\be\in(0,1]$.

In proving the a priori estimates we have made an effort to extend various classical arguments and bounds to this singular 
setting with minimal assumptions on the background geometry. In particular, the Ricci continuity method together with 
the Chern--Lu inequality allows us to obtain the a priori estimate on the Laplacian assuming only 
that the reference edge metric has bisectional curvature bounded above.
We then explain how the Evans--Krylov theory together with our asymptotic expansion imply a priori H\"older bounds
on the second derivatives for all cone angles with no further curvature assumptions.
Reducing the dependence of the estimates for the existence of a \KE metric
to only an upper bound on the bisectional curvature of the reference 
metric does not seem to have been observed previously even in the smooth setting,
where traditionally a lower bound on the bisectional curvature is required,
or at least an upper bound on the bisectional curvature together with a lower bound on some curvature.
Thus, as a by-product, we also obtain a new and unified proof of the classical results of Aubin, Yau, and Tian,
on existence of KE metrics on smooth compact \K manifolds.
Finally, in the case of positive curvature, we show how to control the Sobolev constant and infimum of the
Green function, which are both needed for the uniform estimate.
In an Appendix it is shown that the bisectional curvature of one
reference metric is bounded from above on $\MsmD$ whenever $\be\in(0,1]$.
The somewhat miraculous calculations to establish this were obtained by the third named
author and Chi Li, and this appendix constitutes yet another necessary component of this work.

Before stating our results, let us mention some other recent articles concerning existence. As already outlined in
\cite{Ma0}, if one were to have linear estimates such as the ones obtained by Donaldson \cite{D}, and if
$\be\in(0,\frac12]$, so that the curvature of the reference metric is bounded, then it is possible
to adapt the classical Aubin--Yau a priori estimates, and hence obtain existence when $\mu\le 0$. 
This was carried out in \cite{Br}. Another quite different approach to existence for $\be\in(0,\frac12]$ and $\mu\le0$ but
allowing divisors with simple normal crossings, based on approximation by 
smooth metrics (and thus avoiding the linear estimates), is due to Campana, Guenancia and P\v aun \cite{CGP}. 
Both \cite{Br,CGP} appeared around the same time as the present article. 
Finally, in a different direction, Berman \cite{Berm} 
showed how to bypass the linear estimates and produce KE metrics whose volume form is asymptotic 
to that of an edge metric using a variational approach.  However, neither of these methods 
give good information about the regularity or the geometry of the solution metric near the divisor.

We now state our main results more precisely. Since some of the terminology in these two Theorems 
is perhaps unfamiliar in complex geometry, we recall the notion of polyhomogeneity described
briefly earlier in this introduction. The existence of a polyhomogeneous expansion should be regarded
as an optimal regularity statement for a solution, and is the natural and unavoidable replacement for smoothness for 
these types of degenerate problems. Just as with the Taylor expansions for smooth functions, the asymptotic
expansions we use in this paper are rarely convergent. We refer to  Sections \ref{PrelimSection} and \ref{LinearSection} 
for more on this and for all relevant notation. 
\begin{mainthm} {\rm (Asymptotic expansion of solutions)}
\label{PhgMainThm}
Let $\o$ be a polyhomogeneous K\"ahler edge metric with angle $2\pi \beta\in(0,2\pi]$. Suppose that, for some H\"older 
exponent $\gamma \in (0,1)$, $u \in \Dom^{0,\gamma}_s\cap \PSH(M,\o)$, $s=w$ or $e$, is a solution of the complex \MA equation
\begin{equation}
\label{genma}
{\o_u^n} = {\o^n} e^{f - s u} , \quad \h{\  on $\MsmD$},
\end{equation}
where $\o_u = \o + \i\,\ddbar u$
and $f \in \calA_{\phg}^0(X)$.  
Then $u$ is polyhomogeneous, i.e., $u \in \calA_{\phg}^0(X)$. 
\end{mainthm}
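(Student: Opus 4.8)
The plan is to bootstrap from the finite regularity that comes with membership in $\Dom^{0,\gamma}_s$ to a full polyhomogeneous expansion, using the structure theory of elliptic edge operators from \cite{Ma1}. First I would localize near the divisor $\D$ and introduce the natural edge coordinates $(x,y,z')$ where $x=|z_1|$, $y=\arg z_1$, and $z'=(z_2,\dots,z_n)$, so that the model edge metric $g_\beta$ and the reference metric $\o$ are expressed in the edge calculus framework. The complex \MA equation \eqref{genma} is rewritten as a quasilinear edge PDE for $u$: the linearization at $u$ is the operator $\Delta_{\o_u} + (\text{zeroth order})$, which — by the analysis carried out earlier in the linear section, equivalently by Donaldson's estimates — is an elliptic edge operator whose indicial roots are computed from the model Laplacian on the cone $C(S^1_\beta)\times\RR^{2n-2}$. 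The key input is that $u$, being in $\Dom^{0,\gamma}_s$, is already $\Ca^{2,\gamma}$ in the edge (or wedge) sense and hence conormal of some positive order, so the standard edge regularity theory applies to the linearized equation.

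The heart of the argument is an iteration. I would argue that $u$ is \emph{conormal}, i.e., $u\in\calA^{0}$ — bounded together with all its iterated edge-vector-field derivatives — by repeatedly applying the edge elliptic estimate: differentiating the equation along the edge directions $\partial_y$ and $\partial_{z'}$ (which commute, to leading order, with the structure) preserves membership in the relevant spaces, while the scaling vector field $x\partial_x$ is handled using the mapping properties of the inverse of the model operator on weighted edge Sobolev/Hölder spaces. The point is that the right-hand side $\o^n F(z,u)$ inherits conormality from $u$ by the hypothesis that $F$ maps $\calA^0_{\phg}$ into $\calA^0_{\phg}$ (and, at the conormal stage, maps conormal functions to conormal functions). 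Once conormality is established, polyhomogeneity follows by the standard parametrix-and-indicial-root argument: one constructs the asymptotic expansion term by term, $u\sim\sum_{(\gamma_j,p)} a_{j,p}(y,z')\,x^{\gamma_j}(\log x)^p$, where the exponents $\gamma_j$ lie in the index set generated by the indicial roots of the linearized operator together with the index set of $F$, and the coefficients $a_{j,p}$ are smooth along $\D$. At each stage one subtracts off the partial expansion, applies the edge operator, shows the remainder satisfies an equation with a right-hand side that is conormal of strictly higher order (using that $F$ is polyhomogeneous in its arguments so that composition preserves the expansion structure), and inverts the model operator to produce the next batch of terms; the absence of accumulation of indicial roots below any fixed order guarantees the process terminates at each order.

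The main obstacle I expect is the nonlinearity: the linearized operator is $\Delta_{\o_u}$, i.e., its coefficients depend on the unknown $u$ itself, so one cannot simply quote a fixed model. This is handled by noting that $\o_u$ is uniformly equivalent to $\o$ (since $u\in\PSH(M,\o)\cap\Dom^{0,\gamma}_s$ forces $\o_u$ to be a genuine K\"ahler edge metric with the same cone angle), so $\Delta_{\o_u}$ is a perturbation of the model edge Laplacian whose indicial roots agree with those of $\Delta_\o$; the perturbation terms, being of lower order in the edge filtration once $u$ is known to be conormal, can be absorbed into the right-hand side at each step of the iteration. A secondary technical point is bookkeeping the index sets under the composition $u\mapsto F(z,u)$ — one must check that the Taylor expansion of $F$ in its second argument, combined with the polyhomogeneity of $F$ in $z$, produces an index set that is closed under the relevant operations (addition, and union with the indicial roots) and is still discrete with finite multiplicities below each order; this is where the precise hypothesis on $F$ does its work. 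Finally, one must verify that the argument is insensitive to whether $s=w$ or $s=e$: this follows because conormality — once established — is the same notion in both scales, and the wedge and edge Hölder spaces differ only up to the conormal regularity that the iteration immediately recovers.
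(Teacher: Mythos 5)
Your overall architecture --- conormality first, then a term-by-term polyhomogeneity bootstrap governed by the indicial roots and the index set of $F$ --- matches the paper's three-step structure, and your description of the final polyhomogeneity iteration is essentially the paper's Step 3. The genuine gap is in how you propose to reach conormality. You assert that because $u \in \Dom^{0,\gamma}_s$, it is ``already $\calC^{2,\gamma}$ in the edge sense and hence conormal of some positive order,'' and then treat everything after that as a matter of linear edge regularity. This starting point is wrong: membership in $\calC^{2,\gamma}_e$ gives exactly two iterated edge derivatives plus H\"older, whereas conormality ($\calA^0$) requires \emph{all} iterated $r\del_r$, $\del_\theta$, $\del_y$ derivatives to be bounded. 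Passing from two edge derivatives to infinitely many is precisely where the nonlinearity of the complex Monge--Amp\`ere operator must be confronted: if you differentiate the equation (as you propose for $\del_y$, $\del_{z'}$) and try to invoke ``standard edge regularity theory,'' the coefficients of the linearized operator $\Delta_{\o_u}$ are themselves built from $u_{i\jbar}$ and are therefore only $\calC^{0,\gamma}_e$, so linear Schauder does not by itself upgrade $u$ past $\calC^{2,\gamma}_e$. Likewise ``mapping properties of the inverse of the model operator'' cannot supply the missing $r\del_r$-regularity, since that inverse is a fixed linear object and the right-hand side after commutation depends nonlinearly on $D^2 u$.

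The paper fills exactly this gap as its first and arguably most decisive step: it invokes the Evans--Krylov--Safonov interior estimate for the Monge--Amp\`ere equation, applied on dilated annuli $B_{r_0,y_0}\times S^1$ after pulling back by the scaling $S_\lambda:(r,\theta,y)\mapsto(\lambda r,\theta,\lambda y)$. The rescaled metrics $\lambda^{-2}S_\lambda^*g$ converge to $g_\beta$, the rescaled $u$ satisfies a Monge--Amp\`ere equation on a standard annulus with uniformly controlled data, and --- crucially --- the edge H\"older norms are exactly invariant under $S_\lambda$, so the Evans--Krylov bound on the rescaled problem pulls back to a uniform bound $\|u\|_{e;k,\gamma}\le C$ for every $k$. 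Only after this nonlinear input is secured does the paper run the linear iteration you describe: differentiate in $\del_y$, exploit the hybrid spaces $\calC^{k,\gamma;k'}_e$ and the commutator estimate $[G,\del_y]\in\Psi^{-2,2,E,E}_e$ to obtain $\del_y^k u$ bounded for all $k$, hence $u\in\calA^0$, and then iterate the mapping $B:\calA^0_{\phg}+\calA^\nu\to\calA^0_{\phg}+\calA^{\nu+2}$ to build the expansion. Your proposal needs to insert the Evans--Krylov-with-scaling step explicitly before the conormality iteration can get off the ground.
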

This result admits a straightforward generalization if the exponential on the right hand side is replaced by a function 
$F(z,u)$ which is polyhomogeneous in its arguments and is such that if $u \in \calA^0_{\phg}$ then $F(z,u) \in \calA_{\phg}^0$. 
\begin{mainthm} {\rm (\KE edge metrics)}
\label{ConicKEMainThm}
Let $(M,\o_0)$ be a compact \K manifold with $D\subset M$ a smooth divisor, and suppose that 
$\mu[\o_0]+(1-\be)[D]=c_1(M)$, where $\be\in(0,1]$ and $\mu\in\RR$. 
If $\mu>0$, suppose in addition that the twisted K-energy $E_0^\beta$ is proper.
Then there exists a \KE edge metric $\o_{\vpKE}$ with Ricci curvature $\mu$ and with angle $2\pi\be$ along $D$.
This metric is unique when $\mu < 0$, unique in its \K class when $\mu = 0$, 
and unique up to automorphisms that preserve $D$ when $\mu>0$.
This metric is polyhomogeneous, namely, $\vpKE$ admits a complete
asymptotic expansion  with smooth coefficients as $r \to 0$ of the form 
\begin{equation}
\label{KEExpEq}
\vpKE(r,\th,Z) \sim \sum_{j, k \geq 0} \sum_{\ell=0}^{N_{j,k}} a_{ j k \ell }(\th,Z)r^{j+k/\be}(\log r)^{\ell},
\end{equation}
where $r = |z_1|^\beta/\beta$ and $\th = \arg z_1$, and with each $a_{jk\ell} \in \calC^\infty$. 
There are no terms of the form $r^\zeta (\log r)^\ell$ with $\ell > 0$ if $\zeta \leq 2$. 
In particular, $\vpKE$ has infinite conormal regularity and a precise H\"older regularity as measured 
relative to the reference edge metric $\o$, which is encoded by $\vpKE \in \calA^0 \cap \Dw$.
\end{mainthm}

We refer to Proposition \ref{PreciseAsympExpansionProp} for the determination of the first several terms 
in the expansion \eqref{KEExpEq}.

To clarify the conclusions about regularity in these theorems, we first prove infinite `conormal' regularity ($\varphi \in \calA^0$), 
which means simply that the solution is tangentially smooth and also infinitely differentiable with respect to the vector 
field $r\del_r$; we then establish H\"older continuity of some second derivatives with respect to the model metric ($\varphi \in 
\Dw$); finally, we prove the existence of an asymptotic expansion in powers of the distance to the edge ($\varphi \in 
\calA^0_{\phg}$). This expansion also leads to the precise asymptotics of the curvature tensor and its covariant derivatives.
For example, when $\be\le\frac12$ we have $\vpKE\in \calC^{2,\onebe-2}_w$, all third derivatives of the form $(\vpKE)_{i\b j k}$
belong to $\calC^{0,\onebe-2}_w$, and therefore so do all Christoffel symbols, and the curvature tensor of $\o_{\vpKE}$ is 
\Holder continuous.  However, assuming only that $\be\le1$, we have $\Delta_\o\vpKE\in \calC^{0,\gamma}_w$
for some $\gamma\in(0,\frac1\be-1]$, but in general the curvature tensor does not lie in $L^\infty$ (this follows
readily from the calculations of the Appendix). Again, we refer to Proposition \ref{PreciseAsympExpansionProp} for
more precise information.

Theorem \ref{ConicKEMainThm} is the generalization to the edge setting of the classical theorems
of Aubin, Yau ($\mu\le0$), and Tian ($\mu>0$) on existence of KE metrics in the compact smooth setting 
\cite{Au,Y2,T97}. Its proof gives a new and unified treatment for all $\mu$ even in the smooth setting.
It is also a satisfactory generalization of Troyanov's theorem on the existence of constant curvature 
metrics with conic singularities on Riemann surfaces \cite{Tr} inasmuch as the cone angle restrictions which
appear in his work arise only in the positive curvature case, and they are the same as the 
properness of the twisted $K$-energy in that setting.  Just as for the smooth setting \cite{T97}, the 
properness assumption should be a necessary condition for existence.

Finally, consider the special case that $M$ is Fano and $D$ is a smooth anticanonical
divisor (the existence of such a divisor is related to the so-called Elephant Conjectures
in algebraic geometry, and is known when $n\le3$ by work of Shokurov and others). 
Then, as noted by Berman \cite{Berm}, the twisted K-energy is proper for small $\mu=\be$. 
Theorem \ref{ConicKEMainThm} thus gives the following corollary conjectured by Donaldson \cite{Don2009}.
\begin{maincor}
\label{FanoCor}
Let $M$ be a Fano manifold, and suppose that there exists a smooth anticanonical divisor
$D\subset M$. Then there exists some $\be_0\in(0,1]$ such that for all $\be\in(0,\be_0)$
there exists a KEE metric with angle $2\pi\be$ along $D$ and with 
positive Ricci curvature equal to $\be$.
\end{maincor} 

{\it Added in revision:}
There has been substantial work in this area in the years following the initial appearance of this article, 
cf.\ in particular, the papers \cite{CMR,RZ,CDS,T12}. We refer the reader to the survey \cite{R14} for 
further references and background.  In both \cite{CDS,T12}, the construction of a smooth KE metric is carried out 
by studying the deformations of a KEE metric as the cone angle increases. 

Our original proof of the $\calD^{0,\gamma}_w$ estimate had an error, now corrected by Appendix B. 
The papers \cite{CDS} contain a different approach to this estimate. 

\subsubsection*{Acknowledgements} 
The authors are grateful to Gang Tian for his advice and encouragement throughout the course of this project, and
also for explaining to us his argument which appears in Appendix B. Various referees made numerous valuable
remarks which led to a better exposition and which drew our attention to errors, some rather important, in earlier versions.
The authors also offer many thanks to Simon Donaldson for his interest in this work, and for numerous comments which
helped sharpen, clarify and in some instances correct the exposition. The NSF supported this research through 
grants DMS-0805529 (R.M.) and DMS-0802923,1206284 (Y.A.R.).

\section{Preliminaries}\label{PrelimSection}
We set the stage for the rest of the article with a collection of facts and results needed later. First consider the flat model 
situation, where $M = \CC^n$ with linear coordinates $(z_1, \ldots, z_n)$,
and $\D$ is the linear subspace $\{z_1 = 0\}$. For brevity we often write $Z = (z_2, \ldots, z_n)$. The model 
singular K\"ahler form and singular \K metric are given by
\begin{eqnarray}
\label{modelform} 
\omega_\beta & = & \frac12 \sqrt{-1} \Big(
|z_1|^{2\beta-2} dz^1 \w \overline{dz^1} + \sum_{j=2}^n dz^j \w \overline{dz^j}
\Big), 
\ \mbox{and}\\
g_\beta & =  & |z_1|^{2\beta-2} |dz^1|^2 + \sum_{j=2}^n |dz^j|^2.
\label{modelmetric}
\end{eqnarray}
This is the product of a flat one complex dimensional conic metric with cone angle $2\pi \beta$
with $\CC^{n-1}$. We always assume that $0 < \beta \leq 1$; the expressions above make sense for any real $\beta$, but
their geometries are quite different for $\beta$ outside of this range. 

Now suppose that $M$ is a compact K\"ahler manifold and $\D$ a smooth divisor. Fix $\beta \in (0,1]$ and
$\mu \in \RR$, and assume that there is a K\"ahler class $\Omega = \Omega_{\mu, \beta}$ such that
\begin{equation}
\mu \Omega + 2\pi(1-\beta) c_1(L_\D) = 2\pi c_1(M).
\label{cohom}
\end{equation}
Here, $L_\D$ is the line bundle associated to $\D$.  Thus, $c_1(M) - (1-\be)c_1(L_D)$ is a positive or negative 
class if $\mu > 0$ or $\mu<0$. If $\mu = 0$, $\O$ is an arbitrary \K class.

Let $g$ be any K\"ahler metric which is smooth (or of some fixed finite regularity) on $M \setminus \D$. 
We shall say that $g$ is a \K edge metric with angle $2\pi \beta$ if, in any 
local holomorphic coordinate system near $\D$ where $\D = \{z_1 = 0\}$, and $z_1 =  \rho e^{\i\theta}$,  
\begin{equation}
\label{pertgb}
g_{1\bar{1}} = F \rho^{2\beta-2},\ g_{1 \jbar} = g_{i \bar{1}} = O(\rho^{\beta - 1 + \eta'}),\ \mbox{and all other}\ g_{i\jbar} = O(1),
\end{equation}
for some $\eta' > 0$, where $F$ is a bounded nonvanishing function which is at least continuous at $\D$ 
(and which will have some 
specified regularity).  If this is the case, we say that $g$ is asymptotically equivalent to $g_\beta$, and that its associated 
K\"ahler form $\omega$ (which by abuse of terminology we sometimes also refer to as a metric) is asymptotically equivalent to 
$\omega_\beta$.  There are slightly weaker hypotheses under which it is reasonable to say that $g$ has angle $2\pi \be$ at 
$\D$, but the definition we have given here is sufficient for our purposes.
We denote by $\Ric\o$ the Ricci current (on $M$) associated to $\o$, namely, in local coordinates
$\Ric\o=-\i\ddbar\log\det[g_{i\bar j}]$ if $\o=\i g_{i\bar j} dz^i\wedge \overline{dz^j}$.
Thus, $\Ric\o-2\pi(1-\be)[D]$ is a $(1,1)$ 
current on $M$ with a continuous potential,
where $[\D]$ is the current associated to integration along $\D$. 

\begin{definition}
With all notation as above, a K\"ahler current $\omega$, with associated singular K\"ahler metric $g$, is called a 
K\"ahler--Einstein edge current, respectively metric, with angle $2\pi \beta$ along $D$ and Ricci
curvature $\mu$ if $\omega$ and $g$ are asymptotically equivalent to $\omega_\beta$ and $g_\beta$, and if 
\begin{equation}
\label{EdgeKEEq}
\Ric \omega - 2\pi(1-\beta) [\D]= \mu \omega.
\end{equation}
\label{Kcurrent}
\end{definition}
In this section we present some preliminary facts about the geometry and analysis of the class of \K edge metrics.
We first review some different coordinate charts 
near the edge $\D$ used extensively below. Many calculations in this article are most easily done 
in a singular real coordinate chart, although when the complex structure is particularly relevant to a calculation, 
we use certain adapted complex coordinate charts. While all of this is quite elementary, there are some 
identifications that can be confusing, so it is helpful to make all of this very explicit.  
We calculate the curvature tensor for any one such metric $g$, assuming it is sufficiently regular. 
We then introduce the relevant class of \K edge potentials and describe the continuity method that will be used for the 
existence theory. As we recall, this particular continuity method is 
closely related to the Ricci iteration, 
that, naturally, we also treat simultaneously in this article. We conclude the section with a fairly lengthy description of 
the various function spaces that will be used later. Rather than a purely technical matter, this discussion 
gets to the heart of some of the more important analytic and geometric issues that must be faced here.  
There are two rather different choices of H\"older spaces; one is naturally associated to this 
class of K\"ahler edge metrics and was employed, albeit in a slightly different guise, by Donaldson \cite{D}, while the
other, from \cite{Ma1}, is well adapted to this edge geometry because of its naturality under dilations and has 
been used in many other analytic and geometric problems where edges appear. Use of these latter function
spaces is central to our method.

\subsection{Coordinate systems}
As above, fix local complex coordinates $(z_1, \ldots, z_n) = (z_1, Z)$ with $\D= \{z_1 = 0\}$ locally.  
There are two other coordinate systems which are quite useful for certain purposes.
The first is a singular holomorphic coordinate chart, 
where we replace $z_1$ by $\zeta = z_1^\beta/\beta$. Of course, 
$\zeta$ is multi-valued, but we can work locally in the logarithmic Riemann surface which uniformizes this variable. 
Thus if $z_1 =  \rho e^{\i\theta}$, then $\zeta = r e^{\i\tilde{\theta}}$, 
where $r = \rho^\beta/\beta$ and $\tilde{\theta} = \beta \theta$. 
The second is the real cylindrical coordinate system $(r, \theta, y)$ around $\D$, where $r = |\zeta|$ as above, $\theta$ is 
the argument of $z_1$, and $(y_1, \ldots, y_{2n-2}) = (\mbox{Re}\, Z, \mbox{Im}\, Z)$. Note
that $re^{\i\theta} = z_1|z_1|^{\beta-1}/\beta$. We use either $(z_1, Z)$ or $(\zeta, Z)$ in situations where the formalism of 
complex analysis is useful, and $(r,\theta,y)$ elsewhere.  For later purposes, note that
\begin{equation}
d\zeta = z_1^{\be-1} dz_1 \Leftrightarrow dz_1 = (\be \zeta)^{\frac{1}{\be}-1} d\zeta, \qquad
\frac{\del \zeta}{\del z_1} = z_1^{\be-1} \Leftrightarrow \frac{\del z_1}{\del \zeta} = (\be \zeta)^{ \frac{1}{\be}-1}.
\label{zzeta}
\end{equation}

One big advantage of either of these other coordinate systems is that they make the model metric $g_\beta$ appear less singular. 
Indeed, 
\begin{equation}
g_\beta = |d\zeta|^2 + |dZ|^2 = dr^2 + \beta^2 r^2 d\theta^2 + |dy|^2.
\label{modmetric2}
\end{equation}
In either case, one may regard the coordinate change as encoding the singularity of the metric via a singular coordinate 
system.  This is only possible for edges of real codimension two, and there are many places, both in 
\cite{D} and here, where we take advantage of this special situation. For edges of higher codimension, one cannot conceal 
the singular geometry so easily, see \cite{Ma1}.  The expression for $g_\beta$ in cylindrical coordinates makes clear that 
for any $\beta, \beta'$, we have $C_1 g_{\be} \leq g_{\be'} \leq C_2 g_\be$; the corresponding inequality in the original $z$ coordinates
must be stated slightly differently, as $C_1 g_{\be} \leq \Phi^* g_{\be'} \leq C_2 g_\be$, where $\Phi(z_1, \ldots, z_n) = (z_1^{\be'/\be}, z_2, 
\ldots, z_n)$. 

We now compute the complex derivatives in these coordinates. We have
\begin{eqnarray}  
\del_{z^1} & = & \frac12 e^{-\i\theta}( \del_\rho - \frac{\i}{\rho} \del_\theta) 
=  \frac12 e^{-\i\theta} (\beta r)^{1-\frac{1}{\beta}} (\del_r - \frac{\i}{\beta r} \del_\theta), 
\label{del1}  
\end{eqnarray}
and then
\begin{equation}
\del^2_{z^1 \overline{z^1}}  =  (\beta r)^{2 - \frac{2}{\beta}} \left( \del_r^2 + \frac{1}{r} \del_r + \frac{1}{\beta^2 r^2} \del_\theta^2\right).
\label{del11bar}
\end{equation}
The other mixed complex partials $\del^2_{z^1 \overline{z^j}}$, $\del^2_{z^i \overline{z^1}}$ and $\del^2_{z^i \overline{z^j}}$ are compositions of 
the operators in \eqref{del1} and their conjugates
and certain combinations of the $\del_{y_\ell}$. From this we obtain that 
\begin{equation}
\Delta_{g_\beta}u = \sum_{i, j = 1}^n (g_\beta)^{i\jbar}u_{i\jbar} = \left(\del_r^2 + \frac{1}{r} \del_r + \frac{1}{\beta^2 r^2} 
\del_\theta^2 + \Delta_y \right) u,
\label{modellap}
\end{equation}
since $(g_\beta)^{1\bar{1}} = \rho^{2-2\beta} = (\beta r)^{\frac{2}{\beta}-2}$ and 
$(g_\beta)^{1\jbar}, (g_\beta)^{i\bar{1}} = 0$  and all other
$(g_\beta)^{i \jbar} = \delta^{ij}$.   

As already described, we shall work with the class of K\"ahler metrics $g$ that 
satisfy condition \eqref{pertgb}, and which
we call asymptotically equivalent to $g_\beta$.  If $g$ is of this type, then 
\begin{equation}
\label{gupperijEq}
g^{1\bar{1}} = F^{-1}\rho^{2-2\beta},\quad g^{1 \jbar}, g^{i \bar{1}} = O(\rho^{\eta'+1-\beta}),
\quad \mbox{and all other}\ g^{i\jbar} = O(1)
\end{equation}
for some $\eta' > 0$, hence
\begin{equation}
\Delta_g = F^{-1}\left(\del_r^2 + \frac{1}{r}\del_r + \frac{1}{\beta^2 r^2} \del_\theta^2  + 
\sum_{r, s = 1}^{2n-2} c_{rs}(r,\theta,y) \del^2_{y_r y_s}\right)  + E ,
\label{decompLap}
\end{equation}
where
\[
E :=  r^{\eta-2} \sum_{i+j+|\mu| \leq 2} a_{i j \mu}(r,\th,y) (r\del_r)^i \del_\th^j (r\del_y)^\mu .
\]
Here $\eta > 0$ is determined from $\eta'$ and $\beta$, and all coefficients have some specified regularity down to $r=0$.
In particular, the coefficient matrix $(c_{rs})$ is positive definite, with $c_{rs}(0,\th,y)$ independent of $\th$, and the 
coefficients $a_{ij\mu}$ are bounded as $r\to 0$. Thus there are no cross-terms to leading order, and the $1 \b1$ part of
the operator $\Delta_g$ is `standard' once we multiply the entire operator by $F$. 

One way that this asymptotic structure will be used is as follows.  Fundamental to this work is the role of the family of dilations
$S_\lambda: (r,\th,y) \mapsto (\lambda r, \th, \lambda y)$ centered at some point $p \in \D$ corresponding to $y=0$.  If we push 
forward this operator by $S_\lambda$, which has the effect of expanding a very small neighbourhood of $p$, then the principal
part scales approximately like $\lambda^2$ while $E$ scales like $\lambda^{2-\eta}$. Hence, after a linear change of
the $y$ coordinates, 
\begin{equation}
A \lambda^{-2}(S_\lambda)_* \Delta_g  \longrightarrow  \Delta_{g_\beta}\ \ \mbox{as}\ \ \lambda \to \infty
\label{scalLap}
\end{equation}
where $A = F(p)$. In particular, $E$ scales away completely in this limit. 

One important comment is that if the derivatives $u_{i\jbar}$ are all bounded, and if $g$ satisfies these asymptotic conditions, 
then so does $\tilde{g}$, where $\tilde{g}_{i\jbar} = g_{i\jbar} + u_{i \jbar}$. 

A key point in the treatment below, exploited by Donaldson \cite{D}, is that for any K\"ahler metric $g$,
$\Delta_g$ only involves combinations of the following second order operators: 
\begin{eqnarray*}
P_{1\bar{1}} & = & ( \del_r^2 + \frac{1}{r}\del_r + \frac{1}{\beta^2 r^2} \del_\theta^2), \\ 
P_{1 \bar{\ell}} & = & ( \del_r - \frac{\i}{\beta r} \del_\theta) \del_{\overline{z_\ell}},  \\ 
P_{\ell \bar{1}} & = & ( \del_r + \frac{\i}{\beta r} \del_\theta) \del_{z_\ell},\ \ \ell = 2, \ldots, n, \mbox{\ and} \\  
P_{\ell \bar{k}} & = & \del^2_{z_\ell \overline{z_k}}, \ \ \ell, k = 2, \ldots, n.
\end{eqnarray*}
Regularity properties in certain function spaces considered below involve precisely these derivatives, while others are
less sensitive about the decomposition of $\del_{z_j}$ and $\del_{\overline{z_j}}$ into their $(1,0)$ and $(0,1)$ parts. We therefore
introduce the following collections of differential operators:
\begin{equation}
\begin{aligned}
{\calQ} & = \{ \del_r, r^{-1} \del_\theta, \del_{y_\ell}, \del^2_{r y_\ell}, \del^2_{r \theta}, \del^2_{\theta y_\ell}\} \\[0.5ex]
\calQ^*  & = {\calQ} \sqcup \{\del^2_{y_k y_\ell}, P_{1 \bar{1}} \}.
\end{aligned}
\label{defQ}
\end{equation}
The reason for singling out the extra operators in $\calQ^*  \setminus {\calQ}$ is that the relevant boundedness properties are 
more subtle for these. 

As a final note, let us record the form of the complex \MA operator in these coordinates, for any K\"ahler metric
which satisfies the decay assumptions above. We have
\[
(\o + \i\, \ddbar u)^n/\o^n = \frac{\det (g_{i\jbar} + \sqrt{-1} u_{i\jbar})}{\det g_{i\jbar}} = \det ( \delta_i^j + \sqrt{-1} u_i^j),
\]
where $u_i^j = u_{i \bar{k}}g^{j \bar{k}}$. Using the calculations above, we have
\begin{eqnarray*}
u_1^{\  1} & = & F^{-1} P_{1 \bar{1}} u + O(r^{\eta})u_{1 \jbar}, \\ 
u_1^{\  j} & = & e^{-\i \, \theta} (\beta r)^{1 - \frac{1}{\beta}} g^{j \bar k} P_{1 \bar k}u   + O(r^{\eta + \frac{1}{\be} - 1})P_{1\bar{1}}u, \\ 
u_i^{\  1} & = & F^{-1} e^{\i \, \theta} (\beta r)^{\frac{1}{\beta} - 1} P_{i \bar{1}}u + O( r^{\eta + \frac{1}{\be}-1}) u_{i\jbar}, \\
u_i^{\ j} & = & g^{j \bar{k}} P_{i \bar{k}}u  + O(r^\eta) u_{i \bar{1}}.
\end{eqnarray*}
Hence if we multiply every column but the first in $(\delta_i^{\ j} + \sqrt{-1}u_i^{\ j})$ 
by $e^{\i\theta}(\beta r)^{1/\beta - 1}$ 
and every row but the first by $e^{-\i \theta} (\beta r)^{1-1/\beta}$, then the determinant remains the same, and we have
shown that
\begin{equation}
\label{RafeRewriteCMAEq}
\frac{\det (g_{i\jbar} + \sqrt{-1} u_{i\jbar})}{\det g_{i\jbar}} = 
\det \left( \begin{matrix}
1 + F^{-1} P_{1 \bar{1}}\, u & F^{-1}P_{2 \bar{1}}\, u  & \ldots & F^{-1} P_{n \bar{1}}u \cr
\vdots & \vdots & \vdots  & \vdots \cr
g^{n\b k} P_{1 \b k} u & \ldots & \ldots  & 1 + g^{n\bar{\ell}}P_{n \bar{\ell}} \, u 
\end{matrix} \right) + R,
\end{equation}
where $R = r^{\eta}R_0( u_{p \bar{q}})$, with $R_0$ polynomial in its entries. 

\subsection{\K edge potentials}
\label{KEdgePotentialsSubSec}
Fix a smooth \K form $\o_0$ with $[\o_0] \in\O\equiv \Omega_{\mu,\be}$. Consider the space of all \K potentials relative to $\o_0$, asymptotically equivalent to the model metric,
\begin{multline}
\label{KEdgePotentialsSpace}
\calH_{\o_0} := \{\vp\in \calC^\infty(\MsmD)\cap \calC^0(M) 
\, :\,   \o_{\vp} := \o_0+\i\ddbar\vp>0 \h{\ on\ } M  \\
\mbox{and $\o_\vp$ asymptotically equivalent to}\  \omega_\be\}.
\end{multline}
Note that in our notation $\calH_{\o_0}\isom\calH_{\eta}$ for any smooth $\eta$ cohomologous to $\o_0$
but not for any $\eta\in\calH_{\o_0}$.
The first observation is that such \K edge metrics exist. 
\begin{lem}
\label{ReferenceMetricExistsLemma}
Let $\be\in(0,1]$. Then $\calH_{\o_0}$ is non-empty.
\end{lem}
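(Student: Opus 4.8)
The plan is to exhibit an explicit element of $\calH_{\o_0}$ built from a defining section of the divisor. Since $\D$ is smooth, the associated line bundle $L_\D$ carries a holomorphic section $s$ whose zero divisor is exactly $\D$; I would fix such an $s$ together with an arbitrary smooth Hermitian metric $h$ on $L_\D$ and set
\[
\vp_\e:=\e\,|s|_h^{2\be},\qquad \e>0 .
\]
This function is continuous on $M$, vanishes along $\D$, and is smooth on $\MsmD$, so $\vp_\e\in\calC^\infty(\MsmD)\cap\calC^0(M)$. It then remains to check, for $\e$ small enough, that $\o_{\vp_\e}=\o_0+\e\,\i\,\ddbar|s|_h^{2\be}$ is positive on $\MsmD$ and is asymptotically equivalent to $\o_\be$ at $\D$ in the sense of \eqref{pertgb}.

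For positivity I would use the pointwise identity, valid on $\MsmD$,
\[
\i\,\ddbar|s|_h^{2\be}=\be\,|s|_h^{2\be}\bigl(-\Theta_h+\be\,\i\,\pa\log|s|_h^2\w\dbar\log|s|_h^2\bigr),
\]
where $\Theta_h$ is the Chern curvature of $(L_\D,h)$, a smooth real $(1,1)$-form on all of $M$ that agrees with $-\i\,\ddbar\log|s|_h^2$ off $\D$. The form $\i\,\pa\log|s|_h^2\w\dbar\log|s|_h^2$ is nonnegative, and $|s|_h^{2\be}$ is bounded on $M$, so $\i\,\ddbar|s|_h^{2\be}\ge -C\,\o_0$ for a constant $C$ depending only on $\o_0$ and $h$. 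Hence $\o_{\vp_\e}\ge(1-C\e)\,\o_0>0$ on $\MsmD$ whenever $\e<1/C$.

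For the edge asymptotics I would work in a local holomorphic chart near a point of $\D$ in which $\D=\{z_1=0\}$ and $L_\D$ is trivialized so that $s=z_1$ and $h=e^{-\phi}$ with $\phi$ smooth; then $|s|_h^2=|z_1|^2e^{-\phi}$ and $\pa\log|s|_h^2=dz_1/z_1-\pa\phi$. Substituting into the identity above, the most singular part of $\i\,\ddbar|s|_h^{2\be}$ is a positive multiple of $|z_1|^{2\be-2}e^{-\be\phi}\,\i\,dz_1\w\overline{dz_1}$; the terms containing exactly one factor of $\pa\phi$ or $\dbar\phi$ are $O(\rho^{2\be-1})$, and the remaining terms are $O(\rho^{2\be})$. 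Adding the smooth form $\o_0$ and multiplying by $\e$, one reads off that $\o_{\vp_\e}$ has $g_{1\bar{1}}=F\rho^{2\be-2}$ with $F$ bounded, continuous up to $\D$, and $F|_\D>0$, while $g_{1\jbar}=g_{i\bar{1}}=O(\rho^{\be-1+\eta'})$ and all other $g_{i\jbar}=O(1)$ for a suitable $\eta'>0$ (one may take $\eta'=\min\{\be,(1-\be)/2\}$ when $\be<1$; the case $\be=1$ is classical, where already $\vp\equiv0$ works). This is exactly \eqref{pertgb}, so $\o_{\vp_\e}$ is asymptotically equivalent to $\o_\be$ and $\vp_\e\in\calH_{\o_0}$.

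The argument is essentially routine, and there is no serious obstacle here; the only point that calls for a little care is the asymptotic bookkeeping near $\D$ — checking that the off-diagonal components of $\o_{\vp_\e}$ decay strictly faster than $\rho^{\be-1}$, so that some $\eta'>0$ can be arranged, and that the leading coefficient $F$ of its $1\bar{1}$-component stays continuous and strictly positive along $\D$.
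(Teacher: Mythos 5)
Your proposal is correct and follows essentially the same route as the paper: the same candidate potential $c\,|s|_h^{2\be}$, positivity of $\o_0+\i\ddbar(c|s|_h^{2\be})$ for small $c$, and a coordinate check of the edge asymptotics \eqref{pertgb}. The only difference is organizational: you package the computation through the identity $\i\ddbar e^{\be u}=\be e^{\be u}\bigl(\i\ddbar u+\be\,\i\pa u\w\dbar u\bigr)$ with $u=\log|s|_h^2$, which cleanly separates the bounded curvature part $-\Theta_h$ from the nonnegative gradient part (the same mechanism underlying Lemma~\ref{BarrierFnLemma}(ii)), whereas the paper expands $\i\ddbar\bigl(H|z_1|^{2\be}\bigr)$ directly in \eqref{ddbarPhioneFirstEq}. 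Both yield the identical $(1,1)$-form and the identical conclusion, and your bookkeeping of exponents (leading $\be^2 e^{-\be\phi}\rho^{2\be-2}$ in the $1\bar1$ slot, $O(\rho^{2\be-1})$ mixed terms, $O(1)$ from $\o_0$) correctly produces some $\eta'>0$ for $\be<1$, with $\be=1$ trivial.
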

\begin{proof}
Let $h$ be a smooth Hermitian metric on $L_D$, and let $s$ be a global holomorphic section of $L_D$
so that $\D = s^{-1}(0)$. We claim that for $c>0$ sufficiently small, the function
\begin{equation}
\label{PhizeroEq}
\phi_0:=c |s|_h^{2\beta}=c(|s|_h^2)^\be 
\end{equation}
belongs to $\calH_{\o_0}$. To prove this, it suffices to consider $p\in\MsmD$ near
$D$. Use a local holomorphic frame $e$ for $L_D$ and local holomorphic coordinates $\{z_i\}_{i=1}^n$ valid
in a neighborhood of $p$, such that $s=z_1e$, so that locally $\D$ is cut out by $z_1$. Let 
\begin{equation}
\label{afunctionEq}
a:=|e|^2_h, 
\end{equation}
and set $H:=a^\be$, so  $|s|^{2\be}_h=H|z_1|^{2\be}$. Note that
$H$ is smooth and positive.  Then
\begin{equation}
\label{ddbarPhioneFirstEq}
\begin{aligned}
\i \, \ddbar |s|_h^{2\be} &= \beta^2 H |z_1|^{2\be - 2} \i\, dz_1\w \overline{dz_1} \\ 
+ & 2\be\Re(|z_1|^{2\be}{z_1}^{-1}\i\, dz_1\w \dbar H) +|z_1|^{2\be}\i\, \ddbar H.
\end{aligned}
\end{equation}
For $c>0$ small, the form $\o_0 + \i\ddbar \phi_0$ is positive definite and satisfies the 
conditions of \eqref{pertgb}, hence is asymptotically equivalent to $g_\be$. 
\end{proof}

It is useful to record the form of $\o_{\phi_0}$ in the $(\zeta,Z)$ coordinates as well. First note that if $\psi_0$ is a \K potential
for $\o_0$, then using \eqref{zzeta},
\begin{equation}
\begin{aligned}
\i\, \ddbar \psi_0 =  & (\psi_0)_{z_1 \bar{z_1}}  |\be \zeta|^{\frac{2}{\be}-2} \i\, d\zeta \w \overline{d\zeta}  \\
& + 
\sum_{j>1}2 \Re ((\psi_0)_{z_1 \bar{z_j}} (\be \zeta)^{\frac{1}{\be}-1} \i\, d\zeta \w \overline{dz_j})
 + \sum_{i,j>1} (\psi_0)_{z_i \bar{z_j}} \i\, dz_i \w \overline{dz_j}.
\end{aligned}
\label{psizeta}
\end{equation}
Next, $|s|_h^{2\be} = \be^2 H |\zeta|^2$, hence
\begin{equation}
\i\, \ddbar (c |s|_h^{2\be}) = c \be^2 \big( \i\, H d\zeta \w \overline{d\zeta} + 2 \Re ( \bar{\zeta} \i\, d \zeta \w \dbar H) + 
|\zeta|^2 \i\, \ddbar H\big).
\label{ddbars2b}
\end{equation}
From these two expressions, it is clear once again that $\phi_0 \in \calH_{\o_0}$ when $c$ is sufficiently small. 
Putting these expressions together shows that $\o_0 + \i\, \ddbar \phi_0$ is locally equal to
\begin{equation}
\label{ddbarPhioneSecondEq}
\begin{aligned}
 & \left( |\be\zeta|^{\frac{2}{\be}-2} (\psi_0)_{z_1\overline{z_1}} 
         + c \beta^2 H+ c |\be\zeta|^{\frac{2}{\be}}
H_{z_1\overline{z_1}}
+ 2 c \be^{\frac{1}{\be}+1} \Re({\bar\zeta}^{\frac1\be}H_{\overline{z_1}})  \right) 
\i \, d\zeta\w \overline{d\zeta}  \\
& \quad
+2\Re \sum_{j>1}  \left( (\be\zeta)^{\frac1\be-1}(\psi_0)_{z_1\overline{z_j}} 
+ c \be^2 {\bar\zeta} H_{\overline{z^j}} +c \be^{\frac{1}{\be}+1} 
\zeta^{\frac{1}{\beta}} {\bar\zeta}H_{z_1 \overline{z^j}} \right) \i\, d\zeta\w \overline{dz^j} \\
& \quad +\i\, \del_Z\dbar_Z \psi_0 +|\zeta|^{2}\i\, \del_Z\dbar_Z H.
\end{aligned}
\end{equation}
The reason for writing the derivatives of $\psi_0$ and $H$ with respect to $z_1$ rather than $\zeta$ is 
because we know that both of 
these functions are smooth in the original $z$ coordinates, and hence so are 
its derivatives with respect to $z$. 

We now use this expression to deduce some properties of the curvature tensor of $g$. 
This turns out to be simple in this singular holomorphic 
coordinate system. The coefficients of the $(0,4)$ curvature tensor are given by
\begin{equation}
\label{CurvatureCoeffEq}
R_{i\jbar k\b l} = -g_{i\jbar,k\b l}+g^{s\b t}\, g_{i\b t,k} \, g_{s\b j,\b l},
\end{equation}
where the indices after a comma indicate differentiation with respect to a variable. 
In the following, contrary to previous notation, we temporarily use the subscripts $1$ and $\bar{1}$ to 
denote components of the metric or derivatives with respect 
to $\zeta$ and $\bar{\zeta}$, not $z_1$ or $\overline{z_1}$. 

\begin{lem}
\label{CurvRefMetricLemma}
The curvature tensor of $\o = \o_0+\i\,\ddbar \phi_0$ is 
uniformly bounded on $\MsmD$ provided $\be\in(0,\frac12]$.
\end{lem}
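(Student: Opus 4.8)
The statement is local near $\D$: away from $\D$ the potential $\phi_0 = c|s|_h^{2\be}$ is smooth, so $\o = \o_0 + \i\,\ddbar\phi_0$ is a smooth \K metric on $\smDno$, and compactness of $M$ bounds its curvature on any set keeping a fixed distance from $\D$; only the behaviour as one approaches $\D$ needs attention. Near $\D$ the plan is to work in the singular holomorphic coordinates $(\zeta, Z)$ of \eqref{zzeta}, in which the model metric is the \emph{flat} metric $|d\zeta|^2 + |dZ|^2$ by \eqref{modmetric2}, and to combine the explicit local expression \eqref{ddbarPhioneSecondEq} for $\o$ with the \K curvature identity \eqref{CurvatureCoeffEq}.

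The first point is the shape of $g$ in these coordinates. Each coefficient $g_{i\jbar}$ in \eqref{ddbarPhioneSecondEq} is a finite sum of terms $\zeta^a\bar\zeta^b\, f$ with real exponents $a, b \ge 0$, where $f$ is either smooth in $(\zeta, Z)$ or smooth in the original variables $z = (z_1, Z)$; in the latter case, since $z_1 = (\be\zeta)^{1/\be}$ and $\be \le 1$, each $\del_\zeta$ or $\del_{\bar\zeta}$ hitting $f$ only contributes a harmless extra factor $O(|\zeta|^{\frac1\be - 1})$. Moreover, as $\zeta \to 0$ the diagonal coefficients tend to the positive quantities $c\be^2 H(0, Z)$ and $(\psi_0)_{z_i\bar z_j}(0, Z)$ (the latter positive definite, being the restriction of a \K form to $\D$), while $g_{\zeta\bar z_j} \to 0$; hence, after shrinking the neighbourhood and invoking Lemma~\ref{ReferenceMetricExistsLemma} for positivity, $g$ and $g^{-1}$ are uniformly comparable to the Euclidean metric there. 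Consequently $g^{s\bar t}$ is bounded and $|R|_g$ is comparable to the Euclidean norm of the coordinate array $R_{i\jbar k\b l}$, so by \eqref{CurvatureCoeffEq} it suffices to bound the mixed coordinate derivatives $\del_{z_k}\del_{\bar z_l} g_{i\jbar}$ and the first derivatives $\del_{z_k} g_{i\jbar}$ in the $(\zeta, Z)$ chart.

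The remaining step is an accounting of exponents. A single holomorphic or anti-holomorphic derivative of a term $\zeta^a\bar\zeta^b f$ lowers the total order $a + b$ by one when it falls on the power and changes nothing to leading order when it falls on $f$; and in $R_{i\jbar k\b l}$ one differentiates each coefficient exactly once holomorphically and once anti-holomorphically. Scanning \eqref{ddbarPhioneSecondEq}, the most singular powers occur in front of $(\psi_0)_{z_1\bar z_1}$ in $g_{\zeta\bar\zeta}$, namely $|\be\zeta|^{\frac2\be-2}$, with $\del_\zeta\del_{\bar\zeta}|\be\zeta|^{\frac2\be-2} = O(|\zeta|^{\frac2\be - 4})$, and in front of $(\psi_0)_{z_1\bar z_j}$ in $g_{\zeta\bar z_j}$, namely $(\be\zeta)^{\frac1\be-1}$, with $\del_\zeta(\be\zeta)^{\frac1\be-1} = O(|\zeta|^{\frac1\be - 2})$ contributing both to $R_{i\jbar k\b l}$ and to $\del g$; every other term of \eqref{ddbarPhioneSecondEq} (the $H$-terms and the higher cross terms) carries strictly larger powers. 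Both exponents $\frac2\be - 4$ and $\frac1\be - 2$ are $\ge 0$ exactly when $\be \le \frac12$, and then the quadratic term of \eqref{CurvatureCoeffEq} is bounded as well, since it is $O\big((\del g)^2\big) = O(|\zeta|^{\frac2\be - 4})$. Hence $|R|_g$ stays bounded as $\zeta \to 0$, which, with the reduction above, proves the lemma.

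I expect essentially all the work to be this exponent bookkeeping, and the one point to get right is that the borderline powers arise only from a \emph{first} $\zeta$-derivative of $(\be\zeta)^{1/\be-1}$ and from the \emph{mixed} second derivative of $|\be\zeta|^{2/\be-2}$, never from two holomorphic derivatives of a single coefficient, which do not appear in \eqref{CurvatureCoeffEq}. This also makes $\be = \frac12$ the genuine threshold: for $\be \in (\frac12, 1)$ the same terms force components such as $R_{\zeta\bar\zeta\zeta\bar\zeta}$ to grow like $|\zeta|^{\frac2\be - 4} \to \infty$, so for larger cone angles the simple reference metric $\o_0 + \i\,\ddbar(c|s|_h^{2\be})$ has, in general, unbounded curvature, and one must settle for the weaker upper bound on the bisectional curvature proved in the Appendix.
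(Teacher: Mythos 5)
Your proposal is correct and follows essentially the same approach as the paper: pass to the singular holomorphic coordinates $(\zeta,Z)$ of \eqref{zzeta}, use that $[g_{i\jbar}]$ is uniformly comparable to the identity there (so bounding the pointwise norm $|R|_g$ reduces to bounding the coordinate array $R_{i\jbar k\b l}$), and then carry out an exponent count on the derivatives of the coefficients in \eqref{ddbarPhioneSecondEq} via the curvature formula \eqref{CurvatureCoeffEq}. The paper's proof is just a terse version of this: it records the bounds $g_{1\b1,1}=O(|\zeta|^{2/\be-3})$, $g_{1\jbar,1}=O(|\zeta|^{1/\be-2})$, etc., and $|g_{i\jbar,k\b l}|\le C(1+|\zeta|^{1/\be-2}+|\zeta|^{2/\be-4})$, exactly the exponents you identify as borderline. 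One small imprecision worth flagging: you bound the quadratic term of \eqref{CurvatureCoeffEq} by $O\bigl((\del g)^2\bigr)=O(|\zeta|^{2/\be-4})$, but the honest worst case there is $g^{1\b1}\,g_{1\b1,1}\,g_{1\b1,\b1}=O(|\zeta|^{4/\be-6})$ together with the $O(1)$ contributions from purely tangential indices, so the correct bound is $O(1+|\zeta|^{4/\be-6})$ rather than a decaying power. Since $4/\be-6\ge 2$ when $\be\le\frac12$, this does not affect the conclusion, but the exponent as written is not the one governing the quadratic piece.
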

\begin{proof}
Since in the $(\zeta, Z)$ coordinates, $cI<[g_{i\jbar}]<CI$, it suffices to show that $|R_{i\jbar k\b l}|<C$.
From \eqref{ddbarPhioneSecondEq},
\begin{equation*}
\begin{aligned}
g_{1\bar{1}, 1}  & = O( |\zeta|^{\frac{2}{\be}-3}), \\
g_{1\bar{1}, k} & =  O(1), 
\end{aligned}
\qquad
\begin{aligned}
g_{1\jbar, 1} & = O(|\zeta|^{\frac{1}{\be}-2}), \\
g_{1\jbar, k} & = O(|\zeta| + |\zeta|^{\frac{1}{\be}-1}),
\end{aligned}
\qquad 
\begin{aligned}
g_{i\jbar, 1}  & = O(|\zeta|^{\frac{1}{\be}-1}),  \\
g_{i\jbar, k} & =  O(1). 
\end{aligned}
\end{equation*}
Similarly,
$|g_{i\jbar,k\b l}|\le C(1+|\zeta|^{\frac1\be-2}+
|\zeta|^{\frac2\be-4})$. 
\end{proof}
As indicated by Donaldson, there seem to be genuine cohomological obstructions to finding reference
edge metrics with bounded curvature when $\be > 1/2$.  Nevertheless, in Proposition~\ref{AppendixProp} 
it is shown that the bisectional curvature of $\o$ is bounded from above on $\MsmD$ provided $\be \leq 1$.
This fact comes out of the proof as some kind of miracle, yet it would be enlightening 
to have a more geometric explanation for it. In a related vein, we remark in passing that it is not difficult to 
write down local expressions (near $D$) for metrics equivalent to the model metric which have bisectional 
curvature unbounded from above or below or both. For instance, when $n=1$, the curvature of 
$(|z_1|^{2\be-2}-1)|dz_1|^2$ equals $(1-\be)^2\rho^{-2\be}/(1-\rho^{2-2\be})^2$ (here $\rho=|z_1|$), 
and hence tends to $+\infty$ as $\rho \searrow 0$. More generally, one can easily choose $\psi$ 
polyhomogeneous but not smooth so that the curvature of the metric $|z_1|^{2\be-2}e^\psi|dz_1|^2$ is 
unbounded, either above or below or both. In other words, the asymptotics of the curvature depend
on the higher order terms in the expansion of the metric. 

\subsection{The twisted Ricci potential}
\label{RicciPotentialSubSec}
From now on (except when otherwise stated) we denote
\begin{equation}
\label{oDefEq}
\o:=\o_0+\i\, \ddbar\phi_0\in \calH_{\o_0},
\end{equation}
with $\phi_0$ given by \eqref{PhizeroEq}. In the remainder of this article, we refer to $\o$ as the reference metric.
Define $f_\o$ by
\begin{equation}
\label{foDefEq}
\i\ddbar f_\o=\Ric\o-2\pi(1-\be)[D]-\mu\o,
\end{equation}
where $[D]$ denotes the current of integration along $D$,
and with the normalization
\begin{equation}
\frac1V\int_M e^{f_\o}\o^n=1, \quad \h{where\ \ } V\!:=\int_M\o^n.
\label{normalization}
\end{equation}
We call this the twisted Ricci potential (see
Lemma \ref{foAsympExpansionLemma} for precise regularity of $f_\o$); this terminology refers to the fact that the adjoint bundle 
$K_M+(1-\be)L_D$ takes the place of the canonical bundle $K_M$. Alternatively, one can
also think of $\Ric\o-2\pi(1-\be)[D]$ as a kind of Bakry-\'Emery Ricci tensor.

\subsection{The Ricci continuity method for the twisted \KE equation}
\label{CMSubSec}
The existence of \KE metrics asymptotically equivalent to $g_\be$ is governed by the 
\MA equation
\begin{equation}
\label{KEMAEq}
\o_\vp^n=e^{f_\o-\mu \vp}\o^n. 
\end{equation}
We seek a solution $\vp\in \calH_{\o_0}$, and shall do so using a particular continuity method. We consider a continuity path 
in the space of metrics $\calH_{\o}$ (with some specified regularity) obtained from the Ricci flow via a backwards 
Euler discretization, as first suggested in \cite{R}. Alternatively, it can be obtained essentially by concatenating (and extending) two previously 
studied paths, one by Aubin \cite{Aubin1984} in the positive case and the other by Tian--Yau \cite{TY} in the negative case.
The path is given by
\begin{equation}
\label{RCMEq}
\o_\vp^n=\on e^{f_\o-s\vp}, \quad s\in(-\infty,\mu],
\end{equation}
where $\vp(-\infty)=0$, and $\o_{\vp(-\infty)}=\o$. We call this the {\it Ricci continuity path.}
Adapting the proof of a result of Wu \cite{Wu}, we prove later that there exists a solution $\vp(s)$ for $s \ll 0$
of the form $s^{-1}f_\o+o(1/s)$. A key feature of this continuity path is that
\begin{equation}
\label{RicAlongCMEq}
\Ric\o_\vp=s\o_\vp+(\mu-s)\o+2\pi(1-\be)[D], 
\end{equation}
which implies the very useful property that for all 
solutions $\vp(s)$ along this path, the Ricci curvature is bounded below on $\MsmD$, i.e.,\ 
$\Ric\o_\vp>s\o_\vp$.  As we explain in \S\ref{TianMabuchiSubSec},  
another important property is that the Mabuchi K-energy is monotone along this path. 

Much of the remainder of this article is directed toward analyzing this family of \MA equations:
\S 3 describes the linear analysis needed to understand the openness part of the continuity
argument as well as the regularity theory; \S 4 uses this linear analysis to prove that solutions are 
automatically `smooth' 
at $\D$, by which we mean that they are polyhomogeneous (see below); the a priori estimates 
needed to obtain the closedness of the continuity argument are derived in the remaining
sections of the article, and the proof is concluded in \S\ref{ProofKESection}.

We will pursue a somewhat parallel development of this proof using two different scales of
H\"older spaces since we hope to illustrate the relative merits of each of these classes of function spaces, 
with future applications in mind.  Certain aspects of the proof work much more easily in one setting
rather than the other, but we give a complete proof of the a priori estimates in either framework. The 
proof of higher regularity, which shows that these two approaches are ultimately equivalent and facilitates
the continuity argument, relies directly on only one of these scales of spaces. 

\begin{remark}
\label{OneTwoParamRemark}
{\rm  The continuity path \eqref{RCMEq} has several useful properties, some already
noted above, which are necessary for the proof of Theorem \ref{ConicKEMainThm}
when $\be > 1/2$. However, we also consider the two-parameter family of equations
\begin{equation}
\label{TwoParamCMEq}
\o_{\vp}^n=e^{tf_\o+c_t-s\vp}\on, 
\quad 
c_t:=-\log\frac1V\int_M e^{tf_\omega}\omega^n, \quad (s,t)\in A,
\end{equation}
where $A:=(-\infty,0]\times[0,1]\;\cup [0,\mu]\times\{1\}$. This incorporates the
continuity path $s = \mu, 0 \leq t \leq 1$, which is the common one in the literature.  
The analysis required to study this two-parameter family requires little extra effort, 
and has been included since it may be useful elsewhere. It provides an opportunity to 
use the Chern--Lu inequality in its full generality (see \S\ref{LaplacianSection}).
In addition, we have already noted that one cannot obtain openness for \eqref{RCMEq} 
at $s=-\infty$ directly, but must produce a solution for $s$ (very) negative by
some other method. Wu \cite{Wu} accomplishes this by a perturbation argument; 
the augmented continuity path \eqref{TwoParamCMEq} gives yet another means to do this, 
but works only when $\be \leq 1/2$. We refer to \S\ref{ProofKESection} for more details. 

We emphasize that our proof of Theorem \ref{ConicKEMainThm} when $\be > 1/2$ or when 
$\mu > 0$, requires the path \eqref{RCMEq} (i.e., fixing $t=1$). }
\end{remark}

\subsection{The twisted Ricci iteration} \label{RISubSec}
The idea of using the particular continuity method \eqref{RCMEq} 
to prove the existence of \KE metrics for all $\mu$ (independently of sign) was suggested in \cite[p. 1533]{R}.
As explained there and recalled below, this path arises from discretizing the Ricci flow via the
Ricci iteration. After treating this continuity path we will be in a 
position to prove smooth edge convergence of the (twisted) Ricci iteration to the \KE edge metric. 
 
One \KRF in our setting is 
$$
\frac{\pa \o(t)}{\pa t} =-\Ric\,\o(t)+2\pi(1-\be)[D]+\mu\o(t),\quad \o(0)  =\o\in\calH_{\o_0}.
$$
Let $\tau\in(0,\infty)$.
The (time $\tau$) Ricci iteration, introduced in \cite{R}, is the sequence $\{\o_{k\tau}\}_{k\in\NN}\subset\calH_\o$,
satisfying the equations
$$
\o_{k\tau} =\o_{(k-1)\tau}+\tau\mu\o_{k\tau}-\tau\Ric\o_{k\tau}+\tau2\pi(1-\be)[D],
\qquad \o_{0\tau}  =\o,
$$
for each $k\in\NN$ for which a solution exists in $\calH_\o$.  
This is the backwards Euler discretization of the K\"ahler--Ricci flow. Equivalently,
let $\o_{k\tau}=\o_{\psi_{k\tau}}$, with $\psi_{k\tau}=\sum_{l=1}^k\vp_{l\tau}$. Then,
\begin{equation}
\label{RIEq}
\o_{\psi_{k\tau}}^n=\on e^{f_\o-\mu\psi_{k\tau}+\frac1\tau\vp_{k\tau}}.
\end{equation}
Since the first step is simply $\o_{\vp_{\tau}}^n=\on e^{f_\o+(\frac1\tau-\mu)\vp_{\tau}},$ 
the Ricci iteration exists (uniquely) once a solution exists (uniquely) for \eqref{RCMEq} for $s=\mu-\frac1\tau$. 
Thus, much like for the Ricci flow,  a key point is to prove uniformity of the a priori estimates 
as $k$ tends to infinity. The convergence to the \KE metric then follows essentially by the monotonicity of the twisted K-energy
if the \KE metric is unique.

As noted above, our choice of the particular continuity path (\ref{RCMEq}) allows us to treat the continuity method and the 
Ricci iteration in a unified manner.  When $\mu\le0$,  our estimates for \eqref{RCMEq}, the arguments of \cite{R}, 
and the higher regularity developed in \S\ref{HigherRegSection}, 
imply the uniqueness, existence and edge smooth convergence of the iteration, for all $\tau$.
When $\mu>0$ the uniqueness of the (twisted) Ricci iteration was proven recently
by Berndtsson \cite{Bern}, and it follows from his result that whenever the twisted K-energy $E_0^\be$ is proper 
then also the \KE edge metric must be unique. Given this, our analysis here and in \cite{R} immediately 
implies smooth (in the edge spaces) convergence of the iteration for large enough times steps, more specifically, 
provided $\tau>1/\mu$ and $E_0^\be$ is proper, or else provided 
$\tau> 1/\a_{\O,\o}$, and $\a_{\O,\o}>\mu$, where $\a_{\O,\o}$
is Tian's invariant defined in \S\ref{TianMabuchiSubSec}
(note that by Lemma \ref{alphainvariantepsLemma} this assumption implies
$E_0^\be$ is proper).
As pointed out to us by Berman, given the results of \cite{R}, 
the remaining cases follow immediately in the same manner
by using one additional very useful pluripotential estimate
contained in \cite[Lemma 6.4]{BBGZ} and stated explicitly in \cite{Berm}, and recalled 
in Lemma \ref{BermanLemma} below. As already observed in \cite{BBEGZ}
this estimate gives in an elegant manner a uniform estimate on the oscillation
of solutions along the iteration, and is 
used in \cite{BBEGZ} to prove convergence of the twisted Ricci iteration
and flow in very general singular settings, smoothly away from the singular
set, and global $\calC^0$ convergence on the level of potentials.
Our result below, in the case $\mu>0$, is complementary to theirs
since it shows how to use their uniform estimate and our analysis to obtain
smooth convergence near the edge. 
We thank Berman for his encouragement to include this result here,
prior to the appearance of \cite{BBEGZ}.

To summarize, we have the following statement.

\begin{thm}
\label{RIThm}
Under the assumptions of Theorem \ref{ConicKEMainThm}, the Ricci iteration \eqref{RIEq} exists uniquely 
and subconverges in $\Dw\!\cap\! \calA^0$ to a \KE edge metric in $\calH_{\o}$.
Whenever the KEE metric is unique, in particular when there are no holomorphic vector fields tangent to $D$, 
the iteration itself converges.
\end{thm}
These function spaces encode the strongest possible convergence for this problem, and are defined next. 
The proof of Theorem \ref{RIThm} is given in Section \ref{ProofKESection}.

\subsection{Function spaces}

To conclude this section of preliminary material, we review the various function spaces used 
below. These are the `wedge and edge' H\"older spaces, as well as the spaces of conormal and 
polyhomogeneous functions necessary for our treatment of the higher regularity theory.  The 
wedge H\"older spaces are the ones used in \cite{J2,D}, and are naturally associated to the 
incomplete edge geometry. The edge spaces, introduced in \cite{Ma1} are also naturally 
associated to this geometry and have some particularly favorable properties stemming from their 
invariance under dilations. Using this, certain parts of the proofs below become quite simple. 
The wedge H\"older spaces, on the other hand, are closer to standard H\"older spaces, and 
indeed reduce to them when $\beta = 1$. They impose stronger regularity conditions. Since we 
use both types of spaces here, we describe many of the proofs below in both settings. 
This is important for applications and should also give the reader a better sense of their relative advantages.

Before giving any of the formal definitions below, let us recall that a H\"older space is naturally associated
to a distance function $d$ via the H\"older seminorm 
\[
[ u ]_{d; 0, \gamma} :=  \sup_{ p \neq p' \atop d(p, p') \leq 1} \frac{ |u(p) - u(p')|}{ d(p, p')^\gamma}.
\]
We only need to take the supremum over points with distance at most $1$ apart, since if $d(p, p') > 1$, 
then this quotient is bounded by $2 \sup |u|$.  The two different spaces below differ simply through
the different choices of distance function $d$.

\subsubsection{Wedge H\"older spaces}
First consider the distance function $d_1$ associated to the model metric $g_\beta$; note that it is clearly
equivalent to replace the actual $g_\beta$ distance function with any other function on $M \times M$ which
is uniformly equivalent, and it is simplest to use the one defined in the coordinates $(r,\theta,y)$ by
\[
d_1( (r,\theta,y), (r', \theta', y')) = \sqrt{ |r-r'|^2 + (r + r')^2| \theta - \theta'|^2 + |y-y'|^2}.
\]
Note that the angle parameter $\beta$ does not appear explicitly in this formula, but if we were to have included
it, there would be a factor of $\beta^2$ before $(r+r')^2|\theta - \theta'|^2$. 
This changes $d_1$ at most by a factor, so we may as well omit it altogether.

Now define the wedge H\"older space 
$\calC^{0,\gamma}_w\equiv\calC^{0,\gamma}_w(M)$ to consist of all functions $u$ on $M \setminus \D$ 
for which  
\[
||u||_{w; 0,\gamma} :=  \sup |u| + [u]_{d_1; 0, \gamma} < \infty.
\]

The spaces with higher regularity are defined using differentiations with respect to unit length vector fields
with respect to $g_\beta$; these vector fields are spanned by $\del_r$, $r^{-1}\del_\theta$ and $\del_{y_j}$. Thus
\[
\calC^{k,\gamma}_w(M) = \{ u:  \del_r^i (r^{-1}\del_\theta)^j \del_y^\mu u \in \calC^{0,\gamma}_w(M)\ 
\quad \forall\;\, i+j+|\mu| \leq k\}.
\]

There a few potentially confusing points about these spaces. The first is that the spaces $\calC^{k,\gamma}_w$ with $k > 0$
seem to depend on the choice of coordinates, or choice of frame. It is not hard to untangle 
the dependence or lack thereof, but since we only use these spaces when $k = 0$, this discussion
is relegated to another paper.  Second, it is worth comparing this definition with the equivalent one given 
in \cite{D}. As is evident from the definition above, the space $\calC^{0,\gamma}_w$ above does {\it not} 
depend on the cone angle parameter $\beta$ 
(at least so long as $\beta$ stays bounded away from $0$ and $\infty$). However, suppose we 
consider the (apparently) fixed function $f = |z_1|^a = \rho^a$ for some $a>0$ in terms of the 
original holomorphic coordinates. In terms of the cylindrical coordinates $(r,\theta,y)$, we 
have $f = \be^{a/\be} r^{a/\beta}$, and hence $f \in \calC^{0,\gamma}_w$ if and 
only if $a/\beta \geq \gamma$, i.e., $a \geq \beta \gamma$. Inequalities of this type appear in 
\cite{D}. This seems inconsistent with the claim that the H\"older space is independent of 
$\beta$; the discrepancy between these statements is explained by observing that the singular 
coordinate change {\it does} depend on $\beta$, and while the function $\rho^a$ is independent 
of 
$\beta$, its composition with this coordinate change is not.  Equivalently, if we pull back the 
function space $\calC^{0,\gamma}_w$ via this coordinate change, then we get a varying family of 
function spaces on $M$.  We prefer, however, to think of $M \setminus \D$ as a fixed but 
singular geometric object, with smooth structure determined by the coordinates $(r,\theta,y)$, 
and with a single scale of naturally associated H\"older spaces.

\subsubsection{Edge H\"older spaces}
Now consider the distance function $d_2$ associated to the {\it complete} metric
\[
\hat{g}_\beta := r^{-2} g_\beta = \frac{ dr^2 + |dy|^2}{r^2} + \beta^2 d\theta^2.
\]
As before, the distance $d_2$ is replaced by the metric 
\[
d_2( (r, \theta, y), (r', \theta', y') ) = (r+r')^{-1}\sqrt{ (r-r')^2 + (r + r')^2(\theta - \theta')^2 + |y-y'|^2}, 
\]
which is uniformly equivalent to it. It suffices to consider only $r, r' \leq C$. As before, no factor 
of $\beta$ is included.  

The H\"older norm $||u||_{e; 0, \gamma}$ is now defined using the seminorm associated to $d_2$. 
The higher H\"older norms are defined using unit-length vector fields with respect to $\hat{g}_\beta$,
which are spanned by $\{ r\del_r, \del_\theta, r \del_y\}$. The corresponding spaces of functions
for which these norms are finite are denoted $\calC^{k,\gamma}_e\equiv \calC^{k,\gamma}_e(M)$. 

The key property of this distance function is that it is invariant with respect to the scaling 
\[
(r,\theta,y) \mapsto (\lambda r, \theta, \lambda y)
\]
for any $\lambda > 0$.  The vector fields $r \del_r$, $\del_\theta$ and $r\del_y$ are also invariant with
respect to these dilations. This means that if $u_{\lambda,y_0}( r, \theta, y) = u( \lambda^{-1} r, \theta, \lambda^{-1} y + y_0)$,
then $|| u_{\lambda,y_0}||_{e; k, \gamma} = ||u ||_{e; k, \gamma}$.  
(We assume, of course, that both $(r,\theta, y)$ and $(\lambda^{-1} r, \theta, \lambda^{-1} y + y_0)$ lie in the domain of $u$.)  
One way to interpret this is as follows. Consider the annular region 
\[
B_{\lambda ,y_0} := \{(r,\theta,y):  0 < \lambda < r < 2\lambda,\ |y-y_0| < \lambda\},
\]
for $\lambda$ small. The image of this annulus under translation by $y_0$ and dilation by $\lambda^{-1}$
is the standard annulus $B_{1,0}$. Hence if $u$ is supported in $B_{\lambda, y_0}$, then $u_{\lambda, y_0}$
is defined in $B_{1,0}$ and $||u||_{e; k, \gamma} = ||u_{\lambda, y_0}||_{e; k,\gamma}$. 

For any $\nu\in\RR$, we also define weighted edge H\"older spaces 
\[
r^\nu C^{k,\gamma}_e(M) = \{u = r^\nu v: v \in  \calC^{k,\gamma}_e(M)\}.
\]

Although $\calC^{0,\gamma}_e(M)\subset L^\infty(M)$, elements of $\calC^{0,\gamma}_e(M)$ need not be continuous at $r=0$; 
an easy example is the function $\sin \log r$, which lies in $\calC^{k,\gamma}_e$ for all $k$.  On the other hand, 
elements of $r^\nu \calC^{0,\gamma}_e$ are continuous and vanish at $D$ if $\nu > 0$. 

\subsubsection{Comparison between the wedge and edge H\"older spaces}
Let us now comment on the relationship between these spaces. Since $r, r' \leq C$, we have
$d_1 \leq C^{-1} d_2$, and hence 
\[
|| u || _{e; k, \gamma} \leq C^{-\gamma} || u ||_{w; k,\gamma},
\]
or equivalently,
\begin{equation}
\label{ComparisonSpacesEq}
\calC^{k,\gamma}_w \subset \calC^{k,\gamma}_e.
\end{equation}

Elements in the wedge H\"older space are more regular than those in the edge H\"older space.  For example, unlike
elements of $\calC^{0,\gamma}_e$, elements of $\calC^{0,\gamma}_w$ are continuous up to $\D$. Moreover, if 
$u \in \calC^{0,\gamma}_w$, then $u(0,\theta, y)$ is independent of $\theta$ and lies in $\calC^{0,\gamma}(\D)$; 
by contrast, if $u \in \calC^{0,\gamma}_e$, then the `tangential' difference quotient $| u( r, \theta, y) - 
u(r, \theta, y')|/ |y-y'|^\gamma$ is bounded by $Cr^{-\gamma}$.

However, there is a direct relationship between the two spaces.  Define
\[
\calC^{0,\gamma}_w(M)_0 = \{u \in \calC^{0,\gamma}_w(M): u|_D = 0\}.
\]
Next, if $u \in \calC^{0,\gamma}_w(M)$, write $u_0 = u|_D \in \calC^{0,\gamma}(D)$. There exists an extension operator 
$\calC^{0,\gamma}(D) \ni u_0 \mapsto E(u_0) = U \in \calC^{0,\gamma}_w$. Fixing an identification of a neighborhood 
$\calV$ of $D$ with a  bundle of truncated cones over $D$ and collapsing the $S^1$ cross-sections of these conic fibers
yields a map $\calV \to D \times [0,r_0)$. Requiring any local $(r, \theta, y)$ coordinates to be coherent with this extension, 
we may choose $U$ to be independent of $\theta$ and to equal the `ordinary' harmonic  extension of $u_0$ in the $(r,y)$ 
coordinates, i.e., $(\del_r^2 + \Delta_y)U = 0$. Actually, the only properties of $U$ needed later are that 
$U \in \calC^\infty(M \setminus D)$, and 
\begin{equation}
|\del_r U| + |\del_y U| \leq C r^{-1 + \gamma}.
\label{extu0}
\end{equation}
For the harmonic extension, these bounds are a classical characterization of H\"older spaces, and this characterization of H\"older
spaces is explained carefully in \cite [Chapter V, Section 4.2]{Stein}. We note that it is straightforward
to choose such an extension in a less ad hoc way using the theory of edge Poisson operators developed in \cite{MV2}.

Now decompose any $u \in \calC^{0,\gamma}_w$ as
\[
u = U  + \tilde{u}, \qquad \tilde{u} \in \calC^{0,\gamma}_w(M)_0. 
\]
This is useful because the two components have different characterizations. We have already explained the relevant
regularity properties of $U = E(u_0)$. As for the other component, we assert that
\begin{equation}
\calC^{0,\gamma}_w(M)_0 = r^\gamma \calC^{0,\gamma}_e(M).
\label{eqw0e}
\end{equation}
To explain this, note that if $u$ lies in the space on the left, then $|u(r,y,\theta)| \leq C r^\gamma$,
so the function $v = r^{-\gamma} u$ is at least bounded. 
The proof of \eqref{eqw0e} is an elementary calculation checking that 
$||u||_{w; 0,\gamma} \leq C ||v||_{e; 0, \gamma}$ and $||v||_{e, 0, \gamma} \leq C ||u||_{w; 0, \gamma}$.  

There are certain advantages to using the edge H\"older spaces. First observe that if $\mu\in(0,1)$, then 
$r^{\mu} \in \calC^{0,\gamma}_w$ only when $\mu \leq \gamma$, while $r^{\mu} \in 
\calC^{0,\gamma}_e$ for all $\gamma \in (0,1)$. Furthermore, $r^\mu\not\in\calC^{k,\gamma}_w$ for any 
$k \geq 1$, but since $(r\del_r)^j r^\mu = \mu^j r^\mu$, we see that $r^\mu \in \calC^{k,\gamma}_e$ for all 
$k \geq 0$. In other words, the edge spaces more naturally accomodate noninteger exponents. This is 
important when dealing with singular elliptic equations because solutions of such equations typically 
involve noninteger powers of $r$, and it is quite reasonable to think of these solutions as being infinitely 
differentiable in a suitable sense.   One final point is that basic H\"older regularity theory for elliptic
differential edge operators is phrased in terms of the edge spaces; these are scale-invariant estimates.
The pseudodifferential parametrices in the edge calculus, discussed in Section 3 below, are most easily
shown to be bounded on edge spaces; their boundedness on the wedge spaces is a consequence of
that result. 

In the remainder of this article, whenever our discussion applies to both of these spaces,
we refer to the `generic' singular H\"older space $\calC^{k,\gamma}_s$, where
$$
\h{\sl $s$ equals either $w$ or $e$}.
$$
This $s$ should not be confused either with the parameter $s$ along the continuity path (\ref{RCMEq}),
nor with the holomorphic section $s$ defined in Lemma \ref{ReferenceMetricExistsLemma}.

\subsubsection{Conormal and polyhomogeneous functions}
The final set of spaces we define are the spaces of conormal and polyhomogeneous functions.
\begin{definition}
For any $\nu \in \RR$, define 
\[
\calA^\nu = \bigcap_{\ell \geq \ell' \geq 0} r^\nu \calC^{\ell,\gamma;\ell'}_e. 
\]
This is the space of conormal functions (of weight $\nu$). Thus elements of $\calA^\nu$ are bounded
by $r^\nu$, as are all of their derivatives with respect to the vector fields $r\del_r$, $\del_\theta$ and $\del_y$. 

Next, we say that $u \in \calA^\nu$ is polyhomogeneous, and write $u \in \calA^\nu_{\phg}$,  if it has an 
expansion of the form
\[
u \sim \sum_{j=0}^\infty \sum_{p=0}^{N_j} a_{jp}(\theta, y) r^{\sigma_j} (\log r)^p 
\]
where the coefficients $a_{jp}$ are all $\calC^\infty$, and $\{\sigma_j\}$ is a discrete sequence of complex
numbers such that $\mbox{Re}\, \sigma_j \to \infty$, with $\mbox{Re}\, \sigma_j \geq \nu$ for
all $j$ and $N_j = 0$ if $\mbox{Re}\,\sigma_j = \nu$. 
This expansion can be differentiated arbitrarily many times with the corresponding differentiated remainder.
We say that $u$ has a nonnegative index set if $u \in \calA^0_{\phg}$ and if any exponent $\sigma$ in
its expansion has $\Re\, \sigma = 0$, then $\sigma = 0$. Note finally that if 
$u \in \calA_{\phg}^0$, then $u$ is bounded, and if any such $u$ has nonnegative index set, then $u$
is continuous up to the boundary. 
\label{co-phg}
\end{definition}
These function spaces accomodate behavior typical for solutions of degenerate elliptic edge problems, 
e.g.\ functions like  $r^\sigma (\log r)^p a(\theta,y)$ where $a$ is smooth, $p$ is a nonnegative integer 
and $\sigma\in\CC$.  We remark that these spaces are the correct analogues of the spaces of infinitely 
differentiable functions in this context. Note that when $\be=1$, $\calA^0$ {\it does not} correspond to 
$\calC^\infty(M)$. In this setting, we make a distinction between functions which are infinitely differential 
(conormal) and those which have ``Taylor series'' expansions (i.e., are polyhomogeneous).  We
remark also that the expansions of polyhomogeneous functions are rarely convergent, but only give
`order of vanishing' type estimates. It is usually difficult to control the size of the neighborhood on
which such an expansion provides a good approximation.  

\section{Linear analysis}
\label{LinearSection}
We now present the key facts about the linear elliptic theory needed to handle the existence, 
deformation and regularity theory for canonical edge metrics.  We discuss this from two points 
of view, reviewing the estimates obtained by Donaldson in the wedge H\"older spaces, and also 
describing how to obtain analogous estimates in the edge H\"older spaces. These latter estimates
are obtained through the use of edge pseudodifferential operators, as developed in \cite{Ma1}. 
This methodology, part of the general framework of geometric microlocal analysis, yields the 
most incisive results for the class of degenerate elliptic operators which arise here, and as we shall see, 
there are numerous places below where the more delicate parts of the linear {\it and} nonlinear analysis 
needed to prove our main results here require this full theory. In other words, the 
use of the edge calculus in this paper is an essential feature, rather than simply
a more systematic way of rephrasing estimates analogous to those obtained by Donaldson. 

Fix a K\"ahler edge metric $g$ on $M$ with cone angle $2\pi\beta$ along the smooth divisor $D$;
we initially suppose that the metric $g$ is polyhomogeneous along $\D$, though this will be relaxed later.
For the rest of this section, we consider the operator $L = \Delta_g + V$ where $V$ is polyhomogeneous 
with nonnegative index set (and hence is bounded); in certain places below we extend certain results 
to the case where $g$ and $V$ are not polyhomogeneous, but have some given H\"older regularity.  

Our method is based on the realization that the Schwartz kernel of the Green operator for the 
Friedrichs extension of $L$ has a fairly simple polyhomogeneous structure, and knowing this structure, one
can read off the estimates we need. This Green function is a pseudodifferential edge operator. The paper \cite{Ma1} 
contains a detailed development of this class of operators, their mapping properties, and the elliptic parametrix 
construction in this calculus. We review various aspects of this theory now, at all times maintaining focus 
on the particular problem at hand. We give specific references to the appropriate results and sections of \cite{Ma1}
so as to guide the interested reader to the details of the proofs of the results we need.  We also 
recall Donaldson's estimates, explain the essential differences between his and the ones obtained here through the
edge theory, and also describe the differences between these two approaches to proving these estimates.

\subsection{Edge structures and edge operators}
We have already indicated that it can be advantageous to think of $M$ with a \K edge metric as being
a singular object, but it is more convenient to formulate the edge theory via structures on the manifold with
boundary obtained by taking the real blowup of $M$ along $D$. 

The general notion of an edge structure on a manifold with boundary $X$ is defined in terms of a space of vector 
fields $\calV_e(X)$ on that manifold, where we assume that $\del X$ is the total space of a fibration $\pi: \del X \to Y$ 
with fiber $F$. The space $\calV_e(X)$ consists of all smooth vector fields on $X$ which are unconstrained in 
the interior, but which lie tangent to the fibers at the boundary. In our setting, the manifold $X$ is obtained 
by taking the real blowup of $M$ around $\D$, so $\del X$ is the unit normal circle bundle $\mathrm{SN}\D$ over $\D$.  
To be more specific, the real blowup $X := [M; D]$ is by definition the disjoint union $(M \setminus D) 
\sqcup \mathrm{SN}\D$, endowed with the unique smallest topological and differential structure so that 
the lifts of smooth functions on $M$ and polar coordinates around $D$ are smooth. There is a natural 
smooth blowdown map $X \to M$. 

Before proceding, we note a subtlety here related to the fact that there are actually two natural smooth structures:
one is induced by the holomorphic coordinates $(z_1, \ldots, z_n)$, where $D = \{z_1 = 0\}$ locally, and the 
other by the cylindrical coordinate system $(r,\theta,y)$ defined earlier. Indeed, since $r = |z_1|^\beta/\beta$, 
functions smooth with respect to $z$ are not necessarily smooth with respect to $(r,\theta,y)$ and vice versa. 
These structures are, of course, equivalent via the coordinate transformation. However, perhaps the correct
perspective is that it is {\it not} the smooth structure on $X$, but rather the `polyhomogeneous structure', i.e., 
the ring of polyhomogeneous functions, which is fundamental. Indeed, the polyhomogeneous structure is preserved
by this coordinate change.  At any rate, for $X = [M, D]$, $\del X = \{\rho = 0\}$, where $\rho = |z_1|$ 
(or equivalently, $\{r = 0\}$ where $r$ is defined as above), and the $S^1$ fibers of $\del X$ are the level sets 
$\{y = \mbox{const.}\}$.  Functions on $X$ are polyhomogeneous if and only if they are polyhomogeneous 
with respect to either of the coordinate systems $(r,\theta,y)$ or $(\rho,\theta, y)$. Finally, and here
the difference between $\rho$ and $r$ is important, we define $\calV_e(X)$ to be generated by the vector fields 
$r\del_r$, $\del_\theta$ and $r\del_y$. 

Next, the space of differential edge operators $\Diff_e^*(X)$ consists of all operators which can be written locally 
as finite sums of products of elements of $\calV_e(X)$.  Thus again for $X = [M, \D]$, if $m \geq 0$, then the 
typical element of $\Diff_e^m(X)$ has the form 
\begin{equation}
A = \sum_{j+k + |\mu| \leq m} a_{j k \mu}(r,\theta,y) (r\del_r)^j \del_\theta^k (r\del_y)^\mu.
\label{exeo}
\end{equation}
We now restrict attention exclusively to the case $X = [M;\D]$ and $m=2$, though there are suitable versions of 
all of the main linear results below in the general edge setting. 

If $g$ is an incomplete edge metric on $M$ with cone angle $\beta$, then $L=\Delta_g + V$ can be written as in 
\eqref{modellap}, as the sum of a principal part and an error term $E$.  However, it is $A = r^2 L$ which is an edge 
operator in the sense we have just defined.

A differential edge operator is called elliptic if it is an `elliptic combination' of elements of $\calV_e(X)$, for
example a sum of squares of a generating set of sections plus lower order terms. This is the case for the operator
$A$ here; we refer to \cite[\S 2]{Ma1} for the coordinate invariant formulation of edge ellipticity, and for more
on edge vector fields and their dual one-forms. 

\subsection{Normal and indicial operators}
If $A$ is an elliptic edge operator, its mapping properties are governed not only by its ellipticity, but also by two model 
operators, the indicial and normal operators $I(A)$ and $N(A)$, respectively, which are defined at each point of $D$.  
While these may be defined invariantly, let us simply record here that for $A = r^2 L$, with $L = \Delta_g + V$, and
after a certain natural identification which we explain below, 
\begin{equation}
N(A) = (s\del_s)^2 + \beta^{-2} \del_\theta^2 + s^2 \Delta_w, \quad \mbox{and}\quad I(A) =  (s\del_s)^2 + \beta^{-2} \del_\theta^2, 
\label{norA}
\end{equation}
where $(s,w)$ are global affine coordinates on a half-space $\RR^+_s \times \RR^{2n-2}_w$, $\Delta_w:=\sum_{i=1}^{2n-2}
\frac{\del^2}{\del w_i^2}$, and  $\theta \in S^1_{2\pi}$ (the circle of radius $2\pi$).  Note that 
\[
N(A) = s^2 L_\beta, \quad \mbox{where}\ L_\beta = \del_s^2 + \frac{1}{s} \del_s + \frac{1}{\beta^2 s^2}\del_\theta^2 + \Delta_w
\]
is the Laplacian of the flat model metric $g_\beta$. 

Informally, $N(A)$ is obtained by dropping the error term $r^2 E$, freezing coordinates at a given point $y_0 \in D$, and replacing the 
local coordinates $(r,y)$ by global affine coordinates $(s,w) \in \RR^+ \times \RR^{2n-2}$. More invariantly, $N(A)$ is the limit of 
rescalings of $A$ by the group of dilations based at a point $y_0 \in Y$, and it acts on functions defined on the inward-pointing normal 
bundle of the fiber of $\del X$ over $y_0$.  The indicial operator $I(A)$ is even simpler: it is defined by dropping the terms 
in $N(A)$ which have the property that they map any function $s^a v(\theta, w)$ (with $v$ smooth) to a function which 
vanishes faster than $s^a$. The only term in the operator $N(A)$ above which is discarded for this reason is $s^2 \Delta_w$.  

In general, both $N(A)$ and $I(A)$ could depend on $y_0$ (for example, if the cone angle were to vary along $D$). Fortunately,
in our case of interest, this dependence is quite simple, and as we have indicated above, can effectively be normalized away. 
Indeed, from \eqref{pertgb}, the term $\del^2_{z_1 \bar{z}_1}$ is multiplied by the factor $F^{-1}$, which depends on all variables, 
but is independent of $\theta$ at $r=0$. The normal operator of $F r^2 L$, obtained by this rescaling procedure above, has 
the form \eqref{norA}, but initially the terms involving derivatives in $w$ are a second order constant coefficient
elliptic operator on $\RR^{2n-2}$, multiplied by $s^2$. A linear change of variables in $w$, depending smoothly on $y_0$, puts this
into standard form $s^2 \Delta_w$. Thus the correct statement is that the normal operators at different points $y_0$ can be identified 
with one another, and similarly for the indicial operator.

A number $a \in \CC$ is called an indicial root of $A$ (and also of $L$) if there exists a nontrivial function
$\psi(\theta)$ such that $I(A) s^a \psi(\theta) = 0$; thus, for $A = r^2 L$, 
\[
I(A) s^a \psi(\theta) =  (\beta^{-2}\del_\theta^2 + a^2) \psi = 0  
\Longleftrightarrow 
\begin{cases} a \in \left\{ j/\beta : j \in \mathbb Z\right\}, \\ 
\psi_j(\theta) = a_j \cos j\theta + b_j \sin j\theta,\ j \geq 1. 
\end{cases}
\]
The case $j=0$ here is special since $0$ is a `double' indicial root, so  $\psi_0(\theta) = 1$ and both $I(A) (s^0 ) = I(A)( s^0 \log s) 
= 0$. This is special to the case that $D$ has codimension $2$. These indicial roots are just the square roots of the eigenvalues 
of $-\beta^{-2}\del_\theta^2$, which leads to the observation that it is quite important that $\theta$ lies on a compact manifold 
(namely, $S^1$), since otherwise the spectrum, and hence the set of indicial roots, would not be discrete. Note also that for 
any $a \in \CC$ and $\psi(\theta,y) \in \calC^\infty$, it is always true that $A(r^a \psi(\theta,y)) = O(r^{a})$, but $a$ is an 
indicial root if and only if $A (r^a \psi_j) = O(r^{a + 1})$ and $\psi(\theta,y) = a(y)\psi_j(\theta)$ where $a(y)$ is essentially
arbitrary. 

\subsection{Mapping properties and the Friedrichs domain}
We next describe the basic mapping properties of $L$ on weighted H\"older spaces; these are the content of 
\cite[Corollary 6.4]{Ma1} applied to the operator $A = r^2 L$. 
\begin{prop}
The mapping 
\begin{equation}
\label{mappingpropeq}
L: r^\nu \calC^{\ell+2,\gamma}_e \longrightarrow  r^{\nu-2}\calC^{\ell,\gamma}_e
\end{equation}
has closed range if and only if $\nu \notin \{\frac{j}{\beta},  j \in \mathbb Z\}$.  
\label{mappingprop}
\end{prop}
\noindent The indicial roots are excluded as weights here because for these values, \eqref{mappingpropeq} 
does not have closed range. 

Although this Proposition, and indeed the emphasis in all of \cite{Ma1}, is on the Fredholm (and semi-Fredholm) theory of 
operators such as $A = r^2 L$, 
it is more relevant for us to focus on $L$ and its action as an unbounded operator acting on a space with a fixed
weight, rather than between two differently weighted spaces.  The main new issue from this point of view
is to select a self-adjoint extension; we assume throughout that the term of order $0$ is real-valued 
so that $L$ is a symmetric operator on the core domain $\calC^\infty_0( \MsmD )$. 
Rather than reviewing the well-known classical theory of self-adjoint extensions, we
recall simply that since $L$ is semibounded, there is always a distinguished self-adjoint realization called the
{\it Friedrichs} extension, which is defined using the coercive quadratic form
\beq
\label{FriedrichsEq}
\langle u, v \rangle = \int_M \left(\nabla u \cdot \overline{\nabla v} - V u \overline{v}\right)\, dV_g.
\eeq

We can identify the domain $\Dom_{\mathrm{Fr}}(L)$ of this Friedrichs extension explicitly. It can be shown, 
see \cite[\S 7]{Ma1}, that any $u \in \Dom_{\mathrm{Fr}}(L)$ has a `weak' partial expansion $u \sim u_0(y) + \tilde{u}$, 
where $\tilde{u} = O(r^\mu)$ for some $\mu > 0$ and $u_0$ may be a distribution of negative order, but is independent 
of $r$; this expansion is called weak because it only becomes an asymptotic expansion in the usual sense (in particular, 
with decaying remainder) provided both sides are paired with a test function $\chi(y)$ (depending only on $y$). Thus
$u \in \Dom_{\mathrm{Fr}}(L)$ if and only if 
\[
(r,\th) \mapsto \langle u(r,\th,\cdot), \chi(y) \rangle = \langle u_0(y), \chi(y) \rangle + \calO(r^\mu)
\] 
for any $\chi \in \calC^\infty(Y)$.  To distinguish this from behaviour of more general solutions, it is also proved 
in \cite[\S 7]{Ma1} that if $u$ is any $L^2$ solution to $Lu = f$ with $f \in L^2$, then this expansion could 
contain an extra term $\langle u_{01}(y), \chi(y) \rangle \log r$ on the right. Hence Friedrichs domain is characterized
by the requirement that the coefficient $u_{01}$ of $\log r$ vanish. We note that a principal source of the difficulties 
reported in \cite{Ma0} revolved around some technicalities encountered when working with these weak expansions. 

Henceforth we work exclusively with the Friedrichs extension of $L$, and denote it simply by $L$. It is straightforward to deduce 
using Hardy-type estimates that the domain $\Dom_{\mathrm{Fr}}(L)$ is compactly contained in $L^2$, which proves 
that $L$ has discrete spectrum as an operator on this space. Its nullspace is finite dimensional, with every element bounded 
and polyhomogeneous. Thus there is a uniquely defined generalized inverse $G$ determined by 
\[
L G = G L = \mbox{Id} - \Pi,
\]
where $\Pi$ is the finite rank orthogonal projector onto the nullspace. Essentially by definition, if $K$ is
the $L^2$ nullspace of $L$, then $\Dom_{\mathrm{Fr}}(L) = G( L^2(M, dV_g)) \oplus K$. 

We now shift to the analogous but less-standard discussion for $L$ acting between H\"older spaces. 
Proceeding by analogy with these $L^2$ definitions, we define the H\"older-Friedrichs domains
\[
\Dom_s^{0,\gamma}(L) := \{u \in \calC^{0,\gamma}_s: Lu \in \calC^{0,\gamma}_s\}, \quad 
\mbox{for}\ s = w \ \mbox{or} \ e.
\]
We claim that 
\[
\Dom^{0,\gamma}_e \subsetneq \calC^{2,\gamma}_e, \quad \mbox{and} \quad 
\Dom^{0,\gamma}_w\supsetneq \calC^{2,\gamma}_w.
\]
To see these inclusions, note first that if $u \in L^\infty$ and $Lu \in \calC^{0,\gamma}_e$, then a basic edge regularity theorem, 
proved using the mapping properties of the Green function $G$, see \cite[Proposition \S 3.7]{Ma1}, gives that $u \in \calC^{2,\gamma}_e$. 
However, if $u \in \calC^{2,\gamma}_e$, then $Lu$ is usually not bounded, and in fact typically we only have
$Lu \in r^{-2}\calC^{2,\gamma}_e$.  On the other hand, we explain below that $G: \calC^{0,\gamma}_w \to \calC^{0,\gamma}_w$,
so $\Dw \subset \calC^{0,\gamma}_w$. Moreover, functions in this domain lie in $\calC^{2,\gamma}( X \setminus \del X)$.  
However, as we describe more carefully below, $\Dom^{0,\gamma}_w$ contains the function $v = r^{1/\beta} e^{i\theta}$, and hence 
if $\beta > 1/2$, then for example $\del_r^2 r^{1/\beta} e^{i\theta} \notin L^\infty$.   In order to accomodate functions with these fractional 
exponents in the wedge spaces, we henceforth assume that
\begin{equation}
\mbox{if $s = w$, then $\gamma\in (0,1)\cap (0,\frac1\be-1]$}.  
\label{gamma}
\end{equation}
Note that this guarantees at least that $r^{1/\beta}e^{i\theta} \in \calC^{1,\gamma}_w$. 

The mapping 
\begin{equation}
L: \Dom_s^{0,\gamma}(L) \longrightarrow \calC^{0,\gamma}_s
\label{sFH} 
\end{equation}
is invertible up to a possible finite dimensional nullspace. We need to obtain a more explicit characterization 
of these singular `H\"older-Friedrichs' domains. The first step in this direction uses the Green
function $G$ exactly as in the $L^2$ theory: 
\begin{prop}
The nullspace $K$ of $L$ in $L^2(M, dV_g)$ coincides with the nullspace of $L$ in $\calC^{2,\gamma}_s$, and we have
\[
\Dom^{0,\gamma}_s(L) = G ( \calC^{0,\gamma}_s) \oplus K = \{u = Gf: f \in \calC^{0,\gamma}_s\} \oplus K,
\]
\label{char1}
\end{prop}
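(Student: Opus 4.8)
The plan is to prove the two claimed equalities by showing the inclusion $G(\calC^{0,\gamma}_s) \oplus K \subseteq \Dom^{0,\gamma}_s(L)$ is elementary, and that the reverse inclusion $\Dom^{0,\gamma}_s(L) \subseteq G(\calC^{0,\gamma}_s) \oplus K$ requires a regularity/decomposition argument using the mapping theory from Proposition~\ref{mappingprop}. First, for the easy direction: if $u = Gf + k$ with $f \in \calC^{0,\gamma}_s$ and $k \in K$, then $Lu = LGf = f - \Pi f$, and since $\Pi f$ lies in the finite-dimensional space $K$ of bounded polyhomogeneous functions it is certainly in $\calC^{0,\gamma}_s$; so $Lu \in \calC^{0,\gamma}_s$. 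It remains to check $u \in \calC^{2,\gamma}_s$, which is where one invokes the edge regularity theory: $Gf$ solves $L(Gf) = f - \Pi f \in \calC^{0,\gamma}_s \subseteq r^{-2}\calC^{0,\gamma}_e$ (using $r \leq C$), and $Gf$ is bounded, so the standard edge elliptic regularity for $A = r^2 L$ (cited above, following \cite[Cor.~6.4]{Ma1}) upgrades $Gf \in \calC^{2,\gamma}_e$; for $s = w$ one must additionally quote Donaldson's Schauder estimate on the wedge spaces (the parallel development the paper keeps advertising) together with the restriction \eqref{gamma} on $\gamma$. The bounded polyhomogeneous functions in $K$ are visibly in $\calC^{2,\gamma}_s$ as well.

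For the reverse inclusion, suppose $u \in \Dom^{0,\gamma}_s(L)$, so $u \in \calC^{2,\gamma}_s$ and $f := Lu \in \calC^{0,\gamma}_s$. Set $w := u - Gf$. Then $Lw = Lu - LGf = f - (f - \Pi f) = \Pi f \in K$. One wants to conclude $w \in K$, i.e.\ $Lw = 0$; equivalently, one must show $\Pi f = 0$. The key observation is that $G f \in \calC^{2,\gamma}_s$ by the regularity argument of the previous paragraph, hence $w = u - Gf \in \calC^{2,\gamma}_s \subseteq L^2(M, dV_g)$ (here one uses that $\calC^{0,\gamma}_s \subseteq L^\infty \subseteq L^2(M)$ since $M$ has finite volume in the edge metric). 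But also $w$ lies in the Friedrichs domain: since $w$ is bounded with $Lw = \Pi f \in L^2$, and $w$ satisfies the weak partial expansion criterion (it is bounded, so the $\log r$ coefficient in its weak expansion, which would force unboundedness obstructions of the type discussed after the definition of $\Dom_{\mathrm{Fr}}$, can be shown to vanish — more carefully, elements of $\calC^{2,\gamma}_s$ have no $\log r$ term to leading order by the regularity theory), we get $w \in \Dom_{\mathrm{Fr}}(L)$. Now pair: $\langle Lw, w\rangle = \langle \Pi f, w\rangle$. Since $\Pi f \in K = \ker L$ and $w \in \Dom_{\mathrm{Fr}}(L)$, self-adjointness gives $\langle \Pi f, w \rangle = \langle w, \Pi f\rangle = \langle w, L(\text{anything}) \rangle$-type manipulation; more directly, $L w = \Pi f$ with $\Pi f \in \ker L$ means $L^2 w = 0$, so $\langle Lw, Lw\rangle = \langle L^2 w, w\rangle = 0$, whence $Lw = 0$, i.e.\ $w \in K$. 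Therefore $u = Gf + w \in G(\calC^{0,\gamma}_s) \oplus K$.

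The directness of the sum $G(\calC^{0,\gamma}_s) \oplus K$ must also be checked: if $Gf = k \in K$ for some $f \in \calC^{0,\gamma}_s$, then applying $L$ gives $f - \Pi f = Lk = 0$, so $f = \Pi f \in K$, and then $Gf = G\Pi f = 0$ (since $G$ annihilates $K$ by construction, $GL = \mathrm{Id} - \Pi$ forces $G|_K = 0$), so $k = 0$; hence the sum is direct.

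The main obstacle I expect is the Friedrichs-membership step for $w$, namely rigorously ruling out a $\log r$ term in the weak expansion of $w = u - Gf$. The element $u$ is only assumed to lie in $\calC^{2,\gamma}_s$, and one needs to know that such elements automatically satisfy the Friedrichs condition — i.e.\ that their weak leading expansion has no $\log r$ coefficient. For $s = w$ this is essentially built into the wedge spaces (continuity at $D$ forces the $\log r$ and the $r^{1/\beta}\log$-type terms to be absent in the relevant range, given \eqref{gamma}); for $s = e$ one needs the finer statement, presumably from \cite[\S 7]{Ma1}, that $\calC^{2,\gamma}_e \cap \Dom^{0,\gamma}_e(L) \subseteq \Dom_{\mathrm{Fr}}(L)$ — intuitively clear because a genuine $\log r$ term would violate $Lu \in \calC^{0,\gamma}_e$ (it would produce an $r^{-2}\log r$ singularity in $Lu$), but making this precise requires care with the indicial-root analysis at the double root $a = 0$. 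Everything else is bookkeeping with the already-quoted mapping properties and the self-adjointness of the Friedrichs extension.
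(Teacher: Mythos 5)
Your proposal is correct and follows the same basic strategy as the paper's proof, which is only a few sentences long: observe $\calC^{0,\gamma}_s \subset L^2$ so the right side is well-defined, dispatch the easy inclusion, and for the converse argue that $u \in \calC^{2,\gamma}_s$ with $Lu \in \calC^{0,\gamma}_s$ automatically lies in $\Dom_{\mathrm{Fr}}(L)$. The one place you take a detour is in showing $w := u - Gf$ lies in $K$: you derive $L^2 w = 0$ and pair with $w$ to conclude $Lw = 0$. The paper instead applies $G$ directly to $Lu = f$: once $u \in \Dom_{\mathrm{Fr}}(L)$ is established, the identity $GL = \mathrm{Id} - \Pi$ gives $u = Gf + \Pi u$, which is the desired decomposition with $v = \Pi u$. (Your intermediate claim $\Pi f = 0$ is indeed true --- $f = Lu$ lies in $\mathrm{Ran}\,L$, which equals $K^\perp$ because $L$ has discrete spectrum --- but it is a consequence of the argument rather than an obstacle to clear.) Your identification of the Friedrichs-membership step as the crux is exactly right; the paper simply asserts it, so your sketch (boundedness kills the leading $\log r$ coefficient in the weak expansion) is, if anything, more detail than the paper provides. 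The directness check for the sum is a nice extra the paper leaves implicit.
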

\begin{proof}
To prove the first assertion about nullspaces, apply \cite[Proposition 7.17]{Ma1} to see that an element
of either nullspace is polyhomogeneous, and lies in both $L^2$ and $\calC^{2,\gamma}_s$. 
Since $\calC^{0,\gamma}_s \subset L^\infty(M) \subset L^2(M, dV_g)$, the space on the right in the
displayed equation is well-defined.  If $u$ is in the space on the right, 
then clearly $Lu = f \in \calC^{0,\gamma}_s$. Conversely, if $u \in \calC^{0,\gamma}_s$, $f \in \calC^{0,\gamma}_s$ and $Lu = f$ 
distributionally, then $u$ is in the $L^2$ Friedrichs domain.  Clearly $L(u - Gf) = 0$, and since both
$u$ and $Gf$ are in $L^2$, we can write $u - Gf = v$ for some $v\in K$. 
\end{proof}

\subsection{Finer properties of functions in the H\"older-Friedrichs domain}
This last proposition sets the stage for the more detailed study of the regularity of functions in these domains.  In this 
subsection we first recall Donaldson's estimates, which characterize $\Dom_w^{0,\gamma}(L)$, and then state the 
corresponding results for $\Dom_e^{0,\gamma}(L)$, with the proofs deferred to the next subsection. 
We include some auxiliary regularity results which are used later.

The characterizations of the domains $\calD^{0,\gamma}_s$ will be given in terms of which derivatives lie in $\calC^{0,\gamma}_s$.
We also show that either of these domains are independent of the operator $L$ in the sense 
that they remain the same if we replace the polyhomogeneous K\"ahler edge metric $\o$ by any metric $\o_u$, 
where the K\"ahler potential $u$ itself only lies in $\Dw$, and  $V \in \calC^{0,\gamma}_s$. 

To gain a sense of where we are headed, recall that on a closed smooth manifold $M$, the $L^2$ Friedrichs domain 
of the Laplacian is equal to the Sobolev space $W^{2,2}(M)$. This follows from the basic elliptic estimates,
of course, but is also a consequence of the boundedness on $L^2$ of the Riesz potential operator 
$\nabla^2 \circ \Delta^{-1}$.  The corresponding Schwartz kernels are pseudodifferential operators of order 
$0$ and we can appeal to the general boundedness properties of this class of operators. 
Pseudodifferential theory has its origins in attempts to answer questions of this type. 

We follow a similar route here. The Green operator $G$ represents $\Delta^{-1}$, and the problem 
becomes one of determining which second derivatives applied to $G$ yield `Riesz potential' operators which 
are bounded on $\calC^{0,\gamma}_s$.  As we now describe, if $u \in \Dom_s^{0,\gamma}(L)$, then {\it not} 
every second derivative term appearing in the operator $L$, written as a real operator, applied to $u$ lands in 
$\calC^{0,\gamma}_s$. Donaldson's simple yet crucial observation \cite{D} is that this is not necessary!  As 
described in \S 2, the \MA operator decomposes into the sum of the individual $(1,1)$-type terms $g^{i\jbar}u_{i\jbar}$, 
each of which involve particular combinations of real second derivatives which in the notation of \S 2 are the expressions $P_{i\jbar} u$.  
The next proposition shows that these simple `monomial' operators characterize $\Dw$ 
in the sense that they provide an equivalent norm on $\Dw(L)$, for each fixed operator $L$ associated to an
edge metric, see also Corollary~\ref{equivdomains} below. 

\begin{prop}
\label{mainlinearprop}
Let $\gamma$ satisfy \eqref{gamma}, and recall the set of operators $\calQ^*$ in \eqref{defQ}. Then 
\[
\Dw(L)=\{u\in \calC^{0,\gamma}_w:\; Q_i u\in \calC^{0,\gamma}_w,\;\; \forall\, Q_i\in\mathcal Q^* \}.
\]
Equivalently, each of the maps 
\begin{equation}
Q_i \circ G:  \calC^{0,\gamma}_w \longrightarrow \calC^{0,\gamma}_w, \quad Q_i \in \mathcal Q^*,
\label{composedmaps}
\end{equation}
is bounded. 
\end{prop}
The proof of Proposition \ref{mainlinearprop} is described in \S\ref{mainlinearpropproofsubsection}. 
\begin{remark}
\label{SchauderRemark}
{\rm 
\noindent
(i)
This is essentially the same as the result in \cite{D} that if $u \in \Dw(L)$ then 
\[
\sum_{i \jbar} || g^{i\jbar}u_{i \jbar} ||_{w; 0,\gamma} \leq C ( || Lu||_{w; 0,\gamma} + ||u||_{\calC^0} ),  
\]
where the important point is that on the left we have a sum of norms rather than a norm of the sum.

Another useful way to phrase this involves the norms
\begin{equation}
||u||_{\Dw} = ||u||_{w; 0,\gamma} + \sum_{Q_i \in \mathcal Q^*} ||Q_i u||_{w;0,\gamma}.
\label{Dsnorm}
\end{equation}
Later we also use the seminorm $[\,\cdot\,]_{\Ds}$ 
defined by omitting the initial $||\,\cdot\,||_{s;0,\gamma}$ term. Proposition~\ref{mainlinearprop} implies that $||\cdot ||_{\Dw}$ 
is a Banach norm on $\Dw$. The space $\Dom^{0,\gamma}_w$ is the same as the space $\calC^{2,\gamma, \be}$ introduced in \cite{D}.}
\end{remark}

There is an equivalence of norms:
\[
C_1 ||u||_{\Dw} \leq ||u||_{\calC^0} + || Lu||_{w; 0,\gamma} \leq C_2 ||u||_{\Dw},
\]
where the constants $C_1, C_2$ depend on the sizes of the coefficients $g^{i\jbar}$. In our application below, 
these metric coefficients are determined by the solution $\vp$ of the \MA equation. We will prove a 
uniform $\calC^0$ bound on $\Delta \vp$ which ensures that these constants remain uniform  
across the family of edge metrics which arise in the continuity argument.

Since the complex operators $P_{i\jbar}$ are sums of the real operators $Q_i$, one direction of Proposition~\ref{mainlinearprop} 
is trivial: if $u$ and every $Q_i u$ lie in $\calC^{0,\gamma}_w$, then trivially $Lu \in \calC^{0,\gamma}_w$ since $Lu$ is just 
a sum of these terms with coefficients in $\calC^{0,\gamma}_w$ (or better). The other direction is proved by
showing that the compositions $Q_i \circ G$ are bounded operators. This is accomplished by Donaldson for the 
model problem by direct scaling methods. 
Our proof here uses that each of these Riesz operators are pseudodifferential edge operators of order $0$ and then invokes 
basic boundedness results for such operators.  We explain this more carefully in the next subsection. 

\medskip

{\rm 
\noindent 
(ii) 
It is at this point that the theory in edge and wedge H\"older spaces differs significantly. Indeed, it turns out that
it is {\it not} true that certain of the Riesz potentials $Q_i \circ G$ are bounded on $\calC^{0,\gamma}_e$; in particular, this
boundedness fails when $Q_i = \del^2_{y_j y_\ell}$. This can be seen by a specific example in local coordinates: the function
$u = y_k y_\ell \log (r^2 + |y|^2)$ lies in $\calC^{0,\gamma}_e$, and it is not hard to check that when $k \neq \ell$, then 
$L u \in \calC^{0,\gamma}_e$ as well.  However, $\del^2_{y_k y_\ell} u \sim \log (r^2 + |y|^2) \notin \calC^{0,\gamma}_e$.
It turns out that $\calC^{0,\gamma}_e$ is a borderline space for this boundedness. Note that we could equally well
have replaced the $y_i y_j$ prefactor in $u$ by $\Re\, z_i \overline{z_j}$; this is still harmonic, so $Lu \in \calC^{0,\gamma}_e$, 
and it is also still true that $\del^2_{i\jbar} u \sim \log r$. 

Despite this defect, the spaces $\calC^{k,\gamma}_e$ still serve some important roles in the arguments in the rest of this paper. 
}

\medskip

The following result is the key to the higher regularity theory. Recall that $\phi \in \PSH(M,\o)$ means that $\o_\phi > 0$.
\begin{cor}
Suppose that $\phi \in \Dw(L)\cap \PSH(M,\o)$ is a limit in the topology of $\Dw(L)$ of a sequence of polyhomogeneous potentials, 
and let $L_\phi := \Delta_{\o_\phi} + V$ for some $V \in \calC^{0,\gamma}_s$. (To make the notation coherent, write $L_0 = L$.) Then 
\begin{equation}
\calD^{0,\gamma}_w(L_0) = \calD^{0,\gamma}_w(L_\phi). 
\label{eqdomains}
\end{equation}
\label{equivdomains}
\end{cor}
We only claim this result when $\phi$ is a limit in the appropriate H\"older norm of polyhomogeneous functions, 
but not for an arbitrary element of $\Dw(L) \cap \PSH(M,\o)$.  This is the classical distinction between the `little'
and `big' H\"older spaces, and is adequate in our setting since we shall only need to apply this result when $\phi$
lies along the continuity path, and hence is a limit in this sense.  This raises an interesting analytic question on which 
we comment after the proof.
\begin{proof}
Observe that $L_\phi u = f$ can be rewritten as $(\Delta_{\o_\phi} -1) u = f - (V +1) u \in \calC^{0,\gamma}_w$,  
so we may as well assume that $V \equiv - 1$, which is a convenient choice because $\Delta_{\o_\phi} - 1$ is invertible. 
Letting $G_\phi = L_\phi^{-1}$, then the assertion is equivalent 
to the fact that the range of $G_\phi$ is independent of $\phi$ (in the allowable space of functions). 

When $\phi$ is polyhomogeneous, then the inverse $G_\phi$ is a pseudodifferential edge operator and 
\eqref{eqdomains} follows from Proposition~\ref{mainlinearprop}.

To prove the assertion for $\phi$ which is a limit of polyhomogeneous functions, note first that the inclusion $\subseteq$ 
is obvious. Indeed, if $u \in 
\calD^{0,\gamma}_w(L_0)$, then $Q u \in \calC^{0,\gamma}_w$ for every 
$Q \in \mathcal Q^*$. Now write
$(g_\phi)_{i\jbar} = g_{i \jbar} + \sqrt{-1} \phi_{i \jbar}$, so that $(g_\phi)^{i\jbar} = g^{i \jbar} + \eta^{i \jbar}$ for some 
$\eta^{i\jbar} \in \calC^{0,\gamma}_w$. Then $L_{\phi} u \in \calC^{0,\gamma}_w$ as well, i.e., $u \in \calD^{0,\gamma}_w(L_\phi)$. 

These two facts together show that $\calD^{0,\gamma}_w(L_\phi)$ remains the same when $\phi$ varies in the dense set of 
polyhomogeneous functions, but might potentially jump up when $\phi$ is a limit of polyhomogeneous potentials. 

For the converse, we claim that there is an a priori estimate
\begin{equation}
\sum_{Q \in \mathcal Q^*} ||Qu ||_{w;0, \gamma} \leq C \left( || L_\phi u||_{w; 0,\gamma} + ||u||_{w; 0,\gamma} \right),
\label{apriori}
\end{equation}
which holds {\it only} for functions $u \in \Dw(L)$ (but {\it not} $\Dom^{0,\gamma}_s(L_\phi)$), where the constant $C$ 
is locally uniform in $\phi$. To prove this, note that this estimate is true for $L_\phi$ when $\phi = 0$ and $u \in \calD^{0,\gamma}_w(L)$.
Freezing coefficients of a more general $L_\phi$ locally near any point $q \in M$, we can approximate this operator by 
one with polyhomogeneous coefficients, with an error term which has coefficients small in $\calC^{0,\gamma}_w$. We prove
the estimate in small coordinate charts for the nearby operator and then by perturbation for $L_\phi$ itself, absorbing the small 
coefficients into the left hand side. These local estimates can then be pasted together with a partition of unity. This method makes
clear that the constant $C$ depends only on the $\calC^{0,\gamma}_w$ norm of the coefficients $\eta^{i \jbar}$. 

To complete the argument, we must prove that for any $f \in \calC^{0,\gamma}_w$, the unique solution $u$ in
$\calD^{0,\gamma}_w(L_\phi)$ to $L_\phi u = f$ necessarily lies in $\calD^{0,\gamma}_w(L_0)$.  Note that another way
to phrase this is that we must prove that $L_\phi: \calD^{0,\gamma}_w(L_0) \rightarrow \calC^{0,\gamma}_w$ is surjective. 

Fix $f \in \calC^{0,\gamma}_w$, and let $u \in \calD^{0,\gamma}_w(L_\phi)$ solve $L_\phi u = f$. By local elliptic regularity (equivalently, 
boundedness of $G$ on edge H\"older spaces), we also know that $u \in \calC^{2,\gamma}_e$. We must show that $u \in 
\calD^{0,\gamma}_w(L_0)$.  

Choose a sequence $\phi_j$ of polyhomogeneous functions which converge to $\phi$ in $\calD^{0,\gamma}_w(L)$.  
For each $j$, there is a unique $u_j \in \calD^{0,\gamma}_w(L)$ with $L_{\phi_j} u_j = f$.  Applying \eqref{apriori} 
(with $L_{\phi_j}$) gives 
\[
\sum_{Q \in \mathcal Q^*} ||Qu_j||_{w; 0,\gamma} \leq C. 
\]

There is a subsequence of the $u_j$ such that each $Q u_j$ converges in some weaker norm $\calC^{0,\gamma'}_w$, and furthermore,
the $\calC^{0,\gamma}_w$ norms of these $Qu_j$ are uniformly bounded.  
There is a limiting function $u \in \calC^{0,\gamma}_w$ (even though the limit takes place in a weaker topology), and 
moreover each $Qu$ lies in $\calC^{0,\gamma}_w$, so in fact $u \in \calD^{0,\gamma}_w(L)$ and $L_\phi u = f$, as desired. 

This proves that $L_\phi$ restricted to $\calD^{0,\gamma}_w(L)$ is surjective, and hence finally that $\calD^{0,\gamma}_w(L_0) = 
\calD^{0,\gamma}_w(L_\phi)$.
\end{proof}
\begin{remark}
\rm
This result is equivalent to the assertion that $L_\phi: \calD^{0,\gamma}_w(L_0) \to \calC^{0,\gamma}_w$ is surjective. 
The latter statement is clearly an open condition for $\phi \in \calD^{0,\gamma}_w(L_0)$, which gives
the stronger conclusion that the result actually holds not just for $\phi$ lying in the closed subspace in $\calD^{0,\gamma}_w(L)$ 
consisting of limits of polyhomogeneous functions, but for all $\phi$ in some open neighbourhood of this subspace. 
This suggests, of course, that the result might be true for all $\phi \in \calD^{0,\gamma}_w(L) \cap \PSH(M,\o)$. 
We do not have a proof of this, but in any case this extension is not needed here. 
\end{remark}

\subsection{Pseudodifferential edge operators and their boundedness}
\label{mainlinearpropproofsubsection}
We describe the proof of Proposition~\ref{mainlinearprop} in this subsection. The main point is to describe 
the structure of the Green operator $G$, or more specifically, the precise pointwise structure of its Schwartz 
kernel $G(z,z')$.  This structure is then used to bound the integrals 
\begin{equation}
Q_i u(z) = \int_X Q_i G(z,z') f(z')\, dV_g(z'), \quad f=Lu \in \calC^{0,\gamma}_w. 
\label{integral}
\end{equation}
The fact which makes this work is that the operators $Q_i \circ G$ are pseudodifferential edge operators; most of these compositions
are of weakly positive type, cf.\ Definition~\ref{nonnegindset}, and \cite[Proposition 3.27]{Ma1} gives their boundedness on the edge
H\"older space $\calC^{0,\gamma}_e$.  We provide an extension of that argument to prove that each $Q_i \circ G$ is
bounded on $\calC^{0,\gamma}_w$ as well.   While this replicates the results of \cite{D}, the refined structure of these
operators proved here is an important ingredient in the higher regularity theory. 

More broadly, we describe why the Schwartz kernel $G$ has a polyhomogeneous structure, and show how one can
deduce from this that most of the Riesz potentials $Q_i \circ G$ are in the edge calculus and of weakly positive type. 
The boundedness of such weakly positive edge operators on edge and wedge H\"older spaces is a basic feature 
of the edge calculus. Donaldson derives the polyhomogeneous structure of the Green function for the flat model 
problem $G_\beta$ by explicit calculation, and then proves the H\"older estimates on the wedge spaces by hand. 
The edge calculus is a systematization of the perturbation arguments which allow one to pass from this flat model 
to the actual curved problem, but one which yields in particular the polyhomogeneous structure of the Green function 
for the curved problem, which plays a significant role for the higher regularity theory. 

The edge calculus $\Psi^*_e(X)$ is a space of pseudodifferential operators on $X$, elements of 
which have degeneracies at $\del X$ similar to the ones exhibited by differential edge operators 
as in \eqref{exeo}. We use $X$ systematically now rather than $M$ since it is more natural
for the descriptions below to work on a manifold with boundary. This space of operators is 
large enough to contain not only all differential edge operators $A$, but also parametrices and generalized 
inverses for the elliptic operators in this category, as well as for incomplete elliptic 
edge operators like $L = r^{-2}A$. The term `calculus' (rather than algebra) is used to indicate 
that $\Psi^*_e(X)$ is almost closed under composition, with the caveat that not every pair of elements may be 
composed due to growth properties of Schwartz kernels in the incoming and outgoing variables 
which prevent the corresponding integrals from converging.

An element $B \in \Psi^*_e(X)$ is characterized by specific regularity properties of its Schwartz 
kernel $B(z,z')$ as a distribution on $X \times X = X^2$; the superscript $*$ is a placeholder for a 
set of indices which indicate the singularity structure of this distribution in various 
geometric regimes in $X^2$. By definition, any such $B(z,z')$ is the pushforward of a distribution $K_B$ defined 
on a space $X^2_e$, called the edge double space, which is a resolution of $X^2$ obtained by 
performing a (real) blow-up of the fiber diagonal (defined below) of $(\del X)^2$. This distribution $K_B$ has a 
standard pseudodifferential singularity along the lifted diagonal (by which we mean a 
polyhomogeneous expansion in powers of the distance to this submanifold), as well as 
polyhomogeneous expansions at all boundary hypersurfaces of $X^2_e$, and product-type 
expansions at the higher codimension corners. We have defined polyhomogeneity on manifolds with 
boundary earlier, and will extend this to manifolds with corners below.  The detailed notation 
$B \in \Psi^{m, k, E_{\rf}, E_{\lf}}_e(X)$ records the pseudodifferential order $m$ along the 
diagonal and the exponent sets in the expansions at the various boundary faces.  We explain 
this in more detail now, but all of this is described fully in \cite[Sections 2-3]{Ma1}. 

We first construct the blowup $X^2_e$. The product $X^2 = X \times X$ is a manifold with corners up to codimension two.
The corner $(\del X)^2$ has a distinguished submanifold, denoted $\mathrm{fdiag}_{\del X}$, which is the fiber diagonal. 
This consists of the set of points $(p, p')$ such that $\pi(p) = \pi(p')$.  This is blown up normally, resulting in 
a space $[X^2, \mathrm{fdiag}_{\del X}]$ which by definition is the edge double space $X^2_e$. Using local coordinates 
$(r,\theta,y)$ on the first factor of $X$ and an identical copy $(r',\theta', y')$ on the second copy, the corner is the 
submanifold $\{r = r' = 0\}$, and $\mathrm{fdiag}_{\del X} = \{r = r' = 0, y = y' \}$. The blowup may be thought of as 
introducing polar coordinates around this submanifold: 
\begin{multline*}
R = | (r, r', y - y')| \geq 0, \ \omega = R^{-1} (r, r', y-y') \in S^{2n-1}_+ = \{\o=(\omega_1, \omega_2, \tilde{\omega}): |\omega| = 1, 
 \omega_1, \omega_2 \geq 0\}, 
\end{multline*}
supplemented by $y', \theta, \theta'$ to make a full coordinate system. Thus $X^2_e$ has a new boundary hypersurface,
$\{R = 0\}$, called the `front face' $\ff$, and the lifts of the two original boundary hypersurfaces, 
$\{\omega_1 = 0\}$ and $\{\omega_2 = 0\}$, called the right and left faces, $\rf$ and $\lf$, respectively.
We write defining functions for these faces as $\rho_\ff$, $\rho_\rf$ and $\rho_\lf$.  The diagonal of $X^2$ lifts to 
the submanifold $\mathrm{diag}_e = \{\omega = (1/\sqrt{2}, 1/\sqrt{2}, 0), \theta = \theta', R \geq 0 \}$.

Here are some motivations for this construction. First, Schwartz kernels of pseudodifferential operators 
are singular along the diagonal in $X^2$, but the fact that this diagonal intersects the corner nontransversely makes 
these singularities hard to describe near this intersection. By contrast, the lifted diagonal $\mathrm{diag}_e$ intersects 
the boundary of $X^2$ only in the interior of $\ff$, and this intersection is transversal; this turns out to allow for a 
simpler description of the singularity of the Schwartz kernel there. Another point is that $X^2_e$ 
captures the homogeneity under dilations inherent in this problem.  The flat model operator
\[
L_\beta = \del_r^2 + r^{-1}\del_r + (\beta r)^{-2}\del_\theta^2 + \Delta_y,
\]
on the product space $[0,\infty)_r \times S^1_\theta \times \RR^{2n-2}_y$ is homogeneous of order $-2$ with respect to 
the dilations $(r,\theta, y) \mapsto (\lambda r, \theta, \lambda y)$, and is also translation invariant in $y$. It follows 
that the Schwartz kernel $G_\beta(z,z')$ of the inverse for the Friedrichs extension of $L_\beta$ commutes with translations 
in $y$, thus depends only on the difference $y-y'$ rather than $y$ and $y'$ individually, and is homogeneous of order 
$-2n + 2$ in the sense that 
\begin{equation}
G_\beta( \lambda r, \lambda r', \lambda (y-y'), \theta, \theta') = \lambda^{-2n+2} G_\beta (r, r', y-y', \theta, \theta').
\label{homogG}
\end{equation}
In the polar coordinate system above, this simply says that
\begin{equation}
G_\beta(r, r', y-y', \theta, \theta') = \mathcal G_\beta( \omega, \theta, \theta') R^{-2n+2},
\label{Gpolar}
\end{equation}
or equivalently, that $G_\beta$ lifts to the double space $(\RR^+ \times S^1 \times \RR^{2n-2})^2_e$ and
decomposes as a product of the simple factor $R^{-2n+2}$ and the `angular part' $\mathcal G_\beta$.  
A further analysis shows that $\mathcal G_\beta$ has a singularity at $\omega = (1/\sqrt{2}, 1/\sqrt{2}, 0)$ 
and polyhomogeneous expansions along the side faces $\{\omega_1 = 0\}$ and $\{\omega_2 = 0\}$. 

We now recall the general definition of polyhomogeneity on manifolds with corners. We do this only on the model orthant 
$\calO = (\RR^+)^k \times \RR^\ell$, with linear coordinates $(x_1, \ldots, x_k, y_1, \ldots, y_\ell)$, 
but this definition is coordinate-invariant, hence translates immediately to arbitrary manifolds with corners. 
First, let
\[
\calV_b(\calO):= \mathrm{span}_{\calC^\infty} \{ x_1 \del_{x_1}, \ldots, x_k \del_{x_k}, 
\del_{y_1}, \ldots, \del_{y_\ell}\}
\]
be the space of all smooth vector fields tangent to all boundaries of this space. We may as well assume that all 
distributions are supported in a ball  $\{|x|^2 + |y|^2 \leq 1\}$. If $\nu = (\nu_1, \ldots, \nu_k) \in \RR^k$, 
then $u$ is conormal of order $\nu$, $u \in \mathcal A^\nu(\calO)$, if
\[
V_1 \ldots V_j u \in x^\nu L^\infty(\calO)\ \forall\, j \geq 0\ \mbox{and for all}\ V_i \in \calV_b(\calO).
\]
Next, $u$ is polyhomogeneous if near the origin in $\calO$, $u$ has an expansion of the form
\[
u \sim \sum_{\ell} \sum_{|p| \leq N_j} a_{\ell, p}(y) x^{\gamma^{(\ell)}} (\log x)^p,
\]
where $\{\gamma^{(\ell)} = (\gamma_1^{(\ell)}, \ldots, \gamma_k^{(\ell)})\}\}$ is a sequence of $k$-tuples
in $\CC^k$ with $\mbox{Re} \gamma_j^{(\ell)} \to \infty$ as $\ell \to \infty$, $x^{\gamma} = x_1^{\gamma_1}
\ldots x_k^{\gamma_k}$ and $(\log x)^p:= (\log x_1)^{p_1} \ldots (\log x_k)^{p_k}$, with each $p \in \NN_o^k$. 
The coefficients $a_{\ell, p}(y)$ are smooth.  As with polyhomogeneous expansions for functions near a codimension
one boundary, these sums are not usually convergent, but may still be differentiated term-by-term, etc.
If $u$ is polyhomogeneous in this sense, then each coefficient of its expansion at any one of
the boundary hypersurfaces or corners is polyhomogeneous on that face. We 
associate to such an expansion an index family $E = \{E^{(\ell)}\}$, $\ell = 1, \ldots, k$, 
consisting of all pairs of multi-indices $\{(\gamma, p)\}$ of exponents which occur in this 
expansion, and denote by $\calA_{\phg}^E$ the space of all such distributions. As in the 
codimension one case, we say that $u \in \calA_{\phg}^0$ if $u$ is polyhomogeneous and if each 
index set $E^{(\ell)}$ is greater than or equal to $0$ in the sense of Definition \ref{co-phg}. We also 
write the simple index set $\{(\gamma + \ell, 0): \ell \in \mathbb N_0\}$ simply as $(\gamma)$; 
thus $\calA_{\phg}^{(\gamma)} = x^\gamma \calC^\infty$, i.e., $u = x^\gamma v$ where $v$ is 
$\calC^\infty$ up to that face. (Even more specifically, a function which is smooth in the 
traditional sense up to the boundary and corners has index set $(0)$.)

We now define the space of pseudodifferential edge operators on $X$.
\begin{definition}
We say that $B \in \Psi^{m, \eta, E_{\rf}, E_{\lf}}_e(X)$ if the Schwartz kernel of $B$ is the pushforward from $X^2_e$ to $X^2$ of a distribution 
$K_B$ on $X^2_e$ which has the following properties.  $K_B$ decomposes as a sum $K_B^{(1)} + K_B^{(2)}$ where 
$K_B^{(1)} = \rho_{\ff}^{-2n+\eta}K_B'$ is supported in a neighbourhood of $\mbox{diag}_e$ which does not intersect the side 
faces, where $K_B'$ has a classical pseudodifferential singularity of order $m$ along this lifted diagonal which is smoothly 
extendible across $\ff$. (This simply says that $K_B'$ is smooth up to $\ff$ away from $\mbox{diag}_e$, and
that the singularity along this lifted diagonal extends across $\ff$ so that it remains conormal on the `continuation'
of the diagonal across this face.) The term $K_B^{(2)}$ is required to be polyhomogeneous on $X^2_e$ with
index sets $(\eta-2n)$ at $\ff$, $E_{\rf}$ at $\rf$ and $E_{\lf}$ at $\lf$, and vanishes in a neighbourhood of the lifted diagonal. 
\end{definition}  
This decomposition of $K_B$ into two terms isolates that part of $K_B$ which contains the diagonal singularity, and emphasizes
the key fact that this singularity is uniform up to $\ff$. The shift of the order at the front face by $-2n$ is an artifact of
a normalization: indeed, the volume form $dV_g$ is uniformly equivalent to $r dr d\theta dy$, so the 
the Schwartz kernel of the identity operator relative to this measure is a smooth nonvanishing multiple of
$r^{-1} \delta(r-r') \delta(\theta - \theta') \delta(y-y')$. Since$\delta(r-r')$ and $\delta(y-y')$ are homogeneous of 
degrees $-1$ and $2-2n$, respectively, this Schwartz kernel is homogeneous of order $-2n$, and we simply
want this to match with the fact that $\mbox{Id}$ is an operator of order $0$. 

Finally, we may state the basic structure theorem for the Green function of $L$.
\begin{prop}
\label
{BasicGreenProp}
Let $g$ be a polyhomogeneous edge metric with angle $\beta$ along $D$ and $L = -\Delta_g + V$ where $V$ is 
polyhomogeneous and bounded on $X$, and suppose that $G$ is the generalized inverse to the Friedrichs extension of $L$. 
Then $G \in \Psi_e^{-2, 2, E, E}(X)$, where the index set $E$ is determined by the indicial 
roots of $L_\beta$ and
by the index sets of $g$ and $V$. In particular, if $g$ and $V$ are smooth (i.e., both have index set $(0)$ at $\del X$), then
\begin{equation}
E \subset \{ (j/\beta + k, \ell): j, k, \ell \in \mathbb N_0\ \mbox{and}\ \ell = 0\ \mbox{for}\ j+k \leq 1,\ (j,k,\ell) \neq (0,1,0) \}.
\label{indset}
\end{equation}
Moreover, if $g$ and $V$ are polyhomogeneous with index set contained in
the index set \eqref{indset}, then the index set $E$ for $G$ is also contained in the 
index set \eqref{indset}. 
\label{strG}
\end{prop}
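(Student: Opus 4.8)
The plan is to construct $G$ as the generalized inverse of the Friedrichs extension of $L$ by the parametrix method of the edge calculus $\Psi^*_e(X)$ from \cite{Ma1}, and then to read off the index set from the model computation. First I would invert the model operator. Since the normal operator of $A = r^2 L$ is $N(A) = s^2 L_\beta$, independently of the point of $D$, this amounts to inverting $L_\beta$ explicitly on $\RR^+_s \times S^1_\theta \times \RR^{2n-2}_w$: taking the Fourier series in $\theta \in S^1$ and the Fourier transform in $w$ reduces it, mode by mode, to a one-dimensional Bessel-type problem in $s$. The Friedrichs extension is singled out by retaining, for the $j$-th angular mode, the solution with bounded leading behaviour $s^{|j|/\beta}$ at $s=0$ (and, for $j=0$, the bounded rather than the $\log s$ solution). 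The resulting kernel $G_\beta$, lifted to the model edge double space, is homogeneous of degree $-2n+2$ in the front-face radial variable as in \eqref{homogG}--\eqref{Gpolar}, carries a classical conormal singularity of order $-2$ along $\mathrm{diag}_e$ that is smooth up to $\ff$, and is polyhomogeneous at $\rf$ and $\lf$ with exponents drawn from the non-negative indicial roots $\{j/\beta\}$ of $L_\beta$; in the notation of the Definition above, $G_\beta \in \Psi^{-2,2,E_0,E_0}_e$ with $E_0$ the set on the right of \eqref{indset}.

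Next I would assemble the parametrix. Ellipticity of $A$ in the edge sense (see \cite{Ma1}) produces a first parametrix inverting $L$ modulo a smoothing error whose normal operator is removed by composing with the inverse of $N(A)$ built from $G_\beta$; this leaves an error vanishing to positive order at $\ff$ and polyhomogeneous at $\rf,\lf$, which is then improved by successive corrections against the indicial operator at the side faces and an asymptotic summation, so that the final error is compact on $L^2$ with kernel vanishing to infinite order at every face. Since $L$ has compact resolvent on $L^2$ and its nullspace $K$ is finite-dimensional and polyhomogeneous (as established earlier), a finite-rank correction turns the parametrix into the exact generalized inverse, $LG = GL = \mathrm{Id} - \Pi$; and because each step used the Friedrichs model inverse, the range of $G$ lies in $\Dom_{\mathrm{Fr}}(L)$, so this $G$ is the Green function in the statement. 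The composition theorem for $\Psi^*_e(X)$ makes all this work, and — what matters for the index set — it records that each composition only shifts exponents by a non-negative indicial root $j/\beta$, by the integer $2$ coming from the term $s^2\Delta_w$ by which $N(A)$ differs from $I(A)$, or by the exponents already present in the polyhomogeneous coefficients of $g$ and $V$. This yields $G \in \Psi^{-2,2,E,E}_e(X)$ with $E$ generated by these data.

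It then remains to identify $E$. When $g$ and $V$ are smooth, the only exponents entering the iteration are the $j/\beta$ and integer shifts, whose closure under the three operations above is $\{j/\beta + k : j,k \in \mathbb N_0\}$; a direct inspection of $G_\beta$ and of the low-order correction terms, using the indicial equation $(\beta^{-2}\del_\theta^2 + a^2)\psi = 0$, shows that no $\log$ is produced at an exponent with $j+k \le 1$ (the would-be $\log$ at exponent $0$ is precisely what the Friedrichs condition removes, and no $\log$ is forced at exponents $1$ or $1/\beta$), and that $(1,0,0)$ is absent from $E$ at the side faces; this gives $E \subset$ \eqref{indset}. For the last assertion one observes that the set in \eqref{indset} is itself closed under those three operations (adding an indicial root $j/\beta$, adding $2$, and the extended union of two such index sets), so if the index sets of $g$ and $V$ already lie inside it, the iteration cannot escape, and again $E \subset$ \eqref{indset}.

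The substantive difficulty is precisely this last bookkeeping: propagating exponents and $\log$ powers through the compositions of the edge calculus so as to land exactly in the index set \eqref{indset}, with its log-free range $j+k\le1$, the exclusion of $(1,0,0)$, and its stability under the operations of the calculus. Everything preceding it — the explicit model inverse $G_\beta$, the parametrix construction, and the identification with the Friedrichs generalized inverse — is a fairly direct application of the machinery of \cite{Ma1}.
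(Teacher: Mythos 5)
Your proposal follows essentially the same route as the paper's own proof: both invoke the edge-calculus parametrix construction from \cite{Ma1}, both compute the model Green function $G_\beta$ by reducing along $\theta$- and $y$-Fourier modes (the paper even offers the heat-kernel alternative from \cite{D} as an equivalent route), and both then track the index set through the composition and error-iteration steps of the calculus, using closure of \eqref{indset} under the extended-union and shift operations. The one place where you fall slightly short of the paper is the exclusion of $(1,0,0)$ from $E$. You attribute this to ``direct inspection of $G_\beta$ and of the low-order correction terms,'' but $1$ sits inside the closure $\{j/\beta + k\}$ you produce, so something more is needed: $1$ is not an indicial root, but it could in principle enter through shifts from the error term. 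The paper supplies the missing mechanism: away from $\ff$ one has $LG=0$ in the interior of $\rf$, and acting on a putative term $a(\theta,y)\,r$ formally produces the uncancellable $r^{-1}\del_r(r)=r^{-1}$ (a term genuinely present in $L$), forcing $a\equiv 0$. Your argument would be complete if you inserted this one-line computation in place of the appeal to inspection; everything else in your write-up is a correct and slightly more detailed rendering of the paper's sketch.
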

\begin{remark}
{\rm
The fact that $G$ has the same index set $E$ at the left and right faces is natural since $G$ is symmetric.
The index set $E$ may be slightly more complicated when $\beta \in \mathbb Q$ since in that case $j/\beta$ 
can equal a positive integer for certain $j$, and this creates extra logarithmic factors in the expansion (i.e., elements of 
$E$ of the form $(k,1)$), but these all occur sufficiently high in the expansion -- in $\mbox{Re}\, \zeta \ge 1$ --
and hence do not affect the considerations below.  These log terms are absent if $g$ is an orbifold metric.

Despite the seemingly elaborate language needed to state this result, this structure theorem for $G$ includes the one given by 
Donaldson \cite{D} for the model Green function, but the key advantage is that we have this same refined structure 
for the curved operator $G$ too.
}
\end{remark}
 
This result is one of the main conclusions of \cite{Ma1}: it is simply the elliptic parametrix construction in the edge calculus, 
modified slightly to accomodate the minor differences for Laplacians of incomplete rather than complete edge metrics. 
As with any parametrix construction, the first main step is to obtain detailed information about the solution operator for the 
model problem $\Delta_{g_\beta}$, or in other words about the model Green function $G_\beta$. This is the
technical core, and the rest of the argument uses pseudodifferential calculus to write the Green function for $L$ 
as a perturbation of $G_\beta$. The specific information we need to obtain, then, is that the Schwartz kernel of 
$G_\beta$ has the same polyhomogeneous structure as in the statement of Proposition \ref{BasicGreenProp}. 
This may be approached in a few ways. The first, appearing in \cite[\S\S 4, 5]{Ma1}, is to take the Fourier transform 
in $y$, thus reducing $\Delta_\beta$ to the family of operators 
\[
\widehat{\Delta}_\beta = \del_r^2 + r^{-1}\del_r + (\beta r)^{-2} \del_\theta^2 - |\eta|^2
\]
on $\RR^+ \times S^1$ where $\eta$ is the variable dual to $y$. 
To keep track of the dependence on $\eta$, set 
$s = r |\eta|$ to convert this to
\[
|\eta|^{2} \left(\del_s^2 + \frac{1}{s} \del_s + \frac{1}{\beta^s s^2} \del_\theta^2 - 1\right). 
\]
This can be analyzed explicitly by separation of variables. Chasing back through these transformations yields a 
tractable expression for $G_\beta$. The equivalent approach in \cite{D} is to write $G_\beta$ as an integral over 
$0 < t < \infty$ of the heat kernel $\exp (t \Delta_\beta)$.  This heat kernel is the product of the heat kernel on 
the model two dimensional cone with cone angle $2\pi \beta$ and the Euclidean heat kernel on $\RR^{2n-2}$. 
The former of these is known classically, albeit as an infinite sum involving Bessel functions, see \cite{D} and \cite{Mooers}, 
while the latter is the standard Gaussian.  Either method requires about the same amount of work. 

A minor point in the statement of Proposition~\ref{strG} which turns out to be important below is the fact that 
the index set $E$ does not contain the element $(1,0)$; in other words, the monomials $r$ and $r'$ do not appear 
in the expansion of $G$ at the left and right faces, respectively. This can be explained as follows. As a distribution, 
$G(r,\theta, y, r', \theta', y')$ satisfies
\[
L G = r^{-1} \delta(r-r') \delta(\theta - \theta') \delta(y-y').
\]
Restricting to the interior of $\rf$, away from the front face, we see that $L G = 0$ there.  Since we know at this
point that $G$ is polyhomogeneous, we can calculate formally, i.e.\ letting $L$ act on the series expansion and collecting
terms with the same powers. It is then easy to see that $L$ cannot annihilate the term $a(\theta, y) r$; indeed, referring
to \eqref{decompLap}, the only possible problematic term in 
$
L (a(\theta, y) r)$ is 
$
(
\del_r^2 + r^{-1} \del_r+ \beta^{-2} r^{-2} \del_\theta^2)
(a r)
= 
r^{-1} (\beta^{-2}\del_\theta^2 +1) a 
$. 
However, since $\beta < 1$, there is no other term
in the expansion which could cancel this, and it is 
impossible for $\beta^{-2}a_{\theta \theta} + a$ 
to vanish unless $a \equiv 0$.  This proves the claim. 

\begin{definition}
An index set $E$ is called nonnegative if, for any $(\gamma, p) \in E$, $\Re \gamma \, \geq 0$ and if
$\Re \, \gamma = 0$, then $(\gamma,p) = (0,0)$. 

An operator $B \in \Psi^{m,\eta,E,E'}_e(X)$ is said to be of nonnegative type if $m \leq 0$, $\eta \geq 0$, and
both $E$ and $E'$ are nonnegative. It is called weakly positive if, in addition, either $\eta > 0$ or $E > 0$ (thus the
excluded case is when $\eta=0$ and $E$ contains $(0,0)$.  We shall always assume that if the leading
exponent at $\lf$ is $0$, then the corresponding coefficient does not depend on $\theta$. 
\label{nonnegindset}
\end{definition}

We now state the basic boundedness theorem needed in the proof of Proposition \ref{mainlinearprop}.
\begin{prop} Let $B \in \Psi^{m,r,E, E'}_e(X)$. If $B$ is of weakly positive type, so in particular $m \leq 0$, and 
if the first nonzero element of the index set for $B$ at $\lf$ is greater than $1$, then 
\[
B: \calC^{\ell,\gamma}_w(X) \longrightarrow \calC^{\ell,\gamma}_w(X)
\]
is bounded for any $\ell \in \NN_0$. 
\label{bddpsido}
\end{prop}
\begin{proof} The order of the singularity along the diagonal is not the key issue here, so we assume that $m=0$.
We proceed in a series of increasingly general steps. For the first, suppose that the index set of $B$ at $\lf$ is strictly
positive and $\mu \geq 0$ is strictly smaller than all elements in this index set, and we consider boundedness on
weighted edge H\"older spaces. The result here is \cite[Proposition 3.27]{Ma1}, which asserts that
\begin{equation}
B: r^\mu \calC^{\ell,\gamma}_e(X) \longrightarrow r^\mu \calC^{\ell,\gamma}_e(X)
\label{bddwpos}
\end{equation}
is bounded for every $\ell \geq 0$. 

Let us recall how \eqref{bddwpos} is proved. Decompose $B$ into a sum of two operators, $B_1 + B_2$, where the Schwartz 
kernel of $B_1$ is supported near the lifted diagonal $\diag_e \subset X^2_e$, and carries the full pseudodifferential singularity, 
while that of $B_2$ is polyhomogeneous on $X^2_e$ and has the same index sets as $B$ at $\lf$, $\ff$ and $\rf$. 
The boundedness of $B_1: r^\mu \calC^{\ell,\gamma}_e \to r^{\mu +\eta}\calC^{\ell,\gamma}_e\hookrightarrow r^\mu \calC^{\ell,\gamma}_e$ 
is a consequence of the approximate dilation 
invariance of both the Schwartz kernel of $B_1$ and of the edge H\"older norm, as well as the standard local boundedness 
on H\"older spaces for (ordinary, nondegenerate) pseudodifferential operators. Rigorously, this is done using a Whitney cube 
decomposition and scaling arguments. This uses only the conditions $m \leq 0$ and $\eta \geq 0$.  

As for the other term, noting that $(r\del_r)^i (r\del_y)^j \del_\theta^\ell B_2$ has the same structural properties and index 
sets as $B_2$ itself, we see that it suffices to prove that if $\tilde{f} \in r^\mu \calC^{\ell,\gamma}_e$, then $|B_2 \tilde{f}| \leq Cr^\mu$, 
since then every $|(r\del_r)^i (r\del_y)^j \del_\theta^k B_2 \tilde{f}| \leq Cr^\mu$ as well.   

For convenience, and since slight variants of this calculation are used several places below, here is a precise statement of
a special case of the pushforward theorem, \cite[Proposition A.18]{Ma1}. 
Suppose that $H \in \Psi^{-\infty, \eta, E, E'}_e$ has Schwartz kernel which is pointwise nonnegative, 
and where $E' + \mu > -2$. Suppose the smallest element in the index set $E$ is $(\lambda_0, 0)$. Then 
\[
H (r^\mu) = \int H(r,\tilde{r},y,\tilde{y},\theta, \tilde{\theta}) \tilde{r}^\mu \, \tilde{r} d\tilde{r}d\tilde{y} d\tilde{\theta}
\]
is polyhomogeneous with leading term $a r^{\hat{\lambda}}$, where $\hat{\lambda} = \min\{\lambda_0, \mu + \eta\}$, provided
$\mu + \eta \neq \lambda_0$, or with leading term $a r^{\lambda_0} \log r$ if $\lambda_0 = \mu + \eta$.   More generally, if
$|f| \leq C r^\mu$, then $|Hf| \leq C r^{\hat{\lambda}}$ when $\lambda_0 \neq \mu + \eta$ and $|Hf| \leq C r^{\lambda_0}\log r$
when $\lambda_0 = \mu + \eta$. 

Returning to the problem above, applying this result shows that because of the assumption about the index set of $B$ 
and the fact that $\mu \geq 0$, we obtain that $|B_2 (r^\mu)| \leq C r^\mu$ as desired. 

Next, when the leading exponent in the expansion of $B$ at $\lf$ is $0$, then the sharp statement is that 
\[
B: r^\mu \calC^{\ell,\gamma}_e \longrightarrow \calC^{0,\gamma}_w \cap \calC^{\ell,\gamma}_e. 
\]
It suffices to consider just the contribution from $B_2$ and to show that $B_2 \tilde{f} \in \calC^{0,\gamma}_w$.  For this 
particular argument it is actually enough to assume that $B$ is of nonnegative type, so let us assume that the 
leading exponent in the expansion of $B_2$ at $\ff$ is $0$.  As a first step note that since $|\tilde{f}| \leq C r^\gamma$, 
then by the pushforward theorem, $|B_2 \tilde{f}| \leq C$.  To improve this, note that $r\del_r$ annihilates the leading term
$a_0 r^0$ of $B_2$ at $\lf$, so $r\del_r B_2$ is weakly positive with leading exponent at $\lf$ strictly greater than $\gamma$.
Applying the pushforward theorem for this operator gives $|r\del_r B_2 \tilde{f}| \leq C r^\gamma$, or equivalently, 
$|\del_r B_2 \tilde{f}| \leq C r^{\gamma-1}$. We next observe that $r\del_y B_2$ has index set greater than or equal to $1$ at 
$\lf$, so $|r\del_y B_2 \tilde{f}| \leq C r^\gamma$, i.e., $\del_y B_2 \tilde{f}| 
\leq r^{\gamma-1}$.  Finally, by hypothesis, $r^{-1} \del_\theta B_2$ has positive index set at $\lf$ and has order $\eta = -1$ at $\ff$, 
hence $|r^{-1}\del_\theta B_2 \tilde{f} \leq C r^{\gamma-1}$. These estimates on the derivatives yield, by a standard argument, that 
$B_2 \tilde{f} \in \calC^{0,\gamma}_w$. 

Finally, let us turn to our actual goal, when $f \in \calC^{\ell,\gamma}_w$.  First suppose that $\ell = 0$ and fix $f \in \calC^{0,\gamma}_w$. 
We wish to prove that $Bf \in \calC^{0,\gamma}_w$ as well.  Since $B$ is of weakly positive type, we have that either the
index set $E$ at $\lf$ is greater than $\gamma$ or else the order $\eta$ at $\ff$ is positive. In our application, $\eta \geq 1$,
but in fact any $\eta > \gamma$ is sufficient for this estimate. The key to this argument is the decomposition $f = \hat{f} + 
\tilde{f}$ from \S 2.6.3, where $\hat{f}$ is the harmonic extension of $f_0 = f|_D$ and $\tilde{f} \in r^\gamma \calC^{0,\gamma}_e$. 
We have just proved that $B \tilde{f} \in \calC^{0,\gamma}_w$, so the remaining issue is to study the behaviour of
$\hat{u} = B \hat{f}$, assuming just that $B$ is of weakly positive type.  Recalling now the bounds \eqref{extu0}, we first estimate that
$\del_y (B \hat{f}) = B \del_y \hat{f} + [\del_y, B] \hat{f}$.  (Because $\hat{f}$ is smooth in the interior, we no longer need
to isolate the diagonal contribution of $B$.) By \cite[Proposition 3.30]{Ma1}, the commutator $[\del_y, B]$ is an operator
of the same type and order as $B$, so in particular $|[\del_y , B] \hat{f}| \leq C |\log r|$.  On the other hand, using \eqref{extu0},
$| B \del_y \hat{f}| \leq |B (\tilde{r}^{\gamma-1})| \leq C r^{\gamma-1}$.  For the $r$ derivative we reintroduce the decomposition 
$B = B_1 + B_2$.  We have $r\del_r B_1 \hat{f} = B_1 r\del_r \hat{f} + [r\del_r, B_1] \hat{f}$.  Now $B_1$ is of strictly positive type, i.e., 
it is of order $0$, vanishes (to all orders) at $\lf$ and to order $\eta > \gamma$ at $\ff$; by \cite[Proposition 3.30]{Ma1} again, 
the commutator $[r\del_r, B_1]$ has the same properties. Therefore $| r\del_r B_1 \hat{f} | \leq C r^\gamma$.  Finally,
$ r\del_r B_2$ vanishes to order greater than $\gamma$ at $\lf$ and $\ff$, hence $|r\del_r B_2 \hat{f}| \leq C r^\gamma$ too.
Altogether, $|\del_r B \hat{f}| \leq C r^{\gamma-1}$, as required. The analogous estimate for $r^{-1} \del_\theta B \hat{f}$ is similar
but simpler. These estimates together imply that $B \hat{f} \in \calC^{0,\gamma}_w$.

Suppose at last that $\ell > 0$.  We use a simple commutator argument again, noting that $\del_y B f = B \del_y f + [\del_y, B] f$, 
so by induction we obtain that $f \in \calC^{\ell,\gamma}_w \Rightarrow Bf \in \calC^{\ell,\gamma}_w$.
\end{proof}

Implicit in this argument is that when $\eta = 0$ and $E \geq 0$, $B f$ may have logarithmic growth as $r \to 0$ when 
$f \in \calC^{0,\gamma}_e$. To see this, observe first that $B_1 f$ is well-behaved as before. In addition, $r\del_r B_2$ is 
weakly positive with positive index set at $\lf$, so $r\del_r  B_2 f \in \calC^{\ell,\gamma}_e$ for all $\ell$. Using
only that $|r\del_r Bf| \leq C$ and integrating from $1$ to $r$ gives $|Bf| \leq C(1 + |\log r|)$.  It is for this reason
that we have to treat certain of the operators in $\calQ^*$ separately. 

\begin{proof}[Proof of Proposition~3.3]
Proposition~\ref{bddpsido} provides the main step. The key point is that $Q_i \circ G$ is of weakly positive type for all 
$Q_i \in {\calQ}$; we then reduce to this case for the remaining operators $Q_i \in \calQ^*  \setminus {\calQ}$.

Suppose then that $Q_i \in {\calQ}$; we show that $Q_i G$ is of weakly positive type. Observe that each vector field $r\del_r$, 
$\del_\theta$ and $r\del_{y_j}$ on $X$ lift smoothly to $X^2_e$ via the blowdown map $\pi: X^2_e \to X$; indeed, each of these 
lifts is a vector field on this blown up space which is tangent to all boundary faces. Thus $\rho_{\ff} \pi^* \del_{y_\ell}$ is tangent to 
all faces, as is $\rho_{\ff} \pi^* r^{-1}\del_\theta$, while $\rho_{\ff} \pi^* \del_r$ differentiates transversely to the left face, but 
is tangent to all other faces.  Therefore, each operator $Q_i \in \mathcal Q^*$ lifts to an operator on $X^2_e$ 
of the form $\rho_{\ff}^{-k} \tilde{Q}_i$ where $\tilde{Q}_i$ acts tangentially along $\ff$ and where $k = 1$ or $2$. 
When $k = 1$, the composition $Q_i \circ G$ has order $1$ at $\ff$, so is of weakly positive type. When $k = 2$,
then it is necessary that $Q_i$ annihilate the leading coefficient of $G$ at $\lf$ so that $Q_i \circ G$ has 
positive index set there, and this is precisely what determines the subcollection ${\calQ} \subset \calQ$. 
Note that we are using here that the coefficient $a_0$ of $r^0$ at $\lf$ is independent of $\theta$, which is the case 
since this term is annihilated by the indicial operator.  
We have now proved that $Q_i \circ G: \calC^{0,\gamma}_w \to \calC^{0,\gamma}_w$ for all $Q_i \in {\calQ}$. 

Next consider the operator $Q = \del^2_{y_i y_j}$. From the considerations in the previous paragraph, $Q \circ G$ is nonnegative,
but not strictly positive, so if $f \in \calC^{0,\gamma}_w$, then it takes an extra step to show that $Q \circ G (f) \in \calC^{0,\gamma}_w$. 
Decompose $G = G_1 + G_2$ with $G_1$ supported near the diagonal and vanishing near the left and right faces, 
and so that $G_2$ has no diagonal singularity.  Then $Q \circ G_1$ has order $0$ but has empty index set near $\lf$, hence
is weakly positive, so Proposition~\ref{bddpsido} shows that $Q \circ G_1 f \in \calC^{0,\gamma}_w$.   

For the other term, recall the decomposition $f = \hat{f} + \tilde{f}$, as in the proof of Proposition~\ref{bddpsido}, where 
$\tilde{f} \in r^\gamma \calC^{0,\gamma}_e$ and $\hat{f}$ satisfies \eqref{extu0}.  The previous proof shows that $Q \circ G_2 (\tilde{f}) 
\in r^\gamma \calC^{0,\gamma}_e \subset \calC^{0,\gamma}_w$, cf.\ \eqref{eqw0e}.  Next, write 
\[
Q\circ G_2 (\hat{f}) = (\del_{y_i} \circ G_2)  (\del_{y_j} \hat{f}) + \del_{y_i} [\del_{y_j}, G_2] \hat{f}.
\] 
Observe that $\del_{y_i} \circ G_2 \in \Psi^{-\infty, 1, E, E}_e$ (here $E$ is the same index set as in the characterization of $G$).
The other operator is of the same type; indeed, by \cite[Proposition 3.30]{Ma1}, the commutator $[\del_{y_j}, G_2] \in 
\Psi^{-\infty,2,E,E}_e$ so $\del_{y_i} [\del_{y_j}, G] \in \Psi^{-\infty,1,E,E}_e$ as well.  Now using the pushforward theorem together
with the estimates $|\hat{f}| \leq C$, $|\del_{y_j} \hat{f}| \leq C r^{-1 + \gamma}$, we see that $|Q \circ G_2 (\hat{f})| \leq C$.
To show that these terms are actually in $\calC^{0,\gamma}_w$, note that $\del_r \circ (\del_{y_i} \circ G_2)$, 
$\del_{y_\ell} \circ (\del_{y_i} \circ G_2)$ and $r^{-1} \del_\theta \circ (\del_{y_i} \circ G_2)$ all lie in $\Psi^{-\infty, 0, E', E}_e$
where $E'$ is a nonnegative index set, so by the pushforward again, applying these to a function bounded by 
$r^{-1 + \gamma}$ produces a function which is again bounded by a multiple of $r^{-1 + \gamma}$.  (For the second
term we even have a much stronger estimate, but this is unimportant here.)  We have proved that
$|\nabla \circ Q \circ G_2 (\hat{f})| \leq C r^{-1 + \gamma}$, and hence that $Q \circ G_2 (\hat{f}) \in \calC^{0,\gamma}_w$. 

The final operator to consider is $P_{1\bar{1}}$. We use the same trick as \cite{D}, noting that $P_{1 \bar{1}} = \Delta_g - \sum a_i Q_i$
where the $Q_i$ are all operators of the type considered above, with coefficients in $\calC^{0,\gamma}_w$, and from this
the corresponding bound is clear. 
\end{proof}

We remark that the proof gives slightly more. Namely, in case $a_2$, the coefficient of $r^2$ in the expansion
of $G$ at $\lf$ is independent of $\theta$, which is the case for solutions of the \MA equation (see 
Proposition~\ref{PreciseAsympExpansionProp}), and $\be < 1/2$, then $r^{-2}\del_\theta^2 \circ G$ is
of weakly positive type, hence both $ r^{-2} \del_\theta^2 \circ G$ and $(\del_r^2 + r^{-1}\del_r) \circ G$ 
are bounded on $\calC^{0,\gamma}_w$. 

\subsection{A comparison of methods}
The previous subsections provide a review of the terminology and basic results about edge operators.
The point of including this is to show that Proposition~\ref{mainlinearprop} follows directly from this 
general existing theory. Since Donaldson's approach \cite{D} has the appearance of being more 
elementary, it is worth saying a bit more about the similarities and differences between the approaches, 
as well as the advantages of the one here. 

The two slightly different methods for constructing the model kernel $G_\beta$ are equivalent, and there is 
little to recommend one method over the other. The other two steps of the argument in \cite{D} are inverted 
relative to the development here. The edge parametrix construction provides a systematic way to pass from 
the polyhomogeneous structure of the model inverse $G_\beta$ to the corresponding structure for the actual 
inverse $G$. Once that is known, the H\"older boundedness for $G$ and $G_\beta$ are then deduced 
from the general result about boundedness of edge pseudodifferential operators acting on edge H\"older spaces,
the proof of which reduces by scaling to little more than the boundedness of standard pseudodifferential 
operators on ordinary H\"older spaces. Donaldson, by contrast, first establishes the H\"older estimates for 
the model operator $G_\beta$ using related scaling arguments and then observes that these estimates can be 
patched together to obtain the H\"older boundedness for the differentiated kernels $P_{i\jbar} G$. In other words,
the patching (or transition from the model to the actual inverse) is done at the level of the parametrix in our 
approach, but at the level of a priori estimates in particular function space in Donaldson's. The disadvantage
of this latter approach is that one is too closely tied to the function space on which the model a priori
estimates were obtained. This makes that method harder to apply when proving the higher regularity
estimates, for example, and this higher regularity turns out to be key in the existence theory. 
Thus, while the edge theory requires a certain amount of technical overhead, it provides a number of
substantial benefits.  These become even more apparent in the generalization of this theory to the case
of divisors with simple normal crossings. 

\section{Higher regularity for solutions of the \MA equation}
\label{HigherRegSection}
We now use the machinery of the last section to prove one of our main results, that under reasonable initial hypotheses, 
solutions of the complex Monge-Ampere equation are polyhomogeneous (Theorem \ref{PhgMainThm}). This type of proof 
has appeared in many places by now. One origin is the proof of polyhomogeneity for {\it complete} Bergman and 
K\"ahler--Einstein metrics on strictly pseudoconvex domains by Lee and Melrose \cite{LM}; that argument was clarified
and recast into something near the present form in \cite{Ma2}, where polyhomogeneity of solutions of the singular 
Yamabe problem (or obstructions to such polyhomogeneity) was determined. This regularity result was announced in \cite{Ma0}. 

\medskip
We turn to the proof of Theorem \ref{PhgMainThm}.  
There are three main steps. The first is to show that $u \in \calC^{k,\gamma}_e$ 
for every $k \in \NN$; the second is to improve this to full conormality, i.e., to show that $u \in \calA^0$;
in the last, we improve this conormality to the existence of a polyhomogeneous expansion.   The first step
is equivalent to standard higher elliptic regularity for Monge-Ampere equations; this uses the dilation
invariance properties of the edge H\"older spaces in a crucial way.  The second step then breaks this
dilation invariance by showing that we may also differentiate arbitrarily many times along $\D$. 
The final step uses an iteration to show that $u$ has a longer and longer partial polyhomogeneous expansion. 

We begin, then, by quoting a consequence of the Evans--Krylov--Safonov
theory concerning solutions of \MA equations \cite{KrylovSafonov1,Evans1,Krylov},
or rather its extension to the complex \MA equation \cite{Siu}.
\begin{thm}
\label{EvansKrylovThm}
Let $\o$ be a smooth \K metric in a ball $B\subset \CC^n$ and $F\in \calC^\infty(B\times\RR)$. Suppose that $u\in \calC^2(B)$ 
is a solution of $\o_u^n/\o^n = F(z,u)$ on $B$. Then for any $k \geq 2$, there is a constant $C$ depending on $F$,
$k$, $\o$, $\sup_B |u|$ and $\sup_B |\Delta_\o u|$ such that if $B'$ is a ball with the same center as $B$ but with 
half the radius, then
\[
||u||_{\calC^{k,\gamma}(B')} \leq C.
\]
The constant $C$ depends uniformly on the $\calC^{k+3}(B)$ norm of the coefficients of $\o$. 
\end{thm}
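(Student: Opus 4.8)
The plan is to reduce this to the classical interior estimates of Evans--Krylov--Safonov for concave uniformly elliptic equations \cite{Evans1,Krylov,KrylovSafonov1}, and then to bootstrap by linear Schauder theory. Writing the equation in the form $\log\det(g_{i\jbar}+u_{i\jbar}) = \psi(z,u)$ with $\psi := \log F(z,u)+\log\det g_{i\jbar}$, the operator $M \mapsto \log\det(g_{i\jbar}+M_{i\jbar})$ is concave on positive Hermitian matrices, which is the structural hypothesis the Evans--Krylov theory requires.

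First I would establish uniform ellipticity on a ball $B''$ slightly smaller than $B$. Since $\o_u>0$ and $\o_u^n = F\on$ one has $F>0$, and because $F$ is smooth and $|u|\le\sup_B|u|$, the value $F(z,u)$ stays in a fixed compact subset of $(0,\infty)$. The eigenvalues $1+\mu_1,\dots,1+\mu_n$ of $\o_u$ relative to $\o$ are positive, have sum $n+\Delta_\o u \le n+\sup_B|\Delta_\o u|$, and product $F$; hence they are pinched between two positive constants depending only on $\sup_B|u|$, $\sup_B|\Delta_\o u|$ and $\o$. Thus $\Lambda^{-1}\o \le \o_u \le \Lambda\o$ on $B''$; in particular the complex Hessian $u_{i\jbar}$ is bounded and the linearization $g_u^{i\jbar}\del_i\del_{\jbar}$ is uniformly elliptic. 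Reading the equation as the linear equation $\Delta_\o u = \tr_\o\o_u - n \in L^\infty$ with smooth coefficients and bounded $u$ then gives, by standard interior estimates, a bound on $\|u\|_{\calC^{1,\gamma}(B'')}$, hence on the $\calC^{0,\gamma}$ norm of $\psi(z,u)$, in terms of the permitted quantities.

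With these inputs I would invoke the complex Evans--Krylov estimate to obtain an interior bound $[u_{i\jbar}]_{\calC^{0,\gamma}(B'')} \le C$, with $C$ depending only on $\Lambda$, $\sup_B|u|$, $\|u\|_{\calC^{1,\gamma}(B'')}$ and $\o$. The next point, easy to overlook, is that this controls only the \emph{complex} Hessian; but $\Delta_{\RR^{2n}}u$ is a constant multiple of $\sum_k u_{k\bar k}$, so it now lies in $\calC^{0,\gamma}$, and interior Schauder for the Poisson equation upgrades the estimate to the full real bound $\|u\|_{\calC^{2,\gamma}(B')}\le C$. After that the bootstrap is routine: differentiating the equation once in a real coordinate direction, each first derivative $v = \del_s u$ solves a uniformly elliptic linear equation whose coefficients $g_u^{i\jbar}$ and right-hand side lie in $\calC^{0,\gamma}$, so Schauder gives $v\in\calC^{2,\gamma}$, i.e. $u\in\calC^{3,\gamma}$; iterating, at stage $k$ one uses the $\calC^{k+1}$-regularity of $g_u^{i\jbar}$ and of $\psi$, which in turn is controlled by the $\calC^{k+3}(B)$ norm of the coefficients of $\o$ (and of $F$), to conclude $u\in\calC^{k,\gamma}(B')$ with the stated dependence.

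The main obstacle --- or rather the only genuinely nontrivial ingredient --- is the complex Evans--Krylov $\calC^{2,\gamma}$ estimate itself; once that is quoted, and once one remembers to convert the complex-Hessian Hölder bound into a real one via the Euclidean Laplacian, everything else is the standard ellipticity/Schauder package. Some care is needed to verify that every constant traces back only to $\sup_B|u|$, $\sup_B|\Delta_\o u|$ and the geometry of $\o$, and not circularly to higher norms of $u$, but this is exactly what the above ordering of the steps ensures.
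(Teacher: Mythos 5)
Your proof is correct and follows the same route the paper indicates immediately after the theorem: quote the (complex) Evans--Krylov theorem to get the $\calC^{2,\gamma}$ estimate, then bootstrap by differentiating the equation and applying linear Schauder estimates. You simply fill in the standard preparatory steps the paper leaves implicit --- the two-sided pinching of the relative eigenvalues of $\o_u$ from the bounds on $\Delta_\o u$ and on $F$, and the passage from the Hölder bound on the complex Hessian to the full real $\calC^{2,\gamma}$ bound via the Euclidean Laplacian --- so this is the same argument, spelled out.
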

To be precise, the Evans--Krylov theorem gives the $\calC^{2,\gamma}(B)$ estimate. The higher regularity is
obtained by a straightforward bootstrap, since differentiating the equation with respect to any coordinate
vector field $W$ gives a linear equation for $Wu$ with coefficients depending on at most the second derivatives 
of $u$, to which we can apply ordinary Schauder estimates since using the $\calC^{2,\gamma}(B)$ estimate,
the coefficients in the resulting equation are \Holder continuous.

To adapt this to our setting, we first observe that the \MA equation is invariant with respect to the scaling 
$S_\lambda(r,\theta,y)  = (\lambda r, \theta, \lambda y)$ (which in the original complex coordinates
takes the form $(z_1, \ldots, z_n) \to (\lambda^{1/\beta} z_1, \lambda z_2, \ldots, \lambda z_n)$).  
To see this, let $\tilde \o$ be any polyhomogeneous \K edge current, i.e.\ an element of $\calH_{\o_0}$
(this was denoted by $\o$ in the statement of Theorem \ref{PhgMainThm}, but we use the tilde here 
to avoid confusion with the reference metric $\o$ \eqref{oDefEq}), and let $\tilde g$ denotes its 
associated \K metric. We see from \eqref{pertgb} (and polyhomogeneity) that as $\lambda\ra \infty$, 
\[
\lambda^{2}S_{1/\lambda}^* \tilde g \longrightarrow c g_\beta, 
\]
for some constant $c > 0$, where $g_\beta$ is the flat model edge metric on $\CC^n$. 

Now let $B$ be the ball of radius $r_0/2$ centered at some point $(r_0,y_0)$ in the coordinates $(r,y)$, where $r_0$ 
is small, and let $B'$ be the ball of half this radius, and consider the sets $B \times S^1$ and $B' \times S^1$. 
Choosing coordinates so that $y_0 = 0$, we obtain the family of metrics 
\[
g_{r_0}:= r_0^{-2} S_{r_0}^* \left(\left. \tilde g \right|_{B\times S^1}\right), 
\]
which we regard as defined on $\tilde{B} \times S^1$, 
where $\tilde{B}$ is a ball of radius $1/2$ centered at $(1,0)$. Let $\tilde{B}'$ be the ball of radius $1/4$ 
centered at this same point. Finally, consider the family of functions $u_{r_0}(r,\theta,y) = S_{r_0}^* u(r,\theta, y) = 
u( r_0 r, \theta, r_0 y)$, also defined on $\tilde{B} \times S^1$. 

By pulling back the original \MA equation \eqref{genma} from the ball $B$ to $\tilde{B}$, we see that for each $r_0 < 1$,
$u_{r_0}$ satisfies the \MA equation with respect to the metric $g_{r_0}$. Applying the Evans-Krylov estimate and
bootstrapping in this standard ball then gives that
\[
||u_{r_0}||_{k,\gamma; \tilde{B}' \times S^1}  \leq C,
\]
where $C$ depends on $g_{r_0}$, $\sup |u_{r_0}|$ and $\sup |\Delta_{g_{r_0}} u_{r_0}|$.  Since $g_{r_0}$ converges
smoothly in this region, $\sup |u_{r_0}|$ is uniformly controlled, and using that 
$\Delta_{g_{r_0}} u_{r_0} = r_0^2 (\Delta_{\tilde{g}} u)_{r_0}$, this last term is also uniformly bounded as $r_0 \searrow 0$,
we conclude that $u_{r_0}$ is uniformly bounded in any $\calC^{k,\gamma}$ norm in $\tilde{B'} \times S^1$.  

The last step is to recall that the edge H\"older norms are invariant under these rescalings. In other words,
\[
||\left. u\right|_{B'}||_{e; k, \gamma} = ||\left. u_{r_0}\right|_{\tilde{B}'}||_{e; k, \gamma}.
\]
The global $\calC^{k,\gamma}_e$ norm of $u$ is the supremum of these norms over all such balls $B'$, and
hence this too is finite for all $k \geq 0$. We have proved that $u \in \calC^{k,\gamma}_e$ for any $k$. 

We have proved that $(r\del_r)^j (r\del_y)^k \del_\theta^\ell  u$ is bounded for any $j, k, \ell \geq 0$, thus a priori
we only know that $\del_y^\alpha u$ may blow up like $r^{-|\alpha|}$. We now address this
and show that these tangential derivatives are bounded too. Write the \MA equation as 
\[
\log \det (g_{i\jbar} + u_{i \jbar}) = \log \det (g_{i\jbar})  + \log F(z,u).
\]
(As explained just after the statement of Theorem~\ref{PhgMainThm}, the result holds when the usual exponential on the right hand
side is replaced by a function $F(z,u)$ satisfying a few properties.) 
Applying $\del_y$ to both sides and using the standard formula for the derivative of a logarithmic determinant, we find that
\begin{equation}
(\Delta_{\tilde{g}} - V)\del_y u = f,
\label{difff}
\end{equation}
where $\tilde{g}_{i\jbar} = g_{i\jbar} +  u_{i\jbar}$, $V = F_u(z,u)/F(z,u)$ and $f= \del_y 
\log \det (g_{i\jbar})-\Delta_{\tilde g}\del_y\phi +  F_z(z,u)/F(z,u)$, where $\phi$ is a local \K potential 
for the reference metric $g$, i.e., such that $\phi_{i\b j}=g_{i\b j}$.

Recall that even if the initial assumption is that $u \in \calD^{0,\gamma}_e$, we immediately know from Theorem~\ref{TianThmB} 
that $u \in \Dw$, 
and this implies that both $V$ and $f$ lie in $\calC^{0,\gamma}_w$. Since $\del_y \in Q$, Proposition~\ref{mainlinearprop} 
implies that $\del_y u $ is bounded.  We now wish to apply 
Corollary~\ref{equivdomains} to improve this regularity, but to do so, we must show that $u$---and hence
the K\"ahler potential for the metric $\tilde g$--- is the limit in $\Dw$ of polyhomogeneous 
functions. Granting this for the moment, this Corollary implies that $\del_y u \in \Dw$. 

This same argument goes on to show that $\del^k_y u\in \Dom^{0,\gamma}_e$ for any $k$. Indeed, suppose
inductively that for some $k \geq 2$, $\del_y^j u \in \Dw$ for all $j \leq k-1$, and 
\[
(\Delta_{\tilde{g}} - V)\del_y^{k-1} u = 
f_0^{(k-1)} + H^{(k-1)}(z, u, \del_y u, \ldots, \del_y^{k-1} u, Q u, \ldots, Q \del_y^{k-2} u) \in \calC^{0,\gamma}_e, 
\]
where $f_0^{(k-1)} \in \calA^0_{\phg}$ and $H^{(k-1)}$ is a smooth function of its arguments $\del_y^j u$ and $Q_i \del_y^j u$, 
$Q_i \in \calQ$. (In our example, $H^{(k-1)}$ is an algebraic function of these arguments.)  Differentiating yields
\begin{multline*}
(\Delta_{\tilde{g}} - V) \del_y^k u = \del_y f^{(k-1)}_0 + \\
\del_y H^{(k-1)}(z, u, \del_y u, \ldots, \del_y^{k-1} u, Q u, \ldots, Q\del_y^{k-2} u) - 
[ \del_y, \Delta_{\tilde{g}} - V] \del_y^{k-1} u.
\end{multline*}
Since $\del_y^{k-1} u \in \Dw$, we conclude first that $\del_y^k u \in \calC^{0,\gamma}_w$ and in addition, by
a straightforward calculation, that the right side lies in $\calC^{0,\gamma}_w$. (Note that we may as well assume 
that $[\del_y, Q] = 0$.) Hence applyng Corollary~\ref{equivdomains} with precisely the same operator
as before shows that $\del_y^k u$ lies in $\Dw$ and satisfies an equation with correct structure.   This completes
the inductive step.   Recalling that we already proved that $(r\del_r)^i (r\del_y)^j \del_\theta^\ell u \in
\calC^{0,\gamma}_e$ for every $i, j, \ell \geq 0$, an almost identical induction proves that $(r\del_r)^i \del_y^j 
\del_\theta^\ell u \in \Dw$ for every $i, j, \ell \geq 0$. This proves, altogether, that $u \in \calA^0$. 

We now address the claim that the \K potential for $\tilde{g}$ is a limit of polyhomogeneous functions. Prior to 
this inductive argument, we only know that $u \in \calD^{0,\gamma}_e$ (or more precisely, that $u \in \calD^{k,\gamma}_e$
for every $k \geq 0$).  Theorem~\ref{DwCor}, which rests on Tian's Theorem~\ref{TianThmB} as stated and proved 
in Appendix B, asserts that if $u$ is a solution to this \MA equation such that $u$ and $\Delta_{\tilde g} u$ are simply bounded, 
then necessarily $u \in \Dw$.  The claim is implied by the fact that if the H\"older exponent $\gamma$ is replaced by a slightly 
smaller value $\gamma' \in (0,\gamma)$, then $u$ can be approximated by polyhomogeneous functions in the topology of 
$\calD^{0,\gamma'}_w$. In the interior, away from the edge, this is the familiar fact that the closure of $\calC^\infty$ in 
$\calC^{0,\gamma'}$ contains $\calC^{0,\gamma}$ for any $0 < \gamma' < \gamma < 1$, which can be proved by mollification. 
Near the edge, it is possible to use a similar mollification argument in a fixed local coordinate system, but let us explain 
a more systematic approach using the heat kernel.

\begin{lem} If $0 < \gamma' < \gamma < 1$, then $\calA:=\calA^0_{\phg}\cap \Dw$ is dense in $\calD^{0,\gamma'}_w$
\label{density}
\end{lem} 
\begin{proof}
Consider the heat kernel $e^{t\Delta}$ associated to the $L^2$ Friedrichs extension of $\Delta_g$, where $g$ is any fixed (smooth or
polyhomogeneous) edge metric.  The Schwartz kernel of $e^{t\Delta}$ is constructed in \cite{MV}, and it is proved there that if $t > 0$, then
$f_t := e^{t\Delta} f \in \calA^0_{\phg}$ for any $f \in L^2$, in particular for $f \in \Dw$. In addition, $\del_t f_t$ 
is polyhomogeneous with nonnegative index set for any $t > 0$, and $\del_t f_t =\Delta f_t$, so $f_t \in\Dw$ too, hence
$f_t \in \calA$.  Next, since $\Delta$ commutes with $e^{t\Delta}$, it follows that $f_t \to f$ and $e^{t\Delta} \Delta f
= \Delta e^{t\Delta} f = \Delta f_t \to \Delta f$ in $L^2$. This already implies that $\calA$ is dense in $\calD_{\mathrm{Fr}}(\Delta)$ in 
the $L^2$ graph topology; we shall need this fact later in \S 6. 

To prove the corresponding H\"older space result, note that using the same commutation, it suffices
to prove that $f_t \to f$ in $\calC^{0,\gamma'}_w$, since the same argument also gives $\Delta f_t \to \Delta f$ 
in $\calC^{0,\gamma'}_w$. This convergence is proved by noting that by standard heat kernel arguments, $f_t \to f$ in $\calC^0$,
and moreover, $||f_t||_{\calC^{0,\gamma}_w} \leq C$ uniformly in $t$. (This last fact can be proved using very similar arguments
to the ones in the proof of Proposition~\ref{bddpsido}.) It is then a simple exercise in real analysis to conclude
that $f_t \to f$ in the slightly weaker norm $|| \cdot ||_{\calC^{0,\gamma'}_w}$. 
\end{proof}

We come to the final step, that $u$ is polyhomogeneous. This requires two more boundedness properties of edge pseudodifferential
operators, namely that this class of operators preserves the spaces of conormal and of polyhomogeneous functions. In particular,
if $B$ is any weakly positive pseudodifferential edge operator, then 
\begin{equation}
B: \calA^0(X) \longrightarrow \calA^0(X) \quad \mbox{and}  \qquad B: \calA^0_{\phg}(X) \longrightarrow  \calA^0_{\phg}(X)
\label{bddcon}
\end{equation}
are both bounded. The pseudodifferential order $m$ is irrelevant at this point since since we are applying $B$ to functions which 
are infinitely differentiable (with respect to the edge vector fields) anyway. The improvement in the argument below relies on a 
refinement of \eqref{bddcon}.  For the following argument, introduce the notation $\calA^{\nu-}(X) = \cap_{\e > 0} \calA^{\nu-\e}$. 
\begin{lem}
Let $B \in \Psi^{m,2,E,E'}_e(X)$, where $E$ and $E'$ are nonnegative. Then
\[
B: \calA^0(X) \longrightarrow \calA^0_{\phg}(X) + \calA^{2-}(X),
\]
and more generally, if $\nu \geq 0$, 
\[
B: \calA^0_{\phg}(X) + \calA^{\nu-}(X) \longrightarrow \calA^0_{\phg}(X) + \calA^{(\nu+2)-}(X).
\]
More concretely, 
\[
u \sim \sum_{0 \leq \mbox{Re}\, \gamma < \nu} a_{\gamma, p} r^\gamma (\log r)^p + \calO(r^{\nu-}) \Longrightarrow 
Bu \sim \sum_{0 \leq \mbox{Re}\, \gamma < \nu+2} b_{\gamma, p} r^\gamma (\log r)^p + \calO(r^{(\nu+2)-}),
\]
where the errors on each side are conormal, $\calO(r^{\nu-})$ denotes an error which decays like $r^{\nu-\e}$ for all
$\e > 0$, and $a_{\gamma,p} = b_{\gamma,p} = 0$ if $\mbox{Re}\, \gamma =0$ and $p \geq 1$. 
\end{lem}
\begin{proof}
The second assertion is an easy consequence of the first. To prove this first assertion, 
if $B$ has index set with all exponents greater than or equal to $2$ at the left ($r \to 0$) face, then 
since $B$ vanishes to order $2$ at the front face, we can write $B = r^2 \tilde{B}$ where $\tilde{B}$
is nonnegative. Hence in that case, $B: \calA^0 \to \calA^{2-}$. 

Now suppose that the exponents in the expansion of $B$ at the left face of $X^2_e$ which lie in the range $[0,2)$ are 
$\gamma_1, \ldots, \gamma_N$, and assume that there are no log terms in these expansions for simplicity. Then
\[
B^{(N)} := (r\del_r - \gamma_1)(r\del_r - \gamma_2) \ldots (r\del_r - \gamma_N) B \in \Psi^{m+N, 2, E(2), E'}_e(X),
\]
where $E(2)$ is some new index set derived from $E$ which has all elements greater than or equal to $2$. 
Thus we can apply the previous observation to see that $B^{(N)}: \calA^0 \to \calA^{2-}$, or said slightly
differently, if $f$ is bounded and conormal, so $f \in \calA^0$, then $B^{(N)}f = u^{(N)}$ is of the
form $r^{2-\e} v_\e$ for any $\e > 0$ where $v_\e$ is bounded and conormal.  Now we can integrate the ODE $(r\del_r - \gamma_1)
\ldots (r\del_r - \gamma_N)$ to see that $u = Bf$ has a partial polyhomogeneous expansion with
all terms of the form $r^{\gamma_j}$, $j = 1, \ldots, N$, since each of these terms are killed by $r\del_r - \gamma_j$. 
\end{proof}

We wish to apply this lemma when $G$ is the Green function for $\Delta_g + V$, where $g$ and $V$ are polyhomogeneous.
It is straightforward to extend this result slightly to show that it remains valid for some fixed $\nu$ provided both $g$ and 
$V$ only lie in $\calA_{\phg}^0 + \calA^{\nu}$.  We leave details of this extension to the reader. 

Finally, let us apply this to the equation $L \del_y u = f$, cf.\ \eqref{difff}.  We know initially that $f \in \calA^0$, hence
at the first step, $\del_y u \in \calA^0_{\phg} + \calA^{2-}$. But this now gives that $f$ and the coefficients
of $L$ lie in $\calA^0_{\phg} + \calA^{2-}$, hence $\del_y u \in \calA^0_{\phg} + \calA^{4-}$. Continuing on in
this manner gives a complete expansion for $\del_y u$, and from this we deduce also that $u$ is
polyhomogeneous. This concludes the proof of Theorem \ref{PhgMainThm}.

Let us remark what is really going on in this proof. Once we have established that $u$ is conormal, i.e., that it is infinitely 
differentiable with respect to $r\del_r$, $\del_\theta$ and $\del_y$, then we can treat the \MA equation satisfied by $u$ 
as an ODE in the $r$ direction; all dependence in the other directions can be treated parametrically, and in particular, 
$y$ and $\theta$ directions are harmless. Thus the important step is going from $u \in \cap \,\calC^{k,\gamma}_e$ to 
$\del_y^\ell u \in \cap\, \calC^{k,\gamma}_e$ for all $\ell \geq 0$. 

While this sort of iteration method was already mentioned in \cite{D}, it is less awkward to use edge spaces here.
The reason is that the different scales in this problem make it necessary to work with functions involving integer 
powers of both $r$ and $r^{1/\be}$, and these are only finitely differentiable in the wedge spaces, but 
infinitely differentiable in the edge spaces.

\subsubsection*{Determination of leading terms}
\label{AsympExpansionSubSec}
For various applications below, in particular the determination of the asymptotics of the metric and curvature,
we must determine the first few terms of the expansion of a solution of the \MA equation.

\begin{prop}
\label{PreciseAsympExpansionProp}
Let $\vp$ be a solution of the \MA equation \eqref{RCMEq}. Suppose that $\vp \in 
\Dom^{0,\gamma}_w$, and hence by Theorem \ref{PhgMainThm}, $\vp \in \calA_{\phg}^0$. Then the asymptotic 
expansion of $\vp$ takes the 
form
\begin{equation}
\label{GeneralExpansionEq}
\vp(r,\th,y) \sim  \sum_{j,k, \ell \geq 0} a_{jk\ell}(\th,y)r^{j+\frac k\be}(\log r)^{\ell}
\end{equation}
as $r \searrow 0$.  Certain coefficients are always absent; for example, $a_{00\ell} = 0$ for $\ell > 0$
and $a_{1 0 \ell} \equiv 0$ for all $\ell$. 
If $a_{jk\ell} = 0$ for some $j, k$ for all $\ell > 0$, then we write this coefficient simply as $a_{jk}$.  
When $0 < \be < 1/2$, 
\begin{equation}
\label{ExpansionBetaLessHalfEq}
\vp(r,\th,y) \sim  a_{00}(y) + a_{20}(y)r^2 + (a_{01}(y)\sin\th+b_{01}(y)\cos\th)r^{\frac1\be} 
+ a_{40}(y)r^4 + O(r^{4+\eps})
\end{equation}
for some $\eps=\eps(\be) > 0$; when $\be = 1/2$, the asymptotic sum on the 
right includes an extra term $(a_{02}(y)\sin2\th+b_{02}(y)\cos2\th)r^4$; finally, 
if $1/2 < \be < 1$, then
\begin{equation}
\label
{ExpansionAllBetaEq}
\begin{aligned}
\vp(r,\th,y) =& a_{00}(y) + (a_{01}(y)\sin\th+b_{01}(y)\cos\th)r^{\frac1\be} + a_{20}(y)r^2  + O(r^{2+\eps}) 
\end{aligned}
\end{equation}
for some $\eps=\eps(\be) > 0$. 
\end{prop}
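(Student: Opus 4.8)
The plan is to substitute the polyhomogeneous expansion furnished by Theorem~\ref{PhgMainThm} into \eqref{TYCMEq}, written as
\[
\Delta_\o\vp = f_\o - s\vp - Q(\vp),
\]
and to read off the coefficients $a_{jk\ell}$ inductively by matching powers of $r$, powers of $\log r$, and Fourier modes in $\th$. Here $Q(\vp)$ is obtained by combining \eqref{TYCMEq} with \eqref{RafeRewriteCMAEq} and expanding the logarithm of the determinant: it is a sum of products of at least two of the expressions $P_{i\jbar}\vp$ (each a linear combination of the operators $Q_i\in\mathcal Q$ of \eqref{defQ} applied to $\vp$) with polyhomogeneous bounded coefficients, plus a term of size $O(r^\eta)$ from the remainder in \eqref{RafeRewriteCMAEq}; and $\Delta_\o$ equals the model Laplacian $\Delta_{g_\be}$ of \eqref{modellap} plus the error $E$ of \eqref{decompLap}. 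The inputs to the matching are the structure of $f_\o$ coming from \eqref{DetOmeganEq}--\eqref{foDefEq}, the indicial roots of $\Delta_\o$ recorded in Section~\ref{LinearSection} --- namely $k/\be$, $k\in\ZZ$, with $r^{k/\be}e^{\pm\i k\th}$ spanning the null spaces of the indicial operator and the double root $0$ carrying $r^0$ and $r^0\log r$ --- and Proposition~\ref{strG}. Since $\Delta_{g_\be}(r^\nu e^{\i m\th})=(\nu^2-m^2/\be^2)\,r^{\nu-2}e^{\i m\th}$, a forcing term $r^\sigma e^{\i m\th}$ is balanced by a $\vp$-term at $r^{\sigma+2}e^{\i m\th}$, with a logarithm produced only in the resonant case $\sigma+2=\pm m/\be$.

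The crucial structural fact is that the forcing $f_\o-s\vp-Q(\vp)$, together with the corrections contributed by $E$ and by the $O(r^\eta)$ remainder, has no strictly negative $r$-order: $f_\o$, $\vp$ and every $Q_i\vp$ are bounded (using $\vp\in\Dom^{0,\gamma}_s$ and Proposition~\ref{mainlinearprop}), while the cross-terms making up $E$ are built from $\del_{z_1}\del_{\bar z_j}$ and so annihilate functions of $y$ alone and do not lower $r$-orders below those already present in $\vp$. From this one obtains at once: (i) the leading coefficient $a_{00}$ is $\th$-independent (Corollary~\ref{correg}), and $a_{00\ell}=0$ for $\ell>0$, because $\vp$ is bounded while $r^0(\log r)^\ell$ is not; (ii) $a_{10\ell}\equiv0$ for all $\ell$, because an $r^1(\log r)^\ell$ term in $\vp$ would put a term of order $-1$ into $\Delta_\o\vp$ (as $P_{1\bar1}r=r^{-1}$) in a Fourier mode unmatched by the forcing --- this is exactly the exclusion of the element $(1,0,0)$ from the index set in Proposition~\ref{strG}; and (iii) there are no $\log r$ terms at any order $\gamma\le2$, since such a logarithm could arise only from a resonance at an indicial root $\le2$, that is at $0$ (which would need a forcing term of order $-2$) or, when $\be\ge\tfrac12$, at $1/\be$ (which would need a forcing term of order $1/\be-2$, negative when $\be>\tfrac12$; when $\be=\tfrac12$ it is $0$, but the order-$0$ part of the forcing is $\th$-independent, i.e.\ in the zeroth Fourier mode, whereas the $1/\be$-root lies in the first), neither of which occurs.

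It then remains to match orders and Fourier modes. The coefficients in \eqref{DetOmeganEq} are smooth, so $f_\o$ is bounded, polyhomogeneous, with $\th$-dependence entering only through $\cos k\th$ and $\sin k\th$ and with all its non-constant terms of positive order; hence its $\th$-independent part produces $\th$-independent forced coefficients (in particular $a_{20}(y)r^2$ and $a_{40}(y)r^4$) non-resonantly (the order $2m+2$ is an even indicial root only when $\be=\tfrac12$, and then it lies in Fourier mode $m+1$), the homogeneous solutions at the orders $k/\be$ contribute the $(\cos k\th,\sin k\th)$-valued terms with undetermined coefficients, and $Q(\vp)$ contributes only at orders $>2$ in the three regimes below. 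Ordering the admissible orders --- those generated by the orders appearing in $f_\o$, the indicial roots $k/\be$, and the finite sums of orders of the $Q_i\vp$ --- below the appropriate threshold yields the three cases: when $\tfrac12<\be<1$ the only admissible orders in $(0,2]$ are $1/\be$ (a homogeneous term, hence $(\cos\th,\sin\th)$-valued) and $2$ (forced, $\th$-independent), the next exceeding $2$, so \eqref{ExpansionAllBetaEq} holds with $\eps=1/\be-1$ (or any smaller positive number); when $0<\be<\tfrac12$ one has $1/\be>2$ and the admissible orders up to $4$ are $2$ (forced), $1/\be$ (homogeneous), and $4$ (forced, and $\th$-independent because the relevant second-order combinations of the lower coefficients are), so \eqref{ExpansionBetaLessHalfEq} holds --- when $\be$ is small some of the listed terms are of higher order than $r^4$ and are then absorbed into the remainder; and when $\be=\tfrac12$ the root $1/\be=2$ is consumed without resonance by the first forced even power, while at $r^{4}=r^{2/\be}$ the $k=2$ homogeneous solution now appears, contributing $(a_{02}(y)\sin2\th+b_{02}(y)\cos2\th)r^4$.

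The delicate point, and the main obstacle, is the bookkeeping underlying the last two paragraphs: one must verify carefully that the forcing never acquires a strictly negative $r$-order --- for which the crude edge-operator bound on $E$ in \eqref{decompLap} does not suffice, and one uses instead the precise structure $\del^2_{z_1\bar z_j}=(\mathrm{prefactor})\cdot(\del_r-\i(\be r)^{-1}\del_\th)\del_{\bar z_j}$ --- and, at the borderline $\be=\tfrac12$, that the resonance which would otherwise occur at $r^2$ is genuinely confined to a Fourier mode distinct from that of the forcing there (e.g.\ that $P_{1\bar\ell}\vp$ carries no order-zero contribution). Tracking the Fourier modes of the quadratic term $Q(\vp)$ through the iteration is routine once the boundedness of the $Q_i\vp$ and the explicit form of $f_\o$ are in hand.
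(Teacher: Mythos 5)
Your proposal is correct and follows essentially the same approach as the paper's proof: substituting the polyhomogeneous expansion guaranteed by Theorem~\ref{PhgMainThm} into the \MA equation, using the structure of $f_\o$ from Lemma~\ref{foAsympExpansionLemma}, and inductively matching $r$-powers, $\log r$-powers and Fourier modes against the indicial roots $k/\be$, with the resonance at $r^2$ for $\be=\tfrac12$ avoided by the Fourier-mode mismatch. The cosmetic differences — you organize the bookkeeping via the linearized form $\Delta_\o\vp=f_\o-s\vp-Q(\vp)$ and cite Proposition~\ref{strG} for the exclusion of $(1,0,0)$ rather than deriving it directly from the equation — do not change the substance of the argument.
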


We begin with a lemma.
\begin{lem}
\label{foAsympExpansionLemma}
The twisted Ricci potential $f_\o$ can be expressed as
\[
f_\o 
= 
\sum_{k=-1}^{n-1}c_{0k} r^{2k + \frac{2}{\be}} + \sum_{k=0}^{n-1} (c_{1k} 
+ c_{2k} r\cos \th + c_{3k} r\sin\th) r^{2k},
\]
where each $c_{jk}$ is a smooth function of $r^{\frac1\be},\th,$ and $y$.
\end{lem}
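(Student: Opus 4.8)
The plan is to compute $f_\o$ almost explicitly near $D$ from its defining equation \eqref{foDefEq} and then read off its asymptotics from the structure of $\o^n$ already recorded in \eqref{DetOmeganEq}. First I would work in a local holomorphic chart with $D=\{z_1=0\}$ and $\o_0=\i\ddbar\psi_0$, $\psi_0$ smooth. On $\MsmD$ one has $\Ric\o=-\i\ddbar\log(\o^n/\mathrm{Leb})$, while by Poincar\'e--Lelong $\i\ddbar\log|z_1|^2=[D]$. Setting $F:=|z_1|^{2-2\be}\,\o^n/\mathrm{Leb}$, so that $\log(\o^n/\mathrm{Leb})=(\be-1)\log|z_1|^2+\log F$, this gives $\Ric\o-(1-\be)[D]=-\i\ddbar\log F$ as currents on the whole chart. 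Because $\o=\o_0+\i\ddbar\phi_0$, equation \eqref{foDefEq} becomes $\i\ddbar\big(f_\o+\log F+\mu(\psi_0+\phi_0)\big)=0$, hence near $D$
\[
f_\o=-\log F-\mu\phi_0+\chi ,
\]
where $\chi$ is a local $\i\ddbar$--potential for $-\mu\o_0$ and is therefore smooth up to and across $D$; the global normalization \eqref{normalization} only shifts $f_\o$ by a constant, which I absorb into the $k=0$ term of the second sum.

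Next I would unwind $F$. By \eqref{DetOmeganEq}, $F$ is a finite sum of functions smooth in $(z_1,\dots,z_n)$ times the monomials $|z_1|^{2m\be}$ and $|z_1|^{2m\be}|z_1|^{2-2\be}$ ($m\ge0$), the former possibly carrying one extra factor $z_1$ or $\overline{z_1}$; in particular $F$ is continuous up to $D$ with $F|_D=f_{11}|_D>0$ (here I use that $\o$ is a genuine edge metric). Passing to the cylindrical coordinates $(r,\th,y)$ through $|z_1|=(\be r)^{1/\be}$ --- so $|z_1|^{2\be}=(\be r)^2$, $|z_1|^{2-2\be}=(\be r)^{2/\be-2}$, $z_1=(\be r)^{1/\be}e^{\i\th}$ --- and Taylor--expanding each smooth coefficient in $(z_1,\overline{z_1})$, which produces only terms $(\be r)^{p/\be}e^{\i q\th}$ with $|q|\le p$, $q\equiv p\pmod 2$ and smooth $y$--coefficients, I obtain a polyhomogeneous expansion of $F$ whose $r$--exponents lie in $2\NN_0+\tfrac1\be\NN_0+\{0,\tfrac2\be-2\}$ and whose constant term is $f_{11}(0,y)>0$. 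The same device applied to $\phi_0=c\be^2r^2H$ (with $H$ smooth) and to $\chi$ gives polyhomogeneous expansions with exponents in $2+\tfrac1\be\NN_0$ and in $\tfrac1\be\NN_0$, respectively.

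Since $F=f_{11}(0,y)\big(1+O(r^{\delta})\big)$ with $\delta=\min\{2,\tfrac1\be,\tfrac2\be-2\}>0$ and $f_{11}(0,y)>0$, the composite $\log F$ is again polyhomogeneous, with $r$--exponents in the sub-semigroup of $\RR_{\ge0}$ generated by those of $F$, and with no logarithmic terms (it is obtained by a pointwise operation from log-free polyhomogeneous functions, as are $\phi_0$ and $\chi$). Substituting into the formula for $f_\o$ and collecting terms by powers of $r$ then produces the asserted expansion: the even integer powers $r^{2k}$ gather all contributions built from the $|z_1|^{2\be}$--monomials together with the $y$--dependent coefficients, so their coefficients are general smooth functions of $(\th,y)$; the $\tfrac2\be$--shifted family $r^{2k+2/\be}$ comes from the single $|z_1|^{2-2\be}$ factor; and an odd integer power can only be produced by pairing exactly one factor $z_1$ or $\overline{z_1}$ with even powers, which forces the accompanying angular harmonic to be a multiple of $\cos\th$ or $\sin\th$.

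The hard part will be precisely this last bookkeeping: keeping careful track of which powers of $r$ and which angular harmonics actually survive the passage $F\mapsto\log F\mapsto f_\o$, verifying that they match the stated list in the claimed finite ranges, and confirming that the coefficients depend on $(\th,y)$ --- or only on $y$ --- as indicated. The preceding steps are routine manipulations of polyhomogeneous expansions once the formula $f_\o=-\log F-\mu\phi_0+\chi$ is in hand.
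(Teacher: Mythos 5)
Your proposal is correct and follows essentially the same route as the paper's proof: both reduce the claim to the polyhomogeneous expansion of the volume ratio $\o^n/(|s|_h^{2\be-2}\o_0^n)$ coming from \eqref{DetOmeganEq}, combined with the smoothness of $\phi_0/r^2$ and of an auxiliary local potential, and then pass through the logarithm using that this ratio is bounded away from zero along $D$. The only cosmetic difference is that the paper quotes the packaged identity for $e^{-f_\o}$ directly, whereas you rederive the equivalent additive relation $f_\o=-\log F-\mu\phi_0+\chi$ from \eqref{foDefEq} via Poincar\'e--Lelong; the computational content is the same.
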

\begin{remark}
We may, of course, Taylor expand the coefficients $c_{ik}$ to obtain an asymptotic sum 
involving the terms $r^{2k + (2+\ell)/\beta}$ and $r^{2k+\ell/\beta}$, respectively, with coefficients depending
only on $y$ and $\theta$. 
\end{remark}
\begin{proof}
By (\ref{ddbarPhioneFirstEq}), 
\begin{multline}
\label{DetOmeganEq}
\o^n/(n!(\i)^{n^2} d{\bf z}\w d{\bf \b z}) 
= \det\Big[\frac{\del^2(\psi_0+\phi_0)}{\del z^i\del\overline{ z^j}}\Big] \\ 
= \sum_{k=0}^n f_{0k} |z_1|^{2k\be}  + \sum_{k=1}^n (f_{1k} + f_{2k} z_1 + f_{3k} \overline{z_1}) |z_1|^{2 k \be - 2},
\end{multline}
where all $f_{jk}$ are smooth functions of $(z_1, \ldots, z_n)$,
and $d{\bf z}:=dz_1\w\cdots\w dz_n$.
It follows that 
\begin{equation}
\label{ExpansionVolRatioEq}
\frac{\o^n}{|s|_h^{2\be-2}\o_0^n}
=
\frac{(\o_0+\i\ddbar\phi_0)^n}{|s|_h^{2\be-2}\o_0^n}
= \;  \sum_{k=-1}^{n-1} \tilde{f}_{00} r^{2k + \frac{2}{\be}} + \sum_{k=0}^{n-1} ( \tilde{f}_{1k} + \tilde{f}_{2k} r\cos \th + \tilde{f}_{3k} r\sin\th)) r^{2k}.
\end{equation}
where each $\tilde{f}_{jk}$ is a smooth function of the arguments $r^{\frac1\be}\cos \th, r^{\frac1\be}\sin\th$ and $y$. 
In addition, we have already noted that $\phi_0 = r^2 \Phi_0$ where $\Phi_0$ is also smooth as a function of 
$r^{\frac1\be}\cos \th, r^{\frac1\be}\sin\th$ and $y$. The result now follows directly from the equation
\begin{equation}
\label
{fomegaSecondEq}
e^{-f_\o}=\frac{(\o_0+\i\ddbar\phi_0)^n}{|s|_h^{2\be-2}\o_0^n}  e^{\mu\phi_0-F_{\o_0}},
\end{equation}
where $F_{\o_0}$ is defined by 
$\i\ddbar F_{\o_0} = \Ric\o_0-\mu\o_0+(1-\beta)\i\ddbar\log a$
(where $a$ is defined in \eqref{afunctionEq}),
and the equation itself, together with \eqref{normalization},
fixes a normalization for $F_{\o_0}$, and again $F_{\o_0}$ is smooth in these same arguments.
\end{proof}

\begin{proof}[Proof of Proposition~\ref{PreciseAsympExpansionProp}]
The idea is quite simple. Since we now know that $\vp$ has an asymptotic expansion, we
simply substitute a `general' expansion into the equation 
\begin{equation}
\ovpn/\on= F(z,\vp) 
\label{DetMAEq}
\end{equation} 
and determine the unknown exponents and coefficients.   Since our main case of interest is
when $F(z,\vp) = e^{f_\o-s\vp}$, we shall explain the argument for this special function,
but it should be clear that the same type of argument works in general. 

Using the precise form of the expansion for $f_\o$ determined above, 
the index set for $\vp$ must be contained in 
\[
\Gamma := \{  (j + {k}/{\be}, \ell): j, k, \ell \in \NN_0\},
\]
or in other words, the only terms which appear are of the form 
$a_{j k \ell}(\th,y) r^{j + \frac{k}{\be}}(\log r)^\ell$. 
This is done inductively. Supposing that we know that this is true for all
$j, k$ such that $j + k/\be \leq A$, then we only need consider the
action of $P_{1\b1}$ on the next term in the series $a_{\gamma \ell} r^\gamma (\log r)^\ell$.
This must either be annihilated by $P_{1\b1}$, i.e., $\gamma$ is an integer multiple
of $1/\be$, or else it must match a previous term in the expansion, i.e.,
$\gamma  - 2 = j' + k'/\be$. In either case, the form of the expansion propagates
one step further. 

Since the solution $\vp$ is bounded, there are no terms $a_{00\ell} (\log r)^\ell$ with $\ell > 0$,
so using the convention in the statement of the theorem, the leading term is simply $a_{00} r^0$.
Note further that $a_{00}$ depends only on $y$ but not on $\th$. This can be seen by
substituting in the equation. If $a_{00}$ were to depend nontrivially on $\th$, then the term $P_{1\b1} \vp$
would contain $r^{-2} \del_\th^2 a_{00}$, and this is not cancelled by any other term in the equation.
Hence $a_{00} = a_{00}(y)$. 

Similar reasoning can be applied to the next few terms in the expansion.  We use discreteness of the set of exponents 
to progressively isolate the most singular terms after we substitute the putative expansion for
$\vp$ into the equation.  Since $a_{00}$ is independent of $\th$ and $r$, $P_{1\b1} a_{00}$, 
and $P_{1\jbar} a_{00}$ and $P_{i\b1} a_{00}$ 
are all bounded (in fact, zero). Hence if the next term in the expansion is 
$a_{\gamma \ell} r^\gamma (\log r)^\ell$ with $\gamma \leq 2$, 
then applying $P_{1\b1}$ to it produces as its most singular term $r^{\gamma-2}(\log r)^\ell (\gamma^2 + \del_\th^2) a_{\gamma \ell}$. 
This shows immediately that either $\gamma$ must be an indicial root, 
i.e., $\gamma = 1/\be$ if $\be > 1/2$
with $a_{\gamma \ell}$ a linear combination of $\cos\th$ and $\sin\th$, or else $\gamma = 2$.
Note that this also shows that $a_{10\ell} \equiv 0$ for all $\ell \geq 0$.  

Assuming $\gamma < 2$ and $\ell > 0$, then using the leading order cancellation, the next
most singular term in $P_{1\b1} a_{01\ell} r^{\frac1{\be}}(\log r)^\ell$ is $\gamma r^{\gamma-2} (\log r)^{\ell-1}a_{01\ell}$ 
with no other term to cancel it. This is impossible, so we have ruled out all such terms with $\ell > 0$.
If $\gamma = 2$ and $\ell > 0$, there is no longer a leading order cancellation, but we are left with 
the singular term $a_{20\ell} (\log r)^\ell$, so $a_{20\ell} = 0$ when $\ell > 0$.  

Now consider what happens to the term $a_{20} r^2$. It interacts with the leading order terms $a_{00}$
in $\vp$ and $c_{00}$ in $f_\o$ only. Neither of these depend on $\th$, so we find that $a_{20}$
is a function of $y$ alone. 

We can continue this same reasoning further. Applying $P_{1\b1}$ to the next term in the expansion 
$a_{\gamma \ell} r^{\gamma} (\log r)^\ell$ beyond $a_{20}r^2$ produces
a leading order term which is a nonzero multiple of $a_{\gamma \ell} r^{\gamma-2} (\log r)^\ell$
if $\ell > 0$. Even though this term is bounded now, there are no other log terms at
the level $r^{\gamma-2}$ in \eqref{DetMAEq}.  On the other hand, if $\ell = 0$, then we 
end up with a term $r^{\gamma-2}(\gamma^2 + \del_\th^2)a_{\gamma 0}$, and there are no
terms in \eqref{DetMAEq} to cancel it either. Hence $\gamma$ must be one of the two indicial 
roots $k/\be$, $k = 1$ or $2$, and the coefficient must be a linear combination of $\cos k\th$ and  $\sin k\th$.

We comment further on the cases $\be=1/2$ or $\be=1/4$. 
In the former, one might suspect that one would need a term $r^2 \log r a_{021}$ because
applying $P_{1\b1}$ to this should match the $r^0$ term coming from the leading
coefficients of  $\vp$ and $f_\o$. However, those coefficients do not depend on $\th$,
whereas $a_{021}$ would be a combination of $\cos2\th$ and $\sin 2\th$, as above,
so there is no interaction, hence no log terms at this location.  This is also true for $\be = 1/4$. 
\end{proof}

\begin{remark} 
{\rm It is worth noting explicitly that while both the reference and solution metrics have expansions, the solution metric 
may have more terms in its expansion than the reference metric. One consequence of this is that the computations in 
the Appendix do {\it not} apply to the solutions $\o_{\vp(s)},\, s>-\infty$; in particular one cannot conclude that the bisectional 
curvatures of the solution metrics are bounded when $\be > 1/2$, and indeed, they are not!}
\end{remark}

Using Theorem \ref{PhgMainThm} and Proposition \ref{PreciseAsympExpansionProp}, we obtain the following regularity statement.
\begin{cor}
\label{DsCor}
Let $\vp$ be a solution of the \MA equation \eqref{RCMEq}, with $\vp \in \Dom^{0,\gamma}_e$. Then $\vp$ is polyhomogeneous,
and there exists some $\epsilon>0$ depending only on $\be$ such that $\vp \in \Dom^{0,\gamma'}_w$ for every 
$\gamma'\in[0,\epsilon(\be)]$. 
\end{cor}

\begin{remark} 
{\rm  As noted in the Introduction, Proposition \ref{PreciseAsympExpansionProp} sheds light on the
distinction between the easier ``orbifold regime" $\be\in(0,\frac12]$ and the case $\be \in (\frac12,1)$. 
In particular, we see that one should not expect uniform estimates even on the third derivatives $\vp_{i\b j k}$ 
when $\be > \frac12$. This is one reason why we study the \Holder norms of second derivatives in 
\S \ref{HolderSection} rather than considering the third order estimates as in the classical approach of Aubin and Yau.}
\end{remark}

\section{Maximum principle and the uniform estimate}
\label{CzeroNonpositiveSection}
We now recall the formulation of the maximum principle in this singular setting.
The main issue is to find barrier functions which allow one to reduce to the classical
maximum principle on $\MsmD$. These barrier functions were used already in \cite{J2}.

\begin{lem}
\label{BarrierFnLemma}
Let $f$ be continuous on $M$ and satisfy $|f(r,\th,y) - a(y)| \leq C r^\gamma$ for some $a \in \calC^0(\D)$
and $0 < \gamma < 1$. Then for $\eps$ sufficiently small,
\begin{enumerate}
\item[(i)] if $C>0$, then $f+C|s|_h^{\eps}$ achieves its maximum in $\MsmD$; \
\item[(ii)] if $c >0$ is small enough, then $c |s|_h^{\eps}\in \PSH(M,\o)$.
\end{enumerate}
\end{lem}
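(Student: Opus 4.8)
\medskip
\noindent
The plan is to establish (ii) first and then (i); both are short, and (ii) is essentially the positivity computation already carried out in the proof of Lemma~\ref{ReferenceMetricExistsLemma}, with the exponent $2\be$ there replaced by a small $\eps$. Fix once and for all $\eps\in(0,1)$ small enough that $\eps/\be<\gamma$; this is possible since $\be\in(0,1]$ and $\gamma>0$ are fixed. (I write $K$ for the constant in the hypothesis $|f-a(y)|\le K r^\gamma$, to avoid clashing with the constant $C$ appearing in~(i).)

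For (ii), I would work in a local holomorphic chart $\{z_i\}$ near a point of $D=\{z_1=0\}$, with holomorphic frame $e$ of $L_D$, $s=z_1 e$, $a:=|e|_h^2>0$, and $H:=a^{\eps/2}$, so $|s|_h^\eps=H|z_1|^\eps$. Exactly as in~\eqref{ddbarPhioneFirstEq},
\[
\i\,\ddbar\big(c\,|s|_h^\eps\big)=\tfrac{c\,\eps^2}{4}\,H|z_1|^{\eps-2}\,\i\,dz_1\w\overline{dz_1}
+c\,\eps\,\Re\!\big(|z_1|^{\eps}z_1^{-1}\,\i\,dz_1\w\dbar H\big)+c\,|z_1|^{\eps}\,\i\,\ddbar H ,
\]
so the only singular contribution is the positive $(1\bar1)$ term of size $\asymp c|z_1|^{\eps-2}$, the mixed terms are $O(c|z_1|^{\eps-1})$, and the last term is $O(c|z_1|^{\eps})$. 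Since $\o$ is uniformly equivalent to $g_\be$ near $D$, its $(1\bar1)$ coefficient is $\asymp|z_1|^{2\be-2}$, its mixed coefficients are $O(|z_1|^{\be-1+\eta'})$ for some $\eta'>0$, and its $Z$-block is $\asymp I$. The dominant-diagonal (Schur complement) argument from the proof of Lemma~\ref{ReferenceMetricExistsLemma} then applies verbatim: the large positive $(1\bar1)$ entry of $\o+\i\,\ddbar(c|s|_h^\eps)$ dominates the squares of the off-diagonal entries after division, while the $O(c)$ perturbation of the $Z$-block is absorbed once $c$ is small, so $\o+\i\,\ddbar(c|s|_h^\eps)>0$ on a punctured neighbourhood of $D$. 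Away from $D$, $c|s|_h^\eps$ is smooth with bounded complex Hessian, so positivity there holds for $c$ small as well. Finally $c|s|_h^\eps$ is continuous and bounded on $M$ and $\o$-psh on $\MsmD$, and $D$ is pluripolar, so the standard removable-singularity property of $\o$-psh functions puts $c|s|_h^\eps\in\PSH(M,\o)$, giving (ii).

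For (i), I would reduce to one case: fix $C\ne0$; it suffices to show that when $C>0$ no point of $D$ maximises $f+C|s|_h^\eps$ on $M$ (so, by compactness, the maximum is attained in $\MsmD$), since the statement about the minimum when $C<0$ then follows by applying this to $-f$ (which satisfies the hypothesis with $a$ replaced by $-a$) and to $-C>0$. So suppose $C>0$ and that $p_0\in D$, lying over $y_0\in\D$, maximises $f+C|s|_h^\eps$; since $|f(r,\th,y)-a(y)|\le K r^\gamma\to0$ and $f$ is continuous, its value there is $f(p_0)=a(y_0)$. Near $D$ one has $|z_1|=(\be r)^{1/\be}$, hence $|s|_h^\eps=H(\be r)^{\eps/\be}\ge c_1 r^{\eps/\be}$ for some $c_1>0$ ($H$ being bounded below near $D$), while $|f(r,\th,y_0)-a(y_0)|\le K r^\gamma$. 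Moving along the normal fibre $y\equiv y_0$ (with $\th$ fixed, $r\searrow0$),
\[
\big(f+C|s|_h^\eps\big)(r,\th,y_0)\ \ge\ a(y_0)-K r^{\gamma}+C c_1 r^{\eps/\be}
\ =\ a(y_0)+r^{\eps/\be}\big(C c_1-K r^{\gamma-\eps/\be}\big)\ >\ a(y_0)
\]
for all $r$ sufficiently small, because $\gamma-\eps/\be>0$. This contradicts $p_0$ being a maximiser, which proves (i).

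I expect the only genuinely technical point to be the positivity estimate in (ii): one must check that the strongly singular positive $(1\bar1)$ term of $\i\,\ddbar(c|s|_h^\eps)$ dominates the mixed and bounded terms uniformly as $z_1\to0$ and that the $Z$-block perturbation is harmless, which is precisely the dominant-diagonal computation already done for Lemma~\ref{ReferenceMetricExistsLemma}. Part (i) is soft, resting only on the elementary comparison of $|s|_h^\eps\asymp r^{\eps/\be}$ with $r^\gamma$: choosing $\eps$ so that $\eps/\be<\gamma$ makes the barrier term dominate the admissible oscillation of $f$ along every normal fibre approaching $D$.
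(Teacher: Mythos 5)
Your proof is correct; part (i) matches the paper's argument in substance, while part (ii) takes a genuinely different (and longer) route.

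For (i) you and the paper do essentially the same thing: use $|s|_h^\eps\asymp r^{\eps/\be}$, choose $\eps$ so that $\eps/\be<\gamma$, and compare with the hypothesis $|f-a(y)|\le Kr^\gamma$ to see that for $C>0$ the barrier term forces $f+C|s|_h^\eps>a(y)$ near $D$, so the maximum is not on $D$. The paper's one-liner (``strictly increases'') is terser and, read literally, too strong; your version correctly isolates what is needed and explicitly handles the $C<0$/minimum case by replacing $(f,C)$ with $(-f,-C)$. Both you and the paper only control one extremum per sign of $C$, which is the only content the lemma is actually invoked for later.

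For (ii) the paper argues globally. It starts from the elementary pointwise inequality $\i\ddbar b\ge b\,\i\ddbar\log b$ valid for any nonnegative function $b$ (because $\i\ddbar e^u=e^u(\i\del u\wedge\dbar u+\i\ddbar u)\ge e^u\,\i\ddbar u$), takes $b=|s|_h^\eps$, and observes that $b\,\i\ddbar\log b=\eps|s|_h^\eps\,\i\ddbar\log|s|_h=-\tfrac\eps2|s|_h^\eps\,R(h)$, which is bounded below by $-C\o$ since $|s|_h$ is bounded and $R(h)$ is a smooth form on the compact $M$ (and $\o\ge c\,\o_0$). This is two lines and completely avoids any coordinate computation near $D$. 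Your argument instead repeats the local Schur--complement estimate used to build the reference metric in Lemma~\ref{ReferenceMetricExistsLemma}: expand $\i\ddbar(c|s|_h^\eps)$ in adapted holomorphic coordinates, track the orders $c|z_1|^{\eps-2}$, $O(c|z_1|^{\eps-1})$, $O(c|z_1|^{\eps})$ of the $(1\bar1)$, mixed, and $Z$-block entries, and check that the singular positive $(1\bar1)$ entry dominates; then remove the singularity across $D$ since the function is bounded and $\o$-psh on $\MsmD$. This is correct, but it is appreciably heavier than the paper's proof, and the phrase ``applies verbatim'' overstates what Lemma~\ref{ReferenceMetricExistsLemma} actually spells out -- there the Schur dominance is asserted rather than written down, so you are really redoing that estimate. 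Two small nits: you should say ``asymptotically equivalent'' to $g_\be$ rather than ``uniformly equivalent,'' since the $O(\rho^{\be-1+\eta'})$ decay of the mixed coefficients comes from the stronger hypothesis \eqref{pertgb}, not from mere uniform equivalence; and in the Schur estimate the total $(1\bar1)$ entry is $\gtrsim |z_1|^{2\be-2}+c|z_1|^{\eps-2}$, and one should bound the cross term of the off-diagonals against this full sum (e.g.\ via AM--GM) rather than only against $c|z_1|^{\eps-2}$, since for $|z_1|$ of moderate size the $\o$-part dominates. Both are easily repaired. Overall: (i) is the paper's argument made precise; (ii) is correct but misses a shorter global route.
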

\begin{proof}
(i) The function $|s|_h^\eps$ is comparable to $r^{\eps/\beta}$, so for $C > 0$, $r \mapsto f(r,\theta,y) + 
C |s|_h^\eps$ strictly increases, hence cannot reach its maximum at $r = 0$. 
\hfill\break
(ii) Let $h$ be a smooth Hermitian metric on $L_D$ with global holomorphic section $s$
so that $\D = s^{-1}(0)$. For any $b\ge0$, we have $\i\ddbar b\ge b\i\ddbar\log b$.
Setting $b:=|s|^\eps_h$ gives
\begin{equation}
\label{LaplacianOfBarrierFnEq}
\i\ddbar b \geq \i\, \eps|s|^\eps_h \ddbar\log|s|_h =-\frac12\eps|s|^{\eps}_h R(h)>-C\o,
\end{equation}
where $C$ depends only on the choice of $\o,h,s,\eps$. Thus $C^{-1} b\in \PSH(M,\o)$.
\end{proof}

The assumption on $f$ above holds in particular for $f\in \calC^{0,\gamma}_w$, and for $f$ and $\Delta_\o f$ when 
$f\in \calD^{0,\gamma}_w$. 

This lemma is used as follows. Replacing $|s|_h^\eps$ by $c |s|_h^\eps$ and letting $c$ tend to 0, 
we obtain estimates which are the same as those one would expect from the maximum principle on $\MsmD$.
See the proofs of Lemmas~\ref{AYUniquenessLemma} and
\ref{TYLaplacianEstimateLemma} below for more on this.
The uniqueness and a priori $\calC^0$ estimate when $\mu\le 0$ are 
now immediate consequences.

\begin{lem} 
\label{AYUniquenessLemma}
Solutions to the \MA equation \eqref{TwoParamCMEq} with $s \le 0$ are unique (when $s=0$, only unique
up to a constant) in $\Dom^{0,\gamma}_w \cap \PSH(M, \o)$ and satisfy
\begin{equation}
\label{AYTYCzeroEq}
||\vp(s,t)||_{\calC^0(M)} \le C=C(||f_\o||_{\calC^0(M)},M, \o). 
\end{equation}
\end{lem}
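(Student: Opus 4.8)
The plan is to run the classical Aubin--Yau maximum principle argument for uniqueness and for the $\calC^0$ bound, with the sole modification that, since $\MsmD$ is noncompact, the maximum principle is applied not directly but through the barrier functions of Lemma~\ref{BarrierFnLemma}. Two preliminary observations make this legitimate. First, any $\vp\in\Dom^{0,\gamma}_s$ is continuous up to $D$ --- for $s=w$ this is built into the wedge H\"older spaces, and for $s=e$ it is Corollary~\ref{correg} --- so $\|\vp\|_{\calC^0(M)}$ is meaningful and $\vp$, together with $\Delta_\o\vp$, satisfies the decay hypothesis of Lemma~\ref{BarrierFnLemma}. Second, the constant $c_t$ is a priori controlled: from $\int_M e^{tf_\o+c_t}\on=V$, the normalization $\frac1V\int_M e^{f_\o}\on=1$, and the pointwise bound $|f_\o|\le\|f_\o\|_{\calC^0(M)}$, monotonicity of the integral gives at once $|c_t|\le\|f_\o\|_{\calC^0(M)}$.

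For uniqueness, suppose $\vp_1,\vp_2\in\Dom^{0,\gamma}_s\cap\PSH(M,\o)$ both solve \eqref{TwoParamCMEq}, and put $\psi:=\vp_1-\vp_2$, so that $\o_{\vp_1}^n/\o_{\vp_2}^n=e^{-s\psi}$ on $\MsmD$. For $c>0$ small, Lemma~\ref{BarrierFnLemma}(i) says $\psi+c|s|_h^\eps$ attains its maximum at some $p_c\in\MsmD$; there $\i\ddbar\psi(p_c)\le -c\,\i\ddbar|s|_h^\eps(p_c)$, and the estimate $\i\ddbar|s|_h^\eps\ge -C\o$ from the proof of Lemma~\ref{BarrierFnLemma}(ii) gives $\i\ddbar\psi(p_c)\le cC\o(p_c)$. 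Since $\o_{\vp_2}$ is uniformly equivalent to $\o$, it follows that $\o_{\vp_1}(p_c)\le(1+cC')\o_{\vp_2}(p_c)$, hence $e^{-s\psi(p_c)}\le(1+cC')^n$, so that $-s\,\psi(p_c)=O(c)$. Because $\psi$ is continuous up to $D$ we have $\sup_M\psi=\sup_{\MsmD}\psi\le\psi(p_c)+c\sup_M|s|_h^\eps$; letting $c\to0$ yields $-s\sup_M\psi\le0$, i.e.\ $\sup_M\psi\le0$ when $s<0$, and interchanging $\vp_1,\vp_2$ gives $\psi\equiv0$. When $s=0$ the same computation only forces $\o_{\vp_1}=\o_{\vp_2}$, so $\psi$ is constant and is then pinned down by the normalization along the path.

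For the a priori estimate I would apply the same barrier argument to $\vp$ itself: at the maximum point $p_c\in\MsmD$ of $\vp+c|s|_h^\eps$ one gets $\o_\vp(p_c)\le(1+cC)\o(p_c)$, hence from \eqref{TwoParamCMEq} that $e^{tf_\o(p_c)+c_t-s\vp(p_c)}\le(1+cC)^n$, and therefore $-s\,\vp(p_c)\le\|f_\o\|_{\calC^0(M)}+|c_t|+O(c)$; the mirror argument at a minimum of $\vp-c|s|_h^\eps$, together with letting $c\to0$ and the bound $|c_t|\le\|f_\o\|_{\calC^0(M)}$, gives $|s|\,\|\vp\|_{\calC^0(M)}\le 2\|f_\o\|_{\calC^0(M)}$. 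Since the continuity path keeps $s\le\mu$, this is the asserted bound $\|\vp(s,t)\|_{\calC^0(M)}\le C(\|f_\o\|_{\calC^0(M)},M,\o)$ with $C=2\|f_\o\|_{\calC^0(M)}/(-\mu)$ when $\mu<0$, while for $\mu=0$ the degenerate endpoint $s=0$ is treated with the usual normalization $\sup_M\vp=0$ of the Calabi-type equation. The honest assessment is that there is essentially no obstacle here once Lemma~\ref{BarrierFnLemma} is in hand; the only step requiring a little care is the passage from the $(1,1)$-form inequality $\i\ddbar\psi(p_c)\le cC\o(p_c)$ to the determinant inequality, which is exactly where uniform equivalence of $\o_{\vp_i}$ (respectively $\o_\vp$) to the model metric $g_\be$ is used.
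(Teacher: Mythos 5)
Your uniqueness argument is essentially the paper's: for $s<0$ a barrier comparison at a near-maximum of the difference of two solutions, and for $s=0$ Blocki's $L^\infty$ uniqueness up to a constant which is then fixed by the continuity path. (The paper pins the constant by $\sup\vp(0,t)=\lim_{s\to 0^-}\sup\vp(s,t)$ rather than by imposing $\sup\vp=0$, but this is a minor normalization issue.)

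The $\calC^0$ estimate has a genuine gap. Your barrier computation correctly yields $\|\vp(s,t)\|_{\calC^0(M)}\le -2s^{-1}\|f_\o\|_{\calC^0(M)}$ for each fixed $s<0$, and the paper records exactly this. But this bound blows up as $s\to 0^-$, and the lemma asserts a bound that is uniform over the whole range $s\in(-\infty,0]$ (which is what the closedness argument in \S\ref{ProofKESection} needs, since $A_S=\{(s,t)\in A: s\in(S,0]\}$ is exhausted as $S\to-\infty$). Your proposed fix, taking $C=2\|f_\o\|_{\calC^0}/(-\mu)$ when $\mu<0$, is incorrect: the lemma concerns \eqref{TwoParamCMEq}, whose parameter domain is $A=(-\infty,0]\times[0,1]\cup[0,\mu]\times\{1\}$, and the piece with $s\le 0$ always reaches all the way to $s=0$ regardless of the sign of $\mu$; similarly, declaring a normalization at the single endpoint $s=0$ does not remedy the degeneracy for all $s$ in a punctured neighborhood of $0$. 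The paper's remedy is a second, complementary argument: for $s\in(S,0]$ the $\calC^0$ bound on the exponent gives a uniform $L^p$ bound (for some $p\in(1,1/(1-\be))$) on the density of $\o_\vp^n$ against $\o_0^n$, and \Kolodziej's estimate \cite{K1} then controls $\mathrm{osc}\,\vp(s,t)$ uniformly; since $\vp(s,t)$ changes sign, this controls $\|\vp(s,t)\|_{\calC^0(M)}$. Concatenating the barrier bound for $s\le S$ with the \Kolodziej\ bound for $s\in(S,0]$ is what yields the asserted uniform constant.
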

\begin{proof}
Uniqueness when $s < 0$ is proved in \cite{J2}; that argument carries over directly to this \MA
equation and either of the types of function spaces we are using here, 
because of Lemma~\ref{BarrierFnLemma}.
Finally, when $s=0$ the result of \Blocki~\cite{Bl2003} gives uniqueness in $L^\infty(M)$
up to a constant, and that constant can be chosen by requiring that $\sup\vp(0,t)=\lim_{s\ra0^-}\sup\vp(s,t)$.

The same argument also shows that  
$||\vp(s,t)||_{\calC^0(M)} \le -2s^{-1}||f_\o||_{\calC^0(M)}$, for each $s <0$.
One can then obtain a uniform estimate for all $s\le0$ as follows.
First, by the above, we may assume that $s>S$, for some $S<0$.
With respect to the fixed smooth \K form $\o_0$, \eqref{TwoParamCMEq} can be rewritten as 
\[
\o_{\vp}^n=\o_0^n F|s|^{2\be-2}_h e^{tf_\o+c_t-s \vp},
\]
where $F\in \calC^0(M)$. By the previous estimate, 
$||e^{tf_\o+c_t-s \vp}||_{\calC^0(M)}\le C$
uniformly in $s$. It follows that 
$||F|s|^{2\be-2}_h e^{tf_\o+c_t-s\vp}||_{L^p(M,\o_0^n)}\le C_p$, for all $p\in(1,1/(1-\be))$, with 
$C_p$ independent of $s\le0$. Assuming this, by \Kolodziej's estimate \cite{K1} 
$\h{\rm osc}\,\vp(s,t)\le C$, with $C>0$ independent of $s,t$, and since 
by \eqref{TwoParamCMEq} $\vp(s,t)$ changes
sign then also $|\vp(s,t)|\le C$.
\end{proof}

\section{The uniform estimate in the positive case}
\label{UniformPositiveSection}
In contrast to the nonpositive curvature cases, when $\mu>0$, there  are well-known obstructions to the existence 
of an a priori $\calC^0$ estimate along the continuity path. In this section 
we review the standard theory due to Tian and others \cite{T97,Tbook} along 
with the necessary modifications to adapt it to our setting.
For an alternative variational approach that can be applied to more general classes of
plurisubharmonic functions we refer to \cite{Berm}.

\subsection{Poincar\'e and Sobolev inequalities}
\label{PoincareSobolevSubSec}
In this subsection we show that along the continuity path (\ref{RCMEq}) one has uniform 
Poincar\'e and Sobolev inequalities.

We first prove that a uniform Poincar\'e inequality holds as soon as $s>\eps>0$. The following argument 
is the analogue of \cite[Lemma 6.12]{Tbook} in this edge setting, and also generalizes \cite[Lemma 3]{LT} 
to higher dimensions. The second part is the same assertion as \cite[Proposition 8]{D}. 
The proof here takes advantage of the fine regularity results for solutions available to us. 
\begin{lem}
\label{LambdaoneLemma} 
Denote by $\Delta_{\o_{\vp(s)}}$ the Friedrichs extension of the
Laplacian associated to $\o_{\vp(s)}$. 
\begin{itemize}
\item[i)] For any $s\in(0,\mu)$, $\lambda_1(-\Delta_{\o_{\vp(s)}})>s$. 
\item[ii)] For $s=\mu$, $\lambda_1(-\Delta_{\o_{\vp(\mu)}})\ge \mu$.  If
$(\Delta_{\o_{\vp(\mu)}}+\mu)\psi=0$ then $\nabla^{1,0}_{g_{\vp(\mu)}}\psi$
is a holomorphic vector field tangent to $D$.
\end{itemize}
\end{lem}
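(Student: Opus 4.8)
\emph{Sketch of the argument.} The plan is to run the classical Bochner--Lichnerowicz--Matsushima eigenvalue argument, with the only genuinely new point being its justification on the incomplete edge space $\MsmD$. The structural input is \eqref{RicAlongCMEq}: along the path \eqref{TYCMEq} one has $\Ric\o_{\vp(s)}=s\,\o_{\vp(s)}+(\mu-s)\,\o+(1-\be)[D]$, so that on $\MsmD$,
\[
\Ric\o_{\vp(s)}=s\,\o_{\vp(s)}+(\mu-s)\,\o.
\]
Since $\o$ and $\o_{\vp(s)}$ are \K edge metrics with the same cone angle they are uniformly equivalent on $M$, so when $s<\mu$ there is an $s'>s$ with $\Ric\o_{\vp(s)}\ge s'\,\o_{\vp(s)}$ on $\MsmD$, while at $s=\mu$ one has exactly $\Ric\o_{\vp(\mu)}=\mu\,\o_{\vp(\mu)}$ there.

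For i), I would take a first eigenfunction $\psi$ of the Friedrichs Laplacian, $-\Delta_{\o_{\vp(s)}}\psi=\lambda_1\psi$ with $\int_M\psi\,\o_{\vp(s)}^n=0$. By Theorem~\ref{PhgMainThm} the metric $\o_{\vp(s)}$ is polyhomogeneous, so $\psi$ lies in the nullspace of the elliptic edge operator $\Delta_{\o_{\vp(s)}}+\lambda_1$ (a bounded, polyhomogeneous potential), hence $\psi\in\calA^0_{\phg}$ by the linear theory of \S\ref{LinearSection}; in particular $\psi$ and all the derivatives $Q_i\psi$ of Proposition~\ref{mainlinearprop} are bounded. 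I would then integrate the K\"ahler Bochner--Weitzenb\"ock identity over $M_\e=\{r\ge\e\}$ to obtain
\[
\int_{M_\e}\big|\nabla^{1,0}\nabla^{1,0}\psi\big|^2
=\int_{M_\e}\Big(\lambda_1\,|\nabla^{1,0}\psi|^2-\Ric(\nabla^{1,0}\psi,\overline{\nabla^{1,0}\psi})\Big)+\mathcal B_\e,
\]
where $\mathcal B_\e$ is an integral over $\{r=\e\}$ of quadratic expressions in $\psi$ and its first and second covariant derivatives against the cross-sectional area element, which is $O(\e)$. Using the expansion $\psi\sim a_{00}(y)+O(r^{\min(2,1/\be)})$ and the boundedness, in the $\o_{\vp(s)}$-metric, of $\nabla^{1,0}\psi$ and of the covariant Hessian of $\psi$ --- the point being that although individual derivatives such as $\del_r^2\psi$ may blow up when $\be>1/2$, the combinations that actually occur in the identity are controlled, exactly as in Donaldson's observation --- one finds $\mathcal B_\e\to0$. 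Letting $\e\to0$ and inserting $\Ric\ge s'\,\o_{\vp(s)}$ gives $0\le(\lambda_1-s')\int_{\MsmD}|\nabla^{1,0}\psi|^2$, whence $\lambda_1\ge s'>s$ since $\psi$ is nonconstant.

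For ii), the same computation at $s=\mu$ yields only $\lambda_1\ge\mu$. If now $-\Delta_{\o_{\vp(\mu)}}\psi=\mu\psi$, then since $\Ric\o_{\vp(\mu)}=\mu\,\o_{\vp(\mu)}$ pointwise on $\MsmD$ the displayed identity forces $\nabla^{1,0}\nabla^{1,0}\psi\equiv0$ on $\MsmD$, i.e., by K\"ahlerness, $V:=\nabla^{1,0}_{g_{\vp(\mu)}}\psi$ is a holomorphic vector field on $\MsmD$. It remains to show $V$ extends across $D$ and, when $\be<1$, is tangent to it. I would use that $\psi$ is polyhomogeneous with leading behaviour, obtained exactly as in Proposition~\ref{PreciseAsympExpansionProp}, of the form $\psi\sim a_{00}(y)+(a_{01}(y)\sin\th+b_{01}(y)\cos\th)\,r^{1/\be}+a_{20}(y)\,r^2+\cdots$, the $r^{1/\be}$ and $r^2$ terms being smooth functions of $(z_1,Z)$. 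Combining this with \eqref{gupperijEq}, one checks that the components $V^k=g_{\vp(\mu)}^{k\jbar}\psi_{\jbar}$ are bounded on $\MsmD$, and moreover $V^1=g_{\vp(\mu)}^{1\bar1}\psi_{\bar1}+\sum_{k\ge2}g_{\vp(\mu)}^{1\bar k}\psi_{\bar k}\to0$ as $z_1\to0$, the decay being forced by the factors $g_{\vp(\mu)}^{1\bar1}=O(|z_1|^{2-2\be})$ and $g_{\vp(\mu)}^{1\bar k}=O(|z_1|^{\eta'+1-\be})$ together with $\be<1$. Hence $V$ is a bounded holomorphic vector field on $\MsmD$, so by the Riemann removable singularity theorem it extends to a holomorphic vector field on $M$, and the vanishing of $V^1$ along $D=\{z_1=0\}$ says precisely that it is tangent to $D$.

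The main obstacle is the justification indicated in the second paragraph: making the Bochner integration by parts rigorous near $D$. This rests on two ingredients already available, namely the polyhomogeneity of the eigenfunction (which upgrades the weak Friedrichs expansions of \S\ref{LinearSection} to genuine differentiable asymptotics) and the bookkeeping showing that, even for $\be>1/2$, the specific combinations of second derivatives appearing in the Weitzenb\"ock formula and in its boundary integrals stay bounded in the $\o_{\vp(s)}$-metric. Once this is in place the remainder is the classical argument.
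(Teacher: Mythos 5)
Your proposal follows essentially the same route as the paper: polyhomogeneity of the eigenfunction via the linear edge theory, the complex Bochner--Weitzenb\"ock identity integrated over $\{r\ge\e\}$ with the boundary flux vanishing because the leading terms $r^0,\,r^{1/\be},\,r^2$ in $\psi$ (and the absence of $\log r$, guaranteed by the Friedrichs choice) control the asymptotics of $\nabla^{1,0}\psi$, followed by the removable-singularity/tangency argument for the holomorphic vector field. The only cosmetic differences are that you extract a strict bound via $\Ric\ge s'\o_{\vp(s)}$ with $s'>s$ using uniform equivalence of $\o$ and $\o_{\vp(s)}$, where the paper just uses $\Ric>s\o_{\vp(s)}$, and you check tangency of $\nabla^{1,0}\psi$ in the $z_1$-coordinate (decaying inverse metric times bounded $\psi_{\bar z_1}$) while the paper does it in the $\zeta$-coordinate (bounded $g^{1\bar1}$ times decaying $\psi_{\bar\zeta}$) --- equivalent computations.
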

\begin{proof}
(i) Let $\psi$ be an eigenfunction of $\Delta_{\o_{\vp(s)}}$ with eigenvalue $-\lambda_1$.  Since
$\vp(s)$ is polyhomogeneous, then the eigenfunctions of $\Delta_{\o_{\vp(s)}}$ are also polyhomogeneous.
This is a special case of the main regularity theorem for linear elliptic differential edge operators
from \cite{Ma1}. The proof uses the same pseudodifferential machinery described in \S 3 (although
for this particular result it is possible to give a more elementary proof). 
The key fact is that $\psi \sim a_0 r^0 + a_1 r^{\frac{1}{\be}} + a_2 r^2 + O(r^{2+\eta})$ for some $\eta > 0$, and 
in particular there is no $\log r$ in this expansion, since we are using the Friedrichs extension. 

The Bochner--Weitzenb\"ock formula states that on $M\sm D$,
\[
\frac12\Delta_g|\nabla_g f|_g^2 = \Ric(\nabla_g f,\nabla_g f)+|\nabla^2 f|^2_g+\nabla f \cdot \nabla (\Delta_g f).
\]
Since $\Delta_g=2\Delta_\o$ and $|\nabla^2 f|^2_g=2|\nabla^{1,0}\nabla^{1,0}f|^2+2(\Delta_\o f)^2$, this becomes
\begin{equation}
\label{WeitzCxEq}
\Delta_\o|\nabla^{1,0} \psi|_g^2 = 2\Ric(\nabla^{1,0} \psi,\nabla^{0,1} \psi) + 2|\nabla^{1,0}\nabla^{1,0}\psi|^2
+ 2\lambda_1^2\psi^2 - 4\lambda_1|\nabla^{1,0} \psi|_\o^2.
\end{equation}
We now claim that
\begin{equation}
\label{LaplacianIntegrateZeroEq}
\int_M \Delta_\ovp|\nabla^{1,0} \psi|_{\ovp}^2\o_{\vp}^n=0.
\end{equation}
This follows directly from the expansion of $\psi$, since the worst term in the expansion of $\nabla^{1,0}\psi$
is $r^{\frac{1}{\be} - 1}$. Hence if we integrate over $r \geq \eps$ then the boundary term is of order 
$\eps^{\frac{2}{\be} - 2}$ (taking into account the measure $r d\th dy$ on this boundary), and this tends to $0$ with $\eps$. 
This proves the claim. Thus integrating (\ref{WeitzCxEq}) and using that $\Ric\o(s)>s\o(s)$
when $s<\mu$ we see that $\lambda_1>s$.

\medskip
\noindent
(ii) When $s=\mu$ this same argument yields $\lambda_1\ge \mu$.  Moreover, equality holds precisely when 
$\nabla^{1,0}\nabla^{1,0}\psi=0$ on $M\setminus D$, i.e., $\nabla^{1,0}\psi$ is a holomorphic vector field on 
$\MsmD$. Using the asymptotic expansion,  $\nabla^{1,0}\psi$ is continuous up to $D$, and hence extends holomorphically 
to  $M$. Now, the coefficient of $\frac{\del}{\del\zeta}$ equals $g^{1\jbar}\psi_{\jbar}$.
By \eqref{gupperijEq} $g^{1\jbar}=O(r^{\eta'})$, hence vanishes on $D$ for $j\ne1$, (and $\psi$ is infinitely differentiable 
in the $j\ne 1$ directions), while although $g^{1\b1}$ is uniformly positive,  $\psi_{\b 1}=O(r^{\onebe-1})$, so this 
terms also vanishes on $D$. In conclusion the $\frac{\del}{\del\zeta}$ component of $\nabla^{1,0}\psi$ vanishes at $D$,
so this vector field is tangent to $D$. 
\end{proof}

We now estimate the Sobolev constant.  First observe that the Sobolev inequality holds 
for the model edge metric $g_\beta$, i.e., since $\dim M = 2n$,  
\begin{equation}
\label{regularSobolevEq}
||f||_{L^{\frac{2n}{n-1}}(M,g_\beta)}\le C_S ||f||_{W^{1,2}(M,g_\beta)},
\end{equation}
and hence also for any metric uniformly equivalent to it. One way to prove this is to note that it suffices
to prove this inequality locally, in the neighbourhood of any point; away from $D$ this is just
the standard Sobolev inequality, while in a neighbourhood of any point $p \in D$ we can
use the $(\zeta,Z)$ coordinate system to reduce to the standard Euclidean case. An alternate proof
relies on the well-known equivalence of the Sobolev inequality with the fact that the heat kernel
for the scalar Laplacian blows up like $t^{-n}$ as $t \searrow 0$ (since the overall dimension is $2n$). 
Since $g_\beta$ is a product of a cone with a Euclidean space, this, in turn, reduces to the fact that the 
heat kernel on a two-dimensional cone blows up like $t^{-1}$, which can be verified by direct computation, see, e.g., \cite{D}. 

As an aside, observe that using \eqref{regularSobolevEq}, 
the standard Moser iteration proof of the 
$\calC^0$ estimate for $s=0$ \cite{Tbook} goes through exactly as in the smooth case, and hence can be 
used instead of \Kolodziej's estimate to prove Lemma \ref{AYUniquenessLemma}.

Next, we derive a uniform Sobolev inequality when $s > \eps$.  Our approach follows Bakry \cite{Bakry} closely,
and relies on the general theory of diffusive semigroups. The following result is essentially a special case of 
\cite[Theorem 6.10]{Bakry}.
\begin{prop}
\label{SobolevProp}
Let $\eps\in(0,1)$ and $s\in(\eps,1]$. There exist a uniform constant $C>0$ depending only
on $(M,\o), n$ and $\eps$ so that for any $f\in W^{1,2}(M,\o_{\vp(s)})$,  
\[
||f||_{L^{\frac{2n}{n-1}}(M,\o_{\vp(s)})} \le C||f||_{W^{1,2}(M,\o_{\vp(s)})}.
\]
\end{prop}
\begin{proof}
Let $L=\Delta_{\ovp}$. Proposition 2.1 in \cite{Bakry} (which holds for substantially more general operators $L$) asserts that if 
$\calA \subset \Dom_{\mathrm{Fr}}(L)$ (recall \eqref{FriedrichsEq}) is a subspace preserved by $L$ and $e^{tL}$ and dense in $L^2$,
then it is also dense in $\Dom_{\mathrm{Fr}}(L)$ with respect to the graph norm $||f||_{L^2}+||Lf||_{L^2}$. We can also verify this directly
in our setting, and in fact have already done so in the proof of Lemma~\ref{density} above, but cf.\ also the discussion 
in \cite[p. 35]{Bakry}.

Now, for any two functions $f,g\in\calA$ define the quantities
\beq\label{GEq}
2\G(f,g):=L(fg)-fLg-gLf,
\eeq
and
\beq\label{G2Eq}
2\G_2(f,g):=L\G(f,g)-\G(f,Lg)-\G(g,Lf).
\eeq
Note that on the smooth part $M\sm D$, 
\beq\label{GForEq}
\G(f,f)=|\nabla f|^2
\eeq
(the gradient and norm are taken with respect to $\ovp$), and
\beq\label{G2ForEq}
\G_2(f,f)=\frac12L|\nabla f|^2-\nabla f.\nabla Lf= \Ric\ovp(\nabla f,\nabla f)+|\nabla^2 f|^2.
\eeq
Since $f\in\calA$, \eqref{GForEq} holds on all of $M$ as a $W^{1,2}$ distribution. Furthermore, by virtue of 
\eqref{RicAlongCMEq}, \eqref{G2ForEq} implies that 
\beq\label{G2For2ndEq}
\G_2(f,f) \ge C\eps\, \G(f,f)+\frac1{2n}(L f)^2
\eeq
in the sense of distributions on all of $M$, where $C=C(n)$ is a universal constant.  Both of these assertions can
be checked easily using that $f$ is \phgs\ with an expansion $f=a_0(y) +r^{\frac1\be}(a_1(y)\cos\th+a_2(y)\sin\th)+
a_2(y)r^2+O(r^{2+c})$, for some $c>0$. 

Following the definition and notation of \cite[p.93]{Bakry}, we have proved that $L$ satisfies the ``uniform curvature-dimension 
condition'' ${\h{CD}(C\eps,2n)}$. We can then follow the general argument in \cite{Bakry} to obtain uniform Sobolev 
bounds \cite[Theorem 6.10]{Bakry}, cf.\ also \cite[Theorem 1]{BakryLedoux}.  This procedure also leads to a uniform 
Poincar\'e estimate; however, an examination of the proof of \cite[Proposition 6.3]{Bakry} shows that this is 
essentially equivalent to the one given above in Lemma \ref{LambdaoneLemma} (i).   In any case, we now sketch 
Bakry's reasoning and explain in detail why it applies here. 

The first point is that it suffices to prove the uniform Sobolev inequality only for functions in $\calA$. Indeed, using
the density of $\calA$ in the graph norm, we must show that both sides in the Sobolev inequality 
are continuous in this topology.  For the right hand side, this is obvious.  Since there is {\it some} (not necessarily uniform)
Sobolev inequality, cf.\ the paragraph containing \eqref{regularSobolevEq}, $W^{1,2}$
is contained in $L^{\frac{2n}{n-1}}$ and so a sequence converging in $W^{1,2}$ converges weakly in $L^{\frac{2n}{n-1}}$, and so
the left hand side is also continuous in the appropriate sense.  

We now show uniform control of the Sobolev constant. Fix $2<p<\frac{2n}{n-1}$  and $\delta>0$ and let $f^{(p)}_k \in \calA$ 
be a sequence which converges towards the supremum of the ratio 
\begin{equation}
\frac{||F||_{p}^2-(1+\delta)||F||_{2}^2}{\G(F,F)}
\label{sr}
\end{equation}
over $F\in\Dom_{\mathrm{Fr}}(L)$. Denote this supremum by $\gamma_p$. As usual, we can assume that $f^{(p)}_k\ge0$ 
and $||f^{(p)}_k||_{2}=1$. Using the compactness of $W^{1,2}$ in $L^{p}$ (that is a consequence of the existence of a
Sobolev inequality; for a general semigroup, this compactness is not automatic and is proved in \cite[Theorem 4.11]{Bakry}), 
we can extract a subsequence converging weakly in $\Dom_{\mathrm{Fr}}(L)$ and strongly in $L^p$ to a nontrivial
limit function $f^{(p)} \geq 0$, which we call $f$ for simplicity. This satisfies
$ ||f||_{p}^2=(1+\delta)+\gamma_p \G(f,f). $ 
Assuming that we have normalized the measure associated to $\o^n_{\vp(s)}$ to have unit volume, then $f$ must
be nonconstant since $\delta>0$. Since $f$ maximizes \eqref{sr}, the usual argument in the calculus of variations gives 
$
||f||^{2-p}_p\langle f^{p-1},g\rangle =(1+\delta)\langle f,g\rangle+\gamma_p \G(f,g) =\langle f,(1+\delta)g-\gamma_p Lg\rangle,
$
for any $g\in\Dom_{\mathrm{Fr}}(L)$, or equivalently 
$||f||^{2-p}_p\langle f^{p-1},R_\lambda(h)\rangle =\langle f,\gamma_ph\rangle$.
Here $\lambda:=(1+\delta)/\gamma_p$ and $g=R_\lambda(h)$ where $R_\lambda=\int_0^\infty e^{-\lambda t}e^{tL}dt=(\lambda I-L)^{-1}$ 
is the resolvent of $L$. This shows that $f=R_\lambda(||f||^{2-p}_pf^{p-1})/\gamma_p$, or equivalently
\beq
\label{fResolventEq} 
||f||^{2-p}_pf^{p-1}=((1+\delta)-\gamma_p L)f. 
\eeq
Following \cite[\S 3.2]{ACM}, the solution to this subcritical Yamabe-type equation must be polyhomogeneous.
Then, by a determination of the leading terms in the expansion of $f$,
it readily follows that $f\in\Dw$. Both these asserations are simpler analogues of Theorem \ref{PhgMainThm} and
Proposition \ref{PreciseAsympExpansionProp}, and their proof follows similar, but simpler, arguments  since this is
a quasilinear equation, and not a fully nonlinear one. Thus, $f\in\calA$.
\begin{lem}
The constant $\gamma_p$ associated to the embedding $W^{1,2}\subset L^p$ satisfies 
\beq\label{SobolevEstimateEq}
\gamma_p\le \frac{(2n-1)(p-2)(1+\delta)}{2nC\eps}.
\eeq
\end{lem}
\begin{proof}
Fix $a\in\RR$. We let $g$ be such that $g^a=f\equiv f^{(p)}$. We then divide \eqref{fResolventEq} by $f$ and then substitute $f=g^a$ to get 
\beq\label{Eq1}
\baeq
||f||^{2-p}_pg^{a(p-2)}
&=1+\delta-\gamma_p g^{-a}[ag^{a-1}Lg+a(a-1)g^{a-2}\G(g,g)],
\cr
&=1+\delta-a\gamma_p [g^{-1}Lg+(a-1)g^{-2}\G(g,g)],
\eaeq
\eeq
Now, following Bakry, we multiply this by $-gLg$ and integrate:
\begin{equation*}
\baeq
-||f||^{2-p}_p\langle g^{1+a(p-2)},Lg\rangle
&=(1+\delta)\G(g,g)+a\gamma_p||Lg||^2+a\gamma_p(a-1)\langle \frac{Lg}g,\G(g,g)\rangle.
\eaeq
\end{equation*}
The left hand side can be rewritten as
$$
||f||^{2-p}_p\G(g^{1+a(p-2)},g)=C_p(1+a(p-2))\langle g^{a(p-2)},\G(g,g)\rangle.
$$
This can be rewritten using \eqref{Eq1} as
$$
(1+a(p-2))\langle 1+\delta-a\gamma_p [g^{-1}Lg+(a-1)g^{-2}\G(g,g)],\G(g,g)\rangle.
$$
Altogether, we have
\begin{equation*}
\baeq
&(1+a(p-2))\Big\langle 1+\delta-a\gamma_p [g^{-1}Lg+(a-1)g^{-2}\G(g,g)],\G(g,g)\Big\rangle
\cr
&=(1+\delta)\G(g,g)+a\gamma_p||Lg||^2+a\gamma_p(a-1)\langle \frac{Lg}g,\G(g,g)\rangle.
\eaeq
\end{equation*}
So the constant $||f||^{2-p}_p$ disappears; from this point on we follow Bakry, and as in \cite[(6.37)]{Bakry}, we obtain
$$
\frac{1+\delta}{\gamma_p}(p-2)||\G(g,g)||_1
=
||Lg||_2^2+a(p-1)\langle Lg/g,\G(g,g)\rangle+(a-1)(1+a(p-2))||\G(g,g)/g||_2^2.
$$
We now invoke a consequence of \eqref{G2For2ndEq}, which holds by the chain rule \cite[(6.38)]{Bakry}: for any $b \in \RR$,  
$$
\G_2(g,g)+b\G(g,\G(g,g))/g+b^2(\G(g,g)/g)^2\ge C\eps\G(g,g)+\frac1{2n}(Lg+b\G(g,g)/g)^2.
$$
Integrating gives 
$$
\Big(\frac{1+\delta}{\gamma_p}(p-2)-\frac{2n}{2n-1}C\eps\Big)\G(g,g)\ge
\Big((a-1)(1+a(p-2))-b(b+2n/(2n+1))\Big)
||\G(g,g)/g||_2^2.
$$
Choosing $a,b$ appropriately as in \cite[p. 110]{Bakry}, we see that the right hand side is nonnegative, 
which implies a uniform bound on $\gamma_p$ since $\G(g,g)\ge0$. 
\end{proof}
Letting $p \nearrow 2n/(n-1)$, and using the fact that there is a Sobolev inequality at the critical exponent,
we see that this Sobolev constant has the upper bound \eqref{SobolevEstimateEq}
with $p=2n/(n-1)$. This concludes the proof of Proposition \ref{SobolevProp}. 
\end{proof}
\begin{remark}
{\rm
In fact, \cite[Theorem 3]{BakryLedoux} shows that we can find a uniform bound for the diameter of $(M,\o_{\vp(s)})$
from Proposition \ref{SobolevProp}. Indeed, define
$$
D(\G):=\sup\{|f(x)-f(y)|\,:\, x,y\in M, f\in A, ||\G(f,f)||_{L^\infty(M)}\le1\},
$$
and apply the Sobolev inequality to the functions $(1+\lambda f)^{1-\frac n2}$ for any $f\in\cal A$. (One must check that such 
functions are once again in $\calA$.) It then follows that $D(\G)\le C\eps^{-1/2}.$}
\end{remark}

\begin{remark}
{\rm
There are other possible approaches to the estimation of the Sobolev constants. One approach, suggested in a remark in 
the first version of this article, is to approximate $\o_{\vp(s)}$ by smooth \K metrics with a uniform positive
lower bound on the Ricci curvature. This has been carried out in detail in \cite{T12,CDS}. Another approach is 
to show that as a metric-measure space, the completion of $(M\sm D,\ovp,\ovp^n)$ satisfies a uniform 
(generalized) doubling property. The arguments of \cite{HK,HindePetersen} show that the Poincar\'e inequality 
implies a Sobolev inequality. This was described in detail in an earlier version of this paper, but for brevity 
we have replaced this by the semigroup approach above. 
}
\end{remark}

\subsection{Energy functionals}
Unlike in the previous cases, there are well-known obstructions to obtaining a $\calC^0$ estimate in the positive case. 
The existence of such an estimate is then described in terms of the behavior of certain energy functionals.
For more background we refer  to \cite{Aubin1984,BM,T87,Tbook}. 

The energy functionals $I,J$, introduced by Aubin \cite{Aubin1984}, are defined by \vglue-9pt
$$
\begin{aligned}
I(\o,\ovp) & =\frac1V\int_M\i\del\vp\w\dbar\vp\w\sum_{l=0}^{n-1}\o^{n-1-l}\w\ovp^{l}
=\frac1V\int_M\vp(\on-\ovpn),
\cr J(\o,\ovp) & =\frac{V^{-1}}{n+1}\int_M\i\del\vp\w\dbar\vp\w\sum_{l=0}^{n-1}(n-l)\o^{n-l-1}\w\ovp^{l}.
\end{aligned}
$$
This definition certainly makes sense for pairs of smooth \K forms, and by the continuity of the mixed 
\MA operators on $\PSH(M,\o_0)\cap \calC^0(M)$ \cite[Proposition 2.3]{BT}, 
these functionals can be uniquely extended to pairs $(\o_0,\ovp)$,
with $\o_0$ smooth and $\ovp\in\calH_{\o_0}$, and hence 
also to $\calH_{\o}\times\calH_\o$, where now by $\o$ we mean 
the reference metric given by \eqref{oDefEq}.
These functionals are nonnegative and equivalent, 
\begin{equation}
\label{IIminusJComparisonEq}
\frac1n J\le I-J\le \frac{n}{n+1} I\le nJ.
\end{equation}
One use of these functionals is in deriving a conditional $\calC^0$ estimate.

\begin{lem}
\label
{PositiveCaseUniquenessCzeroLemma}
Let $s\in(0,\mu)$. Any $\calC^0(M)\cap \PSH(M,\o)$ solution 
$\vp(s)$ to (\ref{RCMEq}) is unique.
Moreover, if $\vp(s)\in \calD^{0,\gamma}_s$ then 
$||\vp(s)||_{\calC^0(M)}\le C(1+I(\o,\o_{\vp(s)}))$, for all $s\in(\eps,\mu)$.
\end{lem}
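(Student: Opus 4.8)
The plan is to prove the two assertions separately: uniqueness by reducing to the regular theory and invoking the spectral gap of Lemma~\ref{LambdaoneLemma}, and the conditional $\calC^0$ bound by combining two Green's function estimates with a Jensen inequality.

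\emph{Uniqueness.} First I would note that along the path \eqref{TYCMEq} one has $\Ric\,\o_{\vp(s)}>s\,\o_{\vp(s)}$ by \eqref{RicAlongCMEq}, and the reference metric has bisectional curvature bounded above (Proposition~\ref{AppendixProp}); hence the a priori Laplacian estimate of \S\ref{LaplacianSection} applies to \emph{any} bounded solution of \eqref{TYCMEq}, and then Theorems~\ref{EvansKrylovThm} and \ref{PhgMainThm} show that every $\calC^0(M)\cap\PSH(M,\o)$ solution $\vp(s)$ in fact lies in $\calD^{0,\gamma}_s$ and is polyhomogeneous. With this regularity in hand the classical argument of Bando--Mabuchi \cite{BM} (see also \cite[\S 6]{Tbook}) applies: linearizing \eqref{TYCMEq} at a solution gives the operator $\Delta_{\o_{\vp(s)}}+s$, whose spectrum on the Friedrichs domain is $\{s\}\cup\{s-\lambda_j:\lambda_j\ge\lambda_1(-\Delta_{\o_{\vp(s)}})>s\}$ and so contains no zero; hence solutions are isolated and depend smoothly on $s$, and since for $s\le0$ the solution is unique (Lemma~\ref{AYUniquenessLemma}) a connectedness argument along the path forces uniqueness for all $s\in(0,\mu)$.

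\emph{The $\calC^0$ bound.} Assume now $\vp:=\vp(s)\in\calD^{0,\gamma}_s$ with $s\in(\eps,\mu)$. I would establish three facts. (1) Since $\vp$ lies in the Friedrichs domain of $\Delta_\o$ and $\Delta_\o\vp=\tr_\o\o_\vp-n\ge-n$, while the Green's function of the fixed reference edge metric $\o$ is bounded below (cf.\ Proposition~\ref{BasicGreenProp}), the Green representation gives $\sup_M\vp-\tfrac1V\int_M\vp\,\o^n\le C(\o,M,n)$. Combining this with $xe^{-sx}\le 1/(es)$ and the normalization \eqref{normalization},
\[
\tfrac1V\int_M\vp\,\o_\vp^n=\tfrac1V\int_M\vp\,e^{f_\o-s\vp}\o^n\le\tfrac1{esV}\int_M e^{f_\o}\o^n=\tfrac1{es}\le\tfrac1{e\eps},
\]
so $\tfrac1V\int_M\vp\,\o^n=\tfrac1V\int_M\vp\,\o_\vp^n+I(\o,\o_\vp)\le\tfrac1{e\eps}+I(\o,\o_\vp)$ and hence $\sup_M\vp\le C(1+I(\o,\o_\vp))$. (2) By Corollary~\ref{equivdomains} the function $-\vp$ lies in the Friedrichs domain of $\Delta_{\o_\vp}$, and $\Delta_{\o_\vp}(-\vp)=\tr_{\o_\vp}\o-n\ge-n$; the infimum of the Green's function of $\o_\vp$ is bounded below \emph{uniformly in $s\in(\eps,\mu)$} thanks to the control of the Sobolev constant and of the geometry established in \S\ref{PoincareSobolevSubSec} (Lemma~\ref{SobolevLemma}, using $\Ric\,\o_{\vp(s)}>\eps\,\o_{\vp(s)}$). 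Thus $\sup_M(-\vp)-\tfrac1V\int_M(-\vp)\,\o_\vp^n\le C$, and since $\tfrac1V\int_M(-\vp)\,\o_\vp^n=I(\o,\o_\vp)-\tfrac1V\int_M\vp\,\o^n\le I(\o,\o_\vp)-\sup_M\vp+C$ by (1), this yields the oscillation bound
\[
\operatorname{osc}_M\vp=\sup_M\vp-\inf_M\vp\le I(\o,\o_\vp)+C.
\]
(3) Finally, applying Jensen's inequality to the probability measure $\tfrac1V e^{f_\o}\o^n$ and the identity $\tfrac1V\int_M e^{f_\o-s\vp}\o^n=\tfrac1V\int_M\o_\vp^n=1$ gives $\tfrac1V\int_M\vp\,e^{f_\o}\o^n\ge0$, hence $\sup_M\vp\ge0$. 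Combining (2) and (3), $-\inf_M\vp\le I(\o,\o_\vp)+C$, and together with (1) this gives $\|\vp(s)\|_{\calC^0(M)}\le C(1+I(\o,\o_{\vp(s)}))$, the equivalence with the $J$-functional being \eqref{IIminusJComparisonEq}.

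The hard part is step (2): the oscillation estimate requires a lower bound on the Green's function of the \emph{evolving} metric $\o_{\vp(s)}$ that is uniform in $s$, and this is precisely what the uniform Poincar\'e, Sobolev and geometric bounds of \S\ref{PoincareSobolevSubSec} are designed to supply. For the uniqueness statement the corresponding subtlety is that the Bando--Mabuchi scheme presupposes enough regularity to differentiate the equation and to use the Friedrichs spectral gap, which is why one must first invoke the polyhomogeneity theory together with the curvature bound of the Appendix.
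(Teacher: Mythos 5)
Your \(\calC^0\) estimate is essentially the paper's argument (Green's function representation for the fixed reference metric to bound \(\sup\vp\) by \(\tfrac1V\int\vp\,\o^n\), a uniform Bando--Mabuchi/Cheng--Li lower bound on the Green's function of the moving metric \(\o_{\vp(s)}\) — supported by the uniform Poincar\'e and Sobolev controls of \S\ref{PoincareSobolevSubSec} — to bound \(-\inf\vp\) by \(-\tfrac1V\int\vp\,\o_\vp^n\), and finally the sign-change of \(\vp\) to reduce to the oscillation, the difference of those two averages being exactly \(I(\o,\o_\vp)\)). Your extra step via \(xe^{-sx}\le 1/(es)\) is correct but redundant once the oscillation trick is used, and your appeal to Proposition~\ref{BasicGreenProp} for the lower bound on \(G_\o\) is a reasonable alternative to the heat-kernel/maximum-principle argument the paper actually runs. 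You correctly identify the hard point — uniformity in \(s\) of the Green's function bound for \(\o_{\vp(s)}\).

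Your \emph{uniqueness} argument, however, has a genuine gap and also diverges from the paper. The paper simply cites Berndtsson's generalized Bando--Mabuchi theorem \cite{Bern}, which applies directly at the stated regularity \(\calC^0(M)\cap\PSH(M,\o)\). You instead try to first upgrade a merely bounded solution to \(\calD^{0,\gamma}_s\) by invoking the Laplacian estimate of \S\ref{LaplacianSection}, and then run a connectedness/implicit-function-theorem argument using the spectral gap of Lemma~\ref{LambdaoneLemma}. But Lemma~\ref{TYLaplacianEstimateLemma} is stated for solutions \emph{already} in \(\Ds\cap\PSH(M,\o)\): the barrier/maximum-principle argument in its proof uses the \(\calC^{0,\gamma}_s\) regularity of \(\log\tr_{\o_\vp}\o\), which in turn requires \(\vp\in\Ds\). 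So it cannot be used to bootstrap a bare \(\calC^0\) solution into \(\Ds\), and the first step of your argument does not get off the ground. Even setting that aside, the connectedness-of-solutions argument requires additional care in the positive case (one must rule out solutions not joined to the trivial one by the continuity path), which is precisely what Berndtsson's convexity argument sidesteps; in the present singular setting the clean route is the citation the paper uses.
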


\begin{proof}
The uniqueness is due to Berndtsson \cite{Bern}. 

We now prove the estimate. Using the uniform estimates on the Poincar\'e 
and Sobolev constants, the arguments proceed much as in the 
smooth case \cite[Lemma 6.19]{Tbook}. 

First, let $G_\o$ be the Green function of $-\Delta_\o$, i.e., $-\Delta_\o G_\o = -G_\o \Delta_\o = \mbox{Id} - \Pi$,
where $\Pi$ is the orthogonal projector onto the constants.  (Note that this is contrary to our previous 
sign convention for $G$, but conforms with the usual convention for this estimate.) 
Necessarily, $\int_M G_{\o}(\cdot,\tilde{z})\o^n(\tilde{z})=0$.  We claim that $A_\o:=-\inf_{M\times M} G_{\o}<\infty$.
Assuming this for the moment, we can write 
\[
\vp_{s}(z) = V^{-1}\int_M \vp_s\o^n -\int_M G_\o(x,y)\Delta_\o\vp_s(y)\o^n(y).
\]
Hence, since $-n<\Delta_\o\vp_s$,
\begin{equation}
\label{SupEstimateEq}
\sup\vp(s) \le \frac 1V\int_M \vp(s)\o^n+nV\!A_\o,
\end{equation}
To prove this claim about the Green function, recall that
\[
G(z,\tilde{z}) =  \int_0^\infty ( H(t, z, \tilde{z}) - \Pi(z,\tilde{z}))\, dt,
\]
where $H$ is the heat kernel associated to this (Friedrichs) Laplacian, 
and $\Pi(z,\tilde{z})$ is the Schwartz kernel of this rank one projector. 
This integral converges absolutely for any $z \neq \tilde{z}$. We rewrite this as
\begin{equation}
\label{GreenKernelEq}
G(z, \tilde{z}) = \int_0^1 H(t,z,\tilde{z})\, dt - \Pi(z, \tilde{z}) + \int_1^\infty (H(t, z, \tilde{z}) - \Pi(z, \tilde{z}))\, dt.
\end{equation}
It follows easily from standard estimates that the integral from $1$ to $\infty$ converges to a bounded function.
On the other hand, by the maximum principle, $H > 0$, 
so the first term on the right is nonpositive. 
Finally, $\Pi(z,\tilde{z})=V^{-1}$ is just a constant, so $G$ is bounded below. 

To conclude the proof, it suffices to prove $-\inf\vp(s) \le -\frac CV\int_M \vp(s)\o_{\vp(s)}^n$
(indeed, $\vp(s)$ changes sign by the normalization (\ref{foDefEq})
of $f_\o$, so $||\vp(s)||_{\calC^0(M)}\le \h{osc}\,\vp(s)$).
This can be shown in one of two ways.
The first is by noting that Bando--Mabuchi's Green's
function lower bound \cite{BM} extends to our present setting,
and thus $A_{\o(s)}<C$ uniformly in $s$ and 
$-\inf\vp(s)\le -\frac1V\int_M\vp(s)\o_{\vp(s)}^n+nVC$.
Indeed, the proof of their bound relies on an estimate
of Cheng--Li \cite{ChengLi} of the heat kernel 
$H_{\o(s)}(t,z,\tilde z)-V^{-1}\le Ct^{-n}$ with
$C$ depending only on terms of the Poincar\'e and Sobolev constants,
and hence independent of $s>\eps$. Thus, by \eqref{GreenKernelEq}
$A_{\o(s)}<C$, as desired. The second uses Moser iteration, as in \cite{T87}.
\end{proof}

\subsection{Mabuchi's K-energy and Tian's invariants}
\label{TianMabuchiSubSec}

Define the twisted Mabuchi K-energy functional  by
integration over paths $\{\o_{\vp_t}\}\subset \calH_{\o_0}$ smooth in $t$,
\begin{equation}
\begin{aligned}
\label{KenergyVariationEq}
E_0^\beta(\o,\o_\vp) &:=-\frac1V\int_{M\times[0,1]}\dot\vp_t\Delta_{\vp_t} f_{\vp_t}\o^n_{\vp_t}\w dt.
\end{aligned}
\end{equation}
Its critical points are \KE edge metrics.
The following is an extension of a formula of Tian
\cite[p. 254]{T1994},\cite[(5.12)]{T}
(cf. \cite{Berm,Li})
to the twisted setting. 
In particular it shows that 
$E_0^\beta$ is well-defined on $\calH_{\o_0}\times\calH_{\o_0}$. 
The proof, as others in this subsection, are straightforward extensions of their
counterparts from the smooth setting, and are included for the
reader's convenience.

\begin{lem}
One has,
\begin{equation}
\label{EbetaFormula}
E^\be_0(\o,\o_{\vp})=
\frac1V\int_M\log\frac{\o_{\vp}^n}{\on}\o_{\vp}^n
-\mu(I-J)(\o,\o_{\vp})
+\frac1V\int_M f_\o(\on-\o_{\vp}^n).
\end{equation}
\end{lem}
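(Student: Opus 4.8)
The plan is to realize the right-hand side of \eqref{EbetaFormula} as an antiderivative, along a path of K\"ahler potentials, of the integrand defining $E_0^\be$ in \eqref{KenergyVariationEq}. Fix $\vp\in\calH_\o$ and set $\vp_t:=t\vp$ for $t\in[0,1]$; since $\o_{\vp_t}=(1-t)\o+t\o_\vp>0$ is asymptotically equivalent to $\o_\be$, this is a smooth path in $\calH_\o$ joining $\o$ to $\o_\vp$. Let $\Phi(t)$ denote the expression obtained by substituting $\vp_t$ for $\vp$ on the right of \eqref{EbetaFormula}. Each of the three terms of $\Phi(0)$ vanishes ($\vp_0=0$ and $\o_{\vp_0}=\o$), and $E_0^\be(\o,\o)=0$ by \eqref{KenergyVariationEq}, so it suffices to prove
\[
\Phi'(t)=-\frac1V\int_M\dot\vp_t\,\Delta_{\vp_t}f_{\vp_t}\,\o_{\vp_t}^n .
\]

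First I would record the standard variational identities on $\MsmD$: $\frac{d}{dt}\o_{\vp_t}^n=(\Delta_{\vp_t}\dot\vp_t)\,\o_{\vp_t}^n$ and $\frac{d}{dt}\log(\o_{\vp_t}^n/\on)=\Delta_{\vp_t}\dot\vp_t$. Differentiating the three terms of $\Phi$ and integrating by parts then gives
\[
\frac{d}{dt}\Big[\frac1V\int_M\log\frac{\o_{\vp_t}^n}{\on}\,\o_{\vp_t}^n\Big]
=\frac1V\int_M\Big(\Delta_{\vp_t}\log\frac{\o_{\vp_t}^n}{\on}\Big)\dot\vp_t\,\o_{\vp_t}^n ,
\]
using $\int_M\Delta_{\vp_t}\dot\vp_t\,\o_{\vp_t}^n=0$; the first-variation formula for the Aubin functionals yields $\frac{d}{dt}(I-J)(\o,\o_{\vp_t})=-\frac1V\int_M\dot\vp_t\,(\Delta_{\vp_t}\vp_t)\,\o_{\vp_t}^n$; and $\frac{d}{dt}\big[\frac1V\int_M f_\o(\on-\o_{\vp_t}^n)\big]=-\frac1V\int_M(\Delta_{\vp_t}f_\o)\,\dot\vp_t\,\o_{\vp_t}^n$.

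Next I would use \eqref{foDefEq} applied to $\o_{\vp_t}$ together with $\Ric\o_{\vp_t}=\Ric\o-\i\ddbar\log(\o_{\vp_t}^n/\on)$ on $\MsmD$ (the ratio is bounded and continuous across $D$, so this identity of currents carries no $[D]$ term) to obtain $f_{\vp_t}=f_\o-\log(\o_{\vp_t}^n/\on)-\mu\vp_t+c_t$ for some normalization constant $c_t$; hence $\Delta_{\vp_t}f_{\vp_t}=\Delta_{\vp_t}f_\o-\Delta_{\vp_t}\log(\o_{\vp_t}^n/\on)-\mu\,\Delta_{\vp_t}\vp_t$, the constant $c_t$ being irrelevant since it is annihilated by $\Delta_{\vp_t}$. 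Summing the three derivatives above and comparing with $-\frac1V\int_M\dot\vp_t\,\Delta_{\vp_t}f_{\vp_t}\,\o_{\vp_t}^n$ shows that they coincide, which proves $\Phi'(t)$ equals the Mabuchi integrand, and hence \eqref{EbetaFormula} for this $\vp$. The extension to all of $\calH_{\o_0}\times\calH_{\o_0}$ then follows exactly as for $I$ and $J$, by continuity of the mixed \MA operators on $\PSH(M,\o_0)\cap\calC^0(M)$.

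The main obstacle is justifying these integrations by parts across $D$, i.e.\ verifying that no boundary flux escapes through the divisor: the vanishing $\int_M\Delta_{\vp_t}\dot\vp_t\,\o_{\vp_t}^n=0$, the self-adjointness $\int_M v\,\Delta_{\vp_t}w\,\o_{\vp_t}^n=\int_M w\,\Delta_{\vp_t}v\,\o_{\vp_t}^n$, and the first-variation formula for $I-J$ all require the boundary terms over $\{r=\eps\}$ to tend to $0$ as $\eps\to0$. This is precisely where the fine regularity of \S\ref{HigherRegSection} is used: for polyhomogeneous potentials the functions $\log(\o_{\vp_t}^n/\on)$, $f_\o$ and $\vp_t$ have expansions beginning with an $r^0$ term whose coefficient is $\th$-independent, followed by $r^2$ and $r^{1/\be}$ terms, so that their gradients computed with $g_{\vp_t}$ are $O(r^{1/\be-1})$; with the boundary measure $r\,d\th\,dy$ on $\{r=\eps\}$ the boundary contributions are then $O(\eps^{2/\be-2})\to0$, exactly as in the proof of \eqref{LaplacianIntegrateZeroEq}. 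One therefore proves \eqref{EbetaFormula} first for polyhomogeneous $\o_\vp$ and then passes to the general case of $\calH_{\o_0}$ by density.
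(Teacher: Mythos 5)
Your proof is correct and follows the paper's approach essentially verbatim: differentiate the right-hand side along a path in $\calH_\o$, identify the result with $-\frac1V\int_M\dot\vp_t\Delta_{\vp_t}f_{\vp_t}\,\o_{\vp_t}^n$ via the relation $f_{\vp_t}=f_\o-\mu\vp_t-\log(\o_{\vp_t}^n/\o^n)+c_t$, and integrate from the common value at the starting endpoint; your only cosmetic change is integrating by parts term by term so that $\Delta_{\vp_t}$ lands on $\log(\o_{\vp_t}^n/\o^n)$, $\vp_t$ and $f_\o$ rather than on $\dot\vp_t$ as the paper does. One small slip in the extra justification you supplied: the boundary flux for these integrations by parts, which pair bounded functions having $O(r^{1/\be-1})$ gradients against the boundary measure $r\,d\th\,dy$, is $O(\eps^{1/\be})$ rather than the $O(\eps^{2/\be-2})$ you quote by analogy with the Weitzenb\"ock computation in the proof of Lemma \ref{LambdaoneLemma} --- still vanishing, and now uniformly for all $\be\in(0,1]$.
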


\begin{proof}
For any smooth (in $t$) path $\{\o_{\vp_t}\}\subset\calH_\o$
connecting $\o$ and $\ovp$ \cite[p. 70]{Tbook},
\begin{equation}
\begin{aligned}
\label{IJVariationalEq}
(I-J)(\o,\o_{\vp_1}) & 
=
-\frac1V
\int_{M\times [0,1]}\vp_t\Delta_{\vp_t}\dot\vp_t\o_{\vp_t}\w dt.
\end{aligned}
\end{equation}
Hence the variation of the right hand side of (\ref{EbetaFormula}) equals
$$
\int_M\Delta_\vp\dot\vp\Big(
\log\frac{\ovpn}{\on}+1
+\mu\vp-f_\o
\Big)\ovpn,
$$
and this coincides with $dE^\beta_0(\dot\vp)$ 
since $f_{\ovp}=f_\o-\mu\vp-\log\frac{\ovpn}{\on}+c_\vp$ with $c_\vp$ a constant.
The formula then follows since both sides
vanish when $\ovp=\o$.
\end{proof}

As noted in the Introduction, a key property of the continuity path (\ref{RCMEq}) is
the monotonicity of $E^\be_0$. Monotonicity of similar twisted K-energy functionals was 
noted, e.g., in \cite{R}, and the following is the analogue of \cite[Lemma 9.3]{R}.

\begin{lem}
\label
{KEnergyMonotonicityLemma}
$E_0^\be$ is monotonically decreasing along the continuity path (\ref{RCMEq}).
\end{lem}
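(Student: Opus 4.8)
The plan is to differentiate $E_0^\be(\o,\ovps)$ directly in the path parameter $s$. Since $E_0^\be$ is a primitive of the Mabuchi $1$-form (this is what \eqref{KenergyVariationEq} and \eqref{EbetaFormula} encode), its derivative along the curve $s\mapsto\ovps$ is
\[
\frac{d}{ds}E_0^\be(\o,\ovps)=-\frac1V\int_M\dot\vp\,\Delta_{\ovps}f_{\ovps}\;\ovpsn,\qquad\dot\vp:=\partial_s\vp(s),
\]
where $f_{\ovps}$ is the twisted Ricci potential of $\ovps$ and $\Delta_{\ovps}$ its Friedrichs Laplacian; the differentiability of $s\mapsto\vp(s)$ into an adequate function space, and the polyhomogeneity of $\dot\vp$, come from the linear theory of \S\ref{LinearSection} and the openness step of the continuity argument. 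I would then rewrite the two factors in the integrand. Differentiating \eqref{TYCMEq} in $s$ yields the linearized equation $(\Delta_{\ovps}+s)\dot\vp=-\vp$. Combining the definition of $f_{\ovps}$ with \eqref{RicAlongCMEq} gives $\i\ddbar f_{\ovps}=\Ric\ovps-(1-\be)[D]-\mu\ovps=(\mu-s)(\o-\ovps)=-(\mu-s)\,\i\ddbar\vp$ on $\MsmD$, so, taking $\ovps$-traces, $\Delta_{\ovps}f_{\ovps}=-(\mu-s)\,\Delta_{\ovps}\vp$. Hence
\[
\frac{d}{ds}E_0^\be(\o,\ovps)=\frac{\mu-s}{V}\int_M\dot\vp\,\Delta_{\ovps}\vp\;\ovpsn.
\]

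Next I would integrate by parts and eliminate $\vp$ via $\vp=-(\Delta_{\ovps}+s)\dot\vp$: using self-adjointness of $\Delta_{\ovps}$,
\[
\int_M\dot\vp\,\Delta_{\ovps}\vp\;\ovpsn=\int_M(\Delta_{\ovps}\dot\vp)\,\vp\;\ovpsn=-\int_M(\Delta_{\ovps}\dot\vp)^2\ovpsn+s\int_M|\nabla\dot\vp|_{\ovps}^2\ovpsn=:-Q(s).
\]
These integrations by parts on $\MsmD$ are legitimate for exactly the reason used in the proof of Lemma~\ref{LambdaoneLemma}: by Proposition~\ref{PreciseAsympExpansionProp} both $\vp$ and $\dot\vp$ have expansions beginning with a $\th$-independent constant and then $r^{\onebe}$ and $r^2$ terms, so the boundary contributions over $\{r=\eps\}$ are $O(\eps^{\onebe})+O(\eps^2)$ and vanish as $\eps\to0$. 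Thus $\frac{d}{ds}E_0^\be(\o,\ovps)=-\frac{\mu-s}{V}\,Q(s)$, and since $\mu-s\ge0$ throughout $(-\infty,\mu]$ the monotonicity reduces to the claim $Q(s)\ge0$.

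The last, and only substantive, point is the sign of $Q(s)=\int_M(\Delta_{\ovps}\dot\vp)^2\ovpsn-s\int_M|\nabla\dot\vp|_{\ovps}^2\ovpsn$. For $s\le0$ both terms are nonnegative and there is nothing to prove, which handles the Tian--Yau half of the path. For $s\in(0,\mu)$ — the case for which the path \eqref{TYCMEq} was designed — I would subtract from $\dot\vp$ its $\ovpsn$-average, obtaining $\psi\perp\mathrm{const}$ with $\nabla\psi=\nabla\dot\vp$ and $\Delta_{\ovps}\psi=\Delta_{\ovps}\dot\vp$; expanding $\psi$ in eigenfunctions of $-\Delta_{\ovps}$ gives $\int_M(\Delta_{\ovps}\psi)^2\ovpsn\ge\lambda_1(-\Delta_{\ovps})\int_M|\nabla\psi|_{\ovps}^2\ovpsn$, and Lemma~\ref{LambdaoneLemma}(i) supplies $\lambda_1(-\Delta_{\ovps})>s$, whence $Q(s)>0$. (At $s=\mu$ the factor $\mu-s$ vanishes, so no estimate is needed there.) This gives $\frac{d}{ds}E_0^\be(\o,\ovps)\le0$ on all of $(-\infty,\mu]$, which is the assertion.

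I expect the only genuinely delicate issues to be the differentiability of $s\mapsto\vp(s)$ into a space fine enough to carry out these manipulations, and the vanishing of the edge boundary terms; both are supplied by the polyhomogeneity of solutions (Theorem~\ref{PhgMainThm}, Proposition~\ref{PreciseAsympExpansionProp}) and the mapping theory of \S\ref{LinearSection}, just as in Lemma~\ref{LambdaoneLemma}. Everything else is a routine transcription of the smooth-case computation and of \cite[Lemma~9.3]{R}.
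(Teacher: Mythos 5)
Your argument is correct and follows essentially the same route as the paper's proof: differentiate the variation formula \eqref{KenergyVariationEq} along the path, use \eqref{RicAlongCMEq} to rewrite $\Delta_\vp f_{\o_\vp}=-(\mu-s)\Delta_\vp\vp$, substitute the linearized equation $(\Delta_\vp+s)\dot\vp=-\vp$, and reduce the sign question to the nonnegativity of the quadratic form $\Delta_\vp^2+s\Delta_\vp$, which is immediate for $s\le 0$ and follows from the eigenvalue bound $\lambda_1(-\Delta_\vp)>s$ of Lemma~\ref{LambdaoneLemma} for $s\in(0,\mu)$. The only differences are expository: you explicitly justify the integrations by parts over the edge via the asymptotic expansions and spell out the eigenfunction expansion behind the positivity of $\Delta_\vp^2+s\Delta_\vp$, both of which the paper leaves implicit.
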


\begin{proof}
By (\ref{RicAlongCMEq}), $\i\ddbar f_{\ovp}=-(\mu-s)\i\ddbar\vp$,
and from (\ref{RCMEq}) we have $(\Delta_\vp+s)\dot\vp=-\vp$. It
follows that
$$
\frac{d}{ds}E^\be_0(\o,\o_{\vp(s)})
=
-\frac{\mu-s}V\int_M\dot\vp\Delta_\vp(\Delta_\vp+s)\dot\vp\ovpn,
$$
and this is nonpositive by the positivity of $\Delta^2_\vp+s\Delta_\vp$,
which is immediate for $s\le 0$, and follows from Lemma \ref{LambdaoneLemma}, when
$s\in(0,\mu)$.
\end{proof}

Following Tian \cite{T97}, we say that $E_0^\be$ is proper if $\lim_{j\ra\infty}(I-J)(\o,\o_j)=\infty$ implies that necessarily,
$\lim_{j\ra\infty}  E_0^\be(\o,\o_j)=\infty$. From Lemmas \ref{PositiveCaseUniquenessCzeroLemma} and \ref{KEnergyMonotonicityLemma} we have:
\begin{cor}
\label
{PropernessCzeroCor}
Let $\vp(s)\in \Ds\cap \PSH(M,\o)$. If $E^\be_0$ is proper then $||\vp(s)||_{\calC^0(M)}\le C$, 
independently of $s\in(\eps,\mu)$. 
\end{cor}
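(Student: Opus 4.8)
The plan is to combine the two preceding results in the obvious way. Corollary~\ref{PropernessCzeroCor} asserts that if $E_0^\be$ is proper then $\|\vp(s)\|_{\calC^0(M)}$ is bounded uniformly in $s\in(\eps,\mu)$, and this follows directly by feeding the monotonicity of $E_0^\be$ along the continuity path into the conditional estimate of Lemma~\ref{PositiveCaseUniquenessCzeroLemma}. The only things to assemble are: (i) along \eqref{TYCMEq}, $E_0^\be(\o,\o_{\vp(s)})\le E_0^\be(\o,\o_{\vp(s')})$ for $s'<s$, by Lemma~\ref{KEnergyMonotonicityLemma}; (ii) since $\vp(s)$ exists in $\Ds\cap\PSH(M,\o)$ and, by the existence statement recalled near \eqref{TYCMEq} (Wu's solution for $s$ very negative) the K-energy is finite at some base point of the path, the values $E_0^\be(\o,\o_{\vp(s)})$ are bounded above by a constant $C_0$ independent of $s\in(\eps,\mu)$; and (iii) $I-J$ and $I$ are comparable, by \eqref{IIminusJComparisonEq}.

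\begin{proof}[Proof of Corollary~\ref{PropernessCzeroCor}]
By Lemma~\ref{KEnergyMonotonicityLemma}, $s\mapsto E_0^\be(\o,\o_{\vp(s)})$ is nonincreasing along \eqref{TYCMEq}, so for every $s\in(\eps,\mu)$ we have $E_0^\be(\o,\o_{\vp(s)})\le E_0^\be(\o,\o_{\vp(\eps)})=:C_0$, a constant independent of $s$. If the conclusion failed, there would be a sequence $s_j\in(\eps,\mu)$ with $\|\vp(s_j)\|_{\calC^0(M)}\to\infty$. By Lemma~\ref{PositiveCaseUniquenessCzeroLemma}, $\|\vp(s_j)\|_{\calC^0(M)}\le C(1+I(\o,\o_{\vp(s_j)}))$, so $I(\o,\o_{\vp(s_j)})\to\infty$, and then \eqref{IIminusJComparisonEq} gives $(I-J)(\o,\o_{\vp(s_j)})\to\infty$. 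Since $E_0^\be$ is proper, this forces $E_0^\be(\o,\o_{\vp(s_j)})\to\infty$, contradicting the uniform bound $E_0^\be(\o,\o_{\vp(s_j)})\le C_0$ from the monotonicity. Hence $\|\vp(s)\|_{\calC^0(M)}$ is bounded uniformly for $s\in(\eps,\mu)$.
\end{proof}

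The main (and essentially only) subtlety is justifying that the K-energy is bounded above along the path, i.e.\ step~(ii): one needs $E_0^\be$ finite at some point of the continuity path and then invokes monotonicity. This is where one uses that the path is anchored by Wu's solution for $s\to-\infty$ together with finiteness of $E_0^\be$ on $\calH_{\o_0}\times\calH_{\o_0}$ as recorded in Lemma~\ref{EbetaFormula}; everything else is a formal chaining of the comparison \eqref{IIminusJComparisonEq}, the conditional $\calC^0$ bound, and the definition of properness.
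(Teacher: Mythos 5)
Your proof is correct and is essentially the argument the paper has in mind: the paper simply says ``From Lemmas~\ref{PositiveCaseUniquenessCzeroLemma} and \ref{KEnergyMonotonicityLemma} we have'' and leaves the chaining to the reader; your contradiction argument, routed through \eqref{IIminusJComparisonEq} and the sequential definition of properness, is that chaining spelled out correctly.

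One remark on your closing paragraph: what you flag as ``the main (and essentially only) subtlety,'' namely bounding $E_0^\be$ at some base point of the path, is actually trivial and does not need Wu's perturbative existence at all. The continuity path \eqref{TYCMEq} is normalized so that $\vp(-\infty)=0$, i.e.\ $\o_{\vp(-\infty)}=\o$, and $E_0^\be(\o,\o)=0$ by definition of an exact energy functional. Monotonicity (Lemma~\ref{KEnergyMonotonicityLemma}) then gives $E_0^\be(\o,\o_{\vp(s)})\le 0$ for every $s$ along the path, which is a sharper and cleaner uniform upper bound than your $C_0=E_0^\be(\o,\o_{\vp(\eps)})$. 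The rest of your argument (conditional $\calC^0$ bound from Lemma~\ref{PositiveCaseUniquenessCzeroLemma}, the comparison $I\le (n+1)(I-J)$ from \eqref{IIminusJComparisonEq}, and properness applied along a putative bad sequence $s_j$) goes through unchanged.
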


We also note that as observed by Berman \cite{Berm}, 
an alternative proof of Corollary \ref{PropernessCzeroCor}
follows by combining \Kolodziej's estimate \cite{K1} and 
the following result contained in \cite[Lemma 6.4]{BBGZ}
and \cite{Berm} (note that $\vp(s)$ change sign).

\begin{lem} 
{\rm \cite{BBGZ,Berm}}
\label{BermanLemma}
Suppose $J(\o,\ovp)\le C$.
Then for each $t>0$ there exists $C'=C'(C,M,\o,t)$
such that $\int_M e^{-t(\vp-\sup\vp)}\on\le C'$.
\end{lem}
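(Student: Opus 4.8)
The plan is to derive this from the compactness of bounded-energy sublevel sets together with the uniform version of Skoda's integrability theorem, which is precisely how the estimate is obtained in \cite{BBGZ}. Normalize so that $\sup_M\vp=0$; the inequality to be proved is then $\int_M e^{-t\vp}\on\le C'$ for every $t>0$, uniformly over all K\"ahler edge potentials $\vp$ with $\sup_M\vp=0$ and $J(\o,\o_{\vp})\le C$. A harmless first reduction replaces the singular volume form $\on$ by a smooth one: exactly as in the proof of Lemma~\ref{AYUniquenessLemma}, it follows from \eqref{DetOmeganEq} that $\on=h\,\o_0^n$ with $h$ comparable to $|z_1|^{2\be-2}$, and since $2\be-2>-2$ we have $h\in L^p(M,\o_0^n)$ for some $p>1$; by H\"older's inequality it therefore suffices to bound $\int_M e^{-tq\vp}\o_0^n$ for arbitrary $t$, where $q$ is the exponent conjugate to $p$. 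Thus one is reduced to a statement about $\o_0$-plurisubharmonic functions and the smooth measure $\o_0^n$.

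Consider then the family $\mathcal F_C:=\{\vp\in \PSH(M,\o_0):\ \sup_M\vp=0,\ J(\o,\o_{\vp})\le C\}$, where $J$ is extended from $\calH_{\o_0}$ to bounded (indeed finite-energy) potentials using \cite{BT} and the continuity of the mixed \MA operators as in \S\ref{TianMabuchiSubSec}. I claim that $\mathcal F_C$ is compact in $L^1(M,\o_0^n)$ and is contained in the finite-energy class $\calE^1(M,\o_0)$. For compactness: $\o_0$-psh functions satisfy $\sup_M\vp\le (\int_M\o_0^n)^{-1}\int_M\vp\,\o_0^n+C_M$ with $C_M=C_M(M,\o_0)$, so $\sup_M\vp=0$ forces $\|\vp\|_{L^1(\o_0^n)}\le C_M$, and hence $\mathcal F_C$ is relatively compact in $L^1$ by standard potential theory; it is closed because $\vp\mapsto\sup_M\vp$ is continuous and, by \eqref{IIminusJComparisonEq} together with the usual lower semicontinuity of the Aubin--Yau energy functionals along $L^1$-convergent sequences of $\o_0$-psh functions, $J$ is lower semicontinuous. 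The inclusion $\mathcal F_C\subset\calE^1(M,\o_0)$ is immediate from $J<\infty$ on $\mathcal F_C$.

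Finally, every element of $\calE^1(M,\o_0)$ has vanishing Lelong number at each point of $M$: a positive Lelong number would cause a loss of \MA mass, incompatible with finite energy. Combined with the compactness of $\mathcal F_C$ and the upper semicontinuity of Lelong numbers (and compactness of $M$), this gives $\sup_{\vp\in\mathcal F_C}\sup_{x\in M}\nu(\vp,x)=0$. Zeriahi's uniform version of the Skoda--H\"older integrability theorem (\cite[Lemma 6.4]{BBGZ}) now applies to the relatively compact family $\mathcal F_C$ with uniformly vanishing Lelong numbers: for every $t>0$ one has $\sup_{\vp\in\mathcal F_C}\int_M e^{-t\vp}\o_0^n<\infty$. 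Unwinding the first-paragraph reduction yields the asserted bound $\int_M e^{-t(\vp-\sup\vp)}\on\le C'=C'(C,M,\o,t)$. The main obstacle is the pair of non-elementary inputs just used — the compactness and lower-semicontinuity step, and the uniform Skoda theorem — both of which rest on the theory of finite-energy classes of Guedj--Zeriahi and on \cite{BBGZ}; a slightly more self-contained variant argues by contradiction, extracting from a would-be violating sequence an $L^1$-limit that must have finite $J$-energy yet carry a positive Lelong number somewhere, contradicting the preceding sentence.
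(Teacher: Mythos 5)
The paper offers no proof of this lemma: it is stated as a quoted result, attributed to \cite{BBGZ,Berm} (specifically \cite[Lemma 6.4]{BBGZ}, via Berman's observation). So your sketch is not being measured against a proof in this paper but against the argument in the cited references, which you reconstruct faithfully and correctly: reduce to the smooth background measure $\o_0^n$ via H\"older (using that $\on/\o_0^n\sim|z_1|^{2\be-2}\in L^p$ for some $p>1$), pass to the sup-normalized, $J$-bounded family of quasi-psh functions which is compact in $L^1(\o_0^n)$ and contained in the finite-energy class $\calE^1$, invoke the Guedj--Zeriahi fact that finite-energy functions have zero Lelong numbers, and conclude with Zeriahi's uniform Skoda integrability (equivalently, the Demailly--Koll\'ar semicontinuity applied to the $L^1$-limit of a would-be violating sequence). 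This is exactly the chain of ideas underlying the cited result.

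Two points worth tidying up. First, $\vp$ is $\o$-plurisubharmonic, not $\o_0$-plurisubharmonic, so the function that actually lies in $\PSH(M,\o_0)$ is $\phi_0+\vp$; since $\phi_0$ is a fixed bounded function, replacing $\vp$ by $\phi_0+\vp$ throughout (shifting the sup-normalization by at most $\sup|\phi_0|$, and noting $J(\o,\ovp)$ and $J(\o_0,\o_{\phi_0+\vp})$ differ by a controlled amount) fixes this at no cost, but as written the family $\mathcal F_C$ is slightly misstated. Second, your appeal to lower semicontinuity of $J$ under $L^1$-convergence of quasi-psh functions, needed to show $\mathcal F_C$ is closed, is correct but itself a nontrivial ingredient of the Guedj--Zeriahi finite-energy theory; it belongs in the same list of black-box inputs you already flag (compactness, zero Lelong numbers, uniform Skoda), rather than being treated as an elementary consequence of \eqref{IIminusJComparisonEq}.
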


We next recall the definition of Tian's invariants \cite{T87,T92} 
$$
\begin{aligned}
\quad\alpha_{\O,\chi}
&:=\sup\Big\{\; a \,:\, \sup_{\vp\in\PSH\cap \calC^\infty(M,\o_0)}\int_M 
e^{-a(\vp-\sup\vp)}\chi^n<\infty 
\Big\},
\qquad
\a(M):=\a_{c_1(M),\o_0},
\cr
\be_{\O,\o} &:=\sup\, \{\; b \,:\, \Ric\chi\ge b\chi, \; \h{for some\ }\chi\in\calH_{\o}  \}, 
\qquad
\cr
\beta(M) &:=\sup\{\; b \,:\, \Ric\lambda\ge b\lambda, 
\;\h{for some\ } \lambda\in\calH^\infty_{c_1}  \}, \!\!\!
\end{aligned}
$$
where the measure $\chi^n$ is assumed to have density in $L^p(M,\o_0^n)$, for some $p>1$,
and where, for emphasis, $\calH_{\o}$ is given by \eqref{KEdgePotentialsSpace} 
and, when $M$ is Fano, $\calH^\infty_{c_1}$ denotes the space of 
smooth \K forms representing $c_1(M)$
(and finally, as always 
$\O=\frac1\mu c_1(M)-\frac{1-\be}\mu c_1(L_D)$ with 
$\O=[\o_0]=[\o]$, $\o_0$ a smooth \K form,
and $\o=\o(\be)$ the reference \K edge current).
These invariants are always positive as shown by Tian when $\chi^n$ is smooth, and hence
by the \Holder inequality also in general. 
For some relations between $\a_{\O,\o}$ and $\a_{\O,\o_0}$ 
we refer to \cite{Berm} where such invariants for singular measures
were studied in depth.

\begin{lem}
\label{alphainvariantepsLemma}
Suppose that $\a_{\O,\o}-\frac{n\mu}{n+1}>\eps$. Then $E_0^\be\ge \eps I-C$,
for some $C\ge0$.
\end{lem}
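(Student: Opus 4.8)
The plan is to adapt the classical argument of Tian (see \cite[Lemma 6.19ff.]{Tbook} or \cite{T92}) relating properness of the K-energy to Tian's $\alpha$-invariant, checking at each step that the singular (edge) setting causes no new difficulties thanks to the regularity results established above. The starting point is the formula \eqref{EbetaFormula} for $E_0^\beta$. First I would rewrite the right-hand side of \eqref{EbetaFormula} so as to isolate a manifestly nonnegative entropy-type term. Concretely, using $\o_\vp^n = e^{f_\o - f_{\o_\vp}+c_\vp}\on$ (which follows from the definition \eqref{foDefEq} of the twisted Ricci potential together with the analogous identity for $\o_\vp$, with $c_\vp$ a normalizing constant), the term $\frac1V\int_M \log(\o_\vp^n/\on)\,\o_\vp^n$ becomes an expression involving $f_\o$, $f_{\o_\vp}$ and the relative entropy $\frac1V\int_M \log(\o_\vp^n/\on)\o_\vp^n$, which by Jensen's inequality (valid here since $\frac1V\int_M e^{f_\o}\on = 1$ by \eqref{normalization}, and all the measures involved have bounded total mass and density in $L^p$ for some $p>1$ by Lemma~\ref{ReferenceMetricExistsLemma} and \eqref{DetOmeganEq}) is bounded below. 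After these manipulations one is left with an inequality of the shape
\[
E_0^\beta(\o,\o_\vp) \ge -\mu(I-J)(\o,\o_\vp) - \frac1V\int_M \vp\,\big(\o_\vp^n - \tfrac{n}{n+1}(\text{lower order})\big) + (\text{bounded}),
\]
i.e. the only genuinely dangerous terms are a negative multiple of $I-J$ and a term controlled by $\int_M(-\vp)\o_\vp^n$ (recall $\vp$ changes sign by the normalization of $f_\o$, so $\sup\vp \ge 0 \ge \inf\vp$).

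The second step is to control that $\int_M(-\vp)\o_\vp^n$ term using the $\alpha$-invariant. By the definition of $\a_{\O,\o}$, for any $a < \a_{\O,\o}$ there is $C_a$ with $\int_M e^{-a(\vp - \sup\vp)}\o^n \le C_a$ for all $\vp \in \PSH \cap \calC^\infty(M,\o_0)$; by the \Holder-inequality density argument recalled just above, the same holds for $\vp\in\calH_\o$, and in fact for the relevant $L^p$-density measures. Taking $\log$ and applying Jensen's inequality to the probability measure $\o^n/V$ yields $\frac1V\int_M(\vp - \sup\vp)\,\o^n \ge -\tfrac1a\log(C_a/V)$, hence $\sup\vp - \frac1V\int_M\vp\,\o^n \le \tfrac1a\log(C_a/V) =: C'_a$. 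Combined with the Green-function estimate \eqref{SupEstimateEq} from the proof of Lemma~\ref{PositiveCaseUniquenessCzeroLemma} (whose hypotheses hold here since $\vp(s)\in\Ds$, so the barrier/maximum-principle and heat-kernel arguments apply), this gives a bound on $\sup\vp$, and then one bounds $\frac1V\int_M(-\vp)\o_\vp^n$ in terms of $\sup\vp$ and $I(\o,\o_\vp)$ via the standard estimate $0 \le \frac1V\int_M(\sup\vp - \vp)\o_\vp^n \le I(\o,\o_\vp) + (\text{lower order})$ together with the equivalence \eqref{IIminusJComparisonEq}. Running the bookkeeping carefully, the coefficient of $I$ that one picks up on the unfavorable side is exactly $\frac{n\mu}{n+1}$ (this is the familiar constant that makes $\frac{n}{n+1}I$ appear, from \eqref{IIminusJComparisonEq}), so that
\[
E_0^\beta(\o,\o_\vp) \ge \big(\a_{\O,\o} - \tfrac{n\mu}{n+1}\big) I(\o,\o_\vp) - C \ge \eps\, I(\o,\o_\vp) - C,
\]
using the hypothesis $\a_{\O,\o} - \frac{n\mu}{n+1} > \eps$.

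I would organize the proof so that the chain of inequalities is transparent: (i) the entropy lower bound from Jensen applied to \eqref{EbetaFormula}; (ii) the $\alpha$-invariant estimate giving $\sup\vp - V^{-1}\int\vp\,\o^n \le C$; (iii) the Green-function bound \eqref{SupEstimateEq} upgrading this to a bound on $\sup\vp$; (iv) the comparison of $\int(-\vp)\o_\vp^n$ with $I$ and the substitution of \eqref{IIminusJComparisonEq} to get the sharp constant; (v) collecting terms. The main obstacle — or rather the point requiring the most care — is step (iv) together with the bookkeeping of the constant: one must make sure that no estimate along the way loses a factor larger than $\frac{n}{n+1}$ in front of $I$, since the whole point is to get precisely the threshold $\frac{n\mu}{n+1}$ and not something worse; this is exactly why one works with $I-J$ and exploits \eqref{IIminusJComparisonEq} rather than cruder bounds. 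Everything else — the applicability of Jensen (the measures have $L^p$ density by \eqref{DetOmeganEq}), the validity of \eqref{SupEstimateEq} and the Green function lower bound in the edge setting (established in the proof of Lemma~\ref{PositiveCaseUniquenessCzeroLemma}), and the extension of the $\alpha$-invariant estimate from smooth to edge potentials (by the \Holder argument already invoked above) — is either already proved in the excerpt or a routine adaptation of the smooth case.
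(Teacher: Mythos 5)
Your proposal assembles the right ingredients (the formula \eqref{EbetaFormula}, the $\alpha$-invariant, Jensen's inequality, the comparison \eqref{IIminusJComparisonEq}) and cites the right sources, but the central step is applied to the wrong measure, and a later step asserts a bound that is not available.

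The paper's argument hinges on the inequality $\frac1V\int_M\log\frac{\ovpn}{\on}\,\ovpn\ge a\,I(\o,\ovp)-C_a$ for any $a<\a_{\O,\o}$. This is obtained by rewriting the $\alpha$-invariant integral as $\frac1V\int_M e^{-\log(\ovpn/\on)-a(\vp-\frac1V\int_M\vp\,\on)}\,\ovpn$ and then applying Jensen's inequality with respect to the probability measure $\ovpn/V$ (convexity of $\exp$), which directly produces the term $-\frac1V\int_M\big(\vp-\frac1V\int\vp\,\on\big)\ovpn = I(\o,\ovp)$ in the exponent. Once this is in hand, one simply plugs it into \eqref{EbetaFormula}, uses boundedness of $f_\o$, and uses $I-J\le\frac{n}{n+1}I$ to absorb the $-\mu(I-J)$ term.

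Your proposal instead applies Jensen to $\on/V$, which only gives $\sup\vp-\frac1V\int_M\vp\,\on\le C_a'$ -- a genuinely weaker consequence that does not directly control the entropy term. You then try (step (iii)) to combine this with the Green-function estimate \eqref{SupEstimateEq} to get ``a bound on $\sup\vp$.'' This cannot work: both \eqref{SupEstimateEq} and the $\alpha$-invariant estimate are \emph{upper} bounds on the same quantity $\sup\vp-\frac1V\int\vp\,\on$, and they give no bound on $\sup\vp$ alone; indeed, if $\sup\vp$ were a priori bounded, the $\calC^0$ estimate would be immediate and Lemmas~\ref{PositiveCaseUniquenessCzeroLemma} and \ref{alphainvariantepsLemma} would be superfluous. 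The subsequent bookkeeping in step (iv) inherits this gap; the ``dangerous term'' $\int(-\vp)\ovpn$ is not in fact what needs controlling, since the lemma is about the functional $E_0^\be$ on all of $\calH_\o$, not only about solutions where $\log(\ovpn/\on)$ simplifies. In short: the missing move is to apply Jensen with respect to $\ovpn/V$ after inserting the factor $e^{-\log(\ovpn/\on)}$ inside the $\alpha$-invariant integral, which is what converts the $\alpha$-invariant bound into a lower bound for the entropy in terms of $I$; without that precise maneuver, your chain of estimates does not close.
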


\begin{proof}
Again we follow the classical argument
\cite[p. 164]{T},\cite[p. 95]{Tbook}.
Note that for any $a\in(0,\a_\o)$ there exists a constant $C_a$
such that
$
\frac1V\int_M\log\frac{\ovpn}{\on}\ovpn\ge aI(\o,\ovp)-C_a.
$
Indeed, by (\ref{SupEstimateEq})
and Jensen's inequality,
$$
\begin{aligned}
e^{C_a} &\ge \frac1V\int_M e^{-a(\vp-\frac1V\intM\vp\on)}\on
\cr &= \frac1V\int_M e^{-\log\frac{\ovpn}{\on}-a(\vp-\frac1V\intM\vp\on)}\ovpn
\ge e^{-\frac1V\intM(\log\frac{\ovpn}{\on}+a(\vp-\frac1V\intM\vp\on))\ovpn}.
\end{aligned}
$$
By (\ref{IIminusJComparisonEq})
and (\ref{EbetaFormula}) it then follows that
$E_0^\beta\ge (a-\frac{n\mu}{n+1})I-C$.
\end{proof}

\begin{cor}
\label{alphainvariantCzeroCor}
For all $s\in (-\infty,\frac{n+1}n\alpha_{\O,\o})\cap (-\infty,\mu]$,
we have $||\vp(s)||_{\calC^0(M)}\le C$, with $C$ independent of $s$.
\end{cor}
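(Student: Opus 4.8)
The plan is to reduce to the nonpositive estimate wherever possible and otherwise to run Tian's $\alpha$-invariant argument with the continuity parameter $s$ playing the role that $\mu$ plays in Lemma \ref{alphainvariantepsLemma}. If $\mu\le 0$ the stated range is contained in $(-\infty,0]$ and Lemma \ref{AYUniquenessLemma} already gives $\|\vp(s)\|_{\calC^0(M)}\le C(\|f_\o\|_{\calC^0(M)},M,\o)$, with no dependence on $s$; the same lemma disposes of the portion $s\le 0$ when $\mu>0$. So I assume henceforth that $\mu>0$ and $0<s\le\mu$ with $s<\frac{n+1}{n}\alpha_{\O,\o}$, and I introduce the ``level-$s$'' energy
\[
F_s(\o,\o_\vp):=\frac1V\int_M\log\frac{\ovpn}{\on}\,\ovpn-s(I-J)(\o,\o_\vp)+\frac1V\int_M f_\o(\on-\ovpn),
\]
which by \eqref{EbetaFormula} equals $E_0^\be(\o,\o_\vp)+(\mu-s)(I-J)(\o,\o_\vp)$, and whose critical points are precisely the solutions of \eqref{TYCMEq} at parameter $s$.

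The argument then runs on two legs. \emph{Properness at level $s$:} the proof of Lemma \ref{alphainvariantepsLemma} applies verbatim with $s$ in place of $\mu$, since it uses only \eqref{SupEstimateEq}, Jensen's inequality, the normalization \eqref{normalization} and \eqref{IIminusJComparisonEq}; it gives $F_s(\o,\o_\vp)\ge\big(a-\frac{ns}{n+1}\big)I(\o,\o_\vp)-C$ for $\vp\in\Ds\cap\PSH(M,\o)$ and every $a<\alpha_{\O,\o}$, so since $s<\frac{n+1}{n}\alpha_{\O,\o}$ I may fix $a$ with $\eps:=a-\frac{ns}{n+1}>0$. \emph{Monotonicity of $F_s$ along \eqref{TYCMEq}:} differentiating \eqref{TYCMEq} yields $\vp(s')=-(\Delta_{\o_{\vp(s')}}+s')\dot\vp(s')$, which inserted into the first-variation formula for $E_0^\be$ from the proof of Lemma \ref{KEnergyMonotonicityLemma} and into \eqref{IJVariationalEq} gives
\[
\frac{d}{ds'}(I-J)(\o,\o_{\vp(s')})=\frac1V\int_M\dot\vp\,\big(\Delta_{\o_{\vp(s')}}^2+s'\Delta_{\o_{\vp(s')}}\big)\dot\vp\;\o_{\vp(s')}^n\ge 0,
\]
the inequality by the spectral gap $\lambda_1(-\Delta_{\o_{\vp(s')}})>s'$ of Lemma \ref{LambdaoneLemma}, and consequently $\frac{d}{ds'}F_s(\o,\o_{\vp(s')})=(s'-s)\frac{d}{ds'}(I-J)(\o,\o_{\vp(s')})$. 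Thus $s'\mapsto F_s(\o,\o_{\vp(s')})$ decreases on $(-\infty,s]$ and increases on $[s,\mu]$, so it attains its minimum at $s'=s$; since $\vp(s')\to 0$ as $s'\to-\infty$, this forces $F_s(\o,\o_{\vp(s)})\le F_s(\o,\o)=0$. Combining the two legs, $\eps\,I(\o,\o_{\vp(s)})\le F_s(\o,\o_{\vp(s)})+C\le C$, so $I(\o,\o_{\vp(s)})$ is bounded, and Lemma \ref{PositiveCaseUniquenessCzeroLemma} converts this into $\|\vp(s)\|_{\calC^0(M)}\le C(1+I(\o,\o_{\vp(s)}))\le C'$.

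For the uniformity in $s$, when $\mu<\frac{n+1}{n}\alpha_{\O,\o}$ one chooses a single $a\in(\frac{n\mu}{n+1},\alpha_{\O,\o})$, so that $\eps\ge a-\frac{n\mu}{n+1}>0$ for every $s\le\mu$ and the final constant genuinely does not depend on $s$ on $(-\infty,\mu]$; otherwise the same reasoning gives a bound uniform on each $(-\infty,s_0]$ with $s_0<\frac{n+1}{n}\alpha_{\O,\o}$. I expect the monotonicity identity for $F_s$ to be the main obstacle: the two first-variation computations must be carried out carefully and the sign extracted from Lemma \ref{LambdaoneLemma}, and this is exactly the place where the higher regularity is used — one needs $\vp(s)$ polyhomogeneous so that it lies in the Friedrichs domain on which the Weitzenb\"ock identity and the eigenvalue bound of Lemma \ref{LambdaoneLemma} are legitimate. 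Two routine matters remain: continuity of $F_s$ along the path down to $s'=-\infty$, which follows from Wu's asymptotics $\vp(s')=(s')^{-1}f_\o+o(1/s')$ and the continuity of the mixed \MAno\ operators defining $I$ and $J$ (cf. Corollary \ref{correg}); and the observation that Lemma \ref{AYUniquenessLemma} (for $s\le 0$) and Lemma \ref{PositiveCaseUniquenessCzeroLemma} (for $0<s<\mu$), together with the uniform bound on $I(\o,\o_{\vp(s)})$, cover the whole range, the endpoint $s=\mu$ being handled by a limiting argument.
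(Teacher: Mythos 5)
Your argument is correct and realizes the approach the paper's terse proof alludes to: the paper handles $\alpha_{\O,\o}>\frac{n\mu}{n+1}$ by combining Lemma~\ref{alphainvariantepsLemma} with Corollary~\ref{PropernessCzeroCor} and then says ``in general the classical derivation carries over,'' and your level-$s$ functional $F_s=E_0^\be+(\mu-s)(I-J)$, with coercivity from the Lemma~\ref{alphainvariantepsLemma} argument run at level $s$ and the identity $\frac{d}{ds'}F_s=(s'-s)\frac{d}{ds'}(I-J)$ extracted from the Lemma~\ref{KEnergyMonotonicityLemma} computation, is exactly that classical derivation adapted to the path \eqref{TYCMEq}, unifying the two cases. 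The only caveat, which you already flag, is that the uniformity of $C$ genuinely fails as $s\nearrow\frac{n+1}{n}\alpha_{\O,\o}$ when $\mu\ge\frac{n+1}{n}\alpha_{\O,\o}$, so ``independent of $s$'' there must be read as uniform on compact subintervals.
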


\begin{proof}
When $\a_{\O,\o}>\frac{n\mu}{n+1}$ the result follows from
Lemma \ref{alphainvariantepsLemma} and Corollary \ref{PropernessCzeroCor}
(note, as explained in \S\ref{ProofKESection}, that there is no difficulty in treating
the interval $s\in(0,\eps)$ for some $\eps>0$).
In general the classical derivation \cite{Tbook} carries over.
\end{proof}

This conditional $\calC^0$ estimate implies of course, given
the other ingredients of the proof of Theorem \ref{ConicKEMainThm}, 
that $\be_{\O,\o}\ge\min\{\mu,\frac{n+1}n\a_{\O,\o}\}$, just as in the smooth setting.
We remark that Donaldson \cite{Don2009} conjectured that 
when $D\subset M$ is a smooth anticanonical divisor of a Fano manifold, then
$
\be(M)=\sup\,\{\,\be\,:\, \eqref{KEMAEq} \h{\ admits a solution with \ } \mu=\be\, \}.
$
Note that the Calabi--Yau theorem shows that the left hand side is positive, while 
Corollary \ref{FanoCor} shows that the right hand side is positive.
Our results have further direct bearings on this problem that we will discuss
elsewhere.

\section{The Laplacian estimate}
\label{LaplacianSection}
Let $f:M\ra N$ be a holomorphic map between two complex manifolds.
The Chern--Lu inequality was originally used by Lu to bound $|\del f|^2$ 
when the target manifold has negative bisectional curvature \cite{Lu} under
some technical assumptions. This inequality was later used by Yau \cite{Y1} 
together with his maximum principle to greatly generalize the result to the case 
where $(M,\o)$ is a complete \K manifold with a lower bound $C_1$ on the Ricci curvature,
and $(N,\eta)$ is a Hermitian manifold whose bisectional curvature is bounded above 
by a negative constant $-C_3$. These results lead to Yau's Schwarz lemma, which says
that the map $f$ decreases distances in a manner depending only on $C_3>0$ and $C_1$.

In a related direction, the use of the Chern--Lu inequality to prove a Laplacian estimate for 
complex \MA equations seems to go back in print at least to Bando--Kobayashi \cite{BK},
who considered the case $\Ric\o\ge -C_2 \eta$ and $C_3$ arbitrary (not necessarily positive)
in the context of constructing a Ricci flat metric on the complement of a divisor. Next, the case 
$\Ric\o\ge -C_1\o$ (and again $C_3$ arbitrary) was used in proving the a priori Laplacian estimate 
for the Ricci iteration \cite{R}. 

The point of Proposition \ref{ChernLuApplicationProp} below is to state the Chern--Lu inequality in a unified 
manner that applies to a wide range of \MA equations that appear naturally in \K geometry. It makes 
the Laplacian estimate in these settings slightly simpler, and the explicit dependence on the geometry 
more transparent. 
In addition, the Chern--Lu inequality applies in some situations where the standard derivation \cite{Au,Y2,Siu} 
of the Aubin--Yau Laplacian estimate may fail (as in the case of the Ricci iteration) or give an estimate with 
different dependence on the geometry (which will be crucial in our setting).
While Proposition \ref{ChernLuApplicationProp} below should be folklore 
among experts, it seems that it is less well-known than it deserves to be.
In particular, we are not aware of a treatment of the Aubin--Yau or Calabi--Yau Theorems that uses it.

\subsection{The Chern--Lu inequality}
Let $(M,\o)$, $(N,\eta)$ be compact \K manifolds and let $f:M\ra N$ be a holomorphic map with 
$\del f\neq0$. The Chern--Lu inequality \cite{Chern,Lu} is
\begin{equation}
\label{ChernLuEq}
\Delta_{\o} \log |\del f|^2\ge \frac{\Ric\o\otimes\eta(\del f,\dbar f)}{|\del f|^2}
-\frac{\o\otimes R^N(\del f,\dbar f,\del f,\dbar f)}{|\del f|^2},
\quad \h{on\ $M$}.
\end{equation}
Since the original statement \cite[(7.13)]{LuThesis},\cite[(4.13)]{Lu} contains a misprint, 
we include a direct and slightly simplified derivation (since we restrict to the \K setting) for completeness. 
We note also that \eqref{ChernLuEq} can be obtained as a special case of a formula of Eells--Sampson \cite[(16)]{EellsSampson}
on the Laplacian of the energy density of a harmonic map.

Write $\del f:T^{1,0}M\ra T^{1,0}N$. Then $\del f$ is a section of $T^{1,0\,\star}M\otimes T^{1,0}N$ given in local holomorphic
coordinates by $\del f=\frac{\del f^i}{\del z^j}dz^j\otimes\frac{\del}{\del w^i}$.
With respect to the metric induced by $\o$ and $\eta$ on the product bundle above,
\begin{equation}
\label{utraceEq}
u:=|\del f|^2=g^{i\b l}h_{j\b k}\frac{\del f^j}{\del z^i}\overline{\frac{\del f^k}{\del z^l}}.
\end{equation}
Compute in normal coordinates at a point 
$$
\begin{aligned}
\Delta_\o u=
\sum_pu_{p\b p}
& =
-\sum_{i,l,p}g_{l\b i, p\b p}h_{j\b k} f^j_{,i}f^{\b k}_{,\b l}
+
\sum_{i,p}h_{j\b k,d\b m}f^d_{,p}f^{\b m}_{,\b p}f^j_if^{\b k}_{,\b i}
+
\sum_{i,j,p}f^j_{,ip}f^{\jbar}_{,\b i\b p}
\cr
& =
\Ric\o\otimes\eta(\del f,\dbar f)-\o\otimes R^N(\del f,\dbar f,\del f,\dbar f)
+\sum_{i,j,p}f^j_{,ip}f^{\jbar}_{,\b i\b p}.
\end{aligned}
$$
By the Cauchy--Schwarz inequality,
\[
u\sum_{i,j,p}f^j_{,ip}f^{\jbar}_{,\b i\b p}\ge \sum_ku_ku_{\b k},
\]
and since $\Delta_\o \log u={\Delta_\o u}/u-{\sum_ku_ku_{\b k}}/{u^2}$, the desired inequality follows.

One particularly useful form of the Chern--Lu inequality is when $f$ is the identity map.

\begin{prop}
\label{ChernLuApplicationProp}
In the above, let $f=\h{\rm id}:(M,\o)\ra (M,\eta)$ be
the identity map, and assume that $\Ric\o\ge -C_1\o-C_2\eta$
and that $\h{\rm Bisec}_\eta\le C_3$, for some $C_1,C_2,C_3\in\RR$. 
Then,
\begin{equation}
\label{FirstTraceChernLuIneq}
\Delta_{\o} \log |\del f|^2\ge -C_1-(C_2+2C_3)|\del f|^2.
\end{equation}
In particular, if $\o=\eta+\i\ddbar\vp$ then
\begin{equation}
\label{ChernLuIneqPotentialEq}
\Delta_{\o} \big(
\log \tr_\o\eta
-(C_2+2C_3+1)\vp
\big)
 \ge -C_1-(C_2+2C_3+1)n + \tr_\o\eta.
\end{equation}
Hence, $\o\ge C\eta$ for some $C>0$ depending only on
$C_1,C_2,C_3,n$ and $||\vp||_{\calC^0(M)}$.
\end{prop}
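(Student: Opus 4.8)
The plan is to specialize the Chern--Lu inequality \eqref{ChernLuEq} to the identity map and then run a maximum principle argument on the resulting differential inequality. First I would observe that for $f=\mathrm{id}$ one has $|\del f|^2=\tr_\o\eta$, since in local holomorphic coordinates $f^j_{,i}=\delta^j_i$ and hence $|\del f|^2=g^{i\b l}h_{i\b l}$. Next I would estimate the two terms on the right of \eqref{ChernLuEq} by diagonalizing $\o$ and $\eta$ simultaneously at a point, writing $g_{i\b j}=\delta_{ij}$ and $\eta_{i\b j}=\mu_i\delta_{ij}$ with $\mu_i>0$ and $\tr_\o\eta=\sum_i\mu_i$. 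The hypothesis $\Ric\o\ge -C_1\o-C_2\eta$ gives $R^\o_{i\b i}\ge -C_1-C_2\mu_i$, so $\Ric\o\otimes\eta(\del f,\dbar f)=\sum_i R^\o_{i\b i}\mu_i\ge -C_1\sum_i\mu_i-C_2\sum_i\mu_i^2\ge -(C_1+C_2\tr_\o\eta)\tr_\o\eta$, using $\sum_i\mu_i^2\le(\sum_i\mu_i)^2$. Likewise, $\mathrm{Bisec}_\eta\le C_3$ yields an upper bound $\o\otimes R^N(\del f,\dbar f,\del f,\dbar f)\le 2C_3(\tr_\o\eta)^2$. Dividing by $|\del f|^2=\tr_\o\eta$ gives \eqref{FirstTraceChernLuIneq}.

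For \eqref{ChernLuIneqPotentialEq} I would combine the case $\eta=\o-\i\ddbar\vp$ of \eqref{FirstTraceChernLuIneq}, namely $\Delta_\o\log\tr_\o\eta\ge -C_1-(C_2+2C_3)\tr_\o\eta$, with the elementary identity $\Delta_\o\vp=\tr_\o(\o-\eta)=n-\tr_\o\eta$. Multiplying the latter by $(C_2+2C_3+1)$ and adding produces exactly $\Delta_\o\big(\log\tr_\o\eta-(C_2+2C_3+1)\vp\big)\ge -C_1-(C_2+2C_3+1)n+\tr_\o\eta$.

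For the final assertion I would apply the maximum principle to $H:=\log\tr_\o\eta-(C_2+2C_3+1)\vp$. At a maximum point $p$ of $H$ one has $\Delta_\o H(p)\le 0$, hence $\tr_\o\eta(p)\le C_1+(C_2+2C_3+1)n=:C_4$, and therefore $H(p)\le\log C_4+(C_2+2C_3+1)\|\vp\|_{\calC^0(M)}$. Since $H\le H(p)$ everywhere, $\log\tr_\o\eta\le\log C_4+2(C_2+2C_3+1)\|\vp\|_{\calC^0(M)}$ pointwise, i.e.\ $\tr_\o\eta\le C_5$ with $C_5$ depending only on $C_1,C_2,C_3,n$ and $\|\vp\|_{\calC^0(M)}$. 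Diagonalizing once more, $\tr_\o\eta\le C_5$ forces every eigenvalue $\mu_i\le C_5$, hence $\eta\le C_5\o$, i.e.\ $\o\ge C\eta$ with $C=C_5^{-1}$.

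The one point needing care is the maximum principle step, since in the applications $\o$ is only a \K edge metric and $H$ need not extend smoothly over $D$: there I would invoke the barrier functions of Lemma~\ref{BarrierFnLemma}, replacing $H$ by $H+c|s|_h^{\eps}$ to force the maximum into $\MsmD$, applying the classical maximum principle there, and letting $c\to 0$; this is legitimate because the polyhomogeneity (or, at minimum, the $\Dom^{0,\gamma}_s$ regularity together with the asymptotic expansion) of $\vp$ guarantees that $\tr_\o\eta$ and $\Delta_\o\vp$ have the regularity demanded by that lemma. This reconciliation of the Chern--Lu differential inequality with the degenerate maximum principle is the only genuine obstacle; everything else is bookkeeping with the simultaneous diagonalization and the estimate $\sum_i\mu_i^2\le(\tr_\o\eta)^2$.
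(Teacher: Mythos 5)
Your proof is correct and follows essentially the same route as the paper. The one genuine (but minor) difference is in how you bound the $C_2$ term and the bisectional curvature term: you diagonalize $g$ and $h$ simultaneously and use the scalar inequality $\sum_i \mu_i^2 \le (\sum_i\mu_i)^2$, whereas the paper keeps the computation invariant, writing $\Ric\o\otimes\eta \ge -C_1\tr_\o\eta - C_2(\eta,\eta)_\o$ and then invoking the Besse identity $(\eta,\eta)_\o = (\tr_\o\eta)^2 - n(n-1)\,\eta\wedge\eta\wedge\o^{n-2}/\o^n$ to conclude $(\eta,\eta)_\o\le(\tr_\o\eta)^2$. In diagonal coordinates these are literally the same estimate, so this is a presentational choice rather than a different argument; the diagonalization is arguably more elementary, while the paper's version records the sharper invariant identity which could matter if one wanted to retain the extra nonnegative term. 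Your handling of the bisectional term and the passage to \eqref{ChernLuIneqPotentialEq} via $\Delta_\o\vp = n - \tr_\o\eta$ match the paper exactly.

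One small scope remark: the Proposition itself is stated for smooth compact K\"ahler manifolds (``In the above\ldots''), so the classical maximum principle applies directly and no barrier is needed to reach $\o\ge C\eta$. The barrier argument from Lemma~\ref{BarrierFnLemma} that you invoke is indeed exactly what the paper uses when this proposition is applied to edge metrics, but that occurs in the proof of Lemma~\ref{TYLaplacianEstimateLemma}, not here. Your anticipation of that issue is sound but slightly ahead of where it arises.
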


\begin{proof}
By (\ref{utraceEq}), $u=\tr_\o\eta$. The assumption on $\Ric\o$ implies that
$$
\begin{aligned}
\Ric\o\otimes\eta(\del f,\dbar f)
& =
g^{i\bar l}g^{k\jbar}R_{i\jbar}h_{k\b l}
\ge
-C_1 g^{i\bar l}g^{k\jbar}g_{i\jbar}h_{k\b l}
-C_2 g^{i\bar l}g^{k\jbar}h_{i\jbar}h_{k\b l}
\cr
& =-C_1\tr_\o\eta-C_2(\eta,\eta)_\o
\ge -C_1\tr_\o\eta-C_2(\tr_\o\eta)^2,
\end{aligned}
$$
where the last inequality follows from the 
identity \cite[Lemma 2.77]{Besse}
$$
(\eta,\eta)_\o=(\tr_\o\eta)^2-n(n-1)\frac{\eta\w\eta\w \o^{n-2}}{\o^n}.
$$

Similarly, we also have
$$
\begin{aligned}
-\o\otimes R^N(\del f,\dbar f,\del f,\dbar f)
& = -g^{i\jbar}g^{k\b l}R^N_{i\jbar k\b l}
\cr
& \ge -C_3 g^{i\jbar}g^{k\b l} (h_{i\jbar}h_{k\b l}+h_{i\b l}h_{k\jbar})
\ge -2C_3(\tr_\o\eta)^2.
\end{aligned}
$$
Thus, (\ref{FirstTraceChernLuIneq}) follows from (\ref{ChernLuEq}).
Since $\tr_\o\eta=n-\Delta_\o\vp$, equation (\ref{ChernLuIneqPotentialEq})
follows from (\ref{FirstTraceChernLuIneq}).
\end{proof}

\subsection{The Laplacian estimate in the singular H\"older spaces}
We now apply the Chern--Lu inequality to obtain an a priori Laplacian estimate for the continuity 
method (\ref{TwoParamCMEq}). For solutions of \eqref{RCMEq} it gives a bound depending,
in addition to the uniform norm, on an upper bound on the bisectional curvature of the reference
metric;  in contrast the well-known Aubin--Yau bound depends on a lower bound 
for the bisectional curvature \cite{Au,Y2,Siu}.
\begin{lem}
\label{TYLaplacianEstimateLemma}
Suppose that there exists a reference metric $\o\in\calH_{\o_0}$ 
with $\Ric\o\ge -C_2\o$ and $\h{\rm Bisec}_\o\le C_3$, on $\MsmD$,
where $C_2\in\RR\cup\{\infty\},\, C_3\in\RR$.
Let $s>S$. Solutions to \eqref{TwoParamCMEq} in $\Ds\cap \calC^4(M\sm D)\cap \PSH(M,\o)$, satisfy
\begin{equation}
\label{AYTYCtwoEq}
\begin{aligned}
||\Delta_\o\vp(s,t)||_{\calC^0(M)} &\le C=C(||f_\o||_{\calC^0(M)},
||\vp(s,t)||_{\calC^0(M)},S,(1-t)C_2,C_3),
\end{aligned}
\end{equation}
where $(1-t)C_2$ is understood to be 0 when $t=1$.
Moreover, $\frac1C \o\le \o_{\vp(s)}\le C\o$.
\end{lem}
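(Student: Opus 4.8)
The plan is to apply the Chern--Lu inequality, in the packaged form of Proposition~\ref{ChernLuApplicationProp}, to the identity map $f=\mathrm{id}\colon (\MsmD,\o_{\vp})\to (\MsmD,\o)$, where $\o_{\vp}$ is the solution metric and $\o$ the reference metric. Then $|\del f|^2=\tr_{\o_{\vp}}\o$, and since $\o=\o_{\vp}+\i\ddbar(-\vp)$ an upper bound on $\tr_{\o_{\vp}}\o$ converts, via the equation $\ovpn=e^{tf_\o+c_t-s\vp}\on$, into the two--sided bound $\tfrac1C\o\le\o_{\vp}\le C\o$ and hence into the desired control of $\Delta_\o\vp=\tr_\o\o_{\vp}-n$. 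The reason for using this direction of the Chern--Lu inequality is that only an \emph{upper} bound on the bisectional curvature of the \emph{target} metric $\o$ enters, which is precisely the hypothesis $\mathrm{Bisec}_\o\le C_3$.

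First I would record the Ricci lower bound for $\o_{\vp}$ along the path \eqref{TwoParamCMEq}. Taking $-\i\ddbar\log$ of that equation gives, on $\MsmD$, $\Ric\o_{\vp}=(1-t)\Ric\o+t\mu\o+s\o_{\vp}-s\o$ (the current $t(1-\be)[D]$ being added on all of $M$), so using $\Ric\o\ge -C_2\o$ and $S<s\le\mu$ one gets $\Ric\o_{\vp}\ge -C_1\o_{\vp}-C_2'\o$ with $C_1$ depending only on $S$ and $C_2'$ depending only on $S$, $\mu$ and $(1-t)C_2$; when $t=1$ one uses \eqref{RicAlongCMEq} directly, where $(\mu-s)\o\ge0$ and $[D]\ge0$ force $\Ric\o_{\vp}\ge s\o_{\vp}$, so $C_2'=0$ and no lower Ricci bound on $\o$ is needed (consistent with allowing $C_2=\infty$ there). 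Feeding $C_1,C_2',C_3$ into \eqref{ChernLuIneqPotentialEq} yields
\[
\Delta_{\o_{\vp}}\bigl(\log\tr_{\o_{\vp}}\o-A\vp\bigr)\ge -C_1-An+\tr_{\o_{\vp}}\o,\qquad A:=C_2'+2C_3+1 .
\]

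The second step is a maximum principle argument for $u:=\log\tr_{\o_{\vp}}\o-A\vp$ on the incomplete space $\MsmD$. Here the hypothesis $\vp\in\Ds$ does real work: by Proposition~\ref{mainlinearprop} the metric $\o_{\vp}$ is again asymptotically equivalent to $g_\be$ in the sense of \eqref{pertgb}, so $\tr_{\o_{\vp}}\o$ is bounded above near $D$; it is bounded below there as well, either by that same equivalence or by the arithmetic--geometric mean inequality applied to $\ovpn/\on$, which is pinched between two positive constants in terms of $\|f_\o\|_{\calC^0(M)}$ and $\mathrm{osc}\,\vp$ (the latter supplied by Lemma~\ref{AYUniquenessLemma} or Corollary~\ref{alphainvariantCzeroCor}). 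Hence $u$ is bounded on $M$. Adding a barrier $c\,|s|_h^{\eps}$, with $\eps$ and $c$ chosen as in Lemma~\ref{BarrierFnLemma} so that in particular $\i\ddbar(c|s|_h^{\eps})\ge -c_0\,\o$ with $c_0<1$, the function $u+c|s|_h^{\eps}$ attains its maximum at an interior point $p\in\MsmD$; the classical maximum principle at $p$ combined with the displayed inequality gives $0\ge -C_1-An+(1-c_0)\tr_{\o_{\vp}}\o(p)$, whence $\tr_{\o_{\vp}}\o(p)\le (C_1+An)/(1-c_0)$. Comparing $u+c|s|_h^{\eps}$ at $p$ with its value at an arbitrary point and letting $c\to0$ gives $\tr_{\o_{\vp}}\o\le C\,e^{A\,\mathrm{osc}\,\vp}$ on $\MsmD$. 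Finally, since $\ovpn/\on$ is bounded above and below, a routine estimate on the eigenvalues of $\o_{\vp}$ relative to $\o$ upgrades this to $\tfrac1{C'}\o\le\o_{\vp}\le C'\o$, and therefore $|\Delta_\o\vp|=|\tr_\o\o_{\vp}-n|\le C''$, which is \eqref{AYTYCtwoEq}.

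I expect the main obstacle to be exactly this incompleteness: one must know \emph{before} invoking the maximum principle that $\log\tr_{\o_{\vp}}\o$ neither blows up nor decays as $r\to0$, so that the barriers of Lemma~\ref{BarrierFnLemma} genuinely force the extrema into $\MsmD$, and one must ensure the barrier's contribution to the Chern--Lu inequality is harmless. Both hinge on the fact that membership in $\Ds$, together with polyhomogeneity from Theorem~\ref{PhgMainThm}, keeps $\o_{\vp}$ uniformly equivalent to the model edge metric $g_\be$ with all relevant derivatives controlled — this is the place where the fine linear theory of \S\ref{LinearSection} is cashed in, and it is what lets the argument get by with only an upper bisectional bound on the reference metric.
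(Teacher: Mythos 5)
Your overall strategy coincides with the paper's: rewrite \eqref{RicAlongCMEq} (or its two-parameter analogue) to get the Ricci lower bound $\Ric\o_\vp \ge S\o_\vp - (1-t)C_2\o$, feed it into Proposition~\ref{ChernLuApplicationProp} applied to $\mathrm{id}\colon(\MsmD,\o_\vp)\to(\MsmD,\o)$, and close via a barrier maximum principle with $c|s|_h^\eps$. This is precisely the paper's argument, and your observation that only the upper bisectional bound on the \emph{target} enters is the right one.

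Where you differ is in how you justify that Lemma~\ref{BarrierFnLemma} actually applies to $f=\log\tr_{\o_\vp}\o - A\vp$. You invoke polyhomogeneity of $\vp$ from Theorem~\ref{PhgMainThm}. This is valid and non-circular — any solution in $\Ds\cap\PSH(M,\o)$ is covered by that theorem, and the polyhomogeneity constant only enters the argument qualitatively (through the barrier, which is let go to zero), not in the final a priori bound — but it is heavier machinery than the paper uses. The paper instead argues \emph{directly} from $\vp\in\Ds$: writing $g_\vp^{i\jbar}=A_{i\jbar}/\det g_\vp$ in singular holomorphic coordinates, the cofactor entries $A_{i\jbar}$ are polynomials in the $\calC^{0,\gamma}_s$ components of $g_\vp$, and the Monge--Amp\`ere equation itself gives $1/\det g_\vp = |z_1|^{2-2\be}F$ with $F\in\calC^{0,\gamma}_s$, whence $\log\tr_{\o_\vp}\o\in\calC^{0,\tilde\gamma}_s$ with $\tilde\gamma\le\min\{\frac1\be-1,\gamma\}$. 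That computation is self-contained and does not require the regularity theory of \S4.

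Be careful, also, that merely knowing $u$ is ``bounded'' or ``neither blows up nor decays'' as $r\to0$ is not enough for Lemma~\ref{BarrierFnLemma}: that lemma requires the sharper modulus $|f(r,\th,y)-a(y)|\le Cr^\gamma$, so that the barrier term $C'r^{\eps/\be}$, with $\eps/\be<\gamma$, dominates the error and forces the maximum off $D$. A merely bounded $f$ could still attain its supremum at $r=0$ (e.g.\ $f=-r^{\eps/2\be}$). You do eventually gesture at the correct ingredient via polyhomogeneity, but the intermediate phrasing (``hence $u$ is bounded on $M$, adding a barrier\,\dots\,attains its maximum at an interior point'') elides exactly the step that Lemma~\ref{BarrierFnLemma} makes precise, and which the paper's cofactor computation supplies cleanly.
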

\begin{proof}
Along the continuity path (\ref{TwoParamCMEq}), 
$$
\Ric\o_{\vp}=(1-t)\Ric\o+s\o_{\vp}+(\mu t-s)\o+2\pi(1-\be)[D]
\ge S\o_{\vp_{s,t}}-(1-t)C_2\o,
$$
Hence, the assumptions of Proposition \ref{ChernLuApplicationProp} are satisfied
(we take $\h{id}:(M,\o_\vp)\rightarrow (M,\o)$),
and the desired estimates follow directly from (\ref{ChernLuIneqPotentialEq})
if the maximum of (the bounded function) $\log\tr_{\o_{\vp}}\o-A\vp$ takes place in $\MsmD$.

Next, suppose the maximum is attained on $D$. We claim that 
$ \log\tr_{\o_{\vp}}\o\in \calC^{0,\tilde\gamma}_s,$
for any $\tilde\gamma \le\min\{\frac{1}{\beta} - 1,\gamma\}$.
Indeed, in the local coordinates $z_1,\ldots,z_n$, 
$g_{\vp}^{i\bar j}=\frac1{\det g_{\vp}} A_{i\bar j}$,
where $A$ is the cofactor matrix of $[g_{\vp}]$.  
Since $A_{i\jbar}$ is a polynomial in the components
$g_{\vp\,k\b \ell}$, it too lies in $\calC^{0,\gamma}_s$.
In addition, $1/\det g_{\vp}=e^{-f_\o-c_t+s\vp}/\det g_\o
=|z_1|^{2-2\be}F$ for some $F\in \calC^{0,\gamma}_s$, hence this lies 
in $\calC^{0,\tilde\gamma}_s$ for
$\tilde\gamma\le\frac1\be-1$. Hence 
$\tr_{\o_{\vp}}\o=g^{i\bar j}_{\vp}g_{i\bar j}\in \calC^{0,\tilde\gamma}_s$, 
proving the claim. 

Now by Lemma \ref{BarrierFnLemma} applied to $f:=\log\tr_{\o_{\vp}}\o-A\vp$, we have that
$f+|s|^\eps_h$ achieves its maximum away from $D$ for $\eps < \beta\gamma $
(when $s=e$ we use that $\vp\in\calA_{\phg}$ by Theorem \ref{PhgMainThm}).
By (\ref{ChernLuIneqPotentialEq}) and Lemma \ref{BarrierFnLemma} (ii) 
(and in particular \eqref{LaplacianOfBarrierFnEq}) we have for all sufficiently large $N>1$ 
$$
\Delta_{\vp} (f+ N^{-1}|s|^\eps_h) \ge -C_1-(2C_3+1)n + (1-C/N)\tr_{\o_{\vp}}\o.
$$
The maximum principle thus implies  $\tr_{\o_{\vp}}\o\le C=C(C_1,C_3,||\vp||_{\calC^0(M)},\o)$,
and so $\o_{\vp}\ge C\o$. Going back to (\ref{RCMEq}) 
we have 
$\ovpn\le C\on$
(with $C$ depending on $||f_\o||_{\calC^0(M)}$ and $||\vp(s,t)||_{\calC^0(M)}$),
and so also $\o_{\vp}\le C\o$. 
\end{proof}

\section{\Holder estimates for second derivatives}
\label{HolderSection}
In the interior of $\MsmD$ the Evans--Krylov regularity theory for \MA equations (Theorem~\ref{EvansKrylovThm}) may be
applied to obtain the a priori interior $\calC^{2,\gamma}$ estimate for a solution $\vp$ on any ball $B'$ depending on 
$\calC^0$ estimates for $\vp$ and $\Delta_\o \vp$ on a slightly larger ball. This depends heavily, of course, on the 
uniform ellipticity of the Laplacian, and hence does not apply directly for balls arbitrarily close to $D$.

We now explain how to obtain a priori estimates in $\Dw$ using the a priori Laplacian estimate from the last section. 
The proof uses an old argument due to Tian. 

\begin{thm} 
\label{DwCor}
Let $\vp(s)\in \calD_w^{0,0}\cap \calC^4(M\sm D)\cap \PSH(M,\o)$ be a solution to \eqref{RCMEq} with $s>S$ 
and $0 < \beta \leq 1$. Then 
$$
||\vp(s)||_{\Dw}\le C,
$$
where $C=C(S,\beta,\o,n,||\Delta_\o\vp||_{L^\infty(M)},||\vp||_{L^\infty(M)}).
$
\end{thm}
\begin{proof}
Let $\calU$ be a neighborhood in $M$. According to Theorem \ref{TianThmB}, if $\ovp$ is locally
represented by $u_{i\b j}dz^i\w \overline{dz^j}$ on $\calU\sm D$, then for some fixed $\gamma,r_0>0$,
every $a\in(0,r_0)$, and all $x$ such that $B_a(x)\subset U$, we have 
$||\nabla u_{i\bar j}||^2_{B_a(x)}\le\, C\, a^{2n-2+2\gamma}.$ The constants $\gamma,r_0,C$ are all uniformly controlled. 
The Poincar\'e inequality gives $||u_{i\bar j}-C_{x,a}||^2_{B_a(x)}\le\, C\, a^{2n+2\gamma},$
where $C_{x,a}=\int_{B_a(x)}u_{i\bar j}\on/\int_{B_a(x)}\on$. Using the integral characterization of H\"older spaces,
see \cite[Theorem 3.1]{HanLin} for example, patching up the estimates over a finite cover,
and using that we already have uniform bounds on  $\Delta_\o\vp$ itself, it follows that $||\Delta_\o\vp||_{C^{0,\gamma}_w}\le C$.
\end{proof}

The proof of Theorem \ref{TianThmB} requires the following lemma that is perhaps of independent interest. 
Let $\psi$ be a fixed \K potential for $\o$ valid in a neighborhood of $y_0\in D$. 
For each pair of parameters $(s,t)$, consider the function $h=h(s,t)$ defined by the equality
\begin{equation}
\label{DefinehEq}
\log h:= \log F+\log\det[\psi_{i\bar j}]=tf_\o+c_t-s\vp+\log\det[\psi_{i\bar j}].
\end{equation}
By Lemma \ref{ReferenceMetricExistsLemma}
(see \eqref{ddbarPhioneSecondEq}), $\psi\in \calD_w^{0,\gamma}$, for any $\gamma\in(0,\frac1\be-1]$.

We now state a collection of estimates for $h$, but note that only the Lipschitz bound (iv) is used later in the proof 
of Theorem \ref{TianThmB}.
\begin{lem}
\label{hLipschitzLemma}
Define $h=h(s,t)$ by \eqref{DefinehEq}, with $s>S$. Then 
the following estimates hold with constants independent of $t,s$:\hfill\break
(i) For $\be\le 1$, $||h(s,t)||_{C^0(M)}\le C(S,M,\o,\be,||\vp(s,t)||_{C^0(M)})$.
\hfill\break
(ii) For $\be\leq 1/2$, $||h(s,t)||_{\calD_w^{0,0}}\le 
C=C(S,M,\o,\be,||\Delta_\o\vp(s,t)||_{\calC^0})$. 
\hfill\break
(iii) For $\be \leq 2/3$, $||h(s,t)||_{w; 0,1} \le C=C(S,M,\o,\be,||\vp(s,t)||_{\calC_w^{0,1}})$.
\hfill\break
(iv) For $\be \le 1$,
\begin{equation}
\label{LipschitzHEstimate}
||h(s,1)||_{w; 1,\frac1\beta-1} \le C=C(S,M,\o,\be,||\vp(s,1)||_{\calC_w^{1,\frac1\beta-1}}).
\end{equation}
Moreover, $||h(s,1)_{k\b l}||_{C^0(M)}\le C(S,M,\o,\be,||\Delta_\o\vp(s,1)||_{\calC^0})$.
\end{lem}

\begin{proof}
By \eqref{DetOmeganEq} and \eqref{zzeta}, near $D$,
$$
\begin{aligned}
\det[\psi_{i\jbar}]
& =
\be^2|\zeta|^{\frac2\be-2}
\det\Big[\frac{\del^2(\psi_0+\phi_0)}{\del z^i\del\overline{ z^j}}\Big] 
\cr
& = \be^2\sum_{k=0}^n  f_{0k} |\zeta|^{2k-2+\frac2\be}  
  + \be^2\sum_{k=0}^{n-1} ( f_{1k} +  f_{2k} \zeta^{\frac1\be} 
  +  f_{3k} \overline{\zeta^{\frac1\be}}) |\zeta|^{2 k},
\end{aligned}
$$
with $f_{jk}$ smooth functions of $(z_1,\ldots,z_n)$.
Thus, if $\be\in(0,2/3]$,  $\log\det[\psi_{i\bar j}]$ is in $\calC^{0,1}_w$. Moreover, if $\be\in(0,1/2]$ then
$\log\det[\psi_{i\bar j}]$ is in $\calC^{1,1}_w$: for that it suffices to remark that 
$$
\del_r f_{jk} 
=
\frac{\del f_{jk}}{\del \rho}\frac{\del \rho}{\del r}
= 
\frac{\del f_{jk}}{\del \rho}(\be r)^{\frac1\be-1}\in C^{0,1}_w,
$$
if $\be\in(0,1/2]$.
Next, by Lemma \ref{foAsympExpansionLemma}, and the same reasoning
as above, it follows that when $\be\in(0,1/2]$, $f_\o\in C^{1,1}_w$
and that when $\be\in(0,2/3]$, $f_\o\in C^{0,1}_w$. Therefore, 
(ii) and (iii) follow. Note also that the above computations
show that $f_\o,\det[\psi_{i\bar j}]\in L^\infty(M)$ for all $\be\in(0,1]$,
proving (i).

Now, assume $t=1$. Denote, as before, $d{\bf z}=dz_1\w\cdots\w dz_n$.
By \eqref{fomegaSecondEq},
$$
\begin{aligned}
f_\o+\log\det[\psi_{i\bar j}]
& =
f_\o+
\log 
\frac
{(\o_0+\i\ddbar\psi_0)^n}
{(\i)^{n^2}d\zeta\w d\bar\zeta\w dz_2\w d\bar{z_2} \w\ldots\w dz_n\w d\bar{z_n}}
\cr
& =
f_\o+
\log 
\frac
{(\o_0+\i\ddbar\psi_0)^n}
{|z_1|^{2\be-2}(\i)^{n^2} d{\bf z}\w d{\bf \b z}}
\cr
& =
\log 
\frac
{|s|^{2\be-2}_h\o_0^n e^{F_{\o_0}-\mu \phi_0}}
{|z_1|^{2\be-2}(\i)^{n^2} d{\bf z}\w d{\bf \b z}}
\cr
& =
(2\be-2)\log a+F_{\omega_0}-\mu\phi_0+\log\det\Big[\frac{\del^2\psi_0}{\del z^i\del\bar{z^j}}\Big],
\end{aligned}
$$
where $a$ is defined in \eqref{afunctionEq}.
Thus $f_\o+\log\det[\psi_{i\bar j}]$ can be written as a sum $\Phi_1-\mu\phi_0=\Phi_1-\mu r^2\Phi_0$
with $\Phi_i,\, i=0,1$, smooth functions of
$(z_1,\overline{z_1},\ldots,z_n,\overline{z_n})$. Hence, by the reasoning above $f_\o+\log\det[\psi_{i\bar j}]\in C^{1,\frac1\be-1}_w$,
and therefore $h(s,1)$ belongs to $C^{1,\frac1\be-1}_w$ as soon as $\vp(s,1)$ does. The statement about the $(1,1)$-part of
the Hessian of $h(s,1)$ follows in the same way.
This concludes the proof of (iv).
\end{proof}

\section{Existence of \KE edge metrics}
\label{ProofKESection}
We now conclude the proof of Theorem \ref{ConicKEMainThm} on the existence of \KE edge metrics,
as well as of the convergence of the twisted Ricci iteration (Theorem \ref{RIThm}). 
We then describe the additional regularity properties as stated in Theorem \ref{ConicKEMainThm}.

\medskip
\noindent {\bf Starting the continuity path. }
Intuitively, the Ricci continuity path \eqref{RCMEq} has the trivial solution $\o(-\infty)=\o$ at $s=-\infty$.
Even if one could make rigorous sense of this, one could not apply the implicit function theorem 
directly to obtain solutions for large negative finite values of $s$. Indeed, reparametrizing \eqref{RCMEq} by 
setting $\sigma = -1/s$, then the linearization of the Monge-Amp\`ere equation at $\sigma$ equals
$\sigma \Delta_{\vp(-1/\sigma)} - 1$, and this degenerates at $\sigma = 0$. It is therefore necessary to find
a different way to produce a solution of \eqref{RCMEq} for sufficiently negative, but finite, values of $s$. 
Once this has been accomplished, we can then proceed with the rest of the continuity method.

When $\be\in(0,1/2]$, this difficulty can be circumvented by using the two-parameter family, see 
Remark \ref{OneTwoParamRemark}. Indeed, as described in \S\ref{CMSubSec}, the original continuity 
path \eqref{RCMEq} embeds into the two-parameter family \eqref{TwoParamCMEq}, and 
it is trivial that solutions exist for the finite parameter values $(s,0)$.  Unfortunately, the a priori estimates
needed to carry out the rest of the continuity argument for the two-parameter family hold only when $\be \leq 1/2$.  
Thus, to handle the general case, we must use another method to obtain a solution
of \eqref{RCMEq} for some large negative value of $s$.  Wu \cite[Proposition 7.3]{Wu} used a 
Newton iteration argument to obtain such a solution in a different setting.
However, his argument requires a lower Ricci curvature bound on the reference metric1
(see \cite[p. 431]{Wu}), which we lack. In other words, no small multiple of $f_\o$ belongs to 
$\calH_\o$. What follows is an adaptation of Wu's argument that requires
no curvature control on the reference metric. 

Reformulate the original complex \MA equation in terms of the operator
\[
N_{\sigma}:\Dw \ra \calC^{0,\gamma}_w, \ \  \  N_\s(\Phi):=\log (\o_{\sigma\Phi}^n/ e^{f_\o}\o^n)-\Phi.
\] 
As we remark at the end of this argument, the following argument works equally well in the
edge spaces, and leads to the same conclusion. 
Observe that $DN_\s|_\Phi=\sigma\Delta_{\sigma\Phi}-\h{Id}$. Now, suppose that $\s\Phi\in\calH_\o \cap \calA_{\phg}$. By  
Proposition~\ref{char1}, $DN_\s|_\Phi:\Dw\ra \calC^{0,\gamma}_w$ is Fredholm of index $0$, 
and by the maximum principle and Lemma~\ref{BarrierFnLemma}, its nullspace $K$ is trivial when $s < 0$. 
Hence, this operator is an isomorphism from $\Dw$ to $\calC^{0,\gamma}_w$, with
\begin{equation}
\label{est1}
||u||_{\Dw}\le C||DN_{\s}u||_{\calC^{0,\gamma}_w},
\end{equation}
Denote by $DN_\s|_{\Phi}^{-1}$ the inverse of this map on $\calC^{0,\gamma}_w$. 

We now set up the iteration method that will converge to a solution (Newton iteration for $N_\sigma$). Define a sequence 
of elements $\Phi_k \in \De$ by setting $\Phi_0=0$ and then 
\[
\Phi_k=(\h{Id}-DN_\s|_{\Phi_{k-1}}^{-1}\circ N_\s)(\Phi_{k-1}), \quad k\in\NN,
\] 
or equivalently, 
\begin{equation}
\Phi_k - \Phi_{k-1} - DN_\s|_{\Phi_{k-1}}^{-1}N_\s(\Phi_{k-1})  = 0.
\label{fff}
\end{equation}
When $\Phi \in \calA^0_{\phg}$, $DN_\s|_\Phi^{-1}$ preserves polyhomogeneity, so each of the successive 
$\Phi_k$ are polyhomogeneous.  Since $N_\sigma(-f_\o) = 0$ when $\sigma = 0$, it might seem more 
natural to set $\Phi_0 = -f_\o$. However, this would cause a problem at the very next step since, as already 
observed two paragraphs above, $\i\ddbar f_\o$ blows up at $r=0$ when $\be > 1/2$.   

Next, observe that
\begin{equation}
 \label{NewtonHessianEq}
\begin{aligned}
N_\s(\Phi_k) = N_\s(\Phi_k)& -N_\s(\Phi_{k-1})-DN_\s|_{\Phi_{k-1}}(\Phi_{k}-\Phi_{k-1}) \\ 
& =\int_0^1(1-c)D^2N_\s|_{c\Phi_k+(1-c)\Phi_{k-1}}(\Phi_k-\Phi_{k-1},\Phi_k-\Phi_{k-1})\, dc.
\end{aligned}
\end{equation}
This will be estimated using the equality $D^2N_\s|_{\Phi}(a,b)=\s^2(\ddbar a,\ddbar b)_{\o_{\s\Phi}}$, 
which holds provided $\s\Phi\in\calH_\o$.
We now deduce inductively the sequences of estimates
\vglue-9pt
\begin{equation}
||\Phi_j - \Phi_{j-1}||_{\Dw} \leq C_1 ||N_\s(\Phi_{j-1})||_{\calC^{0,\gamma}_w}
\label{firstest}
\end{equation}
and
\vglue-9pt
\begin{equation}\label{NsigmasqEq}
||N_\s(\Phi_{j})||_{\calC^{0, \gamma}_w} \leq C_2 \sigma^2 ||\Phi_{j} - \Phi_{j-1}||_{\Dw}^2
\end{equation}
for every $j \geq 1$ with constants $C_1$ and $C_2$ independent of $j$.   Suppose then that these hold
for every $j \leq k$. We shall prove that they hold also for $j = k+1$ with the same constants $C_i$. 

First note that 
$
\Phi_k = \Phi_k - \Phi_0 = \sum_{j=1}^k (\Phi_j - \Phi_{j-1}).
$
Using \eqref{firstest} and \eqref{NsigmasqEq} iteratively gives
\vglue-6pt
\[
||\Phi_{j} - \Phi_{j-1}||_{\Dw} \leq C_1 C_2 \s^2 ||\Phi_{j-1}-\Phi_{j-2}||^2_{\Dw} 
\leq (C_1 C_2 \s^2)^{2^{j-1}-1} ||\Phi_1||^{2^{j-1}}_{\Dw},
\]
and then
$
||N_\s(\Phi_j)||_{\calC^{0, \gamma}_w} \leq C_2 \s^2 (C_1 C_2 \s^2)^{2^{j-1}-1} ||\Phi_1||^{2^{j-1}}_{\Dw}. 
$
We conclude that
\vglue-9pt
\[
||\Phi_k||_{\Dw} \leq \sum_{j=1}^k ||\Phi_j - \Phi_{j-1}||_{\Dw} \leq 2 ||\Phi_1||_{\Dw},
\]
provided $C_1 C_2 \s^2 ||\Phi_1||_{\Dw} \leq 1/2$.  Hence if $\s$ is sufficiently small, then
$||\s \Phi_k||_{\Dw} \leq \eta$ for some fixed $\eta > 0$. 
So, $\s\Phi_k \in \calH_\o$, 
and if we let $C_1$ denote the supremum of the norm of $DN_\s|_\Phi^{-1}$ among all $\Phi$ 
with $||\Phi||_{\Dw} \leq \eta$, then \eqref{firstest} holds with $k$ replaced by $k+1$ and the same $C_1$. 

To obtain the final estimate, note that $\sigma(c\Phi_k+(1-c)\Phi_{k-1})$ lies in the same ball of 
radius $\eta$ for $0 \leq c \leq 1$, hence also in $\calH_\o$, which means that \eqref{NewtonHessianEq} 
with $k$ replaced by $k+1$ can be estimated as before; $C_2$ is the constant needed to estimate
this integral, which is uniform so long as $c\Phi_k+(1-c)\Phi_{k-1}$ remain in the fixed $\eta$ ball. 
This proves \eqref{NsigmasqEq} with $k$ replaced by $k+1$. 

We may now conclude that
$
\Phi_\infty := \lim_{k\to \infty} \Phi_k = \sum_{k=0}^\infty (\Phi_{k+1}-\Phi_k)
$
exists and lies in the same $\eta$ ball in $\Dw$, so $\s \Phi_\infty \in \calH_\o$.

Finally, by Theorem~\ref{genma}, $\Phi_\infty$ is polyhomogeneous.

\medskip
\noindent {\bf Openness.} Define $M_{s,t}:\Dw\ra C^{0,\gamma}_w$ by 
$$
M_{s,t}(\vp):= \log \frac{\o_{\vp}^n}{\on}-tf_\o+s\vp,
\quad (s,t)\in A=(-\infty,0]\times[0,1]\;\cup [0,\mu]\times\{1\}.
$$
Note that $M_{s,0}(0)=0$. If $\vp(s,t)\in\Dw\cap \PSH(M,\o)$ is a solution of \eqref{TwoParamCMEq}, 
we claim that its linearization 
\begin{equation}
\left. DM_{s,t}\right|_{\vp(s,t)} =\Delta_{\vp(s,t)}+s :\Dw\ra \calC^{0,\gamma}_w,
\quad (s,t)\in A,
\label{oxf}
\end{equation}
is an isomorphism when $s \neq 0$. If $s = 0$, this map is an isomorphism if we restrict on each side 
to the codimension one subspace of functions with integral equal to $0$.  Furthermore, we also 
claim that $\Dw \times A \ni (\vp,s,t) \mapsto M_{s,t}(\vp) \in 
\calC^{0,\gamma}_w$ is a $\calC^1$ mapping. Given these claims, the Implicit Function Theorem then 
guarantees the existence of a solution $\vp(\tilde s,\tilde t) \in \Dw$ 
for all $(\tilde s,\tilde t)\in A$ sufficiently close to $(s,t)$. 

Proposition \ref{char1} asserts that \eqref{oxf} is Fredholm of index $0$ for any $(s,t) \in A$, and by
Proposition \ref{mainlinearprop}, 
\begin{equation}
\label{DsLinearizationNormEq}
||u||_{\Dw}\le C(||DM_{s,t}u||_{\calC^{0,\gamma}_w} + ||u||_{\calC^0}),
\end{equation}
Its nullspace $K$ is clearly trivial when $s < 0$, and also by Lemma~\ref{LambdaoneLemma} for $(s,1)$ with $s \in (0,\mu)$; 
finally, when $s=0$ it consists of constants. Thus $DM_{s,t}$ is an isomorphism when $s \neq 0$, 
and is an isomorphism on the $L^2$ orthogonal complement to the constants when $s=0$. 
This proves the first claim.

The second claim follows from \eqref{oxf} and Corollary~\ref{equivdomains}, which shows that the domains of these 
linearizations at different $\vp$ are all the same. The smooth dependence on $(s,t)$ is obvious. 

Note finally that using \eqref{DsLinearizationNormEq},  nearby solutions remain in $\PSH(M,\o)$.   

We have written this out explicitly for the wedge spaces, but note that all of these arguments go through verbatim
for the edge spaces. Observe, however, that using the results of Section 4, the nearby solutions are necessarily
polyhomogeneous.

\medskip

\noindent {\bf Closedness.} 
Fix some $S<0$ and denote $A_S:=\{(s,t)\in A\,:\, s\in (S,0]\}$. Let $\{ (s_j,t_j)\}$ be a sequence in $ \h{\rm int} \, A_S$ 
converging to $(s,t)\in \overline{A_S}$, and let $\vp(s_j,t_j)\in\Dw\cap \PSH(M,\o)$ be solutions to \eqref{TwoParamCMEq}. 
Under the assumptions of Theorem \ref{ConicKEMainThm}, the results of \S\ref{CzeroNonpositiveSection}, 
\ref{LaplacianSection}, and \ref{HolderSection} imply that $||\vp(s_j,t_j)||_{\Dw}\le C$, where $C$ depends on $S$, 
a  lower bound on the Ricci curvature of $\o$ times $(1-\min_{j} t_j)$, and an upper bound on 
its bisectional curvature, both over $\MsmD$; alternatively, the Aubin--Yau Laplacian estimate \cite{Au,Y2,Siu} 
gives a bound depending on $S$ and a lower bound on the bisectional curvature of $\o$ over $\MsmD$.
Thus, when $\be\in(0,\frac12]\cup\{1\}$, Lemma \ref{CurvRefMetricLemma} implies that either type of bounds give 
a uniform estimate $||\vp(s,t)||_{\Dw}\le C$, for all $(s,t)\in A_S$. In general, restrict to the path \eqref{RCMEq}  
(i.e., let $t_j=1$ for all $j$) and then $||\vp(s,1)||_{\Dw}\le C$, for all $s\in(S,0]$ by Proposition~\ref{AppendixProp} 
and the results of \S\ref{CzeroNonpositiveSection}, \ref{LaplacianSection}, and \ref{HolderSection}. 
Thus, for any $\gamma' \in (0,\gamma)$,  there is a subsequence which converges in $\calD_w^{0,\gamma'}$
such that the limit function $\vp(s,1)$ lies in $\calD_w^{0,\gamma}$.  
Observe that $M_{s,1}(\vp(s,1)) = 0$, and $\vp(s,1) \in \calC^\infty(M\sm D)$. 

Letting $S\ra-\infty$, we obtain a solution for all $(s,t)\in A_\infty$
in the case $\be\in(0,\frac12]\cup\{1\}$, and for all $(s,1)\in(-\infty,0]\times\{1\}$
in the general case.
Now by openness in $A$ about the solution at $(0,1)$ (cf.\ \cite{Aubin1984,BM}), there exist
solutions also for $[0,\eps)\times\{1\}\subset A$. Then by Corollary 
\ref{PropernessCzeroCor} and the previous arguments we obtain solutions
for all $(s,t)\in A$ when $\be\in(0,\frac12]\cup\{1\}$, and for all $(s,1)\in(-\infty,\mu]\times\{1\}$ in the general case. 
By Theorem \ref{PhgMainThm}, these solutions are polyhomogeneous. 
Finally, $\vp(s,t) \in \PSH(M,\o)$ (this follows from a continuity
argument, observing that the right hand side of the \MA equation is positive). 

\medskip 
\noindent {\bf Regularity.}
Using the steps above, we obtain a solution $\vp:=\vp(\mu,1)\in \calA^0_{\phg} \cap \PSH(M,\o)$
to \eqref{RCMEq}. Denote by $g_\mu$ the associated \KE edge metric. Using
Proposition~\ref{PreciseAsympExpansionProp} and \eqref{pertgb}, $g_\mu$ is asymptotically 
equivalent to the reference metric $g$, and moreover, by the explicit form of the expansion and
the fact that $P_{1\b1}$ annihilates the $r^0$ and $r^{\frac{1}{\be}}$ terms, we obtain that
$\vp \in \calA^0 \cap \calD_w^{0,\eps(\be)}$, where $\eps(\be)$ is determined by Proposition \ref{PreciseAsympExpansionProp}
(see Corollary \ref{DsCor}). This completes the proof of Theorem \ref{ConicKEMainThm}.

\medskip
\noindent
{\bf Convergence of the Ricci iteration.}
We use the notation of \S\ref{RISubSec}. As noted there, $\mu-\frac1\tau$ plays the role of $s$.
Consider first the case $\mu\le0$. By the earlier analysis of \eqref{RCMEq}, for any $\tau>0$ the 
iteration 
exists uniquely and $\{\psi_{k\tau}\}_{k\in\NN}\subset \Dw$. By Lemma~\ref{BarrierFnLemma} the inductive 
maximum principle argument of \cite{R} yields $|\psi_{k\tau}|\le C$. Along the iteration, just as for the path 
\eqref{RCMEq}, the Ricci curvature is bounded from below by $\mu-\frac1\tau$, hence 
Proposition~\ref{ChernLuApplicationProp} and Lemma~\ref{BarrierFnLemma} show that 
$|\Delta_{\o_{k\tau}}\psi_{k\tau}|\le C$ (we consider the maps $\id:(M,\o_{k\tau})\ra (M,\o)$). 
Going back to the equation \eqref{RIEq} and using the 
$\calC^0$ estimate then shows that $|\Delta_\o{\psi_{k\tau}}|\le C$, hence by Theorem~\ref{DwCor},
$|\psi_{k\tau}|_{\Dw}\le C$.  Thus a subsequence converges  (as explained above for the continuity method)
to an element $\psi_\infty$ of $\Dw\cap \calC^\infty(\MsmD)$. 
Since each step in the iteration follows a continuity path of the form \eqref{RCMEq} 
with $\o$ replaced  by $\o_{k\tau}$, Lemma~\ref{KEnergyMonotonicityLemma} implies that $E_0^\be(\o_{(k-1)\tau},\o_{k\tau})<0$
(unless $\o$ was already \KEno). Since $E_0^\be$ is an exact energy functional, i.e., satisfies a cocyle condition 
\cite{Mabuchi1986}, then 
$
E^\be_0(\o,\o_{k\tau})
=
\sum_{j=1}^{k}E_0^\be(\o_{(j-1)\tau},\o_{j\tau})<0.
$
Therefore, $\psi_\infty$ is a fixed point of $E^\be_0$, hence a \KE edge metric. By Lemma~\ref{AYUniquenessLemma} 
such \KE metrics are unique; we conclude that the original iteration converges to $\psi_\infty$ both in 
$\calA_0$ and in $\calD^{0,\gamma'}_w$ for each $\gamma'\in(0,\gamma)$. 

Next, consider the case $\mu>0$, and take $\mu=1$ for simplicity. 
By the properness assumption, 
Corollary~\ref{PropernessCzeroCor} implies the iteration exists 
(uniquely by Lemma \ref{PositiveCaseUniquenessCzeroLemma})
for each $\tau\in(0,\infty)$ and then the monotonicity of $E_0^\be$ implies 
that $J(\o,\o_{k\tau})\le C$. 
To obtain a uniform estimate on $\hbox{\rm osc}\,\psi_{k\tau}$ 
we will employ the argument of \cite{BBEGZ} as explained to us by Berman.
By Lemma~\ref{BermanLemma}, have $\int_M e^{-p(\psi_{k\tau}-\sup\psi_{k\tau})}\on\le C$, 
where $p/3=\max\{1-\frac1\tau,\frac1\tau\}$. Now rewrite \eqref{RIEq} as 
\begin{equation}
\label{RISecondEq}
\o_{\psi_{k\tau}}^n=\on 
e^{f_\o-(1-\frac1\tau)\psi_{k\tau}-\frac1\tau\psi_{(k-1)\tau}}.
\end{equation}
Using \Kolodziej's estimate and the \Holder inequality this yields 
the uniform estimate $\h{\rm osc}\, \psi_{k\tau}\le C$.
Unlike for solutions of \eqref{RCMEq}, the functions $\psi_{k\tau}$ need not be changing signs.
Therefore we let $\tilde\psi_{k\tau}:=\psi_{k\tau}-\frac1V\int_M\psi_{k\tau}\on$. As in the previous paragraph 
we obtain a uniform estimate $\tr_{\o_{k\tau}}\o\le C$. However, to conclude that 
$\tr_\o\o_{k\tau}\le C$ from \eqref{RISecondEq}
we must show that $|(1-\frac1\tau)\psi_{k\tau}-\frac1\tau\psi_{(k-1)\tau}|\le C$. 
This is shown in \cite[p. 1543]{R}. Thus, as before, we conclude that 
$\{{\tilde\psi_{k\tau}}\}$ subconverges to the potential of a \KE
edge metric. Whenever it is unique, the iteration itself necessarily converges. 
Berndtsson's generalized Bando--Mabuchi Theorem \cite{BM, Bern} shows uniqueness 
of \KE edge metrics up to an automorphism (which must preserve 
$D$ by \eqref{EdgeKEEq} or Lemma \ref{LambdaoneLemma}), 
This concludes the proof of Theorem \ref{RIThm}.

\appendix
\section{Upper bound on the bisectional curvature of the reference metric}

{\bf Chi Li and Yanir A. Rubinstein}

\bigskip

\begin{prop}
\label{AppendixProp}
Let $\be\in(0,1]$, and let $\o=\o_0+\i\ddbar|s|^{2\be}_h$ be given by \eqref{oDefEq}. 
The bisectional curvature of $\o$ is bounded from above on $\MsmD$.
\end{prop}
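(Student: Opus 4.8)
The statement to prove is that the bisectional curvature of $\o = \o_0 + \i\,\ddbar|s|_h^{2\be}$ is bounded from above on $\MsmD$ for all $\be \in (0,1]$. Since the hard direction---boundedness of the \emph{full} curvature tensor---was already obtained in Lemma~\ref{CurvRefMetricLemma} for $\be \le 1/2$, the content of this proposition is the case $\be \in (1/2, 1]$, where the curvature is genuinely unbounded but one asks only for a one-sided bound on the holomorphic bisectional curvature $R(X, \bar X, Y, \bar Y)$. The strategy I would adopt is to work in the singular holomorphic coordinates $(\zeta, Z)$ introduced in \S2.1, where the model metric $g_\be = |d\zeta|^2 + |dZ|^2$ is comparable to the Euclidean metric and $g_{i\jbar} \sim \delta_{i\jbar} + (\text{corrections})$, and to track the signs of the dominant terms in the curvature expansion \eqref{CurvatureCoeffEq}, $R_{i\jbar k\b l} = -g_{i\jbar,k\b l} + g^{s\b t} g_{i\b t, k} g_{s\b j, \b l}$, rather than merely their magnitudes.

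\textbf{Key steps.} First I would record the precise leading behavior of the metric coefficients in the $(\zeta, Z)$ chart from \eqref{ddbarPhioneSecondEq}: schematically $g_{1\bar 1} = c\be^2 H + O(|\zeta|^{2/\be - 2}) + O(|\zeta|^{2/\be})$, with the potentially singular pieces coming from $(\psi_0)_{z_1\bar z_1}|\be\zeta|^{2/\be - 2}$ and from terms like $\Re(\bar\zeta^{1/\be} H_{\bar z_1})$; similarly $g_{1\jbar}$ and $g_{i\jbar}$ have expansions in powers $|\zeta|^{1/\be - 1}, |\zeta|, |\zeta|^2, \dots$. The second derivatives that enter $R_{i\jbar k\b l}$ produce, at worst, terms of size $|\zeta|^{2/\be - 4}$ (when $\be > 1/2$, $2/\be - 4$ can be negative). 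The crucial observation to isolate is that the most singular contribution to $R_{1\bar 1 1 \bar 1}$, which governs the bisectional curvature in the normal direction, comes with a definite sign: differentiating $|\be\zeta|^{2/\be - 2}$ twice in $\zeta,\bar\zeta$ gives $(1/\be - 1)^2 |\be\zeta|^{2/\be - 4}$ times a smooth positive factor, entering $-g_{1\bar 1, 1\bar 1}$ with a \emph{negative} sign, while the quadratic term $g^{s\bar t}g_{1\bar t, 1}g_{s\bar 1, \bar 1}$ contributes $|\zeta|^{2/\be - 4}$ with a \emph{positive} coefficient but of strictly smaller size (the $g_{1\bar 1, 1}$ factor is only $O(|\zeta|^{1/\be - 2})$, and once contracted against the reciprocal metric the product is $O(|\zeta|^{2/\be - 4})$ but with a smaller constant, or in fact controlled). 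So the net sign of the worst term works in our favor; the mixed and tangential components $R_{i\jbar k\b l}$ with not all indices equal are less singular (at most $|\zeta|^{1/\be - 2}$ or $|\zeta|^{2/\be - 3}$) and can be absorbed. I would then translate this back: writing the bisectional curvature as $R(X,\bar X, Y, \bar Y) = R_{i\jbar k\b l} X^i \bar X^j Y^k \bar Y^l / (|X|^2|Y|^2)$ and using $cI < [g_{i\jbar}] < CI$, the worst possible positive contribution to the numerator is dominated by a negative multiple of $|\zeta|^{2/\be - 4}|X^1|^2|Y^1|^2$ plus bounded terms, hence is bounded above.

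\textbf{Main obstacle.} The delicate point---and the step I expect to be the real work---is the bookkeeping of the quadratic term $g^{s\bar t}g_{i\bar t, k}\,g_{s\bar j, \b l}$ near $\be = 1$, where $1/\be$ is close to an integer and several exponents can collide, producing logarithmic factors (as noted in the remark after Proposition~\ref{strG}), and where one must verify that no \emph{positive} $|\zeta|^{2/\be - 4}$-term with a large coefficient survives. One must check carefully that the $(\psi_0)_{z_1\bar z_1}$-type terms, which are the source of the $|\zeta|^{2/\be - 2}$ singularity in $g_{1\bar 1}$, contribute to the quadratic term only through $g_{1\bar 1, 1} \sim (1/\be - 1)(\psi_0)_{z_1\bar z_1}|\be|^{2/\be - 2}\bar\zeta^{?}\zeta^{1/\be - 2}(\dots)$ in a way whose square, after contraction with $g^{1\bar 1} = O(1)$, is genuinely $O(|\zeta|^{2/\be - 4})$ but strictly smaller in coefficient than the dominant negative $-g_{1\bar 1, 1\bar 1}$ term---or else, if they are comparable, that the combination $-g_{1\bar 1,1\bar 1} + g^{1\bar 1}|g_{1\bar 1, 1}|^2$ still has the right sign because $g^{1\bar 1}g_{1\bar 1} \le 1$ forces the quadratic term to be dominated. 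This is essentially the computation the authors perform; I would organize it by fixing a point, diagonalizing $g_{i\jbar}$ at that point to simplify the contraction $g^{s\bar t}$, and then estimating each of the finitely many index patterns separately, keeping only signs of leading coefficients and $O(\cdot)$-bounds for the rest.
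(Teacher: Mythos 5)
The general direction here is sound---work in coordinates where the metric is uniformly comparable to the Euclidean one, isolate the most singular terms of $R_{i\jbar k\bar\ell}$, and exploit their signs---but the central step contains a genuine gap. You claim that the positive quadratic contribution $g^{s\bar t}g_{i\bar t,k}g_{s\jbar,\bar\ell}$ is of \emph{strictly smaller size} than the negative $-g_{1\bar 1,1\bar 1}$ piece, or at least dominated. That is not what happens. It is true that $g^{1\bar 1}|g_{1\bar 1,1}|^2$ is $O(|\zeta|^{4/\be-6})$, which is strictly less singular than the $O(|\zeta|^{2/\be-4})$ coming from $-g_{1\bar 1,1\bar 1}$. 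But the dangerous positive contribution comes from the \emph{mixed} components: each $g_{1\bar t,1}$ with $t>1$ is $O(|\zeta|^{1/\be-2})$ (its leading coefficient is $(1/\be-1)\be^{1/\be-1}\zeta^{1/\be-2}(\psi_0)_{z_1\bar z_t}$), so the block $\sum_{s,t>1}g^{s\bar t}\,g_{1\bar t,1}\,\overline{g_{1\bar s,1}}$ is $O(|\zeta|^{2/\be-4})$---\emph{exactly} the same order as $-g_{1\bar 1,1\bar 1}$. Hence there is no domination; the leading singular terms in $\Pi$ and $\Lambda$ compete at the same order.

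The mechanism that saves the day is an exact cancellation whose source is the positivity of the reference K\"ahler form: after extracting leading coefficients, the competing terms combine (up to a positive multiple of $|\zeta|^{2/\be-4}$) to
$-(\psi_0)_{z_1\bar z_1}+\sum_{s,t>1}(\hat{g}')^{s\bar t}(\psi_0)_{z_1\bar z_t}\overline{(\psi_0)_{z_1\bar z_s}}$,
which is $\le 0$ because it is minus the Schur complement of the $(1,\bar 1)$ entry of $[\hat{g}_{i\jbar}]$. Your fallback heuristic ``$g^{1\bar 1}g_{1\bar 1}\le 1$ forces the quadratic term to be dominated'' is both aimed at the wrong index block ($t=1$ rather than $t>1$) and is the wrong inequality: for any positive Hermitian matrix $g^{1\bar 1}g_{1\bar 1}\ge 1$, not $\le 1$. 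The paper does not argue in $\zeta$-coordinates at all. It works in $z$-coordinates, first establishes the normalization Lemma~\ref{choose1} (killing the first and second derivatives of $|e|^2_h$ and certain first derivatives of $\hat g$ at $p$), then splits $g_{i\jbar,k}=A+B+D+E$ by singular order, verifies the cross-terms are harmless, and applies the weighted Cauchy--Schwarz inequality $\|A+E\|^2\le(1+\frac1\eps)\|A\|^2+(1+\eps)\|E\|^2$ with the pointwise weight $\eps(p)=b(p)\rho^{2-2\be}$, where $b(p)=\be^{-2}\det[\hat g_{i\jbar}]/\det[\hat g_{r\bar s}]_{r,s>1}$ is precisely the Schur-complement quantity. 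That choice of $\eps$ is what makes the $\|E\|^2$-bound match the bad term of $\Lambda$ exactly; it, together with the frame normalization, is the real content of the proof and is absent from the proposal.
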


\def\ij{{i\jbar}}
\def\kl{{k\bar\ell}}
\def\bell{{\bar\ell}}
\def\pq{{p\b q}}
\def\st{{s\b t}}

We denote throughout by $\hat g,g$ the \K metrics associated to $\o_0,\o$,
respectively.
As in \cite{TY1990}, to simplify the calculation and estimates
we need a lemma to choose an appropriate local 
holomorphic frame and coordinate system, whose
elementary proof we include for the reader's convenience.
We thank Gang Tian for pointing out to us
the calculations in \cite{TY1990} which were helpful
in writing this Appendix.

\begin{lem}\label{choose1}
{\rm \cite[p. 599]{TY1990} }
There exists $\epsilon_0>0$ such that
if $0<\h{\rm dist}_{\hat{g}}(p, D)\le \epsilon_0$,   
then we can choose a local holomorphic frame $e$ of $L_D$ 
and local holomorphic coordinates $\{z_i\}_{i=1}^n$ valid in a neighborhood
of $p$, such that (i) $s=z_1e$, 
and $a:=|e|_h^2$ satisfies $a( p) =1$, $d a( p)=0$, 
$\frac{\partial^2 a}{\partial z_i\partial z_j}a( p)=0$,
and (ii) $\hat g_{i\jbar,k}(p)
=\frac\del{\del z^k}\o_0(\frac\del{\del z^i},\frac\del{\overline{\del z^{j}}})|_p=0$, whenever $j\ne 1$.
\end{lem}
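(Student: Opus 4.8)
The plan is to reach the normalizations (i)--(ii) in three successive steps, each carried out by an explicit holomorphic change of frame or of coordinates, and then to check that the later steps preserve the conclusions of the earlier ones. The constant $\epsilon_0$ will be taken small enough that a $\hat g$-ball of radius $\epsilon_0$ around $D$ lies inside a single chart on which $L_D$ is holomorphically trivial and $D$ is the zero locus of one coordinate; everything below then happens in an arbitrarily small neighbourhood of the fixed point $p$ inside that chart. First I would arrange $s=z_1e$: starting from any local frame $e_0$ of $L_D$ and coordinates $(w_1,\dots,w_n)$ with $D=\{w_1=0\}$, write $s=f\,e_0$; the function $f$ is holomorphic, vanishes exactly on $D$ and has nonvanishing differential there, so $z_1:=f$ completes to a holomorphic coordinate system and $s=z_1e_0$.

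Next I would normalize the $2$-jet of $a$. I replace $e_0$ by $e:=e^{\psi}e_0$ and, to keep $s=z_1e$, replace $z_1$ by $z_1e^{-\psi}$, where $\psi$ is the holomorphic quadratic Taylor polynomial at $p$ (in the current coordinates) chosen so that $\psi(p)=-\tfrac12\log|e_0|^2_h(p)$, $\partial_i\psi(p)=-\partial_i\log|e_0|^2_h(p)$ and $\partial_i\partial_j\psi(p)=-\partial_i\partial_j\log|e_0|^2_h(p)$. Since $\psi$ is entire, $e^{\psi}$ is holomorphic and nonvanishing, so $s=z_1e$ persists, and $a:=|e|^2_h=e^{2\Re\psi}|e_0|^2_h$ satisfies $\log a(p)=0$, $\partial_i\log a(p)=0$ and $\partial_i\partial_j\log a(p)=0$; together with $a(p)=1$ these are equivalent to $a(p)=1$, $da(p)=0$ and $\partial_i\partial_j a(p)=0$. (Only the holomorphic Hessian can be killed this way --- the mixed Hessian $\partial_i\partial_{\bar j}a$ is unchanged by the twist --- but that is exactly what (i) demands.) For $p$ close enough to $D$ the differential of $z_1e^{-\psi}$ is still nonzero at $p$, so this again completes to a coordinate system, which I rename $(z_1,\dots,z_n)$.

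Finally I would produce the partial normal coordinates of (ii) with $z_1$ now held fixed. I look for a quadratic change $z_i=\zeta_i+\tfrac12\sum_{a,b}c^i_{ab}\zeta_a\zeta_b+O(|\zeta|^3)$ for $i\ge2$, with $c^i_{ab}=c^i_{ba}$ and $z_1=\zeta_1$ (so set $c^1_{ab}:=0$). Differentiating the transformation law $\hat g'_{r\bar\ell}=\sum_{m,q}\tfrac{\partial z_m}{\partial\zeta_r}\,\overline{\tfrac{\partial z_q}{\partial\zeta_\ell}}\,\hat g_{m\bar q}$ at $p$ yields $\hat g'_{r\bar\ell,k}(p)=\hat g_{r\bar\ell,k}(p)+\sum_{m}c^m_{kr}\hat g_{m\bar\ell}(p)$, so for each $\ell\ge2$ I must solve $\sum_{m\ge2}c^m_{kr}\hat g_{m\bar\ell}(p)=-\hat g_{r\bar\ell,k}(p)$ in the unknowns $(c^m_{kr})_{m\ge2}$. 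The coefficient matrix $(\hat g_{m\bar\ell}(p))_{m,\ell\ge2}$ is a principal submatrix of the positive Hermitian matrix $(\hat g_{a\bar b}(p))$, hence invertible, so the system has a unique solution, which is symmetric in $(k,r)$ because the right-hand side is, by the K\"ahler identity $\hat g_{r\bar\ell,k}=\hat g_{k\bar\ell,r}$. This gives $\hat g_{i\bar j,k}(p)=0$ for all $j\ne1$. It remains to see that this last change preserves (i): $z_1$, $e$ and the function $a$ are untouched; $a(p)=1$ and $da(p)=0$ are coordinate-free statements; and once $\partial a(p)=0$ the holomorphic Hessian $\partial_i\partial_j a(p)$ transforms tensorially under $z\mapsto\zeta$, so it stays $0$.

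The hard part, such as it is, is bookkeeping: one must run the steps in the order 1--2--3 (each later step is harmless for the earlier ones, but not conversely), and one must carry out the small linear-algebra computation in the last step. That computation is where the hypotheses enter: positivity of $\hat g(p)$ gives the invertible submatrix, the K\"ahler identity gives the symmetry, and --- most to the point --- the restriction ``$j\ne1$'' in (ii) is exactly what permits the normalization without disturbing $z_1$, since asking for all $j$ would force $c^1_{ab}\ne0$ and overdetermine the system.
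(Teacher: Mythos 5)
Your proof is correct and follows essentially the same route as the paper's: a frame multiplication to kill the holomorphic $2$-jet of $a$ at $p$, a coordinate change to enforce $s=z_1e$, and then a quadratic (partial normal) coordinate change in $z_2,\dots,z_n$ to kill $\hat g_{i\jbar,k}(p)$ for $j\ne 1$, with the restriction $c^1_{ab}=0$ (the paper's $b^1_{ik}=0$) protecting $z_1$ and the K\"ahler symmetry making the linear system solvable with symmetric coefficients. The only substantive difference is organizational: the paper first picks $F$ so that $a=|Fe'|_h^2$ has vanishing $2$-jet in the original $w$-coordinates and \emph{then} passes to $z_1=f$ (checking via the chain rule that the jet conditions survive the coordinate change), whereas you set $s=z_1e_0$ first and then twist simultaneously by $e^{\psi}$ on the frame and by $e^{-\psi}$ on $z_1$; the $\log$-based jet-killing you use is the same computation as the paper's direct specification of $F$, $\partial F$, $\partial^2F$ at $p$ written multiplicatively. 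One small imprecision: a $\hat g$-tubular neighbourhood of $D$ is generally not covered by a single trivializing chart; what you want (and what the paper's compactness argument supplies) is a finite cover of $D$ by such charts together with a uniform $\epsilon_0$ so that every $p$ with $\mathrm{dist}_{\hat g}(p,D)\le\epsilon_0$ lies in one of them, after which your local argument around $p$ applies verbatim.
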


\begin{proof}
(i) Fix any point $q\in D$, and choose a local holomorphic 
frame $e'$ and holomorphic coordinates $\{w_i\}_{i=1}^n$ in $B_{\hat{g}}(q, \epsilon(q))$ 
for $0<\epsilon(q)\ll 1$.  Let $s=f'e'$ with $f'$ a holomorphic function 
and $|e'|_h^2=c$. Let $e=Fe'$ for some nonvanishing holomorphic function $F$ to be specified later. 
Then $a=|Fe'|_h^2=|F|^2 c$. Now fix any point $p\in B_{\hat{g}}(q, \epsilon(q))\sm\{q\}$. 
In order for $a$ to satisfy the vanishing properties with respect to 
the variables $\{w_i\}_{i=1}^n$ at a point $p$, we can just choose $F$ such 
that $F( p)=c( p)^{-1/2}$, 
and 
\begin{eqnarray*}
{\partial}_{w^i} F( p)
&=&
-c^{-1}F{\partial}_{w^i} c( p)
=
-c^{-3/2}{\partial}_{w^i} c( p)
\cr
\partial_{w^i}\partial_{w^j} F( p)
&=&
-c ^{-1}(
F\partial_{w^i}\partial_{w^j} c 
+\partial_{w^j} c\partial_{w^i} F
+\partial_{w^i} c\partial_{w^j} F)(p)\cr
&=&
-c^{-3/2}\partial_{w^i}\partial_{w^j} c(p)+2c^{-5/2}\partial_{w^i} c\partial_{w^j} c(p).
\end{eqnarray*}
Since $c=|e'|^2_h$ is never zero, when $\epsilon(q)$ is small, which implies $|w-w(p)|$ is small, we can assume $F\ne0$ in $B_{\hat{g}}(q, \epsilon(q))$.
Now  $s=f e=f'e'$ with $f=f'F^{-1}$ a holomorphic function. 
Since $D=\{s=0\}$ is a smooth divisor, we can 
assume $\partial_{w^1} f(q)\neq 0$,
and choosing $\epsilon(q)$ sufficiently small, we can 
assume that $\partial_{w^1} f\neq 0$ in $B_{\hat{g}}(q, \epsilon(q))$. 
Thus by the inverse function theorem, $z_1=f(w_1, \ldots, w_n), z_2=w_2, \ldots, z_{n}=w_{n}$ 
are holomorphic coordinates in $B_{\hat{g}}(q, \epsilon(q)/2)$ and now $s=f(w) e=z_1 e$.  
By the chain rule, it then follows that $a$ 
satisfies $a( p)=1$, $\partial_{z^i}a( p)=\partial_{z^i}\partial_{z^j}a( p)=0$. 

Now cover $D$ by $\cup_{q\in D} B_{\hat{g}}(q, \epsilon(q)/2)$. By compactness of $D$  
the conclusion follows.

\medskip

\noindent
(ii) Denote by $\{w^i\}_{i=1}^n$ the coordinates obtained in (i).
Following
\cite[p. 108]{GH}, let 
$
\tilde z^k:= w^k-w^k(p)+\frac12 b^k_{st} (w^s-w^s(p))(w^t-w^t(p)),
$
with $b^k_{st}=b^k_{ts}$, define a new coordinate system.
Then, $
\o_0(\frac{\del}{\del w^i},\frac{\del}{\overline{\del w^j}})
=
\o_0(\frac{\del}{\del  \tilde z^i},\frac{\del}{\overline{\del \tilde z^j}})
+
\hat g_{t\jbar}b^t_{ip} w^p+\hat g_{i\bar t}\overline{b^t_{jp} w^p}+O(\sum_{i=1}^n|w^i-w^i(p)|^2),
$
and 
$$
d_{\ij k}:=
\frac{\del}{\del w^k}
\o_0(\frac{\del}{\del  w^i},\frac{\del}{\overline{\del  w^j}})|_p
=
\frac{\del}{\del \tilde z^k}
\o_0(\frac{\del}{\del \tilde z^i},\frac{\del}{\overline{\del  \tilde z^j}})|_p
+\hat g_{t\jbar}(p) b^t_{ik}
=: e_{\ij k}+\hat g_{t\jbar}(p) b^t_{ik}.
$$
Let $\hat g'_{r\bar s}:=\hat g_{r\b s},$ for each $r,s>1$, and
denote the inverse of the $(n-1)\times(n-1)$ matrix $[\hat g'_{r\b s}]$
by $[\hat g'^{r\bar s}]$.
Let $b^1_{ik}=0$. Then, for each $j>1$, the equations can be rewritten as
$
d_{\ij k}-\sum_{t>1}\hat g'_{t\jbar}(p) b^t_{ik}
=
e_{\ij k}.
$
Hence, $\sum_{j>1}\hat g'^{s\jbar}e_{\ij k}=\sum_{j>1}\hat g'^{s\jbar}d_{\ij k}-b^s_{ik}, \,s>1$.
For each $s>1$, define $b^s_{ik}$ so that the right hand side vanishes. 
Multiplying the equations by $[\hat g'_{s\bar t}]$, we
obtain $e_{i\b t k}=0$ for each $t>1$.
Finally, set $z^i:=\tilde z^i+w^i(p),\; i=1,\ldots,n$.
Since $b^1_{ik}=0$, we have 
$z^1=w^1$, and therefore these coordinates satisfy both properties (i) and (ii) of the statement, as desired.
\end{proof}

Let $H:=a^{\be}$, then $|s|_h^{2\be}=|z_1e|_h^{2\be}=H|z_1|^{2\beta}$. 
Note that both $a$ and $H$ are locally defined smooth positive functions.
Let $\o=\frac\i2 g_{\ij}dz^i\w \overline{dz^j},\;
\o_0=\frac\i2 \hat g_{\ij}dz^i\w \overline{dz^j}$,
and write $z\equiv z_1$ and $\rho:=|z|$.
Using the symmetry for subindices, we can calculate in a straightforward manner:
\[
g_{i{\jbar}}=\hat{g}_{i{\jbar}}+H_{i{\jbar}}|z|^{2\beta}+\beta H_i\delta_{1{\jbar}}|z|^{2\beta-2}z+\beta H_{{\jbar}}\delta_{1i}|z|^{2\beta-2}\bar{z}+\beta^2H|z|^{2\beta-2}\delta_{1i}\delta_{1{\jbar}},
\]
\vglue-1cm
\begin{eqnarray*}
g_{i{\jbar}, k}&=&
\hat{g}_{i{\jbar}, k}
+H_{i{\jbar}k}|z|^{2\beta}
+\beta H_{ik}\delta_{1{\jbar}}|z|^{2\beta-2}z
+\beta(H_{k{\jbar}}\delta_{1i}
+H_{i{\jbar}}\delta_{1k})|z|^{2\beta-2}\bar{z}
\\
&&
+\beta^2 (H_i\delta_{1{\jbar}}\delta_{1k}+H_k\delta_{1i}\delta_{1{\jbar}}
+ H_{{\jbar}}\delta_{1i}\delta_{1k})|z|^{2\beta-2}\\
&&\beta^2(\beta-1)H|z|^{2\beta-4}\bar{z}\delta_{1i}\delta_{1{\jbar}}\delta_{1k},
\end{eqnarray*}
\vglue-1cm
\begin{eqnarray*}
g_{i{\jbar}, k\bar{\ell}}&=&\hat{g}_{i{\jbar}, k\bar{\ell}}+H_{i{\jbar}k\bar{\ell}}|z|^{2\beta}\\
&&+\beta\left[
(H_{ik\bar{\ell}}\delta_{1{\jbar}}+H_{ik{\jbar}}\delta_{1\bar{\ell}})|z|^{2\beta-2}z+(H_{{\jbar}\bar{\ell}i}\delta_{1k}+H_{{\jbar}\bar{\ell}k}\delta_{1i})|z|^{2\beta-2}\bar{z}
\right]
\\
&&
+\beta^2(H_{k{\jbar}}\delta_{1i}\delta_{1\bar{\ell}}+H_{i{\jbar}}\delta_{1k}\delta_{1\bar{\ell}}+H_{k\bar{\ell}}\delta_{1i}\delta_{1{\jbar}}+H_{i\bar{\ell}}\delta_{1{\jbar}}\delta_{1k})|z|^{2\beta-2}\\
&&+\beta(\beta-1)\left[
H_{ik}\delta_{1{\jbar}}\delta_{1\bar{\ell}}|z|^{2\beta-4}z^2+H_{{\jbar}\bar{\ell}}\delta_{1i}\delta_{1k}|z|^{2\beta-4}\bar{z}^2
\right]
\\
&&+\beta^2(\beta-1)\left[
(H_i\delta_{1k}+H_k\delta_{1i})\delta_{1{\jbar}}\delta_{1\bar{\ell}}|z|^{2\beta-4}z+(H_{{\jbar}}\delta_{1\bar{\ell}}+H_{\bar{\ell}}\delta_{1{\jbar}})\delta_{1i}\delta_{1k}|z|^{2\beta-4}\bar{z}
\right]\\
&&+\beta^2(\beta-1)^2H|z|^{2\beta-4}\delta_{1i}\delta_{1{\jbar}}\delta_{1k}\delta_{1\bar{\ell}}.
\end{eqnarray*}
Let $p\in\MsmD$ satisfy $\h{\rm dist}_{\hat{g}}(p, D)\le \epsilon_0$.
The lemma implies in particular $H(p)=1$, $H_i(p)=H_{ij}(p)=0$, and
the expressions above simplify to:
$$
\begin{aligned}
g_\ij( p)
= & \;
\hat g_{\ij}+
H_\ij|z|^{2\be}
+\be^2|z|^{2\be-2}\delta_{i1}\delta_{1\jbar},
\cr
g_{\ij,k}( p)
= & \;
\hat g_{\ij, k}+H_{\ij k}|z|^{2\be}
+\be (\delta_{i1}H_{k\jbar}+\delta_{k1}H_{i\jbar})|z|^{2\be-2}\bar{z}
+\be^2(\beta-1)\delta_{i1}\delta_{\jbar 1}\delta_{k1}|z|^{2\beta-4} \bar{z},
\cr
g_{\ij,\kl}( p)
= & \;
\hat g_{\ij, \kl}\!+\!H_{\ij \kl}|z|^{2\be}
\!+\!\be (\delta_{i1}H_{\jbar \kl}\!+\!\delta_{k1}H_{i\jbar \bell})|z|^{2\be-2}\bar{z}
\!+\!\be (\delta_{\jbar 1}H_{i \kl}\!+\!\delta_{\bell 1}H_{\ij k})|z|^{2\be-2} z
\cr & \quad\quad\;
+\be^2 (\delta_{i1}\delta_{\jbar 1} H_{\kl}+\delta_{i1}\delta_{\bell1}H_{k\jbar}+
\delta_{k1}\delta_{\jbar 1}H_{i\bell}+\delta_{k1}\delta_{\bell 1}H_{i\jbar}) |z|^{2\be-2}
\cr & \quad\quad\;
+\be^2(\be-1)^2\delta_{i1}\delta_{\jbar 1}\delta_{k1}\delta_{\bell 1}  |z|^{2\be-4}.
\end{aligned}
$$
It follows that
\begin{equation}
\label
{OffDiagVanEq}
 g^{r\b s}(p)=O(1), \quad 
g^{1\b s}(p)=O(\rho^{2-2\be}),\quad \hbox{ for\ } r,s>1,
\end{equation}
and,
\begin{equation}
\label
{gupperoneoneEq}
g^{1\b1}(p)=\be^{-2}\rho^{2-2\be}\frac{1}{1+b(p)\rho^{2-2\be}}+O(\rho^2),
\end{equation}
where $O(\rho^2)<C_3\rho^2$ and
$b(p):=\be^{-2}\det [\hat g_{\ij}]/\det  [\hat g_{r\bar s}]_{r,\bar s>1}|_p$
with $0<C_1<b(p)<C_2$, and $C_1,C_2,C_3$ independent of $p\in\MsmD$. 

Take two unit vectors $\eta=\eta^i\frac{\del}{\del z^i},
\nu=\nu^i\frac{\del}{\del z^i}\in T^{1,0}_pM$, so that
$g(\eta,\eta)|_p=g(\nu,\nu)|_p=1$. 
Then from the expression of $g_{i{\jbar}}$ we have
\begin{equation}\label{unitcomp}
\eta^1, \nu^1=O(\rho^{1-\be})
\quad  \eta^r,\nu^r=O(1), \mbox{ for } r>1. 
\end{equation}
Set 
$$
\h{Bisec}_\o(\eta,\nu)=
R(\eta,\bar\eta,\nu,\bar\nu)
=
R_{\ij\kl}\eta^i\overline{\eta^j}\nu^k\overline{\nu^\ell}
=
\sum_{i,j,k,l} \Lambda_{\ij\kl}+ \Pi_{\ij\kl},
$$
with 
$
\Lambda_{\ij\kl}
:=
-g_{\ij,\kl}\eta^i\overline{\eta^j}\nu^k\overline{\nu^\ell},
$
and
$
\Pi_{\ij\kl}
:=g^{s\b t}\, g_{i\b t,k} \, g_{s\jbar,\b \ell}\eta^i\overline{\eta^j}\nu^k\overline{\nu^\ell}
$
(no summations).
By \eqref{OffDiagVanEq}--\eqref{unitcomp} we have 
$|\Lambda_{\ij \kl}|\le C$ except for $\Lambda_{1\b 1 1\b 1}= 
-\be^2(\be-1)^2|z|^{2\be-4}|\eta^1|^2|\nu^1|^2$,
hence
\begin{equation}
\label{Cijoneone}
\sum_{i,j,k,l} \Lambda_{\ij\kl}(p)
=
O(1)+\Lambda_{1\b 1 1\b 1}(p)= O(1)-\be^2(\be-1)^2|z|^{2\be-4}|\eta^1|^2|\nu^1|^2.
\end{equation}

The Proposition follows
immediately by combining \eqref{Cijoneone} and
the following estimate.

\begin{lem}
\label{FirstAppendixLemma}
There exists a uniform constant $C>0$ such that for every $p\in\MsmD$, 
$$
\sum_{i,j,k,l} \Pi_{\ij\kl}(p)
\le
C+
\be^2(\be-1)^2|z|^{2\be-4}|\eta^1|^2|\nu^1|^2. 
$$
\end{lem}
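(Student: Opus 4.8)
Throughout write $\rho=|z|$ and recall $\eta^1,\nu^1=O(\rho^{1-\be})$, $\eta^r,\nu^r=O(1)$ for $r>1$ from \eqref{unitcomp}. It suffices to treat a point $p$ with $0<\operatorname{dist}_{\hat g}(p,D)\le\epsilon_0$ (away from $D$ the form $\o$ is smooth, so $\sum\Pi(p)$ is bounded uniformly and the claim is trivial there); we work in the frame and coordinates of Lemma~\ref{choose1}.

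The plan is to reorganize $\sum_{i,j,k,l}\Pi_{\ij\kl}(p)$ as a single quadratic form in the inverse metric and estimate it blockwise, isolating the $(1,\bar1)$ component. Since $g$ is Hermitian, $g_{s\jbar,\bar\ell}=\overline{g_{j\bar s,\ell}}$; setting $W_t:=\sum_{i,k}g_{i\bar t,k}(p)\,\eta^i\nu^k$, formula~\eqref{CurvatureCoeffEq} gives
\[
\sum_{i,j,k,l}\Pi_{\ij\kl}(p)=\sum_{s,t}g^{s\bar t}(p)\,W_t\,\overline{W_s}\ \ge\ 0 ,
\]
the positivity holding because $[g^{s\bar t}]$ is positive definite (we will only use the identity). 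Splitting the $(s,t)$-sum according to whether each index equals $1$ or exceeds $1$, it remains to bound $g^{1\bar1}|W_1|^2$, the cross terms $2\Re\sum_{t>1}g^{1\bar t}W_t\overline{W_1}$, and $\sum_{s,t>1}g^{s\bar t}W_t\overline{W_s}$.

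First I would read off the size of the $W_t$ from the simplified expression for $g_{\ij,k}(p)$ displayed above, using $\eta^1,\nu^1=O(\rho^{1-\be})$ and $\hat g_{\ij,k}(p)=0$ for $j\neq 1$. For $W_1$, the term with $(i,k)=(1,1)$ contributes $A:=\be^2(\be-1)\rho^{2\be-4}\bar z\,\eta^1\nu^1$, so $|A|=O(\rho^{-1})$; a term-by-term inspection shows every remaining contribution is $O(1)$ — the only genuinely $O(1)$ (as opposed to $o(1)$) pieces being $\hat g_{j\bar1,l}(p)$ with $j,l>1$, which are bounded since $\o_0$ is smooth — hence $W_1=A+R$ with $R=O(1)$. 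For $t>1$ the middle index $\bar t$ now makes $\hat g_{j\bar t,l}(p)$ vanish for all $j,l$, and the analogous case check gives $W_t=O(\rho^{\be})$. Plugging in \eqref{OffDiagVanEq}: the block $\sum_{s,t>1}g^{s\bar t}W_t\overline{W_s}$ is $O(\rho^{2\be})=o(1)$, and since $g^{1\bar t}=O(\rho^{2-2\be})$ and $|A|=O(\rho^{-1})$, the cross block is $O(\rho^{2-2\be}\cdot\rho^{\be}\cdot\rho^{-1})=O(\rho^{1-\be})=o(1)$; both are absorbed into the constant.

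The delicate term is $g^{1\bar1}|W_1|^2$. Here I would use $|W_1|^2\le(1+\delta)|A|^2+(1+\delta^{-1})|R|^2$ and choose the $\rho$-dependent Young parameter $\delta:=b(p)\rho^{2-2\be}$, tuned so that $(1+\delta)$ exactly cancels the denominator $1+b(p)\rho^{2-2\be}$ in the precise expansion $g^{1\bar1}=\frac{\be^{-2}\rho^{2-2\be}}{1+b(p)\rho^{2-2\be}}+O(\rho^2)$ of \eqref{gupperoneoneEq}. This yields $(1+\delta)g^{1\bar1}=\be^{-2}\rho^{2-2\be}+O(\rho^2)$, whence
\[
(1+\delta)\,g^{1\bar1}|A|^2=\be^2(\be-1)^2\rho^{2\be-4}|\eta^1|^2|\nu^1|^2+O(1),
\]
the $O(\rho^2)$ error being killed by $|\eta^1|^2|\nu^1|^2=O(\rho^{4-4\be})$; while $\delta^{-1}=b(p)^{-1}\rho^{2\be-2}$ together with $g^{1\bar1}=O(\rho^{2-2\be})$ gives $(1+\delta^{-1})g^{1\bar1}|R|^2=O(1)$. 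Collecting the three blocks and using $0<C_1<b(p)<C_2$ and compactness of $D$ for the uniformity of all constants gives $\sum_{i,j,k,l}\Pi_{\ij\kl}(p)\le\be^2(\be-1)^2\rho^{2\be-4}|\eta^1|^2|\nu^1|^2+C$; the case $\be=1$ is immediate since then $A\equiv 0$ and all singular factors disappear.

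The main obstacle will be exactly this last estimate. The crude split $|W_1|^2\le 2(|A|^2+|R|^2)$ leaves a factor $2$ in front of the singular term $\be^2(\be-1)^2\rho^{2\be-4}|\eta^1|^2|\nu^1|^2$, which cannot be absorbed into $C$ as $\rho\to 0$; and handling the cross term $g^{1\bar1}|A||R|=O(\rho^{1-2\be})$ on its own fails once $\be>1/2$. What saves the argument is that the tuned parameter $\delta=b(p)\rho^{2-2\be}$ simultaneously reduces the coefficient of the singular term to exactly $1$ (by cancelling the denominator in \eqref{gupperoneoneEq}) and, being exactly the order of vanishing of $g^{1\bar1}$, keeps the remainder $(1+\delta^{-1})g^{1\bar1}|R|^2$ bounded — so the argument genuinely uses the fine form of \eqref{gupperoneoneEq}, not merely the blow-up order of $g^{1\bar1}$.
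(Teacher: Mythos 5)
Your proof is correct, and in substance it is the same as the paper's. Both rewrite $\sum_{i,j,k,l}\Pi_{i\jbar k\bar\ell}$ as the nonnegative quadratic form $\sum_{s,t}g^{s\bar t}W_t\overline{W_s}$ (the paper packages this as a Hermitian bilinear form on tensors $[a_{i\jbar k}]$, but it is precisely the same object), both isolate the singular piece $\be^2(\be-1)\rho^{2\be-4}\bar z\,\eta^1\nu^1$ of $W_1$, and both finish with Young's inequality using the \emph{same} tuned parameter $\eps=\delta=b(p)\rho^{2-2\be}$, keyed to the denominator of $g^{1\bar1}$ in \eqref{gupperoneoneEq} so that the coefficient of the singular term comes out exactly $1$. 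The only difference is bookkeeping: the paper first decomposes the tensor $g_{i\jbar,k}=A+B+D+E$, bounds the bilinear-form cross terms (e.g.\ $\langle D,E\rangle=O(\rho^{1-\be})$) to reduce to $\|A+E\|^2$, applies Young to the pair $(A,E)$, and closes with Lemma~\ref{choose1}(ii) to get $(1+\eps^{-1})\|A\|^2=O(1)$; you instead split the quadratic form blockwise over $(s,t)$, use Lemma~\ref{choose1}(ii) to show $W_t=O(\rho^{\be})$ for $t>1$ (without which the cross block would be $O(\rho^{1-2\be})$, fatal once $\be>\frac12$), and apply Young only to the scalar $W_1=A+R$, closing with $(1+\delta^{-1})g^{1\bar1}|R|^2=O(1)$. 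Both routes rest on exactly the same two inputs — Lemma~\ref{choose1}(ii) and the precise form of $g^{1\bar1}$, not merely its blow-up rate — and your concluding remarks identify those dependencies correctly; if anything, writing the quadratic form out concretely from the start makes the block structure a bit more transparent.
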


\begin{proof}
Define a 
bilinear Hermitian form of two tensors $a=[a_{i\jbar k}]$, $b=[b_{p\b q r}]\in(\mathbb{C}^n)^3$
satisfying $a_{i\jbar k}=a_{k\jbar i}$ and $b_{p\b q r}=b_{r\b q p}$ by setting
$$
\langle [a_{i\jbar k}],[b_{p\b q r}]\rangle
:=\sum_{i,j,k,p,q,r}
g^{q\jbar}(\eta^i a_{i\jbar k}\nu^{k})  (\overline{\eta^{p} b_{p\b q r}\nu^{r}}).
$$
It is easy to see that this is a nonnegative bilinear form. We denote by $\|\,\cdot\,\|$
the associated norm.
Then 
$ 
\sum_{i,j,k,l} \Pi_{\ij\kl}
=
\|[g_{i\jbar, k}]\|^2
$.
Write,
$$
\begin{aligned}
g_{\ij,k}
= A_{\ij k}
+
B_{\ij k}
+
D_{\ij k}
+
E_{\ij k},
\end{aligned}
$$
with
$
A_{\ij k}:=\hat g_{\ij, k},\;
B_{\ij k}:=H_{\ij k}|z|^{2\be},\;
D_{\ij k}:=
\be (\delta_{i1}H_{k\jbar}+\delta_{k1}H_{i\jbar}) |z|^{2\be-2}\bar{z},\;
$
and
$
E_{\ij k}:=
\be^2(\be-1) \delta_{i1}\delta_{\jbar 1}\delta_{k1} |z|^{2\be-4}\bar{z}
$.
Denote $A=[A_{\ij k}]$ and similarly $B,D,E$.
Using \eqref{OffDiagVanEq},
$$
\langle D,E\rangle
\le C\Big|\sum_j g^{1 \jbar}|\eta^1|^2\overline{\nu^1}\rho^{2\be-1}\rho^{2\be-3}\Big|
\le C\rho^{1-\be},
$$
and similarly we conclude that
$\|[g_{i\jbar, k}]\|^2\le C+\|A+E\|^2$.
Now, since $\|\frac1{\sqrt\eps}A-\sqrt\eps E\|^2\ge0$,
we obtain 
$
\|A+E\|^2
\le
(1+\frac1\eps)\|A\|^2+(1+\eps)\|E\|^2.
$
Note now that by \eqref{gupperoneoneEq}
$$
\|E\|^2
=
g^{1\b1}|E_{1\b1 1}|^2|\eta^1|^2|\nu^1|^2
\le
C+\frac{\be^2(1-\be)^2}{1+b(p)\rho^{2-2\be}}\rho^{2\be-4}|\eta^1|^2|\nu^1|^2.
$$
Thus, letting $\eps=\eps(p)=b(p)\rho^{2-2\be}$, we will have proved
the lemma provided we can bound
$
(1+\rho^{2\be-2})\|A\|^2.
$
Now, by \eqref{gupperoneoneEq} and Lemma \ref{choose1} (ii),
$$
\rho^{2\be-2}\|A\|^2 =
\sum_{i,k,p,r}\rho^{2\be-2}\hat g_{i\b 1,k}\hat g_{1\jbar,\b \ell} 
g^{1\b 1}\eta^i\overline{\eta^j}\nu^k\overline{\nu^\ell}\le C.
$$
This concludes the proof of  Lemma \ref{FirstAppendixLemma}.
\end{proof}

\section{A local third derivative estimate (after Tian)}
A general result due to Tian  \cite{tian83}, proved in his M.Sc. thesis, gives a local a priori estimate in $W^{3,2}$
for solutions of both real and complex Monge--Amp\`ere equations under the assumption that the solution 
has bounded real or complex Hessian and the right hand side is at least H\"older. By the classical integral characterization 
of H\"older spaces this implies a uniform H\"older estimate on the Laplacian. This result can be seen as an alternative to 
the Evans--Krylov theorem (and in fact appeared independently around the same time). 

We present a very special case of this here which applies, in particular, to $\varphi(s)$ along the Ricci continuity path 
\eqref{RCMEq}. Unlike Calabi's estimates, this local estimate does not  require curvature bounds on the reference geometry 
(which works only when $\be<1/2$ \cite{Br}). The argument here is an immediate adaptation of \cite{tian83}
to the complex edge setting and is based entirely on the presentation in \cite{tian83} and Tian's unpublished notes 
\cite{Tian-unpublished}.  He understood the applicability of this method in the edge setting for some time, and had
described this in various courses and lectures over the years. The authors are indebted to Tian for 
generously sharing his notes
on this and for explaining the proof to them in great detail. 

\begin{thm} {\rm  (Tian \cite{tian83,Tian-unpublished})}
\label{TianThmB}
Let $\vp(s)\in \calD_w^{0,0}\cap \calC^4(M\sm D)\cap \PSH(M,\o)$ be a solution to \eqref{RCMEq} with $s>S$ 
and $0 < \beta <1$. For any $\gamma \in (0, \beta^{-1} - 1)\cap(0,1)$, there are constants $r_0\in (0,1)$ and $C> 0$ such that 
for any $x\in M$ and $0 < a < r_0$, 
\begin{equation}
\label{eq:main}
\int_{B_a(x)} |\nabla\o_\varphi|^2 \, \omega^n\,\le\, C\, a^{2n-2+2\gamma},
\end{equation}
where $B_a(x)$ denotes the geodesic ball with center $x$ and radius $a$, $\nabla$ the covariant derivative, and $|\,\cdot\,|$
the norm, all taken with respect to $\omega_\be$  \eqref{modelform}. 
The constant $C$ depend only on $\gamma,\beta,\o,n,||\Delta_\o\vp||_{L^\infty(M)},$
and $||\vp||_{L^\infty(M)}$. 
\end{thm}

For the proof, we may assume that $x\in D$ and fix some neighborhood $U$ of $x$ in $M$. We will also always assume $1/2<\be<1$ 
purely for simplicity of notation. Setting $t=1$ in \eqref{DefinehEq},
\begin{equation}
\label{Defineh2Eq}
\log\det[u_{i\b j}]= f_\o-s\vp+\log\det[\psi_{i\bar j}]=:\log h,
\end{equation}
and differentiating twice, multiplying by $h$,  and using that $(hu^{i\b j})_i=0$ (this, in turn, uses that $h=\det u_{i\b j}$), yields
\begin{equation}
\label{Defineh2Eq2}
-hu^{i\b s}u^{t\b j}u_{t\b s,\b l}u_{i\b j,k}+(hu^{i\b j}u_{k\b l,i})_{\b j}=h_{k\b l}-h_kh_{\b l}/h,
\end{equation}
Combining Lemma \ref{TYLaplacianEstimateLemma} and Lemma \ref{hLipschitzLemma} (iv) yields a uniform
bound for the right hand side of \eqref{Defineh2Eq2} (in fact, even with weaker bounds on $h$ one could replace terms
of order $a^{2n}$ that appear later by terms of order $a^{2n-\delta}$ and still run the argument with $a^{-\delta}+|\nabla\o|$ instead
of $1+|\nabla\o|$).
Here all the derivatives are with respect to $\zeta,z_2,\ldots,z_n$, equivalently, covariant derivatives with respect to $\omega_\beta$
defined in \eqref{modelform}.

\def\ze{\zeta}

Define $\bB_\beta(R)\subset U$ to be the domain in $\CC\times\CC^{n-1}$
consisting of all $(\ze,Z)$, where $Z=(z_2,\cdots,z_n)$, satisfying
$|\ze|^2+ |Z|^2 \le R^2$; recall $\ze=r e^{\sqrt{-1}\be\theta}, r\in [0,R], \theta\in [0, 2\pi].$
We often identify $\bB_\beta(R)\subset U$ with the standard ball $B_R$ in $\CC^n$.

\begin{lem}
\label{lemm:flat-1} 
(i)
Let $h$ be a harmonic function on $\bB_\beta(1)$ such that
\begin{equation}
\label{eq:boundary-1}
\begin{aligned}
h( r e^{\sqrt{-1} 2\pi \beta}, Z)\, &=\, e^{\sqrt{-1} 2\pi (1-\beta)} h(r,Z), \cr 
\del_{z_1} h( r e^{\sqrt{-1} 2\pi \beta}, Z)\, &=\, e^{\sqrt{-1} 2\pi (1-\beta)}  \del_{z_1} h(r,Z), 
~~||d h||_{L^2(\bB_\beta(1),\o_\be)} < \infty. 
\end{aligned}
\end{equation}
Then for any $a < 1$, there is a constant $C=C(\be,n)$,
\begin{equation}\label{eq:har-5}
||d h||^2_{L^2(\bB_\beta(a),\o_\be)} \le C a^{2n-4 + 2\beta^{-1}}
||d h||^2_{L^2(\bB_\beta(1),\o_\be)}.
\end{equation}
(ii)
Let $f$ be a smooth function on $\bB_\beta(1)$ satisfying \eqref{eq:boundary-1}. Then for some $C=C(\be,n)$,
\begin{equation}\label{eq:har-51}
||f||^2_{L^2(\bB_\beta(1),\o_\be)}\le C||df|^2_{L^{\frac{2n}{n+1}}(\bB_\beta(1),\o_\be)}.
\end{equation}
\end{lem}
This lemma can be proved by standard methods (e.g., Sobolev embedding $L^2\subset W^{1,\frac{2n}{n+1}}$, separation of variables 
and consideration of the indicial roots associated to the harmonic functions $z_1^{1-\be+k}=\ze^{\frac{k+1}\be-1}$, with $k=0,1,\ldots$; 
the exponent $2n - 4+ 2\beta^{-1}$ is sharp, corresponding to the first indicial root of the problem, i.e., the harmonic function 
$\ze^{\frac{1-\beta}{\beta}}$). The boundary condition \eqref{eq:boundary-1} corresponds to the
$d\zeta$-coefficient of a smooth 1-forms written in the $\zeta,z_2,\ldots,z_n$ coordinates.
To be more specific, in our application, we will consider a smooth 1-form defined on
a neighborhood in $M$ of a point $p\in D$, and write this 1-form with respect to
the aforementioned coordinates. The $d\zeta$-coefficient of this 1-form is then multivalued.
Choosing any branch, the coefficient is a function on the wedge $\bB_\beta(R)$ which satisfies
\eqref{eq:boundary-1}.

The lemma above is the only place where we need to modify \cite{tian83}. The rest of the proof below uses arguments identical to 
those of \cite[\S2]{tian83}. Since the proof was originally written for real Monge--Amp\`ere equations, we write out details
here for the complex Monge--Amp\`ere equation (this involves purely a change in notation).

\begin{lem}{\rm\cite[Lemma 2.2]{tian83}} \label{lemm:2-2}
Let $\lambda_{i\b i}, i=1,\ldots,n$ be positive numbers.
Then, 
 \begin{equation}
 \label{eq:83-1}
 \Big|\sum_k u_{k\bar k}\Pi_{i\neq k}\lambda_{i\bar i} - \det[u_{p\bar q}]- (n-1) 
\Pi_{i}\lambda_{i\bar i}\Big|\,\le\,C \sum_{i,j}|u_{i\bar j}- \delta_{ij}\lambda_{i\bar j}|^2,
 \end{equation}
where $C$ is a constant depending only on 
$\lambda_{i\bar i},u_{i\bar j}, i,j=1,\ldots,n$.
\end{lem}
\begin{proof}
First, by using the homogeneity and positivity of $[u_{i\bar j}]$, it suffices to prove the case when $[u_{i\bar j}] = [\delta_{ij}]=I$.
Next, if we denote the left side of \eqref{eq:83-1} by $f(\lambda_{1\bar 1},\ldots,\lambda_{n\bar n})$, then by a direct 
computation, $f(I)=0$, $\frac{\partial f}{\partial \lambda_{i\b i}} (I) =0$ for $i=1,\cdots, n$. Then \eqref{eq:83-1} follows from 
the Taylor expansion of $f$ at $I$.
\end{proof}

\begin{lem}{\rm\cite[Lemma 2.3]{tian83}}
 \label{lemm:2-3}
There are some uniform constants $q > 2$ and $C>0$, depending only on $\beta,n,\o,||u_{i\bar j}||_{L^\infty}, ||h_{i\bar j}||_{L^\infty}$, 
$i,j=1,\ldots,n$, 
such that for any $B_{2a}(y)\subset U$, 
 \begin{equation}
 \label{eq:83-2}
|| 1 + |\nabla \omega|^2  ||_{L^{q/2}(B_a(y),\o_\be)}
\le
C a^{2n(-1+2/q)}
|| 1 + |\nabla \omega|^2  ||_{L^1(B_{2a}(y),\o_\be)}.
 \end{equation}
\end{lem}
\begin{proof}
First we assume $y=x$. Set 
\begin{equation}
\label{eq:83-3}
\lambda _{i\bar j}:=a^{-2n}\,\int_{B_{a}(x)} u_{i\bar j}\, \omega_\beta^n,\qq
i,j=1,\ldots,n.
\end{equation}
By using unitary transformations if necessary, we may assume $\lambda_{i\bar j}=0$ for any $i\not= j$ and $i , j \ge 2$.
Let $C>0$ be such that $C^{-1} I \le \Lambda = [\lambda_{i\bar j}] \le C I.$ Choose a radial cut-off function 
$\eta: B_{a}(x) \ra \RR_+$ equal to $1 \h{\ on\ } B_{3a/4}(x)$ and supported in $B_{4a/5}(x)$, such that
$|\eta''| \le {C}/{a^2}$. Multiplying \eqref{Defineh2Eq2} by $\eta$, and integrating by parts gives
\begin{equation}
c \int_{B_{a}(x)} \eta |\nabla \omega_\vp|^2\omega^n_\beta-Ca^{2n}\le \int_{B_{a}(x)} hu^{i\bar j} \left (\sum_{k=1}^n 
u_{k\bar k}\Pi_{i\neq k}\lambda_{i\b i}- h-(n-1) \Pi\lambda_{i\b i}\right)\,\eta_{i\bar j}\,\omega^n_\beta\nonumber.
\end{equation}
Thus,
$\int_{B_{\frac{3a}{4}}(x)} |\nabla \omega_\vp|^2\,\omega^n_\beta \,\le\, C\,\left ( a^{2n}\,+ \,\int_{B_{a}(x)} \sum_{i,j=1}^n
|u_{i\bar j} - \lambda _{i\b i} \delta_{ij}|^2 \,\omega^n_\beta\right )$,
by Lemma \ref{lemm:2-2} applied to the matrix $\diag(\lambda_{1\b1},\ldots,\lambda_{n\b n})$.
Applying Lemma \ref{lemm:flat-1} (ii) to the terms $u_{1\b j}$ and the usual Sobolev inequality to the
term $u_{1\b 1}- \lambda _{1\b 1}$ and the terms $u_{i\b j}- \lambda _{i\b i} \delta_{ij}, i,j\ge2$, it follows that
$||1+|\nabla \omega_\vp|^2||_{L^1(B_{\frac{3a}{4}}(x),\o_\be)}\le Ca^{-2} ||1+|\nabla \omega_\vp|^2||_{L^{\frac{ n }{n+1}}(B_a(x),\o_\be)}.$
This inequality still holds if we replace $B_a(x)$ by any $B_a(y)$ which is disjoint from the singular set $\{z_1=0\}$. This can 
be proved by using the same arguments (without Lemma \ref{lemm:flat-1} (ii)). From this and a covering argument then
$ ||1+|\nabla \omega_\vp|^2||_{L^1(B_{a}(y),\o_\be)}\le C a^{-2} ||1+|\nabla \omega_\vp|^2||_{L^{\frac{n }{n+1}}(B_{2a}(y),\o_\be)}$
for any ball $B_{2a}(y)\subset U$. Then \eqref{eq:83-2} follows from Gehring's inverse H\"older inequality \cite[Lemma 3]{gehring73}.
\end{proof}

\begin{lem} {\rm\cite[Lemma 2.4]{tian83}}
\label{lemma:2-4}
For any $B_{4a}(y)\,\subset \,U$ and $\sigma < a$, we have
\begin{equation}
\label{eq:83-5}
||\nabla \omega_\vp||^2_{L^2(B_\sigma(y),\omega_\beta)}-Ca^{2n}
\le C\left [\left(\frac{\sigma}{a} \right )^{2n- 4+\frac2\beta}
+ a^{(2n-2)(\frac2q-1)} ||\nabla \omega_\vp||_{L^2(B_a(y),\omega_\beta)}^{\frac{2(q-2)}{q}}\right ]
||\nabla \omega_\vp||_{L^2(B_{2a}(y),\omega_\beta)}^2.
\nonumber
\end{equation}
The dependence of $C$ is as in the previous lemma.
\end{lem}
\begin{proof}
Let $v$ be the unique (1,1)-form on $B_a(y)$ solving
\begin{equation}
\label{eq:83-6}
\sum_{k=1}^n\,\Pi_{i\neq k}\lambda_{i\b i} v_{k\bar k}\,=\, 0~~{\rm on}~B_a(y),~~~v=\o_\vp ~~{\rm on}~\partial B_a(y).
\end{equation}
We emphasize that here $v_{k\bar k}=\nabla_k\nabla_{\bar k} v$ denotes
covariant derivatives with respect to $\o_\be$ of the full $(1,1)$-form $v$.
Set $\hat\o:=\ovp-v$. 
Then, $||\nabla\ovp||_{L^2(B_{\sigma}(y),\o_\be)}^2\le
2||\nabla\hat\o||_{L^2(B_{\sigma}(y),\o_\be)}^2+2||\nabla v||_{L^2(B_{\sigma}(y),\o_\be)}^2\
$.
Note that $v$ is harmonic with respect to a constant
coefficient metric equivalent to $\o_\be$.
Thus, $||\nabla v||_{L^2(B_{a}(y),\o_\be)}\le C||\nabla\ovp||_{L^2(B_{a}(y),\o_\be)}$, and
applying Lemma \ref{lemm:flat-1} (i) to $v$
(or more precisely to each of the components $v_{1\b j},v_{i\b 1}, i,j\ge2$, 
of the form where $v$ with respect to the coordinates $\zeta,z_2,\ldots,z_n$ gives,
\begin{equation}
\begin{aligned}
||\nabla\ovp||_{L^2(B_{\sigma}(y),\o_\be)}^2
&\le
2||\nabla\hat\o||_{L^2(B_{\sigma}(y),\o_\be)}^2
+2C\left(\sigma/a\right)^{2n-4 + 2\beta^{-1}}||\nabla v||_{L^2(B_a(y),\o_\be)}^2
\cr
&\le
2||\nabla\hat\o||_{L^2(B_{\sigma}(y),\o_\be)}^2
+2C'\left({\sigma}/{a}\right)^{2n-4 + 2\beta^{-1}}||\nabla\ovp||_{L^2(B_a(y),\o_\be)}^2
.
\end{aligned}
\end{equation}
It remains to estimate the first term on the right hand side.
Similarly to before, multiplying \eqref{Defineh2Eq} by $\hat \omega\w \o_\be^{n-1}$ and integrating by parts,
\begin{equation}
\label{eq:83-7}
\int_{B_r(y)} |\nabla \hat\omega|^2 \omega^n_\beta
\le C\Big(r^{2n}+\,\int_{B_r(y)}  (|\hat \omega|+
 \sum_{i,j}|u_{i\bar j}-\lambda_{i}\delta_{i j}|^2)(1+|\nabla \omega_\vp|^2)\omega^n_\beta\Big).
\end{equation}
By using Lemma \ref{lemm:2-3} and the Poincar\'e inequality (the usual one with matching boundary
data $f( r e^{\sqrt{-1} 2\pi \beta}, Z)= f(r,Z) $ as well as Lemma \ref{lemm:flat-1} (ii)),
\begin{equation}
\begin{aligned}
\label{eq:83-8}
||\,|u_{i\bar j}-\lambda_{i}\delta_{ij}|^2(1+|\nabla \omega_\vp|^2)&||_{L^1(B_a(y),\omega_\beta)}
\le 
||1+|\nabla \omega_\vp|^2\,||_{L^{q/2}(B_a(y),\omega_\beta)} ||\,|u_{i\bar j}-\lambda_{i}\delta_{i j}|\,||^2_{L^{\frac{2q}{q-2}}(B_a(y),\omega_\beta)}
\nonumber
\cr
&\le C 
a^{\frac{q-2}q(2-2n)}
||1+|\nabla \omega_\vp|^2||_{L^1(B_{2a}(y),\omega_\beta)}
||u_{i\bar j}-\lambda_{i}\delta_{i j}||^{\frac{2(q-2)}q}_{L^2(B_a(y),\omega_\beta)}
\cr
&\le C 
a^{\frac{q-2}q(2-2n)}
||1+|\nabla \omega_\vp|^2||_{L^1(B_{2a}(y),\omega_\beta)}
||1+|\nabla \omega_\vp|^2||^{\frac{q-2}q}_{L^1(B_{a}(y),\omega_\beta)}.
\end{aligned}
\end{equation}
Without loss of generality, we may assume that $q \ge 2(q-2)$. Since $\hat\omega$ vanishes on $\partial B_r(y)$, its $L^2$-norm is controlled by the $L^2(B_{a}(y),\omega_\beta)$-norm of
$|\nabla \hat \omega|$ and, consequently, of $|\nabla \omega_\vp|$
(as $||\nabla \hat \omega||^2_{L^2}\le 2||\nabla \ovp||^2_{L^2}+2||\nabla v||^2_{L^2}
\le 2(C+1)||\nabla \ovp||^2_{L^2}$). 
Also, $\hat\omega$ is uniformly bounded in $L^\infty$ as
both $\ovp$ (by the Laplacian estimate) and $v$ (by the maximum principle) are; thus
its $L^2$ norm is equivalent to its $L^{\frac{q}{q-2}}$ norm.
Then,
\begin{equation}
\begin{aligned} 
\label{eq:83-9}
||\,|\hat \omega|(1+|\nabla \omega_\vp|)^2||_{L^1(B_a(y),\omega_\beta)}
&\le 
||1+|\nabla \omega_\vp|^2||_{L^{q/2}(B_a(y),\omega_\beta)}
||\hat\omega||_{L^{\frac{q}{q-2}}(B_a(y),\omega_\beta)}
\nonumber
\cr
&\le  
Ca^{2n(-1+2/q)}||1+|\nabla \omega_\vp|^2\,||_{L^1(B_{2a}(y),\omega_\beta)}
||\hat\omega||^{\frac{2(q-2)}{q}}_{L^2(B_a(y),\omega_\beta)}
\nonumber
\cr
&\le  
Ca^{2n(-1+2/q)}||1+|\nabla \omega_\vp|^2\,||_{L^1(B_{2a}(y),\omega_\beta)}
a^{\frac{2(q-2)}q}||\nabla\hat\omega||^{\frac{2(q-2)}{q}}_{L^2(B_a(y),\omega_\beta)}
\nonumber
\cr
&\le  
Ca^{(2n-2)(-1+2/q)}||1+|\nabla \omega_\vp|^2\,||_{L^1(B_{2a}(y),\omega_\beta)}
||\nabla\omega_\vp||^{\frac{2(q-2)}{q}}_{L^2(B_a(y),\omega_\beta)}.
\nonumber
\end{aligned} 
\end{equation}
Combining all the estimates above concludes the proof.
\end{proof}

The next lemma gives an estimate on the smallness of the coefficient in the right hand
side of the previous lemma.

\begin{lem} {\rm \cite[Lemma 2.5]{tian83}}
\label{lemm:2-5}
For any $\epsilon_0 > 0$, there is an $\ell$ depending only on $\epsilon_0$, $||\Delta u||_{L^\infty}$ and $||h_{i\bar j}||_{L^\infty}$ satisfying: For any
$a > 0$ with $B_{a}(y) \subset U$, there is $\sigma \in [ 2^{-\ell} a , 2^{-1}a]$ such that
\begin{equation}
\label{eq:83-11}
||\nabla \omega_\vp||^2_{L^2(B_{\sigma}(y),\o_\be)}\le\epsilon_0\sigma^{2n-2}.
\end{equation}
\end{lem}

\begin{proof}
From \eqref{Defineh2Eq},
\begin{equation}
\label{eq:83-cma}
\Delta_{\ovp} \Delta u = 
\sum_k u^{i\bar j} u^{p\bar q} u_{i\bar q k} u_{\bar j p \bar k}+\Delta\log h,
\end{equation}
where $\Delta$ denotes, as before, the Laplacian of $\omega_\be$.
Let $\eta $ be a non-negative radial function on $B_a(y)$ equal to $1$ 
on $B_{a/2}(y)$, supported in $B_{3a/4}(y)$, and such that
$a^2|\eta''| \le C$. Let
$M_a:=\sup_{B_a(y)} \Delta u$. Then,
\begin{eqnarray}
\label{eq:83-12}
\int_{B_a(y)} \eta|\nabla\omega_\vp|^2 \,\omega_\beta^n
\le 
C a^{2n}- \int_{B_a(y) } \Delta_{\ovp} \eta \, (M_a - \Delta u)\,\omega^n\nonumber
\le C\, a^{2n}+C a^{-2}  \int_{B_a(y) }\, (M_a - \Delta u)\,\omega_\be^n.
\end{eqnarray}
From \eqref{eq:83-cma}, there exists $c > 0$, 
such that $Z:= M_a - \Delta u - ca^2$ satisfies
$\Delta_{\ovp}Z \le 0$. Thus, 
$$\frac1Ca^{-2n} \int_{B_a(y)} \, Z \,\omega^n\,\le \inf_{B_{{a}/{2}}(y)} Z+a^2,$$
by \cite[Thm8.18]{GT}.
Thus, $a^{2-2n}||\nabla\omega_\vp||^2_{L^2(B_{a/2}(y),\o_\be)}\le C( M_a - M_{a/2} + a^2).$
Hence, if \eqref{eq:83-11} does not hold for $\sigma=2^{-1}a, \cdots, 2^{-k}a$, then
$(k-1)\epsilon_0\le C \,(M_{a } - M_{2^{-k} a} + 2 a^2). $
This is impossible if $k$ is sufficiently large.
\end{proof}

We now complete the proof of Theorem \ref{TianThmB}.
Using the previous two lemmas, there exist uniform $\chi,\lambda\in(0,1)$ such that 
$$ (\lambda a)^{\frac2\be-2}+(\lambda a)^{2-2n}||\nabla\o_\vp||^2_{L^2(B_{\lambda a}(y),\o_\be)} 
\le \chi \Big[a^{\frac2\be-2}+ a^{2-2n}||\nabla\o_\vp||^2_{L^2(B_{a}(y),\o_\be)}\Big].
$$
Thus, as in \S 8, from \cite[Lemma 8.23]{GT}
it follows that there exists some $\gamma\in(0,\frac1\be-1)$ for which \eqref{eq:main} holds,
which is sufficient for the purposes of this article (it is easy to see that in fact
$\chi,\lambda$ can be chosen to give \eqref{eq:main} even for all $\gamma\in(0,\frac1\be-1)$).
This concludes the proof.

\begin{spacing}{0.1}

\end{spacing}

\bigskip

{\sc Wichita State University}

{\tt jeffres@math.wichita.edu}

\medskip

{\sc Stanford University}

{\tt mazzeo@math.stanford.edu}

\medskip

{\sc University of Maryland}

{\tt  yanir@member.ams.org}

\end{document}